\theoremstyle{definition}
\newtheorem{defin}{Definition}[section]
\newtheorem{ex}[defin]{Example}
\theoremstyle{plain}
\newtheorem{theo}[defin]{Theorem}
\newtheorem{lemma}[defin]{Lemma}
\newtheorem{obs}[defin]{Remark}
\newtheorem{prop}[defin]{Proposition}
\newtheorem{cor}[defin]{Corollary}
\newtheorem*{theorem-no-number}{Theorem}
\newtheorem{theorem}{Theorem}
\newcommand{\restr}[1]{\lower3pt\hbox{$|_{#1}$}}
\newcommand{\LocGeod}[1]{\textup{Loc-Geod}(#1)}
\newcommand{\Geod}{\textup{Geod}}
\newcommand{\limi}{\varliminf}
\newcommand{\lims}{\varlimsup}
\renewcommand{\t}[1]{\tilde{#1}}
\newcommand{\N}{\mathbb{N}}
\newcommand{\R}{\mathbb{R}}
\newcommand{\Z}{\mathbb{Z}}
\newcommand{\sfd}{{\sf d}}
\newcommand{\X}{{\rm X}}
\newcommand{\Y}{{\rm Y}}
\newcommand{\CAT}{\textup{CAT}}
\newcommand{\Isom}{\textup{Isom}}
\newcommand{\E}{\mathcal{E}}
\newcommand{\Pack}{\textup{Pack}}
\newcommand{\Cov}{\textup{Cov}}
\newcommand{\MD}{\textup{MD}}
\newcommand{\sfD}{{\sf D}}
\renewenvironment{abstract}
{\par\noindent\textbf{\abstractname.}\ \ignorespaces}
{\par\medskip}
\title{Otal-Peigné's Theorem for Gromov-hyperbolic spaces}
\author{Nicola Cavallucci}
\date{}
\begin{document}
	\maketitle
	\begin{abstract}
		\footnotesize
		We extend the classical Otal-Peigné's Theorem to the class of proper, Gromov-hyperbolic spaces that are line-convex. Namely, we prove that when a group acts discretely and virtually freely by isometries on a metric space in this class then its critical exponent equals the topological entropy of the geodesic flow of the quotient metric space. We also show examples of proper, Gromov-hyperbolic spaces that are not line-convex and for which the statement fails.
	\end{abstract}
	\tableofcontents
	
	\section{Introduction}
	The relation between the topological entropy of the geodesic flow of a Riemannian manifold $(M,g)$ with non-positive sectional curvature and the critical exponent of the discrete group of isometries $\Gamma \cong \pi_1(M)$ acting freely on the universal covering $\t{M}$ of $M$ has been intensively studied during the years.
	A.Manning (\cite{Man79}) proved that in case of compact manifolds with non-positive sectional curvature the topological entropy of the geodesic flow equals the volume entropy of the universal covering, and so it is the critical exponent of the group $\Gamma$. 
	
	We recall that the critical exponent of a group $\Gamma$ acting discretely by isometries on a metric space $(\X,\sfd)$ is classically defined as
	\begin{equation}
		\label{eq:defin_critical_exponent}
		h_\text{crit}(\X;\Gamma) := \lims_{T \to +\infty} \frac{1}{T} \log \#(\Gamma x \cap B(x,T)),
	\end{equation}
	where $x$ is any point of $\X$.
	
	Manning's result has been generalized to compact, locally geodesically complete, locally $\CAT(0)$ spaces by R.Ricks in \cite[Theorem A]{Ric19}.
	\vspace{2mm}
	
	We are interested in the much more complicated case of non-cocompact actions. For groups acting on Riemannian manifolds the following result is due to J.P.Otal and M.Peigné in the torsion-free case (cp. \cite{OP04}, see also \cite{Led13}), and to F.Paulin, M.Pollicott and B.Schapira in the case of groups with torsion (cp. \cite[Theorem 6.1]{PaulinPollicottShapira2015}).
	\begin{theorem-no-number}[Otal-Peigné]
		\label{intro-thm-OP}
		Let $\tilde{M}$ be a complete, simply connected, Riemannian manifold with pinched, negative sectional curvature, i.e. $-b^2 \leq \textup{Sec}_g \leq -a^2 <0$. Let $\Gamma$ be a non-elementary, discrete group of isometries of $\tilde{M}$.
		Then the topological entropy of the geodesic flow on $M:=\Gamma \backslash \tilde{M}$ equals the critical exponent of $\Gamma$. 
	\end{theorem-no-number} 
	
	The price to pay in order to consider every non-elementary group acting discretely on $\tilde{M}$ is the condition on the sectional curvature. While the lower bound is quite natural, since every compact manifold has such a bound, the negative upper bound marks a difference among the cocompact case and the general one. 
	\vspace{2mm}
	
	The purpose of this paper is to extend Otal-Peigné's Theorem to a wider class of metric spaces by weakening the regularity of the space and the upper curvature bound and by essentially removing the lower bound on the curvature. 
	
	The upper curvature bound is replaced by Gromov hyperbolicity. This class is extremely wild and contains many non-manifold examples. Moreover, this condition is different in nature from a negative upper bound on the sectional curvature, or a CAT$(-1)$-condition, since it does not give any information on the local geometry of the metric space. 
	
	For Gromov-hyperbolic metric spaces $\X$ and discrete groups of isometries $\Gamma < \Isom(\X)$, we need to clarify the definition of the quotient geodesic flow of $\Gamma \backslash \X$. We denote by $\Geod(\X)$ the space of all geodesic lines of $\X$. It supports a natural action $\Phi_t$ by $\R$ via reparametrization. The dynamical system $(\Geod(\X), \Phi_1)$ is called the geodesic flow of $\X$. Every isometry of $\X$ acts on $\Geod(\X)$ and commutes with the flow $\Phi_t$. The quotient geodesic flow appearing in Theorem \ref{theo:intro-equality-Gromov-hyperbolic} is the dynamical system $(\Gamma \backslash \Geod(\X), \bar{\Phi}_1)$, where $\bar{\Phi}_1$ is the induced action on the quotient space $\Gamma \backslash \Geod(\X)$.
	
	This definition, proposed for instance in  \cite{Rob03,DilsavorThompson2023}, may seem artificial and not related to the classical notion of geodesic flow of $\Gamma \backslash \X$. However, if $\X$ is $\CAT(-1)$, or even Busemann convex, and $\Gamma$ is torsion-free then the dynamical system $(\Gamma \backslash \Geod(\X), \bar{\Phi}_1)$ is conjugated to $(\LocGeod{\Gamma \backslash \X}, \Phi_1)$, where $\LocGeod{\Gamma \backslash \X}$ is the space of local geodesics of $\Gamma \backslash \X$ and $\Phi_1$ is again the standard reparametrization. This result is proven in Corollary \ref{cor:conjugation_of_flows}.
	For more details we refer to Section \ref{sec:geodesic_flow}.
	
	The topological entropy of the quotient geodesic flow of $\Gamma\backslash \X$ is then defined as the topological entropy of the dynamical system $(\Gamma \backslash \Geod(\X), \bar{\Phi}_1)$. By the variational principle, it can be expressed in the following form:
	\begin{equation}
		\label{eq:defin_h_top_with_measures}
		h_\text{top} := \sup_{\mu \in \E_1}h_\mu,
	\end{equation}
	where $\E_1$ is the set of ergodic invariant measures of the dynamical system and $h_\mu$ is the Kolmogorov-Sinai entropy of $\mu$. We refer to Section \ref{sec:topological_entropy} for the definitions.
	
	\vspace{2mm}
	The equivalent of Otal-Peigné's Theorem would then be the following: for every Gromov-hyperbolic metric space $\X$ and every discrete $\Gamma < \Isom(\X)$, the critical exponent of $\Gamma$ equals the topological entropy of the quotient geodesic flow of $\Gamma \backslash \X$.
	
	However, Gromov hyperbolicity alone is too weak, namely there are Gromov hyperbolic spaces for which this version of Otal-Peigné's Theorem does not hold (see Theorem \ref{theo:intro_counterexample_h_top>h_Gamma}). 
	
	We need to introduce an additional weak upper bound on the curvature through the next definition: a metric space $(\X,\sfd)$ is said to be \emph{line-convex} if for every two geodesic lines $\gamma,\gamma'\colon \R \to \X$, the function $t\mapsto \sfd(\gamma(t),\gamma'(t))$ is convex. For instance, every Busemann convex space is line-convex, but the vice versa is false, see for instance the space of Theorem \ref{theo:intro-weird-example}, which is line-convex but not simply connected. In general also the line-convexity property does not give control on the local geometry of the metric space, since it is preserved by local perturbations of the metric space that are not seen by any geodesic line.

	The main result of this work is the following.
	\begin{theorem}
		\label{theo:intro-equality-Gromov-hyperbolic}
		Let $\X$ be a proper, geodesic, line-convex, Gromov hyperbolic metric space. Let $\Gamma < \Isom(\X)$ be discrete, virtually torsion-free and non-elementary. Assume that $h_\textup{erg}(\X;\Gamma) < +\infty$. Then the topological entropy of the quotient geodesic flow of $\Gamma \backslash \X$ equals the critical exponent of $\Gamma$.
	\end{theorem}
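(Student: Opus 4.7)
The plan is to prove the two inequalities $h_\text{top} \leq h_\text{crit}(\X;\Gamma)$ and $h_\text{top} \geq h_\text{crit}(\X;\Gamma)$ separately. Both quantities are invariant under passage to a finite-index subgroup by a standard covering argument, so after replacing $\Gamma$ with a torsion-free finite-index subgroup I may assume $\Gamma$ itself is torsion-free; then the conjugation between $(\Gamma\backslash\Geod(\X),\bar\Phi_1)$ and a local-geodesic flow, established in the Introduction, is at my disposal.

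For $h_\text{top} \leq h_\text{crit}$ I would adapt the Otal--Peign\'e Katok-style argument, with line-convexity playing the decisive role. Given an ergodic $\bar\Phi_1$-invariant probability measure $\mu$, Katok's formula expresses $h_\mu$ as the exponential growth rate of maximal $(\varepsilon,T)$-separated subsets of a compact $K \subset \Gamma\backslash\Geod(\X)$ of $\mu$-measure close to $1$. I would lift $K$ to a compact $\tilde K \subset \Geod(\X)$ and fix a bounded $F \subset \X$ containing every origin $\gamma(0)$ for $\gamma \in \tilde K$; to each separated line $\gamma_i$ I attach the unique $g_i \in \Gamma$ with $g_i^{-1}\gamma_i(T) \in F$, which satisfies $\sfd(x_0,g_i x_0) \leq T + C$. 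The key point is that if $g_i = g_j$, then $\gamma_i,\gamma_j$ have endpoints (at times $0$ and $T$) pairwise within $\text{diam}(F)$, so line-convexity forces $\sfd(\gamma_i(t),\gamma_j(t)) \leq \text{diam}(F)$ throughout $[0,T]$, contradicting $(\varepsilon,T)$-separation as soon as $\varepsilon > \text{diam}(F)$. Hence $N_{\varepsilon,T}(K) \leq C_\varepsilon \cdot \#(\Gamma x_0 \cap B(x_0,T+C))$, giving $h_\mu \leq h_\text{crit}(\X;\Gamma)$, and the variational principle concludes.

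For the reverse inequality I would construct a Bowen--Margulis--Sullivan probability measure $\bar\mu$ on $\Gamma\backslash\Geod(\X)$ with $h_{\bar\mu} \geq h_\text{crit}$. First build an $h_\text{crit}$-conformal Patterson--Sullivan density $(\nu_x)_x$ on the Gromov boundary $\partial\X$ by the standard Poincar\'e-series construction, which needs only properness, Gromov hyperbolicity and non-elementarity of $\Gamma$; then, using a Hopf-type parametrization of almost every geodesic line by endpoint-pair and time-offset, define a $\Gamma$- and $\Phi_t$-invariant measure on $\Geod(\X)$ with density of the form $e^{2h_\text{crit}(\xi|\eta)_{x_0}}\,d\nu_{x_0}(\xi)\,d\nu_{x_0}(\eta)\,dt$ (where $(\cdot|\cdot)_{x_0}$ denotes the Gromov product at the basepoint), and push it down to the quotient. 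The assumption $h_\textup{erg}(\X;\Gamma) < +\infty$ is exactly what guarantees finiteness of the pushdown, so that after normalisation $\bar\mu$ is a probability measure. A shadow-lemma argument, combined with line-convexity to control the transverse spread of geodesic lines with nearby endpoint-pairs, then yields Bowen-ball mass comparable to $e^{-h_\text{crit}T}$, and the Brin--Katok formula delivers $h_{\bar\mu} \geq h_\text{crit}$.

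The main obstacle I expect is the lower bound. In the line-convex Gromov-hyperbolic setting there is no honest Busemann cocycle and the Hopf parametrization is defined only up to a bounded ambiguity in the fibre direction, so the Patterson--Sullivan and shadow-lemma apparatus has to be reworked with Gromov products replacing Busemann functions; line-convexity is the crucial ingredient translating transverse estimates on $\partial\X$ into transverse estimates on $\Geod(\X)$. Extracting the finiteness of $\bar\mu$ on the quotient from $h_\textup{erg}(\X;\Gamma) < +\infty$ -- the analogue of the classical Sullivan finiteness criterion -- is the other delicate non-routine step.
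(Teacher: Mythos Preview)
Your upper-bound argument has a genuine gap that goes to the heart of the non-cocompact case. You propose to lift a compact set $K\subset\Gamma\backslash\Geod(\X)$ of large $\mu$-measure and then, for each $\gamma_i$ in a $(T,\varepsilon)$-separated set, attach $g_i\in\Gamma$ with $g_i^{-1}\gamma_i(T)\in F$. But such a $g_i$ exists only if $\gamma_i(T)$ lies within bounded distance of the orbit $\Gamma x_0$, and nothing in Katok's formula guarantees this: $K$ has large measure but is \emph{not} flow-invariant, so the trajectory $t\mapsto\bar\Phi_t([\gamma_i])$ can wander arbitrarily far from any fixed compact set during $[0,T]$. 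This is exactly the obstruction that distinguishes the non-cocompact Otal--Peign\'e theorem from Manning's cocompact result. The paper handles it by a different mechanism: Birkhoff's theorem shows that for $\mu$-a.e.\ $[\gamma]$ the return times to a fixed set have asymptotic linear density $1/c$, which forces $\gamma^\pm\in\Lambda_{\tau_\varepsilon,c,\varepsilon,n_\varepsilon}$ and puts $\gamma(T)$ within distance $\sim\frac{2\varepsilon}{c-\varepsilon}T$ of the orbit. Line-convexity is then used (Lemma~\ref{lemma:key_lemma_Lip_Bow_entropy}) to reduce the dynamical covering to a covering at the two endpoints, and the resulting error term $h_\text{erg}(\X;\Gamma)\cdot\frac{2\varepsilon}{c-\varepsilon}$ vanishes as $\varepsilon\to 0$ precisely because $h_\text{erg}<\infty$. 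Even granting the existence of $g_i$, your argument only controls separation at scale $\varepsilon>\text{diam}(F)$; bounding $C_\varepsilon$ for small $\varepsilon$ again requires geometric control of the type encoded in $h_\text{erg}$.

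Your lower-bound proposal also misidentifies the role of $h_\text{erg}$. The quantity $h_\text{erg}(\X;\Gamma)$ is a covering entropy of $\text{QC-Hull}(\Lambda_\text{erg})$; it has nothing to do with finiteness of a Bowen--Margulis--Sullivan measure on the quotient, which classically comes from divergence-type conditions on the Poincar\'e series. In fact the paper establishes $h_\text{top}\ge h_\text{crit}$ (Theorem~\ref{theo:intro-h_top>=h_Gamma}) without any BMS construction, without line-convexity, and without assuming $h_\text{erg}<\infty$: it combines the Bishop--Jones identity $h_\text{crit}=\sup_\tau\underline{\MD}(\Lambda_\tau)$ with a direct comparison between the lower $f$-entropy $\underline{h}_f(\Lambda_\tau)$ and the topological entropy of the compact invariant sets $K_\tau\subset\Gamma\backslash\Geod(\X)$. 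Building a BMS measure in the merely Gromov-hyperbolic setting (no Busemann cocycle, non-unique geodesics, Hopf parametrization defined only up to bounded ambiguity) is a substantial undertaking and is not needed here.
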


	The quantity $h_\textup{erg}(\X;\Gamma)$ is what we call the \emph{ergodic entropy} of the action of $\Gamma$ on $\X$. It is defined as follows:
	\begin{equation}
		\label{eq:defin_h_erg_intro}
		h_\textup{erg}(\X;\Gamma) := \lim_{r\to 0} \lims_{T \to +\infty} \frac{1}{T} \log \text{Cov}(\overline{B}(x,T);\Lambda_\text{erg};r),
	\end{equation}
	where $x$ is a fixed basepoint of $\X$ and $\text{Cov}(\overline{B}(x,T);\Lambda_\text{erg},r)$ denotes the minimal number of $r$-balls needed to cover the set of points that belong to $\overline{B}(x,T)$ and that lie in a geodesic line with endpoints in the ergodic limit set $\Lambda_\text{erg}(\Gamma) \subseteq \partial \X$.
	
	The ergodic limit set of the group $\Gamma$ has been introduced in \cite{Cav24}. A point $z\in \partial \X$ belongs to $\Lambda_\text{erg}(\Gamma)$ if a geodesic ray $\xi_{x,z}$ joining the basepoint $x$ to $z$ satisfies the following property: for some $\tau > 0$ and some increasing sequence of real numbers $\vartheta_i$ such that $\exists \lim_{i\to +\infty} \frac{\vartheta_i}{i} \in (0,+\infty)$ it holds that for every $i\in \N$ there exists $t_i \in [\vartheta_i, \vartheta_{i+1}]$ such that $\sfd(\xi_{x,z}(t_i), \Gamma x) \le \tau$. For instance, if the sequence $\vartheta_i$ is of the form $i\cdot \sigma$ then $z$ belongs to the set of uniformly radial limit points and the projection of a geodesic ray $\xi_{x,z}$ on $\Gamma \backslash \X$ is entirely contained in a compact subset of $\Gamma \backslash \X$. The condition defining $\Lambda_\text{erg}(\Gamma)$ is a generalization of uniformly radial points, and if $z\in \Lambda_\text{erg}(\Gamma)$ then the projection of a geodesic ray $\xi_{x,z}$ on $\Gamma \backslash \X$ may be not contained in a compact region, but it returns to a fixed compact region infinitely many times and the sequence of returning times is asymptotically linear. 
	
	From a dynamical point of view, this condition comes directly from Birkhoff's ergodic Theorem, and indeed every ergodic measure on the quotient geodesic flow of $\Gamma \backslash \X$ is concentrated on classes of geodesics with endpoints in $\Lambda_\text{erg}(\Gamma)$, see Proposition \ref{prop:Birkhoff_mean} and \cite[Theorem C]{Cav24}.
	
	The assumption $h_\textup{erg}(\X;\Gamma) < +\infty$ appearing in Theorem \ref{theo:intro-equality-Gromov-hyperbolic} plays the role of a very weak lower bound on the curvature. For instance it is always satisfied when $\Gamma$ is quasiconvex cocompact, see Corollary \ref{cor:h_crit=h_erg}. Also, it is always satisfied under other weak geometric constraints. A metric space $\X$ has \emph{bounded geometry} if 
	\begin{equation}
		\label{eq:defin_bounded_geometry_intro}
		\sup_{x\in \X} \Cov(\overline{B}(x,R),r) < + \infty\quad \text{for every } 0<r\le R,
	\end{equation}
	where $\Cov(\overline{B}(x,R),r)$ denotes the minimal number of balls of radius $r$ needed to cover the ball $\overline{B}(x,R)$. This assumption is extremely weak: notice that the supremum in \eqref{eq:defin_bounded_geometry_intro} is always finite on every compact region of $\X$, so the definition asks just for a uniform bound in the whole space. If a Gromov-hyperbolic metric space has bounded geometry then the ergodic entropy of $\Gamma$ coincides with the Minkowski dimension of the limit set of $\Gamma$ and this dimension is finite (cp. Corollary \ref{cor:bounded_geometry_then_finite_erg_entropy}). Therefore Theorem \ref{theo:intro-equality-Gromov-hyperbolic} applies to line-convex, Gromov-hyperbolic metric spaces with bounded geometry. Conditon \eqref{eq:defin_bounded_geometry_intro} can be further weakened as we show in Section \ref{sec:preliminaries}.
	
%
%
	The condition  $h_\text{erg}(\X;\Gamma) < \infty$ is not too far from the optimal $h_\text{crit}(\X;\Gamma) < \infty$. Indeed, at least for spaces with bounded geometry, $h_\text{erg}(\X;\Gamma)$ equals the Minkowski dimension of the limit set of $\Gamma$, while $h_\text{crit}(\X;\Gamma)$ equals the Hausdorff dimension of the radial limit set by the classical Bishop-Jones' Theorem (cp. \cite{BJ97}, \cite{Pau97}, \cite{DSU17}).
	We refer to Section \ref{sec:preliminaries} for more details and examples.
	\vspace{2mm}
	
	The proof of the original Otal-Peigné's Theorem, also in \cite{PaulinPollicottShapira2015}, is based on local arguments where the negative sectional curvature assumption is exploited. The proof of the inequality $h_\text{top}(\Gamma \backslash \Geod(\X), \bar{\Phi}_1) \le h_\text{crit}(\X;\Gamma)$ has been extended, with the same techniques, to geodesically complete, CAT$(-1)$ spaces in \cite{DilsavorThompson2023}, assuming the finiteness of the Minkowski dimension of the space $\Gamma \backslash \Geod(\X)$. This condition, somehow transversal to our $h_\textup{erg}(\X;\Gamma) < \infty$, is more sensible to the local geometry of $\X$.
	
	This classical approach cannot work in the setup of Theorem \ref{theo:intro-equality-Gromov-hyperbolic} due to the lack of control on the local geometry of the spaces under consideration. Therefore we will follow a completely new approach involving asymptotic arguments only. 
	This new strategy allows to prove the inequality $h_\text{top}(\Gamma \backslash \Geod(\X), \bar{\Phi}_1)\le h_\text{crit}(\X;\Gamma)$ also for groups with arbitrary torsion. On the other hand, it really needs the line-convexity of $\X$ in order to work.
	\begin{theorem}
		\label{theo:intro_counterexample_h_top>h_Gamma}
		There exists a proper, geodesic, Gromov-hyperbolic metric space $\X$ with bounded geometry, but not line-convex, and a discrete, non-elementary, cocompact, torsion-free group $\Gamma < \Isom(\X)$ such that $$h_\textup{top}(\Gamma \backslash \Geod(\X), \bar{\Phi}_1) > h_\textup{crit}(\X;\Gamma).$$
	\end{theorem}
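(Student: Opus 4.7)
The plan is to exhibit an explicit counterexample by locally perturbing the universal cover of a compact negatively curved model in a way that introduces genuine geodesic branching but preserves both the quasi-isometry type (hence Gromov hyperbolicity, properness, and bounded geometry) and the exponential volume growth.

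\emph{Construction.} Take $M_0$ a closed hyperbolic surface, $\Gamma_0 := \pi_1(M_0)$, and $\t M_0 = \mathbb{H}^2$. Fix a short embedded null-homotopic loop $\alpha \subset M_0$ bounding a small disk $D$, and replace $D$ by a \emph{bubble} $B$: two metric disks isometrically glued along their common boundary $\partial D$, creating a 2-complex whose two faces provide two alternative paths between any pair of points on $\partial B$. Perform this modification $\Gamma_0$-equivariantly along every lift of $D$ in $\t M_0$. Let $\X$ be the resulting metric space and $\Gamma := \pi_1(\Gamma_0 \backslash \X) = \Gamma_0$ (the bubble is a topological sphere, so attaching it does not change $\pi_1$). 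Then $\Gamma$ is discrete, non-elementary, torsion-free, and cocompact on $\X$. The bubbles are uniformly bounded in diameter and uniformly separated, so $\X$ is quasi-isometric to $\mathbb{H}^2$ and thus proper, geodesic, Gromov-hyperbolic, and of bounded geometry. Line-convexity fails by inspection: two lines $\gamma, \gamma'$ that agree outside a single bubble $B_g$ and cross $B_g$ on opposite sides give a function $t\mapsto \sfd(\gamma(t),\gamma'(t))$ which is strictly positive on a compact interval and vanishes outside, contradicting convexity.

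\emph{Comparing the entropies.} Since $\Gamma$ acts cocompactly on the proper geodesic space $\X$, the critical exponent $h_\textup{crit}(\X;\Gamma)$ equals the volume entropy of $\X$. Since the bubbles are uniformly separated and add only bounded volume near each $\Gamma$-orbit point, the volume entropy of $\X$ coincides with that of $\mathbb{H}^2$, namely $1$. For the topological entropy I would construct an explicit $(\varepsilon, T)$-separated family in $\Gamma \backslash \Geod(\X)$ as follows. A maximal separated family of closed geodesics of $M_0$ of length $\asymp T$ has cardinality $\gtrsim e^{T \cdot h_\textup{crit}(\X;\Gamma)}$; each such geodesic, after a small $C^0$-perturbation into $\Geod(\X)$, crosses at least $cT$ bubbles for some uniform $c>0$ (by equidistribution of closed geodesics in the cocompact hyperbolic quotient $M_0$). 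At each bubble crossing one records a binary choice of side, yielding $\geq 2^{cT}$ distinct geodesic lines associated to the same backbone. Passing to the quotient and applying the separated-set characterization of topological entropy gives
\begin{equation}
h_\textup{top}(\Gamma \backslash \Geod(\X), \bar\Phi_1) \;\geq\; h_\textup{crit}(\X;\Gamma) + c \log 2 \;>\; h_\textup{crit}(\X;\Gamma).
\end{equation}

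\emph{Main obstacle.} The delicate point is the last separation claim: different itineraries must correspond to genuinely $\Gamma$-distinct orbits in $\Gamma \backslash \Geod(\X)$ that remain $\varepsilon$-separated in the quotient. For this one chooses the bubble diameter $\rho$ strictly larger than the diameter of a fundamental domain of the $\Gamma$-action near each bubble, so that (i) the two sheets of each bubble remain at distance $\geq \rho/2$ in the interior, and (ii) no element of $\Gamma$ can identify two itineraries that differ at some bubble. The technical core of the argument is therefore the calibration of $\rho$ relative to the injectivity scale of $M_0$, together with verifying that the bubble construction indeed preserves Gromov hyperbolicity via a uniform quasi-isometry to $\mathbb{H}^2$.
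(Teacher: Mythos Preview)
Your core idea is exactly the one the paper uses: locally double part of the space so that the coarse geometry (hence $h_{\textup{crit}}$) is unchanged while geodesics acquire genuine binary branching, inflating $h_{\textup{top}}$. The paper, however, implements this on the regular tree $T_{2\ell}$ rather than on $\mathbb{H}^2$: it attaches a second edge between every pair of adjacent vertices, lets $F_\ell$ act in the obvious way, and reads off both numbers from symbolic dynamics. Geodesics through the identity are coded by reduced bi-infinite words in $4\ell$ symbols with two forbidden successors at each step, giving $h_{\textup{top}}=\log(4\ell-2)$, while $h_{\textup{crit}}=\log(2\ell-1)$ comes from the unchanged orbit geometry. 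No equidistribution, no separated-set gymnastics, no calibration of scales.

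Your surface version, by contrast, has real gaps in the entropy lower bound. First, the claim that each long closed geodesic of $M_0$ crosses $D$ at least $cT$ times is false: many closed geodesics avoid $D$ entirely, and equidistribution only tells you something about \emph{typical} closed geodesics with respect to the Bowen--Margulis measure, not about every member of an arbitrary separated family. You would need to argue that a definite exponential fraction of an $(\varepsilon,T)$-separated set in $\Gamma\backslash\Geod(\mathbb{H}^2)$ consists of orbit segments with $\geq cT$ crossings, which is an extra ergodic-theoretic step you have not supplied. Second, even granting that, the joint $(\varepsilon,T)$-separation of the family (backbone $\times$ itinerary) in $\Gamma\backslash\Geod(\X)$ is not established: you flag it as the ``main obstacle'' but the proposed fix (make $\rho$ large relative to a fundamental domain) goes in the wrong direction---you need the bubble width bounded \emph{below} for itinerary-separation and bounded \emph{above} by the injectivity radius for $\Gamma$-separation, and you have not shown these are compatible. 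Third, using the count of closed geodesics as a proxy for the size of a separated set is a conflation that needs justification.

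All of this can probably be repaired (e.g.\ via an Abramov--Rokhlin argument applied to the factor map $\Gamma\backslash\Geod(\X)\to\Gamma\backslash\Geod(\mathbb{H}^2)$, lifting the Bowen--Margulis measure with i.i.d.\ fiber choices), but it is substantially more work than the statement warrants. The tree model gives the same phenomenon with a five-line computation.
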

%

	Our asymptotic approach gives the proof of the inequality $h_\text{top}(\Gamma \backslash \Geod(\X), \bar{\Phi}_1) \ge h_\text{crit}(\X; \Gamma)$ for every Gromov hyperbolic space, without further assumptions.
	\begin{theorem}
		\label{theo:intro-h_top>=h_Gamma}
		Let $\X$ be a proper, geodesic, Gromov-hyperbolic metric space and let $\Gamma < \Isom(\X)$ be discrete, virtually torsion-free and non-elementary. Then 
		$$h_\textup{top}(\Gamma \backslash \Geod(\X), \bar{\Phi}_1) \ge h_\textup{crit}(\X;\Gamma).$$
	\end{theorem}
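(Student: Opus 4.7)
The plan is to use the variational principle \eqref{eq:defin_h_top_with_measures}: it suffices to construct, for every $h'<h:=h_\textup{crit}(\X;\Gamma)$, a flow-invariant ergodic probability measure on $\Gamma\backslash\Geod(\X)$ of Kolmogorov--Sinai entropy at least $h'$ (or equivalently, one such measure of entropy $\ge h$). Because only the lower bound on $h_\textup{top}$ is at stake, one can bypass the obstructions that force the line-convexity hypothesis in Theorem \ref{theo:intro-equality-Gromov-hyperbolic}: producing \emph{one} witnessing measure is enough. I would invoke Coornaert's extension of the Patterson--Sullivan theory to the Gromov-hyperbolic setting: the non-elementarity and discreteness of $\Gamma<\Isom(\X)$ together with the properness and Gromov-hyperbolicity of $\X$ yield a quasi-conformal density $\{\nu_x\}_{x\in\X}$ of dimension $h$ supported on the limit set $\Lambda(\Gamma)\subseteq\partial\X$. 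Combined with the Hopf-type parametrization $(\xi,\eta,t)\mapsto \gamma_{\xi,\eta}(t)$ of $\Geod(\X)$ by (a measurable section of) $\partial^2\X\times\R$, this produces the Bowen--Margulis--Sullivan current
\begin{equation}
	d\tilde m(\xi,\eta,t)=e^{2h(\xi|\eta)_x}\,d\nu_x(\xi)\,d\nu_x(\eta)\,dt,
\end{equation}
which, up to multiplicative slack coming from Gromov-hyperbolicity, is $\Gamma$- and $\Phi_t$-invariant and descends to a Radon measure $\bar m$ on $\Gamma\backslash\Geod(\X)$.

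Because $\bar m$ may carry infinite mass, the next step is to extract from it a flow-invariant ergodic probability measure of entropy at least $h$. By the ergodic characterization of $\Lambda_\textup{erg}(\Gamma)$ recalled in the excerpt (Proposition \ref{prop:Birkhoff_mean} and \cite{Cav24}), all ergodic components of $\bar m$ are concentrated on classes of geodesics with both endpoints in $\Lambda_\textup{erg}(\Gamma)$, and these classes visit a bounded neighbourhood of $\Gamma x$ with asymptotically linear return times. A standard inducing/Abramov-type construction on such a cross-section then produces an ergodic flow-invariant probability measure $\mu$ on $\Gamma\backslash\Geod(\X)$ supported on these classes. The lower bound $h_\mu\ge h$ reduces, via the Brin--Katok local entropy formula, to the estimate
\begin{equation}
	\bar m\bigl(B^{\bar\Phi}_T(\bar\gamma,\varepsilon)\bigr)\le C(\varepsilon)\,e^{-hT}
\end{equation}
for $\mu$-a.e.\ $\bar\gamma$, where $B^{\bar\Phi}_T(\bar\gamma,\varepsilon)$ denotes the $(T,\varepsilon)$-Bowen ball of the quotient flow.

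The main obstacle is precisely this local-measure estimate, which has to be proved in the complete absence of local regularity of $\X$: two geodesic lines starting $\varepsilon$-close in $\Geod(\X)$ may fellow-travel in wildly irregular ways, so the classical local arguments are unavailable. The crucial asymptotic replacement is that, by $\delta$-hyperbolicity, a Bowen ball $B^{\bar\Phi}_T(\bar\gamma,\varepsilon)$ is contained---up to an $O(\delta)$ fattening in the endpoints---in the preimage under the Hopf parametrization of a product of two shadows of balls around $\Gamma x$, one near $\bar\gamma(+\infty)$ and one near $\bar\gamma(-\infty)$. The $\nu_x$-measure of each such shadow decays like $e^{-hT}$ by the standard Shadow Lemma for conformal densities in Gromov-hyperbolic spaces, and inserting this into $d\tilde m$ produces the required estimate. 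This purely asymptotic control of Bowen balls, via shadows at infinity rather than local fellow-travelling of geodesics, is what makes the inequality robust to the geometric pathologies permitted by the bare Gromov-hyperbolic hypothesis and delivers $h_\textup{top}(\Gamma\backslash\Geod(\X),\bar\Phi_1)\ge h_\textup{crit}(\X;\Gamma)$.
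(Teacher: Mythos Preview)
Your approach is genuinely different from the paper's and, as written, has real gaps.

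The paper does not construct any measure at all. Instead it works purely with coverings: after reducing to the torsion-free case (Lemma~\ref{lemma:reduction_torsion_free}), it identifies the compact $\bar\Phi_1$-invariant subsets $K_\tau\subseteq\Gamma\backslash\Geod(\X)$ (classes of geodesics that stay $\tau$-close to $\Gamma x$ at every integer time), and shows directly that for each $\tau$ and $f\in\mathcal{F}$ one has $\underline{h}_f(\Lambda_\tau)\le h_{\textup{top}}(K_{\tau'},\bar\Phi_1)$ for a suitable $\tau'$. The chain
\[
h_{\textup{top}}\ge h_{\textup{inv-top}}=\sup_\tau h_{\textup{top}}(K_\tau,\bar\Phi_1)\ge\sup_\tau\underline{h}_f(\Lambda_\tau)\ge\sup_\tau\underline{\MD}(\Lambda_\tau)= h_{\textup{crit}}(\X;\Gamma)
\]
then closes using Proposition~\ref{prop:h_Lip_top_>=_MD} and the Bishop--Jones Theorem~\ref{theo:Bishop-Jones}. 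The only place torsion-freeness enters is to guarantee a positive systole of the $\Gamma$-action on $(\Geod(\X),\sfD_f)$ over each $K_\tau$, which is what lets one lift $\bar\sfD_f^T$-coverings of $K_\tau$ to $\sfD_f^T$-coverings upstairs with controlled multiplicity.

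Your measure-theoretic route has several problems in this generality. First, in a bare Gromov-hyperbolic space the Hopf parametrization $\partial^2\X\times\R\to\Geod(\X)$ is \emph{not} a bijection: for a fixed pair $(\xi,\eta)$ there can be many geodesics with those endpoints (they are only $8\delta$-fellow-travellers), so any ``measurable section'' produces a measure $\tilde m$ that is only quasi-invariant under $\Phi_t$ and $\Gamma$, with multiplicative slack you yourself flag. But the variational principle and Brin--Katok require a genuinely $\bar\Phi_1$-invariant \emph{probability} measure; passing from a quasi-invariant, possibly infinite Radon measure to such an object is not ``standard'' here, and your appeal to an inducing/Abramov construction on a cross-section is a black box --- you would need an honest Poincar\'e section with integrable return time, which you have not produced, and the asymptotic linearity of return times for $\Lambda_{\textup{erg}}$ concerns ergodic \emph{probability} measures (Proposition~\ref{prop:Birkhoff_mean}), not ergodic components of an infinite one. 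Second, you never use the virtually torsion-free hypothesis; the paper explicitly remarks that it does not know whether the theorem holds for groups with arbitrary torsion, so an argument that ignores this assumption should make you suspicious. Third, your Bowen-ball estimate via two shadows implicitly assumes that the $(T,\varepsilon)$-Bowen ball around $[\gamma]$ in the quotient lifts to a single $(T,\varepsilon)$-Bowen ball in $\Geod(\X)$; without a positive systole (hence without torsion-freeness) this lift is not well controlled.

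In short, the Patterson--Sullivan/BMS strategy is natural in $\CAT(-1)$ or pinched negative curvature, where Busemann functions and the Hopf parametrization are honest, but it does not go through in raw Gromov-hyperbolic spaces without substantial additional work that your sketch does not supply. The paper's covering argument sidesteps all of this.
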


	We do not know if Theorem \ref{theo:intro-h_top>=h_Gamma} is true for every discrete group with arbitrary torsion, as it is in the manifold case because of Paulin-Pollicott-Shapira's version of the original Otal-Peigné's Theorem.

	In this regard, one could try, as suggested in Remark 6.1 of \cite{DilsavorThompson2023}, to adapt the proof of Theorem \ref{theo:intro-h_top>=h_Gamma} given in \cite[Lemma 6.7]{PaulinPollicottShapira2015} to the $\CAT(-1)$ setting. However, the proof of \cite[Lemma 6.7]{PaulinPollicottShapira2015} is based on \cite[Proposition 4.9]{PaulinPollicottShapira2015}, which uses the lower bound on the sectional curvature. However it could be possible to extend the proof of \cite[Proposition 4.9]{PaulinPollicottShapira2015} to geodesically complete, packed, CAT$(-1)$ spaces following the ideas of \cite{CavS20bis} in order to produce free subgroups of a torsion-free group of isometries.
	It is not clear to the author how to get Theorem \ref{theo:intro-h_top>=h_Gamma} for every group with arbitrary torsion, even in the simplified setting of geodesically complete, CAT$(-1)$ spaces.
	\vspace{2mm}
	
	Interesting geometric phenomena can occur. 	
	Let $\X$ be a $\CAT(-1)$ space and let $\Gamma < \Isom(\X)$ be discrete, non-elementary and virtually torsion-free. Let $\Gamma_0$ be a torsion-free, finite index, normal subgroup of $\Gamma$. As stated above, the two flows $(\Gamma_0\backslash \Geod(\X), \bar{\Phi}_1)$ and $(\LocGeod{\Gamma_0\backslash \X}, \Phi_1)$ are conjugated. Moreover, the finite group $\Gamma / \Gamma_0$ acts naturally on both spaces commuting with the flows. On $(\Gamma_0\backslash \Geod(\X), \bar{\Phi}_1)$ it acts via the quotient action of the global action of $\Gamma$ on $\Geod(\X)$, and the quotient is exactly $(\Gamma\backslash\Geod(\X), \bar{\Phi}_1)$. On $(\LocGeod{\Gamma_0\backslash \X}, \Phi_1)$ it acts via the natural action on local geodesics induced by $\Gamma / \Gamma_0$ as group of isometries of $\Gamma_0 \backslash \X$. The resulting flow is $((\Gamma / \Gamma_0) \backslash \LocGeod{\Gamma_0 \backslash \X}, \bar{\Phi}_1)$. 
	
	The next result shows that these actions do not commute, and the flows $(\Gamma\backslash\Geod(\X), \bar{\Phi}_1)$ and $((\Gamma / \Gamma_0) \backslash \LocGeod{\Gamma_0 \backslash \X}, \bar{\Phi}_1)$ are in general not conjugated as they can have different topological entropy.
	
	\begin{theorem}
		\label{theo:intro-counter-tree}
		Let $T_4$ be the regular tree of valency $4$ and let $\Gamma_0$ be the free group on two generators with its standard action on $T_4$. Then there exists a discrete, cocompact group $\Gamma < \Isom(T_4)$ containing $\Gamma_0$ as finite index normal subgroup such that
		\begin{itemize}
			\item[(i)] $h_\textup{top}(\Gamma \backslash \Geod(T_4), \bar{\Phi}_1) = \log 3$;
			\item[(ii)] $h_\textup{top}((\Gamma/\Gamma_0) \backslash \LocGeod{\Gamma_0 \backslash T_4}, \bar{\Phi}_1) = 0$.
		\end{itemize}
	\end{theorem}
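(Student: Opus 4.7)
The plan is to exhibit an explicit cocompact torsion extension $\Gamma$ of $\Gamma_0 = F_2$ inside $\Isom(T_4)$ whose underlying metric quotient collapses to a segment, while preserving the critical exponent.

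\emph{Construction.} Fix a vertex $v_0 \in T_4$ and let $\sigma_a \in \Isom(T_4)$ be the involution fixing $v_0$ obtained by equivariantly extending the $\Gamma_0$-automorphism $a \mapsto a^{-1}$, $b \mapsto b$; define $\sigma_b$ symmetrically. These are reflections of $T_4$ fixing pointwise the $b$-axis and the $a$-axis through $v_0$, respectively. Set $\Gamma := \Gamma_0 \rtimes \langle \sigma_a, \sigma_b \rangle \cong F_2 \rtimes (\Z/2)^2$, a discrete cocompact group in $\Isom(T_4)$ containing $\Gamma_0$ as a normal subgroup of index $4$.

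\emph{Part (i).} $T_4$ is $\CAT(-1)$ (hence line-convex and Gromov-hyperbolic) and has bounded geometry, so $h_\textup{erg}(T_4;\Gamma) < +\infty$ by Corollary~\ref{cor:bounded_geometry_then_finite_erg_entropy}. Since $\Gamma$ is cocompact, virtually torsion-free (via $\Gamma_0$), and non-elementary, Theorem~\ref{theo:intro-equality-Gromov-hyperbolic} yields $h_\textup{top}(\Gamma \backslash \Geod(T_4), \bar{\Phi}_1) = h_\textup{crit}(\Gamma;T_4)$. Invariance of the critical exponent under finite-index subgroups and the standard count of reduced words in $F_2$ give $h_\textup{crit}(\Gamma;T_4) = h_\textup{crit}(\Gamma_0;T_4) = \log 3$, proving (i).

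\emph{Part (ii).} On the rose $R = \Gamma_0 \backslash T_4$ (vertex $v$, loops $\alpha, \beta$), the induced involution $\sigma_a$ flips loop $\alpha$ about $v$ and its midpoint $m_\alpha$ while fixing loop $\beta$ pointwise, and $\sigma_b$ does the symmetric thing. Hence the metric quotient $\Gamma \backslash T_4 = ((\Z/2)^2) \backslash R$ is a finite segment of length $1$ with midpoint $v$ and endpoints $m_\alpha, m_\beta$. Such a segment admits no bi-infinite locally isometric map $\R \to \Gamma \backslash T_4$, so its local geodesic flow has empty phase space and topological entropy $0$. Under the identification of $((\Gamma/\Gamma_0) \backslash \LocGeod(\Gamma_0 \backslash T_4), \bar{\Phi}_1)$ with this local geodesic flow on the metric tree $\Gamma \backslash T_4$ explained in the discussion preceding the statement, (ii) follows.

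\emph{Main obstacle.} The crux is precisely the identification used in (ii). A naive set-theoretic quotient of $\LocGeod(R)$ by the finite group $\Gamma/\Gamma_0$ acting via induced isometries of $R$ would a priori preserve entropy, yielding $\log 3$. What produces $0$ is the failure, highlighted in the preamble of the statement, of the two $\Gamma/\Gamma_0$-actions—on $\Gamma_0 \backslash \Geod(T_4)$ via the global action of $\Gamma$, and on $\LocGeod(\Gamma_0 \backslash T_4)$ via the induced isometries of $R$—to be intertwined by the canonical conjugation in the presence of torsion. Verifying that the relevant quotient reduces to the (empty) local geodesic flow on the tree $\Gamma \backslash T_4$, rather than to the rich flow of part (i), is the technical heart of the argument.
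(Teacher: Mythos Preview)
Your part (i) is correct and parallels the paper's argument; the paper uses a different finite extension (generated by the order-$4$ rotation $R_{\pi/2}$ of the planar embedding of $T_4$ rather than your two reflections $\sigma_a,\sigma_b$), but the route through Theorem~\ref{theo:intro-equality-Gromov-hyperbolic} and finite-index invariance of the critical exponent is the same.

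Part (ii) has a genuine gap. You assert that $((\Gamma/\Gamma_0)\backslash \LocGeod(R),\bar\Phi_1)$ is identified with the local geodesic flow of the segment $\Gamma\backslash T_4$, but this identification is not available: Proposition~\ref{prop-quotient-geodesics} (which would give $\Gamma'\backslash\LocGeod(Y)\cong\LocGeod(\Gamma'\backslash Y)$) requires a \emph{free} action, and your $(\Z/2)^2$ fixes the vertex of the rose. Concretely, the induced action of $\Gamma/\Gamma_0$ on $\LocGeod(R)$ sends $\gamma\mapsto g\circ\gamma$; in the coding of local geodesics by reduced bi-infinite words in $\{a^{\pm1},b^{\pm1}\}$ this is the \emph{diagonal} action (the same letter-permutation applied at every position), so each orbit has at most $|\Gamma/\Gamma_0|=4$ elements. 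Then the paper's own Lemma~\ref{lemma:h_top_finite_quotient}, applied to the compact system $(\LocGeod(R),\Phi_1)$, forces the quotient to have topological entropy $\log 3$, not $0$. You anticipate exactly this in your ``Main obstacle'' paragraph but then resolve it by declaring that the flow in (ii) is something other than this quotient---whereas the preamble to the theorem defines it to be precisely that quotient. The paper's proof of (ii) proceeds differently, working directly with the symbolic parametrization and arguing that, since $\langle R_{\pi/2}\rangle$ identifies all four oriented edges of the rose, the quotient is coded by words in a single symbol; but that step runs into the same difficulty with the diagonal nature of the action, so neither approach, as written, closes the gap.
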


%
	
	Another interesting phenomenon is the following.
	\begin{theorem}
		\label{theo:intro-weird-example}
		There exists a proper, geodesic, line-convex, Gromov-hyperbolic metric space $\X$ with bounded geometry and a discrete, torsion-free, cocompact group $\Gamma < \Isom(\X)$ such that:
		\begin{itemize}
			\item[(i)] $\Gamma \backslash \X$ is a locally $\CAT(-1)$, locally geodesically complete, compact metric space. However, $\X$ is not simply connected, so it is not the universal cover of $\Gamma \backslash \X$, and $\Gamma \backslash \Geod(\X)$ is a proper subset of $\LocGeod{\Gamma \backslash \X}$;
			\item[(ii)] it holds that $$h_\textup{top}(\LocGeod{\Gamma\backslash \X}, \Phi_1) = h_\textup{crit}(\tilde{\X};\pi_1(\Gamma\backslash \X)) > h_\textup{crit}(\X;\Gamma) = h_\textup{top}(\Gamma \backslash \Geod(\X), \bar{\Phi}_1),$$
			where $\tilde{\X}$ is the universal cover of $\Gamma \backslash \X$.
		\end{itemize}
	\end{theorem}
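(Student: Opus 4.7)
I would build the example as an infinite cyclic cover of the rose on two petals. Let $Y := R_2$ be the metric graph with one vertex $v$ and two loops $a,b$ of length $1$, so $\pi_1(Y) = F_2 = \langle a,b \rangle$ and the universal cover is the $4$-regular tree $T_4$. Consider the surjection $\varphi\colon F_2 \to \Z$ defined by $a\mapsto 1$, $b\mapsto 0$, and set $\X := \ker\varphi \backslash T_4$. Concretely, $\X$ is an ``infinite caterpillar graph'': its vertex set is $\Z$, with an $a$-edge of length $1$ between each $n$ and $n+1$ and a $b$-loop of length $1$ based at each vertex $n$. The group $\Gamma := F_2 / \ker\varphi \cong \Z$ acts freely, cocompactly and by translation on $\X$, with quotient $Y$.

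For item (i), $Y = R_2$ is a compact $1$-dimensional metric graph whose unique vertex has valency $4$, hence locally $\CAT(-1)$ and locally geodesically complete. The space $\X$ is locally finite with uniform valency $4$, hence proper and of bounded geometry; and it is quasi-isometric to $\Z$ via projection onto the main axis $A := \bigcup_{n\in\Z}[n,n+1]$, hence Gromov-hyperbolic. Moreover $\pi_1(\X) = \ker\varphi \neq 1$ (it contains the class of any $b$-loop), so $\X$ is not simply connected. The crucial verification is line-convexity, for which I would show that every geodesic line $\gamma\colon \R\to \X$ must lie on the main axis $A$: if $\gamma$ entered a $b$-loop at some vertex $v_n$, the non-backtracking property would force $\gamma$ to traverse the whole loop and return to $v_n$ after unit time, contradicting isometry of $\gamma$; a partial entrance would force a backtrack. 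Hence $\Geod(\X)$ is parametrized by $(c,\epsilon)\in\R\times\{\pm1\}$ via $\gamma_{c,\epsilon}(t) = \gamma_0(\epsilon t + c)$ where $\gamma_0\colon\R\to A$ is the obvious isometry, and $\sfd(\gamma_{c_1,\epsilon_1}(t),\gamma_{c_2,\epsilon_2}(t)) = |(\epsilon_1-\epsilon_2)t+(c_1-c_2)|$ is manifestly convex. The same description shows that $\Gamma\backslash\Geod(\X)$ is a disjoint union of two circles, embedded as a proper subset of $\LocGeod{Y}$ (the latter encoding all bi-infinite reduced words in $\{a^{\pm 1}, b^{\pm 1}\}$).

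For item (ii) I would compute the four entropies directly. Since $\Gamma = \Z$ has linear orbit growth on $\X$, $h_\textup{crit}(\X;\Gamma) = 0$; and the flow $(\Gamma\backslash\Geod(\X),\bar\Phi_1)$ is conjugated to a rotation on two circles, so $h_\textup{top}(\Gamma\backslash\Geod(\X),\bar\Phi_1) = 0$ (this last identity does not invoke Theorem \ref{theo:intro-equality-Gromov-hyperbolic}, whose non-elementarity hypothesis fails for $\Gamma$; both sides are just zero). On the other side, $\tilde{\X} = T_4$ and the $F_2$-action on $T_4$ is the standard Cayley action, so $h_\textup{crit}(\tilde{\X};\pi_1(Y)) = \log 3$ by a direct count of ball volumes. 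Finally, applying Theorem \ref{theo:intro-equality-Gromov-hyperbolic} to the pair $(T_4, F_2)$ (where Gromov-hyperbolicity and line-convexity are obvious for a tree, and $h_\textup{erg}(T_4;F_2) < \infty$ follows from Corollary \ref{cor:bounded_geometry_then_finite_erg_entropy}), combined with the identification $(F_2\backslash\Geod(T_4),\bar\Phi_1) \cong (\LocGeod{Y},\Phi_1)$ from Corollary \ref{cor:conjugation_of_flows}, yields $h_\textup{top}(\LocGeod{Y},\Phi_1) = \log 3$. Comparing $\log 3 > 0$ completes the strict inequality. The only delicate step in the whole argument is the rigidity of geodesic lines of $\X$, which is what simultaneously secures line-convexity and forces the $\X$-side entropies to vanish while the $Y$-side ones stay large.
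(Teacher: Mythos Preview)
Your construction is correct and gives a valid proof of the theorem, but it is genuinely different from the paper's. The paper takes $\X$ to be $T_4$ with a circle of length $1$ glued at every vertex and $\Gamma = F_2$ acting via its standard action on $T_4$ (extended to the extra circles); the quotient is the rose $R_3$ on three petals, the universal cover is $T_6$, and the two entropies come out as $\log 3 < \log 5$. You instead take an infinite cyclic cover of $R_2$ with deck group $\Gamma \cong \Z$, obtaining $0 < \log 3$.

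Both arguments hinge on the same mechanism: the added loops (your $b$-loops, the paper's glued circles) cannot be traversed by any geodesic line of $\X$, which simultaneously forces line-convexity and keeps $\Gamma\backslash\Geod(\X)$ strictly smaller than $\LocGeod{\Gamma\backslash\X}$. The main conceptual difference is that the paper's group $F_2$ is non-elementary, so the rightmost equality in (ii) is an instance of Theorem~\ref{theo:intro-equality-Gromov-hyperbolic} itself, reinforcing the point that the phenomenon already occurs within the class of groups the main theorem addresses. Your $\Gamma = \Z$ is elementary, so you correctly bypass Theorem~\ref{theo:intro-equality-Gromov-hyperbolic} for the pair $(\X,\Gamma)$ and verify both sides vanish by hand; this makes your example more elementary to compute but slightly weaker as an illustration of the paper's framework. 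On the other hand, your space is quasi-isometric to $\Z$, so its hyperbolicity and bounded geometry are immediate, and the description of $\Geod(\X)$ as two real lines is particularly clean.
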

	
	A last comment on the second part of the original version of Otal-Peigné's Theorem for manifolds, that states that there is at most one measure of maximal entropy, i.e. realizing the supremum in \eqref{eq:defin_h_top_with_measures}. This is not true in general for proper, Gromov-hyperbolic spaces that are line-convex.
	\begin{ex}
		Let $\Gamma < \Isom(\mathbb{H}^2)$ be discrete, cocompact and torsion-free, so $M=\Gamma \backslash \mathbb{H}^2$ is a compact hyperbolic surface. Let $Q$ be any compact geodesic metric space and let $\X := \mathbb{H}^2 \times Q$ equipped with the product metric. Then $\X$ is proper and geodesic. It is also Gromov-hyperbolic since it is at bounded distance from $\mathbb{H}^2$. Since every geodesic on $\X$ is the product of geodesics of the factors (cp. \cite[Proposition I.5.13]{BH09}), we have that every geodesic line of $\X$ is of the form $t\mapsto (\gamma(t),q)$ where $\gamma$ is a geodesic line in $\mathbb{H}^2$ and $q \in Q$, i.e. $\Geod(\X) = \Geod(\mathbb{H}^2) \times Q$. This shows that $\X$ is line-convex. Observe that $\Gamma$ acts discretely, cocompactly and freely on $\X$ by acting on $\mathbb{H}^2$ as above and trivially on $Q$. Theorem \ref{theo:intro-equality-Gromov-hyperbolic}, that we can apply by Corollary \ref{cor:h_crit=h_erg}, implies that $h_\textup{top}(\Gamma \backslash \Geod(\X), \bar{\Phi}_1) = h_\textup{crit}(\X; \Gamma) = h_\textup{crit}(\mathbb{H}^2; \Gamma) = 1$. Let $\mu$ be the unique measure of maximal entropy of $\Gamma \backslash \Geod(\mathbb{H}^2)$ provided by Otal-Peigné's Theorem for manifolds. Observe that $h_\mu = h_\textup{top}(\Gamma \backslash \Geod(\mathbb{H}^2), \bar{\Phi}_1) = h_\textup{crit}(\mathbb{H}^2; \Gamma) = 1$. For every $q\in Q$ let $\delta_q$ be the Dirac measure at $q$ and define $\nu_q := \mu \otimes \delta_q$ on $\Gamma \backslash \Geod(\X) = \Gamma \backslash \Geod(\mathbb{H}^2) \times Q$. A direct computation shows that $\nu_q$ is $\bar{\Phi}_1$-invariant and that $h_{\nu_q} \ge 1$, implying that $\nu_q$ realizes $h_\textup{top}(\Gamma \backslash \Geod(\X), \bar{\Phi}_1)$ for every $q\in Q$. Observe that these measures are concentrated on different subsets, that are necessarily disjoint since every such measure is ergodic.
	\end{ex}
	
	\subsection{Organization of the paper} In Section \ref{sec:preliminaries} we will recall the basic definitions and we will discuss and compare in detail several notions of lower bounds on the curvature for arbitrary metric spaces and for Gromov-hyperbolic ones. The goal is to give a better intuition of the ergodic entropy appearing in Theorem \ref{theo:intro-equality-Gromov-hyperbolic}.
	
	In Section \ref{sec:geodesic_flow} we will recall basic properties of the space of geodesic lines of a metric space and we will introduce the geodesic flow. In Section \ref{sec:topological_entropy} we will discuss several notions of entropies of a dynamical system, with applications to the case of the geodesic flow.
	Section \ref{sec:Lip-Bowen-entropy} is the last preparatory part: here we introduce the notion of $f$-entropy of a subset of the boundary at infinity of a Gromov-hyperbolic space and we compare it to other classical quantities. This section completes the study of Section \ref{subsec:geodesic_entropy_subsets}.
	
	Finally, in the last three sections we will prove the main theorems.

	\section{Preliminaries}
	\label{sec:preliminaries}
	We begin by recalling some notations. Let $(\X,\sfd)$ be a metric space. The \emph{open (resp.closed) ball} of radius $r$ and center $x \in X$ is denoted by $B(x,r)$ (resp. $\overline{B}(x,r)$). If we want to specify the metric we write $B_\sfd(x,r)$ (resp. $\overline{B}_\sfd(x,r))$. A metric space is \emph{proper} if every closed ball is compact.
	
	A \emph{geodesic} is an isometric embedding $\gamma\colon I \to \X$ where $I=[a,b]$ is a a bounded interval of $\mathbb{R}$. The points $\gamma(a), \gamma(b)$ are called the endpoints of $\gamma$. A metric space $\X$ is called geodesic if for every couple of points $x,y\in \X$ there exists a geodesic whose endpoints are $x$ and $y$. Every such geodesic will be denoted, with an abuse of notation, by $[x,y]$. A \emph{geodesic ray} is an isometric embedding $\xi\colon[0,+\infty)\to \X$ while a \emph{geodesic line} is an isometric embedding $\gamma\colon \mathbb{R}\to \X$. 
	A map $\gamma \colon I \to \X$, where $I$ is an interval of $\mathbb{R}$, is a \emph{local geodesic} if for every $t\in I$ there exists $\varepsilon > 0$ such that $\gamma\vert_{[t-\varepsilon, t+\varepsilon]}$ is a geodesic. The space of local geodesic lines, namely the set of local geodesics of $\X$ defined over $\mathbb{R}$, is denoted by $\LocGeod{\X}$, while its subspace of geodesic lines is denoted by $\Geod(\X)$. 
	
	The \emph{evaluation map at time $s \in \R$} is the function $e_s\colon \LocGeod{\X} \to \X$, $\gamma \mapsto \gamma(s)$.
	
	A metric space is \emph{geodesically complete} if every geodesic can be extended to a geodesic line. It is \emph{locally geodesically complete} if every local geodesic can be extended to a local geodesic line. A metric space $(\X,\sfd)$ is \emph{line-convex} if for every two geodesic lines $\gamma,\gamma'\colon \R \to \X$, the function $t\mapsto \sfd(\gamma(t),\gamma'(t))$ is convex.
	
	Let $\X$ be a metric space, $B\subseteq \X$ and $r>0$. A subset $S$ of $B$ is called {\em $r$-separated} if $\sfd(y,y') > r$ for all $y,y'\in S$, while it is called {\em $r$-dense} if for all $y \in B$ there exists $z\in S$ such that $\sfd(y, z) \le r$.
	We define $\Pack_\sfd(B,r) \in \N\cup \{\infty\}$ as the maximal cardinality of a $r$-separated subset of $B$ and $\Cov_\sfd(B,r) \in \N\cup \{\infty\}$ as the minimal cardinality of a $r$-dense subset. When the metric $\sfd$ is clear from the context, we just omit it. The two quantities above are classically related, more precisely:
	\begin{equation}
		\label{eq:Cov_Pack}
		\Pack_\sfd(B,2r) \le \Cov_\sfd(B,2r) \le \Pack_\sfd(B,r)
	\end{equation}
	for every $B\subseteq \X$ and every $r>0$.
	
	\subsection{Bounded geometry, metric doubling, covering entropy and geodesic entropy}
	\label{subsec:geodesic_entropy}
	Let $\X$ be a proper metric space. There are several notions that can be used to control the geometry of $\X$. In \eqref{eq:defin_bounded_geometry_intro} we introduced the notion of \emph{bounded geometry}, where we ask that 
	\begin{equation}
		\label{eq:defin_bounded_geometry}
		\sup_{x\in \X} \Cov_\sfd(\overline{B}(x,R),r) =: C_{R,r} < +\infty \quad \text{for every  } 0<r\le R.
	\end{equation}
	
	A related notion is the metric doubling condition. We say that $\X$ is \emph{metrically doubling up to scale $r_0$} if there exists $C_D \ge 1$ such that $\Pack_\sfd(\overline{B}(x,2r), r) \le C_D$ for every $x\in \X$ and every $0<r\le r_0$. Here the choice of the packing function instead of the covering one is not important in view of \eqref{eq:Cov_Pack}: it has been done to precisely cite the following result.
	\begin{lemma}
		\label{lemma:metrically_doubling_basics}
		Let $\X$ be a proper, geodesic metric space that is metrically doubling up to scale $r_0$ with constant $C_D$. Then for every $x\in \X$ and every $0<r\le R$ it holds that
		$$\Pack_\sfd(\overline{B}(x,R),r) \le C_D^2(1+C_D^2)^{\frac{R}{\min\{r,r_0\}} - 1}.$$
	In particular $\X$ has bounded geometry.
	\end{lemma}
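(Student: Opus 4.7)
I would first reduce to the small-scale regime, where the doubling hypothesis applies directly, and then propagate the control outward by an induction that uses the geodesic structure of $\X$ to slide points of large norm back onto a smaller ball. Let $\sigma := \min\{r, r_0\}$ and set $P(R) := \Pack_\sfd(\overline{B}(x, R), \sigma)$. Since $\sigma \le r$, every $r$-separated set is automatically $\sigma$-separated, so $\Pack_\sfd(\overline{B}(x, R), r) \le P(R)$ and it is enough to bound $P(R)$. The strategy is induction on the integer $\lceil R / \sigma \rceil$, with base case $P(\sigma) \le \Pack_\sfd(\overline{B}(x, 2\sigma), \sigma) \le C_D$ coming straight from the doubling hypothesis at scale $\sigma \le r_0$.

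For the inductive step I would take a maximal $\sigma$-separated set $S \subseteq \overline{B}(x, R + \sigma)$ and partition it into $S_1 := S \cap \overline{B}(x, R)$ and $S_2 := S \setminus \overline{B}(x, R)$. Trivially $|S_1| \le P(R)$. For each $y \in S_2$, the geodesicity of $\X$ provides a point $y' \in [x, y]$ with $\sfd(x, y') = R$, hence $\sfd(y, y') \le \sigma$. Fix any maximal (and therefore $\sigma$-dense) $\sigma$-separated net $T \subseteq \overline{B}(x, R)$ of cardinality at most $P(R)$: each $y'$ lies within $\sigma$ of some $t \in T$, whence $y \in \overline{B}(t, 2\sigma)$. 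The doubling hypothesis at scale $\sigma$ bounds the number of $\sigma$-separated points inside any such enlarged ball by a constant depending polynomially on $C_D$, so $|S_2| \le c(C_D)\, P(R)$ and the recursion $P(R + \sigma) \le (1 + c(C_D))\, P(R)$ closes the induction. After $\lceil R/\sigma \rceil - 1$ iterations, absorbing the explicit constants into the shape $C_D^2 (1 + C_D^2)^{R/\sigma - 1}$ yields the stated bound.

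The main subtlety I expect is the appearance of the enlarged ball $\overline{B}(t, 2\sigma)$ rather than $\overline{B}(t, \sigma)$ in the inductive step, which is precisely what forces the quadratic dependence on $C_D$ in the final bound: one either applies the doubling directly at the larger scale and absorbs an extra $C_D$, or performs a secondary cover of each such ball by $\sigma$-balls using \eqref{eq:Cov_Pack}. Once the inductive bound is in place, the bounded geometry assertion is immediate: the right-hand side of the stated inequality depends on $x$ only through the uniform constant $C_D$, so $\sup_{x \in \X} \Pack_\sfd(\overline{B}(x, R), r) < +\infty$, and by \eqref{eq:Cov_Pack} the same holds for $\Cov_\sfd(\overline{B}(x, R), r)$ up to a change of scale.
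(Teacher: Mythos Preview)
Your argument is correct and is essentially the self-contained version of what the paper does: the paper reduces to the condition $\Pack_\sfd(\overline{B}(x,3r),r)\le C_D^2$ for $r\le r_0/2$ and then cites \cite[Lemma 4.7]{CavS20}, whose proof is precisely the geodesic ``slide inward and recurse'' induction you outline. One minor point: your recursion actually yields $P(R+\sigma)\le (1+C_D)\,P(R)$ with a linear factor, since the doubling hypothesis applied directly at scale $\sigma\le r_0$ gives $\Pack_\sfd(\overline{B}(t,2\sigma),\sigma)\le C_D$; this produces $C_D(1+C_D)^{\lceil R/\sigma\rceil-1}$, which is stronger than the stated $C_D^2(1+C_D^2)^{R/\sigma-1}$ but does not embed into it verbatim near $R/\sigma=1$. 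Taking the base case as $P(2\sigma)\le C_D$ (same one-line justification) fixes this and gives the stated bound exactly.
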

	\begin{proof}
		A space as in the assumptions satisfies $\Pack_\sfd(\overline{B}(x,3r), r) \le C_D^2$ for every $x\in \X$ and every $r\le \frac{r_0}{2}$. Therefore the thesis follows by \cite[Lemma 4.7]{CavS20}.
	\end{proof}

	Another possible bound on the geometric complexity of $\X$ is via its \emph{covering entropy}. It is defined as
	$$h_\Cov(\X) := \lim_{r\to 0} \lims_{T \to +\infty} \frac{1}{T} \log \Cov_\sfd(\overline{B}(x,T), r).$$
	This quantity does not depend on the choice of the basepoint $x\in \X$. Moreover one can replace $\Cov_\sfd(\overline{B}(x,T), r)$ with $\Pack_\sfd(\overline{B}(x,T), r)$ without changing the value of the limit, because of \eqref{eq:Cov_Pack}. Every space which is metrically doubling up to some scale has finite covering entropy.
	\begin{lemma}
		\label{lemma:bounded_geometry_no_r}
		Let $\X$ be a proper, geodesic metric space with bounded geometry. Then the function 
		$$r\mapsto \lims_{T \to +\infty} \frac{1}{T} \log \Cov_\sfd(\overline{B}(x,T), r)$$ 
		is constant. Moreover, if $\X$ is metrically doubling up to scale $r_0$ with constant $C_D$ then $h_\Cov(\X) \le \frac{\log(1+C_D^2)}{r_0} < +\infty$.
	\end{lemma}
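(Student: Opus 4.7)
The function $r \mapsto \lims_{T \to +\infty} \frac{1}{T} \log \Cov_\sfd(\overline{B}(x,T), r)$ is automatically non-increasing in $r$, since an $r_1$-dense set is $r_2$-dense whenever $r_1 \le r_2$. My plan is to use bounded geometry to prove the reverse monotonicity, and therefore conclude that the function is constant.

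For the key step, fix $0 < r_1 < r_2$. By \eqref{eq:defin_bounded_geometry}, there exists a uniform constant $C_{r_2, r_1} < +\infty$ such that every closed ball of radius $r_2$ in $\X$ can be covered by at most $C_{r_2, r_1}$ balls of radius $r_1$. Given an optimal $r_2$-dense subset of $\overline{B}(x,T)$, replacing each of its points $y$ by a $C_{r_2, r_1}$-size family of centers covering $\overline{B}(y, r_2)$ yields an $r_1$-dense subset of $\overline{B}(x,T)$ of cardinality at most $C_{r_2, r_1} \cdot \Cov_\sfd(\overline{B}(x,T), r_2)$. Taking $\frac{1}{T} \log$ and sending $T \to +\infty$, the constant disappears, and we obtain
$$\lims_{T \to +\infty} \frac{1}{T} \log \Cov_\sfd(\overline{B}(x,T), r_1) \le \lims_{T \to +\infty} \frac{1}{T} \log \Cov_\sfd(\overline{B}(x,T), r_2).$$
Combined with the obvious monotonicity in the other direction, this gives equality for every $0 < r_1 < r_2$, so the function is constant in $r$, which proves the first assertion.

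For the quantitative bound, assume $\X$ is metrically doubling up to scale $r_0$ with constant $C_D$. Thanks to the first part, we are free to compute $h_\Cov(\X)$ at any single scale, and by \eqref{eq:Cov_Pack} we may work with $\Pack_\sfd$ instead. Choosing the scale $r = r_0$, Lemma \ref{lemma:metrically_doubling_basics} gives
$$\Pack_\sfd(\overline{B}(x,T), r_0) \le C_D^2 (1+C_D^2)^{\frac{T}{r_0} - 1}$$
for every $T > 0$. Taking $\frac{1}{T} \log$ and letting $T \to +\infty$ kills the constant prefactor and the $-1$ in the exponent, leaving $\frac{\log(1+C_D^2)}{r_0}$ as an upper bound. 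This is the desired estimate, and it is finite.

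There is no real obstacle in this argument: the only genuinely non-formal step is the first one, where bounded geometry is invoked to compare $\Cov$ at two different scales in a uniform way. Everything else is a direct computation from Lemma \ref{lemma:metrically_doubling_basics} and the elementary inequality \eqref{eq:Cov_Pack}.
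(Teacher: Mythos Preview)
Your proof is correct and follows essentially the same approach as the paper: both use the bounded-geometry constant $C_{R,r}$ to obtain the key inequality $\Cov_\sfd(\overline{B}(x,T), r) \le C_{R,r}\cdot \Cov_\sfd(\overline{B}(x,T), R)$ for the first part, and both invoke Lemma~\ref{lemma:metrically_doubling_basics} together with \eqref{eq:Cov_Pack} for the quantitative bound. The only cosmetic difference is that the paper phrases the second part via the bound $h_\Cov(\X)\le \lims_{T\to+\infty}\frac{1}{T}\log C_{T,r}$, whereas you compute directly at a fixed scale using the first part; the content is the same.
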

	\begin{proof}
		 The function $r\mapsto \lims_{T \to +\infty} \frac{1}{T} \log \Cov_\sfd(\overline{B}(x,T), r)$ is decreasing, se we just need to check one inequality.	Let $0<r\le R$.	Let $C_{R,r} < +\infty$ be the constant in \eqref{eq:defin_bounded_geometry}. Then
		$$\Cov_\sfd(\overline{B}(x,T), r) \le \Cov_\sfd(\overline{B}(x,T), R) \cdot C_{R,r}$$
		for every $T>0$. This implies the first part of the thesis. As a consequence we have that $h_\Cov(\X) \le \lims_{T \to +\infty} \frac{1}{T} \log C_{T,r}$ for every $r>0$. The right hand side may be finite or not, depending on the behaviour of the function $T \mapsto C_{T,r}$. Now, if in addition $\X$ is metrically doubling up to scale $r_0$ with constant $C_D$, then $C_{T,2r_0} \le C_D^2(1+C_D^2)^{\frac{T}{r_0} - 1}$ because of Lemma \ref{lemma:metrically_doubling_basics} and \eqref{eq:Cov_Pack}. This gives the second part of the statement.
	\end{proof}

	The covering entropy can be finite also for spaces that have no bounded geometry.
	
	\begin{ex}
		Let $\{d_n\}_{n\in \N}$ be a sequence of integers. Let $\X$ be the space obtained by taking $\R$ and gluing $d_n$ segments of length $1$ at the point $n\in \N \subseteq \R$. For every fixed $r>0$ we need at most $(n + \sum_{j=1}^{n-1} d_j) \cdot \frac{1}{r}$ points to cover the ball $\overline{B}(0,n)$. Choosing for instance $d_n = n$ we obtain that $h_\text{Cov}(\X) = 0$. On the other hand, the same choice of $d_n$ implies that $\sup_{n\in \N} \Cov_\sfd(\overline{B}(n,1), \frac{1}{4}) = +\infty$.
	\end{ex}

	One can weaken the condition $h_\text{Cov}(\X) < +\infty$ using the \emph{geodesic covering entropy} of $\X$. Its definition is similar to the one of the covering entropy, but we ask to cover only the points of $\overline{B}(x,T)$ that lie in some geodesic line. More precisely,
	$$h_\Geod(\X) := \lim_{r\to 0} \lims_{T \to +\infty} \frac{1}{T} \log \Cov_\sfd(\overline{B}(x,T) \cap e_0(\Geod(\X)), r).$$
	By definition, $h_\Geod(\X) \le h_\Cov(\X)$. The inequality may be strict.
	\begin{ex}
		Let $\{d_n\}_{n\in \N}$ be a sequence of integers. Let $\X$ be the space obtained by taking $\R$ and gluing $d_n$ segments of length $1$ at the point $n\in \N \subseteq \R$.
		If $r\le \frac{1}{2}$ then $\Cov(\overline{B}(0,n), r) \ge n + \sum_{j=1}^{n-1} d_j$. So $h_\Cov(\X) \ge \lims_{n \to +\infty} \frac{\log(\sum_{j=1}^{n-1} d_j)}{n}$. By choosing the sequence $\{d_n\}$ appropriately we can get any value in $[0,+\infty]$. On the other hand, every geodesic line of $\X$ must live in the copy of $\R$ inside $\X$. So, $h_\Geod(\X) = 0$ for every choice of $\{d_n\}$. Observe that $\X$ is line-convex.
	\end{ex}

	Other bounds on the geometric complexity will be presented in Section \ref{subsec:geodesic_entropy_subsets} in case of Gromov-hyperbolic spaces. They would resemble the bounds we gave in this section.
	
	The condition appearing in Theorem \ref{theo:intro-equality-Gromov-hyperbolic} is weaker than all the conditions we discussed in this section.
		
	\subsection{Gromov hyperbolic spaces}
	References for this subsection are \cite{CDP90, BH09}.
	Let $X$ be a metric space and let $x,y,z \in \X$. The {\em Gromov product} of $y$ and $z$ with respect to $x$ is
	$$(y,z)_x = \frac{1}{2}\big( d(x,y) + d(x,z) - d(y,z) \big).$$
	
	$\X$ is called {\em $\delta$-hyperbolic} if   for every $x,y,z,w \in \X$   the following {\em 4-points condition} hold:
	\begin{equation}\label{hyperbolicity}
		(x,z)_w \geq \min\lbrace (x,y)_w, (y,z)_w \rbrace -  \delta 
	\end{equation}
	or, equivalently,
	\begin{equation}
		\label{four-points-condition}
		d(x,y) + d(z,w) \leq \max \lbrace d(x,z) + d(y,w), d(x,w) + d(y,z) \rbrace + 2\delta. 
	\end{equation}
	
	$\X$ is   {\em Gromov hyperbolic} if it is $\delta$-hyperbolic for some $\delta \geq 0$. Gromov-hyperbolicity should be considered as a negative-curvature condition at large scale: for instance every CAT$(\kappa)$ metric space, with $\kappa <0$ is $\delta$-hyperbolic for a constant $\delta$ depending only on $\kappa$. 
	
	Let $\X$ be a proper, Gromov-hyperbolic metric space and let $x$ be a point of $\X$. The {\em Gromov boundary} of $\X$ is defined as the quotient 
	$$\partial \X := \left\lbrace (z_n)_{n \in \mathbb{N}} \subseteq \X \,:\,   \lim_{n,m \to +\infty} (z_n,z_m)_{x} = + \infty \right\rbrace \hspace{1mm} /_\sim,$$
	where $(z_n)_{n \in \mathbb{N}}$ is a sequence of points in $\X$ and $\sim$ is the equivalence relation defined by $(z_n)_{n \in \mathbb{N}} \sim (z_n')_{n \in \mathbb{N}}$ if and only if $\lim_{n,m \to +\infty} (z_n,z_m')_{x} = + \infty$.  
	We will write $ z := [(z_n)] \in \partial \X$ for short, and we say that $(z_n)$ {\em converges} to $z$. This definition does not depend on the basepoint.
	
	There is a natural topology on $\X\cup \partial \X$ that extends the metric topology of $\X$. 
	
	Every geodesic ray $\xi$ defines a point  $\xi^+:=[(\xi(n))_{n \in \mathbb{N}}]$  of the Gromov boundary $ \partial \X$: we  say that $\xi$ {\em joins} $\xi(0)$ {\em to} $\xi^+$. Moreover for every $z\in \partial \X$ and every $x\in \X$ it is possible to find a geodesic ray $\xi$ such that $\xi(0)=x$ and $\xi^+ = z$. Indeed if $(z_n)$ is a sequence of points converging to $z$ then, by properness of $\X$, a sequence of geodesics $[x,z_n]$ subconverges to a geodesic ray $\xi$ which has the properties above (cp. \cite[Lemma III.3.13]{BH09}). A geodesic ray joining $x$ to $z\in \partial \X$ will be denoted by $\xi_{x,z}$.
	
	The relation between Gromov product and geodesic rays is highlighted in the following lemma.
	\begin{lemma}[\text{\cite[Lemma 4.2]{Cav21ter}}]
		\label{product-rays}
		Let $\X$ be a proper, $\delta$-hyperbolic metric space, $z,z'\in \partial \X$, $x\in \X$ and $b>0$. Then for all geodesic rays $\xi_{x,z}, \xi_{x,z'}$ it holds that
		\begin{itemize}
			\item[(i)] if $(z,z')_{x} \geq T$ then $\sfd(\xi_{x,z}(T - \delta),\xi_{x,z'}(T - \delta)) \leq 4\delta$;
			\item[(ii)] if $\sfd(\xi_{x,z}(T),\xi_{x,z'}(T)) < 2b$ then $(z,z')_{x} > T - b$.
		\end{itemize}
	\end{lemma}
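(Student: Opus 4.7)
The plan is to prove both items by approximating the boundary points $z, z'$ by interior sequences along the two geodesic rays and reducing to the classical finite four-point version of $\delta$-hyperbolicity. Throughout, set $y_n := \xi_{x,z}(n)$ and $y_n' := \xi_{x,z'}(n)$; since $\xi_{x,z}$ and $\xi_{x,z'}$ are geodesic rays we have $y_n \to z$ and $y_n' \to z'$ in $\X \cup \partial \X$.

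For part (i), I would first upgrade the assumption $(z,z')_x \geq T$ to the pointwise statement $\liminf_{n \to +\infty} (y_n, y_n')_x \geq T$. This is a standard semi-continuity property of the Gromov product that follows by applying \eqref{hyperbolicity} to $x, y_n, y_n'$ and an arbitrary sequence realizing the defining $\liminf$ for $(z, z')_x$, together with the convergence property built into the definition of $\partial \X$. Next I would use the finite-case companion statement: in a $\delta$-hyperbolic space, if $y, y' \in \X$ satisfy $\min\{\sfd(x, y), \sfd(x, y')\} \geq T$ and $(y, y')_x \geq T - \delta$, then for any geodesics $[x, y]$ and $[x, y']$ one has $\sfd([x, y](T-\delta), [x, y'](T-\delta)) \leq 4\delta$. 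This is obtained from \eqref{four-points-condition} applied to the quadruple $(x, y, [x, y'](T-\delta), [x, y](T-\delta))$ after unwinding the $\max$; the crucial input is that Gromov products of points lying on a geodesic emanating from $x$ are controlled by their distances from $x$ themselves. Applying this to $y = y_n$, $y' = y_n'$ for large $n$ and using the fact that $[x, y_n]$ and $[x, y_n']$ are initial segments of $\xi_{x,z}$ and $\xi_{x,z'}$ yields the stated inequality.

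Part (ii) is a direct triangle-inequality argument. Setting $p := \xi_{x,z}(T)$ and $p' := \xi_{x,z'}(T)$, one computes
\[(p, p')_x \;=\; T - \tfrac{1}{2}\sfd(p, p') \;>\; T - b.\]
To push this to the boundary, pick any $s, s' \geq T$ and set $w := \xi_{x,z}(s)$, $w' := \xi_{x,z'}(s')$. The triangle inequality through $p, p'$ gives
\[\sfd(w, w') \;\leq\; (s - T) + \sfd(p, p') + (s' - T),\]
so that $(w, w')_x \geq T - \tfrac{1}{2}\sfd(p, p') > T - b$. Letting $s, s' \to +\infty$ and invoking the standard characterization of $(z, z')_x$ as a $\liminf$ (up to a uniform $\delta$-error) over pairs of sequences converging to $z$ and $z'$ yields the desired strict inequality.

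The main obstacle lies in the bookkeeping of step (i): the interplay between the $\delta$-loss incurred in passing from the boundary Gromov product to finite Gromov products, and the $4\delta$-loss arising in the fellow-traveling statement, must be arranged so that the constants advertised in the lemma (time $T-\delta$, gap $4\delta$) come out exactly. Part (ii), in contrast, presents no real difficulty beyond checking that the strict inequality in the hypothesis survives the limit passage to the boundary.
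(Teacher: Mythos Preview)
The paper does not prove this lemma; it is quoted from \cite[Lemma~4.2]{Cav21ter} without argument, so there is no in-paper proof to compare against. Your outline is the standard one and is essentially correct.

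Two small remarks. First, a single application of \eqref{four-points-condition} to the quadruple $(x, y, p', p)$ (with $p = [x,y](T-\delta)$, $p' = [x,y'](T-\delta)$) does not by itself yield $\sfd(p,p') \le 4\delta$: one branch of the $\max$ contains $\sfd(y, p')$, which is not yet controlled. The clean route is two applications of \eqref{hyperbolicity}: first $(y, p')_x \ge \min\{(y,y')_x, (y',p')_x\} - \delta$, then $(p,p')_x \ge \min\{(p,y)_x, (y,p')_x\} - \delta$; using $(y',p')_x = (y,p)_x = T-\delta$ one gets $(p,p')_x \ge T - 3\delta$, hence $\sfd(p,p') \le 4\delta$. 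Second, both the exact constant in (i) and the survival of the strict inequality in (ii) depend on the convention chosen for the extended product $(z,z')_x$ with $z,z' \in \partial \X$, which this paper does not spell out. With the $\sup\liminf$ convention your computation in (ii) goes through verbatim, while (i) may acquire an extra $\delta$; this is immaterial here, since every use of the lemma in the paper carries slack (e.g.\ the conclusion of (i) is invoked with $6\delta$ in the proof of Proposition~\ref{prop:h_Geod=MD}).
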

	
	
	The following is a standard computation, see for instance \cite[Proposition 8.10]{BCGS}.
	\begin{lemma}
		\label{parallel-geodesics}
		Let $\X$ be a proper, $\delta$-hyperbolic metric space. Let $\xi, \xi'$ be two geodesic rays with $\xi^+ = \xi'^+$. Then there exist $t_1,t_2\geq 0$ such that $t_1+t_2=\sfd(\xi(0),\xi'(0))$ and  $\sfd(\xi(t + t_1),\xi'(t+t_2)) \leq 8\delta$ for all $t\in \mathbb{R}$.
	\end{lemma}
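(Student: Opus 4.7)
Plan. Set $x:=\xi(0)$, $x':=\xi'(0)$, $D:=\sfd(x,x')$ and $z:=\xi^+=\xi'^+\in\partial\X$. I will reduce the statement to two basic configurations: two rays sharing a basepoint, and two rays from different basepoints (but the same endpoint at infinity) aligned via a Busemann-type time shift.

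First I will record the following \emph{same-basepoint} fact: if $\alpha,\beta$ are two geodesic rays from a common point $p$ with $\alpha^+=\beta^+\in\partial\X$, then $\sfd(\alpha(t),\beta(t))\le 4\delta$ for every $t\ge 0$. Indeed the Gromov product of a boundary point with itself is infinite, so the hypothesis of Lemma \ref{product-rays}(i) is satisfied for every $T$, giving exactly this bound. Using properness of $\X$, I then extract a subsequential limit $\eta$ of the segments $[x',\xi(n)]$, obtaining a geodesic ray from $x'$ with $\eta^+=z$; applying the same-basepoint fact to $\eta$ and $\xi'$ yields $\sfd(\eta(s),\xi'(s))\le 4\delta$ for every $s\ge 0$. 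It therefore suffices to produce $t_1,t_2\ge 0$ with $t_1+t_2=D$ and $\sfd(\xi(t+t_1),\eta(t+t_2))\le 4\delta$ for every $t\ge 0$; combined with the previous estimate via the triangle inequality this gives the required $8\delta$-bound.

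For the alignment of $\xi$ (from $x$) and $\eta$ (from $x'$) I introduce the Busemann-type shift
$$\sigma := \lim_{s\to\infty}\bigl(s - \sfd(\xi(s),x')\bigr) \in [-D,D],$$
whose existence up to an $O(\delta)$ ambiguity follows from applying the four-point condition \eqref{four-points-condition} to the quadruple $\xi(s),x,\xi(s'),x'$ as $s,s'\to\infty$, and whose boundedness is immediate from the triangle inequality. Setting $t_1:=(D+\sigma)/2$ and $t_2:=(D-\sigma)/2$ yields $t_1,t_2\ge 0$ and $t_1+t_2=D$; moreover this choice makes $\xi(t+t_1)$ and $\eta(t+t_2)$ sit at matching Busemann heights with respect to $z$. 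A further application of the four-point condition to the quadruple $\xi(t+t_1),\eta(t+t_2),\xi(N),\eta(N-\sigma)$, letting $N\to\infty$ and using that $\sfd(\xi(N),\eta(N-\sigma))$ itself tends to at most $4\delta$ by the same-basepoint argument after time-translation, yields $\sfd(\xi(t+t_1),\eta(t+t_2))\le 4\delta$.

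The main obstacle is this last alignment step: defining $\sigma$ rigorously in a general Gromov-hyperbolic space (where Busemann functions are only defined up to $O(\delta)$) and deriving the matching-height fellow-traveling cleanly from the four-point inequality. The estimates themselves are standard hyperbolic-geometry bookkeeping, and no new ideas beyond Lemma \ref{product-rays} and \eqref{four-points-condition} are required; the only subtlety is that the asymmetry $t_1\ne t_2$ is precisely encoded by $\sigma$ and must be tracked consistently through every application of the four-point inequality.
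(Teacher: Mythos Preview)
The paper does not actually give a proof of this lemma; it only cites \cite[Proposition 8.10]{BCGS}. So there is no in-paper argument to compare against, and the question is simply whether your sketch can be completed.

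Your reduction is sound. The same-basepoint bound $\sfd(\alpha(t),\beta(t))\le 4\delta$ does follow from Lemma~\ref{product-rays}(i) with $z=z'$, and the auxiliary ray $\eta$ from $x'$ to $z$ exists by properness. The Busemann shift $\sigma=\lim_{s\to\infty}\bigl(s-\sfd(\xi(s),x')\bigr)$ in fact exists as a genuine limit (no $O(\delta)$ ambiguity): the function $s\mapsto s-\sfd(\xi(s),x')$ is nondecreasing and bounded above by $D$, so your $t_1=(D+\sigma)/2$, $t_2=(D-\sigma)/2$ are well defined with $t_1,t_2\ge 0$ and $t_1+t_2=D$; they coincide with the Gromov products $(x',z)_x$ and $(x,z)_{x'}$.

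The gap is in your final step. Applying \eqref{four-points-condition} to the quadruple $\xi(t{+}t_1),\eta(t{+}t_2),\xi(N),\eta(N{-}\sigma)$ requires as input the bound $\sfd(\xi(N),\eta(N{-}\sigma))\le 4\delta$. But $N{-}\sigma=(N{-}t_1)+t_2$, so this is exactly the conclusion you want, evaluated at the parameter $N-t_1$ instead of $t$; the argument is circular. The phrase ``same-basepoint argument after time-translation'' does not rescue this: time-translating $\xi$ and $\eta$ still leaves rays with distinct basepoints $\xi(t_1)\neq\eta(t_2)$, so Lemma~\ref{product-rays} is not applicable.

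A quadruple that works must retain one of the finite endpoints. First apply \eqref{four-points-condition} to $x',\xi(t_1{+}t),x,\xi(n)$ and let $n\to\infty$ (using $\sfd(x',\xi(n))-n\to -\sigma$) to get $\sfd(x',\xi(t_1{+}t))\le t+t_2+2\delta$. Then, writing $p_n$ for the point on a geodesic $[x',\xi(n)]$ at distance $t_2{+}t$ from $x'$, apply \eqref{four-points-condition} to $\xi(t_1{+}t),p_n,x',\xi(n)$: both cross-sums are at most $\sfd(x',\xi(n))+2\delta$, so $\sfd(\xi(t_1{+}t),p_n)\le 4\delta$. Since $p_n\to\eta(t_2{+}t)$ you obtain $\sfd(\xi(t_1{+}t),\eta(t_2{+}t))\le 4\delta$, and then your triangle inequality with $\sfd(\eta(s),\xi'(s))\le 4\delta$ finishes the proof.
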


	Given a subset $C \subseteq \partial \X$, we define the \emph{quasi-convex hull} of $C$, and we denote it by $\textup{QC-Hull}(C) \subseteq \X$, as the union of all geodesic lines with endpoints in $C$. We will need the following basic, but important, facts.
	
	\begin{lemma}[{\cite[Lemma 5.4]{Cav21}}]
		\label{lemma:approximation-ray-line}
		Let $\X$ be a proper, geodesic, $\delta$-hyperbolic space.
		Let $x\in \X$ and $C\subseteq \partial \X$ be a subset with $\# C \ge 2$. Then for every $z\in C$ there exists a geodesic line $\gamma$ with endpoints in $C$ such that for every geodesic ray $\xi_{x,z}$ it holds that $\sfd(\xi_{x,z}(t), \gamma(t))\leq 22\delta + \sfd(x,\textup{QC-Hull}(C))$ for every $t\geq 0$.
	\end{lemma}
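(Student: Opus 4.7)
The plan is to produce, for the given $z \in C$, a geodesic line $\gamma$ with $\gamma^+ = z$ and $\gamma^- \in C$ that passes within $D + O(\delta)$ of $x$, and then to compare $\gamma$ with $\xi_{x,z}$ using the fellow-traveler property for rays sharing an endpoint. Set $D := \sfd(x, \textup{QC-Hull}(C))$ and fix $\varepsilon > 0$, to be absorbed into the final constant. By definition of the quasi-convex hull there exists a geodesic line $\gamma_0$ with endpoints $z_0^-, z_0^+ \in C$ and a point $p_0 \in \gamma_0$ with $\sfd(x, p_0) \le D + \varepsilon$; parametrizing $\gamma_0(0) = p_0$ and computing $(\gamma_0(n), \gamma_0(-n))_x$ directly gives $(z_0^-, z_0^+)_x \le D + \varepsilon$. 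If $z \in \{z_0^-, z_0^+\}$, set $\gamma := \gamma_0$, reoriented so that $\gamma^+ = z$; otherwise the 4-point condition at $x$ applied to $z, z_0^-, z_0^+$ yields
\[
(z_0^-, z_0^+)_x \ge \min\bigl((z, z_0^-)_x, (z, z_0^+)_x\bigr) - \delta,
\]
so, after swapping $z_0^\pm$ if necessary, $(z, z_0^-)_x \le D + \varepsilon + \delta$, and we take $\gamma$ to be any geodesic line joining $z_0^-$ to $z$ (its endpoints lie in $C$).

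Parametrize $\gamma$ so that $\gamma^+ = z$ and $\gamma(0) =: p$ is a projection of $x$ onto $\gamma$. By Lemma \ref{product-rays}(i) the rays $\xi_{x,z_0^-}$ and $\xi_{x,z}$ are $4\delta$-close at parameter $(z, z_0^-)_x - \delta$, and slimness of the ideal triangle $(x, z_0^-, z)$ places this meeting point $O(\delta)$-close to $\gamma$; since in a $\delta$-hyperbolic ideal triangle the centre on a side agrees up to $O(\delta)$ with the projection of the opposite vertex, one deduces $\sfd(x, p) \le (z, z_0^-)_x + O(\delta) \le D + O(\delta) + \varepsilon$ (in the case $z \in \{z_0^-, z_0^+\}$ one uses $\sfd(x, p_0) \le D + \varepsilon$ directly). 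Now $\gamma|_{[0,\infty)}$ and $\xi_{x,z}$ are geodesic rays converging to $z$, so Lemma \ref{parallel-geodesics} yields $t_1, t_2 \ge 0$ with $t_1 + t_2 = \sfd(p, x)$ and $\sfd(\gamma(t + t_1), \xi_{x,z}(t + t_2)) \le 8\delta$ on the common domain. The identification of $p$ with the triangle centre forces $t_1 = O(\delta)$, so for every $s \ge 0$ the triangle inequality gives
\[
\sfd(\gamma(s), \xi_{x,z}(s)) \le 8\delta + |t_2 - t_1| \le 8\delta + \sfd(p, x) \le 8\delta + D + O(\delta) + \varepsilon.
\]
Choosing $\varepsilon$ small and tracking the various $O(\delta)$ contributions yields the stated bound $22\delta + D$; the conclusion transfers to any other geodesic ray $\xi'_{x,z}$ at the cost of an extra $8\delta$, itself absorbed into $22\delta$, by a final application of Lemma \ref{parallel-geodesics} with coincident starting points.

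The main obstacles I anticipate are (i) verifying that Lemma \ref{parallel-geodesics} can be applied uniformly for all $s \ge 0$, which essentially requires $p$ to lie on the convergence side of $\gamma$ and follows from identifying the projection of $x$ with the centre of the ideal triangle up to $O(\delta)$; and (ii) honest bookkeeping of the several $O(\delta)$ contributions (from the 4-point condition, from Lemma \ref{product-rays}, from the centre-projection identification, and from Lemma \ref{parallel-geodesics}) so as to fit inside the stated constant $22\delta$.
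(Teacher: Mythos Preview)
The paper does not prove this lemma; it is quoted from \cite{Cav21} without argument, so there is no in-paper proof to compare against. Your strategy---choose an auxiliary line $\gamma_0 \subset \textup{QC-Hull}(C)$ near $x$ to find $z_0^-\in C$ with $(z,z_0^-)_x \le D + O(\delta)$, join $z_0^-$ to $z$ by a line $\gamma$ passing within $D+O(\delta)$ of $x$, and then align $\gamma|_{[0,\infty)}$ with $\xi_{x,z}$---is the natural one and does yield a bound of the form $D + C\delta$.

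Two points need tightening. First, the assertion $t_1 = O(\delta)$ does not follow from Lemma~\ref{parallel-geodesics} alone, which only guarantees the existence of \emph{some} pair $t_1, t_2$ with $t_1+t_2=\sfd(p,x)$. To pin down $t_1$ you must actually use that $p$ is a nearest point: from $\sfd(\gamma(t_1),\xi_{x,z}(t_2))\le 8\delta$ one gets $\sfd(x,\gamma(t_1))\le t_2+8\delta$, and combining this with a lower bound $\sfd(x,\gamma(t_1))\ge \sfd(x,p)+t_1-O(\delta)$ (which is where the projection/centre identification really enters) yields $2t_1\le O(\delta)$. Without this step, the crude triangle inequality for $0\le s\le \min(t_1,t_2)$ only gives $2\sfd(x,p)\le 2D+O(\delta)$, which is too weak. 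Second, you do not carry out the bookkeeping: the boundary Gromov product carries a $2\delta$ ambiguity, ideal-triangle slimness costs several $\delta$, Lemma~\ref{parallel-geodesics} costs $8\delta$, and you add another $8\delta$ at the end to pass to an arbitrary ray. It is not evident these sum to at most $22$; your route certainly gives $D+C\delta$ for an explicit $C$, but matching $C=22$ would require sharper intermediate constants than those available in this paper (e.g.\ Lemma~\ref{lemma:projection_quasigeodesic} already spends $144\delta$) or consulting the original argument in \cite{Cav21}.
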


	\begin{lemma}
		\label{lemma:projection_quasigeodesic}
		Let $\X$ be a proper, geodesic, $\delta$-hyperbolic space and let $x\in \X$. Let $\gamma$ be a geodesic line of $\X$ parametrized in such a way that $\gamma(0)$ is a point of $\gamma$ which is closest to $x$. Let $\xi_{x,\gamma^+}$ be a geodesic ray. For every $t\ge 0$ let $y_t$ be the unique point on $\xi_{x,\gamma^+}$ such that $\sfd(x,y_t) = \sfd(x,\gamma(t))$. Then $\sfd(\gamma(t), y_t) \le 144\delta$.
	\end{lemma}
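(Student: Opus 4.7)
The strategy is to apply Lemma \ref{parallel-geodesics} to the two geodesic rays $\xi := \xi_{x,\gamma^+}$ based at $x$ and $\gamma|_{[0,\infty)}$ based at $\gamma(0)$, both converging to $\gamma^+ \in \partial\X$. Setting $d_0 := \sfd(x,\gamma(0))$, this produces parameters $t_1, t_2 \geq 0$ with $t_1 + t_2 = d_0$ such that
$$\sfd(\xi(u + t_1), \gamma(u + t_2)) \leq 8\delta \qquad \text{for every } u \geq 0.$$

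The first key observation is that the closest-point hypothesis on $\gamma(0)$ forces $t_2$ to be of order $\delta$. Indeed, taking $u = 0$ gives $\sfd(\xi(t_1), \gamma(t_2)) \leq 8\delta$, so by the triangle inequality $\sfd(x, \gamma(t_2)) \leq t_1 + 8\delta$. But by hypothesis $\sfd(x, \gamma(t_2)) \geq d_0 = t_1 + t_2$, so $t_2 \leq 8\delta$. This is the crucial asymmetry between $t_1$ and $t_2$ that the nearest-point condition creates.

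With this in hand, the case $t \geq t_2$ is immediate: setting $u := t - t_2 \geq 0$ and $s := u + t_1 = t + t_1 - t_2$, one has $\sfd(\xi(s), \gamma(t)) \leq 8\delta$, whence $|\sfd(x, \gamma(t)) - s| \leq 8\delta$ by the reverse triangle inequality applied to $x, \xi(s), \gamma(t)$. Since $y_t = \xi(\sfd(x,\gamma(t)))$ lies on the geodesic $\xi$, this yields $\sfd(y_t, \xi(s)) \leq 8\delta$ and hence $\sfd(\gamma(t), y_t) \leq 16\delta$.

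The remaining range $t \in [0, t_2] \subseteq [0, 8\delta]$ is handled by a triangle-inequality chain through $\gamma(0)$ and $y_0$. Both $\sfd(\gamma(t), \gamma(0)) = t$ and $\sfd(y_0, y_t) = |d_0 - \sfd(x,\gamma(t))|$ are bounded by $t \leq 8\delta$, and the middle distance $\sfd(\gamma(0), y_0) = \sfd(\gamma(0), \xi(d_0))$ is controlled by inserting the intermediate points $\gamma(t_2)$ and $\xi(t_1)$ — each of the three resulting segments has length at most $8\delta$ (for the first and third, by $t_2 \leq 8\delta$; for the middle, by the parallel-rays estimate at $u=0$), yielding at most $24\delta$. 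Combining everything gives $\sfd(\gamma(t), y_t) \leq 40\delta$, comfortably within the claimed $144\delta$. The main obstacle — and the only non-routine step — is the first one, showing that the nearest-point hypothesis pins $t_2$ near zero; once this is secured, the remainder is elementary triangle-inequality bookkeeping.
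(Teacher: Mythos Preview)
Your proof is correct and in fact yields a sharper constant ($40\delta$, and even $16\delta$ for $t\ge t_2$) than the paper's $144\delta$. The approaches are genuinely different: the paper simply invokes an external result (\cite[Lemma~5.2]{Cav21}) to produce a point $y'$ on $\xi_{x,\gamma^+}$ with $\sfd(\gamma(t),y')\le 72\delta$, then concludes by the triangle inequality, whereas you give a self-contained argument using only the paper's own Lemma~\ref{parallel-geodesics}. The key step in your argument—that the nearest-point hypothesis forces $t_2\le 8\delta$—is exactly the geometric content hidden inside the cited external lemma, so your route makes the mechanism transparent and avoids the external dependency at the cost of a short case analysis. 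The paper's route is shorter on the page but less informative about why the closest-point parametrization matters.
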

	\begin{proof}
		Let $t\ge 0$. By \cite[Lemma 5.2]{Cav21} we can find a point $y'$ on $\xi_{x,\gamma^+}$ such that $\sfd(\gamma(t),y') \le 72\delta$, and so $\sfd(x,\gamma(t)) - 72\delta \le \sfd(x,y') \le \sfd(x,\gamma(t)) + 72\delta$. Then $\sfd(y_t,y') \le 72\delta$, so $\sfd(\gamma(t),y_t) \le 144\delta$.
	\end{proof}
	
	Let $x\in \X$ be a basepoint.
	As in \cite{Pau96} and \cite{Cav21ter} we define the {\em generalized visual ball of center $z \in \partial \X$ and radius $\rho \geq 0$} as
	$$B_x(z,\rho) = \bigg\lbrace z' \in \partial \X \,:\, (z,z')_{x} > \log \frac{1}{\rho} \bigg\rbrace.$$
	
	Generalized visual balls are related to shadows. The \emph{shadow of radius $r>0$ casted by a point $y\in X$ with center $x$} is the set
	$$\text{Shad}_x(y,r) = \lbrace z\in \partial \X \,:\, \xi_{x,z}\cap B(y,r) \neq \emptyset \text{ for every geodesic ray } \xi_{x,z}\rbrace.$$
	\begin{lemma}[\text{\cite[Lemma 4.8]{Cav21ter}}]
		\label{shadow-ball}
		Let $\X$ be a proper, $\delta$-hyperbolic metric space. Let $z\in \partial \X$, $x\in \X$ and $T\geq 0$. Then for every $r>0$ it holds that
		$\textup{Shad}_{x}\left(\xi_{x,z}\left(T\right), r\right) \subseteq B_x(z, e^{-T + r}).$
	\end{lemma}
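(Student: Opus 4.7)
The plan is to reduce the statement to a direct application of Lemma \ref{product-rays}(ii). Unpacking the definition of the generalized visual ball, the conclusion $z'\in B_x(z,e^{-T+r})$ is exactly the inequality $(z,z')_{x} > T-r$. So the whole task is to produce this lower bound on the Gromov product from the shadow assumption.

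First I would pick an arbitrary geodesic ray $\xi_{x,z'}$ joining $x$ to $z'$ (existence is guaranteed by properness, as recalled just before Lemma \ref{product-rays}). By the definition of $\textup{Shad}_x(\xi_{x,z}(T),r)$, this ray enters $B(\xi_{x,z}(T),r)$, so there exists $s\ge 0$ with $\sfd(\xi_{x,z'}(s),\xi_{x,z}(T)) < r$. Since $\xi_{x,z'}$ and $\xi_{x,z}$ are parametrized by arclength from $x$, the triangle inequality gives
\begin{equation}
|s-T| = |\sfd(x,\xi_{x,z'}(s)) - \sfd(x,\xi_{x,z}(T))| < r.
\end{equation}

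Next, using that $\xi_{x,z'}$ is a geodesic, $\sfd(\xi_{x,z'}(T),\xi_{x,z'}(s)) = |T-s| < r$, and hence by the triangle inequality
\begin{equation}
\sfd(\xi_{x,z'}(T), \xi_{x,z}(T)) < 2r.
\end{equation}
Applying Lemma \ref{product-rays}(ii) with $b=r$ yields $(z,z')_x > T - r$, which is what we need. The only case requiring a small sanity check is $T<r$, where the bound $T-r$ is negative and the statement becomes trivial because Gromov products are nonnegative, so the visual ball is all of $\partial \X$; the argument above still goes through without modification once we agree that $\xi_{x,z'}(T)$ is well defined for $T\ge 0$. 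There is no real obstacle here: the proof is essentially a one-line consequence of Lemma \ref{product-rays}(ii) together with the defining inequality of the shadow.
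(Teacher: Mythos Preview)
Your argument is correct. The paper does not actually give a proof of this lemma; it is stated as a citation to \cite[Lemma 4.8]{Cav21ter} and used as a black box, so there is no in-paper proof to compare against. Your derivation via Lemma~\ref{product-rays}(ii) is exactly the natural route: the shadow hypothesis forces the chosen ray $\xi_{x,z'}$ to pass within $r$ of $\xi_{x,z}(T)$, the arclength parametrization turns this into $\sfd(\xi_{x,z}(T),\xi_{x,z'}(T))<2r$, and Lemma~\ref{product-rays}(ii) with $b=r$ gives $(z,z')_x>T-r$, i.e.\ $z'\in B_x(z,e^{-T+r})$. This is precisely how the cited lemma is proved in the source, so there is no meaningful difference in approach.
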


	Using generalized visual balls one can define the classical fractal dimensions, like the Hausdorff dimension, etc. of a subset of $\partial \X$. For instance, the lower and upper Minkowski dimension of a subset $C\subseteq \partial \X$ are respectively
	$$\underline{\text{MD}}(C) := \limi_{\rho \to 0}\frac{\log \text{Cov}_x(C,\rho)}{\log \frac{1}{\rho}}, \qquad \overline{\text{MD}}(C) := \lims_{\rho \to 0}\frac{\log \text{Cov}_x(C,\rho)}{\log \frac{1}{\rho}},$$
	where $\text{Cov}_x(C,\rho)$ denotes the minimal number of generalized visual balls of radius $\rho$ needed to cover $C$. These quantities do not depend on the choice of the basepoint $x\in \X$.
	
	\subsection{Geodesic entropy of subsets of the boundary}
	\label{subsec:geodesic_entropy_subsets} 
	Let $\X$ be a proper, Gromov-hyperbolic metric space and let $C\subseteq \partial\X$. For $B \subseteq \X$, $C\subseteq \partial \X$ and $r>0$ we define
	$$\Cov_\sfd(B;C;r) := \Cov_\sfd(B \cap \text{QC-Hull}(C), r).$$
	It represents the minimal number of $r$-balls needed to cover the points of $B$ that lie inside some geodesic lines with endpoints in $C$.
	
	We introduce the adaptation of the geodesic covering entropy introduced in Section \ref{subsec:geodesic_entropy} to subsets of the boundary at infinity. The \emph{geodesic covering entropy of $\X$ with respect to $C \subseteq \partial\X$} is the quantity
	\begin{equation}
		\label{eq:defin_h_Geod}
		h_\Geod(\X;C) := \lim_{r\to 0}\lims_{T \to +\infty} \frac{1}{T} \log \Cov_\sfd(\overline{B}(x,T);C; r).
	\end{equation}
	A similar definition has been considered in \cite{Cav21}, under the name of covering entropy of $C$. In order to avoid confusion we prefer to call it geodesic covering entropy since it refers to points belonging to geodesic lines.
	
	If $C=\partial \X$ then $h_\Geod(\X;C) = h_\Geod(\X)$ as defined in Section \ref{subsec:geodesic_entropy}. 
	
	The geodesic covering entropy of $\X$ with respect to $C$ bounds from above the Minkowski dimension.
	\begin{prop}
		\label{prop:MD<h_Geod}
		Let $\X$ be a proper, geodesic, Gromov-hyperbolic metric space and let $C\subseteq \partial \X$. Then $\overline{\MD}(C) \le h_\Geod(\X;C)$.
	\end{prop}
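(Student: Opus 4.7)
The plan is to convert a near‑optimal $r$-dense set in $\overline{B}(x,T)\cap\textup{QC-Hull}(C)$ into a cover of $C$ by generalized visual balls of radius comparable to $e^{-T}$, with the same cardinality; rescaling via $T=\log(1/\rho)+O(1)$ will then yield the inequality after taking $r\to 0$.

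Fix $r>0$, set $D:=\sfd(x,\textup{QC-Hull}(C))$ and $T':=T+22\delta+D$, and let $S\subseteq\overline{B}(x,T')\cap\textup{QC-Hull}(C)$ be an $r$-dense subset of minimal size $N:=\Cov_\sfd(\overline{B}(x,T');C;r)$. For each $z\in C$, Lemma \ref{lemma:approximation-ray-line} produces a geodesic line $\gamma_z$ with endpoints in $C$ such that every ray $\xi_{x,z}$ stays within $22\delta+D$ of $\gamma_z$ on $[0,+\infty)$. In particular $\gamma_z(T)\in\overline{B}(x,T')\cap\textup{QC-Hull}(C)$, so I pick $y_z\in S$ with $\sfd(\gamma_z(T),y_z)\le r$, whence $\sfd(\xi_{x,z}(T),y_z)\le r+22\delta+D$ for every such ray.

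The remaining task is to upgrade this metric neighbourhood of $y_z$ to a generalized visual ball. Since any two geodesic rays from $x$ sharing a boundary endpoint stay within $8\delta$ of each other (apply Lemma \ref{parallel-geodesics} with $\xi(0)=\xi'(0)=x$, so $t_1=t_2=0$), the estimate above holds for every ray, hence $z\in\textup{Shad}_x(y_z,s)$ where $s:=r+30\delta+D+1$. Writing $T_y:=\sfd(x,y_z)\in[T-s,T+s]$ and noting that $\xi_{x,z}(T_y)$ lies within $|T-T_y|+s\le 2s$ of $y_z$, one obtains $\textup{Shad}_x(y_z,s)\subseteq\textup{Shad}_x(\xi_{x,z}(T_y),3s)$; Lemma \ref{shadow-ball} then forces this shadow into the generalized visual ball $B_x(z,e^{-T_y+3s})\subseteq B_x(z,e^{-T+4s})$. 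Choosing, for each $y\in S$ which is assigned by at least one $z$, one representative $z_0(y)\in\textup{Shad}_x(y,s)\cap C$, I obtain a cover of $C$ by at most $N$ generalized visual balls of radius $\rho:=e^{-T+4s}$.

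Setting $T=\log(1/\rho)+4s$ and passing to the $\limsup$ as $\rho\to 0$, the additive shift $T'-T=22\delta+D$ is absorbed, yielding
$$\overline{\MD}(C)=\lims_{\rho\to 0}\frac{\log\Cov_x(C,\rho)}{\log(1/\rho)}\le\lims_{T\to+\infty}\frac{\log\Cov_\sfd(\overline{B}(x,T);C;r)}{T}$$
for every $r>0$; sending $r\to 0$ gives $\overline{\MD}(C)\le h_\Geod(\X;C)$. The only delicate point is the uniform passage from an interior metric ball around $y_z$ to a visual ball on $\partial\X$, which is precisely handled by combining Lemmas \ref{parallel-geodesics} and \ref{shadow-ball}; everything else is bookkeeping of additive constants depending on $r$, $\delta$, $D$.
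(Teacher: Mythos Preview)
Your proof is correct and follows the same strategy as the paper: turn an $r$-dense subset of points on lines with endpoints in $C$ into a cover of $C$ by generalized visual balls of radius comparable to $e^{-T}$, then pass to the $\limsup$. The only substantive difference is that the paper records a slightly sharper intermediate inequality---bounding $\overline{\MD}(C)$ by coverings of the smaller set $e_T(\Geod(\overline{B}(x,M);C))$ rather than all of $\overline{B}(x,T')\cap\textup{QC-Hull}(C)$---which it later reuses in Proposition~\ref{prop:h_Lip_top_>=_MD}; note also that your appeal to Lemma~\ref{parallel-geodesics} is redundant, since Lemma~\ref{lemma:approximation-ray-line} already quantifies over all rays $\xi_{x,z}$.
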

	\begin{proof}
		If $\# C < 2$ then $\overline{\MD}(C) = 0$ and there is nothing to prove. So we can assume that $\# C \ge 2$.
		Let $\delta$ be the hyperbolicity constant of $\X$.
		Fix $x\in \X$ and let $M:=\sfd(x,\text{QC-Hull}(C))$. We will show a stronger statement that will be useful also later. Namely, we show that
		\begin{equation}
			\label{eq:MD<e_T_Geod}
			\overline{\MD}(C) \le \lim_{r\to 0}\lims_{T \to +\infty}\frac{1}{T}\log \Cov_\sfd(e_T(\Geod(\overline{B}(x,M);C)),r),
		\end{equation}
		where $\Geod(\overline{B}(x,M);C)$ denotes the set of geodesic lines $\gamma$ with both endpoints in $C$ and with $\gamma(0) \in \overline{B}(x,M)$. Since $e_T$ is the evaluation map at time $T$, every point of $e_T(\Geod(\overline{B}(x,M);C))$ belongs to $\overline{B}(x, M + T) \cap \text{QC-Hull}(C)$, so \eqref{eq:MD<e_T_Geod} implies the thesis.
		
		We now prove \eqref{eq:MD<e_T_Geod}. For every $T>M$ let $\{y_i\}$ be a $1$-dense subset of $e_T(\Geod(\overline{B}(x,M); C))$. For every $i$ let $\gamma_i \in \Geod(\overline{B}(x,M), C)$ such that $\gamma_i(T) = y_i$. Let $z\in C$. By Lemma \ref{lemma:approximation-ray-line} there exists a geodesic line $\gamma \in \Geod(\overline{B}(x,M); C)$ with $\gamma^+ = z$ and $\sfd(\xi_{x,z}(t),\gamma(t)) \le M$ for every $t\ge 0$. Then there exists $i$ such that $\sfd(\gamma(T), y_i) \le 1$. Moreover, since $T> M$ we can apply Lemma \ref{lemma:projection_quasigeodesic} to find that $\sfd(y_i, \xi_{x,\gamma_i^+}(S)) \le 144\delta$, where $S = \sfd(x,y_i)$ satisfies $\vert S - T \vert \le M$. The triangular inequality implies that $\sfd(\xi_{x,z}(T), \xi_{x,\gamma_i^+}(T)) \le 2M + 1 + 144\delta$. Then $(z,\gamma_i^+)_x > T - M - \frac{1}{2} - 72\delta$ by Lemma \ref{product-rays}. Defining $\rho := e^{-T}$ we get that $\Cov_x(C,e^{M+\frac{1}{2} + 72\delta}\rho) \le \# \{\gamma_i^+\}$. Hence,
		\begin{equation}
			\begin{aligned}
				\overline{\MD}(C) = \lims_{\rho \to 0} \frac{\log \Cov_x(C,\rho)}{\log \frac{1}{\rho}} &= \lims_{\rho \to 0} \frac{\log \Cov_x(C,e^{M+\frac12 + 72\delta}\rho)}{\log \frac{1}{\rho}} \\
				&\le \lims_{T \to +\infty} \frac{1}{T}\log \Cov_\sfd(e_T(\Geod(\overline{B}(x,M);C)),1) \\
				&\le \lim_{r\to 0} \lims_{T \to +\infty} \frac{1}{T}\log \Cov_\sfd(e_T(\Geod(\overline{B}(x,M);C)),r).
			\end{aligned}
		\end{equation}
	\end{proof}
	
	We say that $\X$ has \emph{bounded geometry with respect to $C$} if
	\begin{equation}
		\label{eq:defin_bounded_geometry_wrt_C}
		\sup_{y\in \textup{QC-Hull}(C)} \Cov_\sfd(\overline{B}(y,R);C; r) =: C_{R,r} < +\infty \quad \text{for all } 0<r \le R.
	\end{equation}

	Even when $C= \partial \X$, condition \eqref{eq:defin_bounded_geometry_wrt_C} is weaker than \eqref{eq:defin_bounded_geometry}. Indeed the supremum is only among points that are along geodesic lines, actually along geodesic lines with endpoints in $C$. Moreover, also the set to cover is not the whole ball $\overline{B}(x,R)$, but only its intersection with the set of geodesic lines with endpoints in $C$. 
	The following result is similar to Lemma \ref{lemma:bounded_geometry_no_r}.
	
	\begin{lemma}
		\label{lemma:h_erg_no_r}
		Let $\X$ be a proper, geodesic, Gromov-hyperbolic metric space, let $x\in \X$ and let $C \subseteq \partial \X$. Suppose $\X$ has bounded geometry with respect to $C$. Then the function 
		$$r\mapsto \lims_{T \to +\infty}\frac{1}{T} \log \Cov_\sfd(\overline{B}(x,T);C;r)$$
		is constant.
	\end{lemma}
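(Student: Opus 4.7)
The plan is to mimic the proof of Lemma \ref{lemma:bounded_geometry_no_r}, using the bounded-geometry-with-respect-to-$C$ hypothesis \eqref{eq:defin_bounded_geometry_wrt_C} as the analogue of \eqref{eq:defin_bounded_geometry}. First I note that the function
$$r \mapsto \lims_{T \to +\infty} \frac{1}{T}\log \Cov_\sfd(\overline{B}(x,T);C;r)$$
is non-increasing in $r$ (smaller balls only require more covers), so only the reverse inequality needs to be established.

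Fix $0 < r \le R$. Choose a minimal $R$-dense subset $\{y_i\}_{i=1}^{N_R}$ of $\overline{B}(x,T) \cap \textup{QC-Hull}(C)$, so $N_R = \Cov_\sfd(\overline{B}(x,T);C;R)$ and each $y_i$ belongs to $\textup{QC-Hull}(C)$. Applying \eqref{eq:defin_bounded_geometry_wrt_C} at each $y_i$ gives an $r$-dense subset of $\overline{B}(y_i,R) \cap \textup{QC-Hull}(C)$ of cardinality at most $C_{R,r}$. Taking the union over $i$, and observing that $\{\overline{B}(y_i,R)\}_i$ covers $\overline{B}(x,T) \cap \textup{QC-Hull}(C)$, yields a family of at most $N_R \cdot C_{R,r}$ balls of radius $r$ whose union still covers $\overline{B}(x,T) \cap \textup{QC-Hull}(C)$. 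Hence
$$\Cov_\sfd(\overline{B}(x,T);C;r) \le \Cov_\sfd(\overline{B}(x,T);C;R) \cdot C_{R,r}.$$
Taking logarithms, dividing by $T$, and passing to $\lims_{T\to +\infty}$, the constant $C_{R,r}$ drops out and one gets the desired reverse inequality.

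There is essentially no main obstacle; the only point of care — mirroring the analogous step in Lemma \ref{lemma:bounded_geometry_no_r} — is that the $r$-dense subsets produced by \eqref{eq:defin_bounded_geometry_wrt_C} inside $\overline{B}(y_i,R)$ may contain points lying in $\overline{B}(x,T+R) \setminus \overline{B}(x,T)$. This is harmless since the excess $R$ is independent of $T$: one can either replace each such point by a closest point of $\overline{B}(x,T) \cap \textup{QC-Hull}(C)$ (costing at most a factor $2$ in the radius, absorbed using \eqref{eq:Cov_Pack}), or simply apply the argument with $r$ replaced by $r/2$ and use monotonicity; neither affects the $\lims$.
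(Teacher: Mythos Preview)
Your proof is correct and follows essentially the same approach as the paper's own proof, which simply asserts the key inequality $\Cov_\sfd(\overline{B}(x,T);C;r) \le \Cov_\sfd(\overline{B}(x,T);C;R)\cdot C_{R,r}$ and takes the $\lims$. Your write-up is actually more careful than the paper's, since you spell out why the centers $y_i$ lie in $\textup{QC-Hull}(C)$ (so that \eqref{eq:defin_bounded_geometry_wrt_C} applies) and you address the harmless issue of cover points falling into $\overline{B}(x,T+R)\setminus\overline{B}(x,T)$, which the paper leaves implicit.
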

	\begin{proof}
		The function $r \mapsto \lims_{T \to +\infty}\frac{1}{T} \log \Cov_\sfd(\overline{B}(x,T);C;r)$ is decreasing. For the other direction we fix $0<r\le R$ and we consider the constant $C_{R,r}$ given by \eqref{eq:defin_bounded_geometry_wrt_C}. Then 
		\begin{equation}
			\begin{aligned}
				\lims_{T \to +\infty}\frac{1}{T} \log \Cov_\sfd(\overline{B}(x,T);C;r) &\le  \lims_{T \to +\infty}\frac{1}{T} \log\left( \Cov_\sfd(\overline{B}(x,T);C,R) \cdot C_{R,r}\right)\\
				&= \lims_{T \to +\infty}\frac{1}{T} \log \Cov_\sfd(\overline{B}(x,T);C;R).
			\end{aligned}
		\end{equation}
	\end{proof}
	
	As a consequence, if \eqref{eq:defin_bounded_geometry_wrt_C} holds then we have equality in Lemma \ref{prop:MD<h_Geod}. This is a result that extends \cite[Theorem D]{Cav21}.
	\begin{prop}
		\label{prop:h_Geod=MD}
		Let $\X$ be a proper, geodesic, Gromov-hyperbolic metric space and let $C\subseteq \partial\X$ be such that $\# C \ge 2$. Suppose $\X$ has bounded geometry with respect to $C$. Then $h_\Geod(\X;C) = \overline{\MD}(C)$. 
	\end{prop}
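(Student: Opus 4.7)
The inequality $\overline{\MD}(C) \le h_\Geod(\X;C)$ is Proposition \ref{prop:MD<h_Geod}, so what remains is the reverse inequality $h_\Geod(\X;C) \le \overline{\MD}(C)$; we may assume $\overline{\MD}(C) < +\infty$. Crucially, Lemma \ref{lemma:h_erg_no_r} lets me fix a single large radius $r_0 = r_0(\delta)$ (take for instance $r_0 := 151\delta + 1$, with $\delta$ the hyperbolicity constant) and estimate $\lims_{T \to +\infty} \frac{1}{T} \log \Cov_\sfd(\overline{B}(x,T);C;r_0)$ directly, rather than tracking the dependence on $r$.

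The geometric heart of the argument is to attach to each $y \in \overline{B}(x,T) \cap \textup{QC-Hull}(C)$ a single boundary point $z \in C$ so that $y$ lies within $144\delta$ of the geodesic ray $\xi_{x,z}$. Namely, I choose a geodesic line $\gamma$ through $y$ with endpoints in $C$, parametrized so that $\gamma(0)$ is a closest point of $\gamma$ to $x$; after possibly reversing $\gamma$, $y = \gamma(s)$ with $s \ge 0$, and I set $z := \gamma^+ \in C$. Lemma \ref{lemma:projection_quasigeodesic} then yields $\sfd(y,\xi_{x,z}(\sfd(x,y))) \le 144\delta$. Next I partition $\overline{B}(x,T)\cap \textup{QC-Hull}(C)$ into the shells $S_k := \{y \,:\, k \le \sfd(x,y) < k+1\}$, $k = 0, \ldots, \lfloor T \rfloor$. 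For $k \ge 1$ I cover $C$ with $N_k := \Cov_x(C, e^{-k})$ generalized visual balls $B_x(\tilde z_i, e^{-k})$ and fix geodesic rays $\xi_{x,\tilde z_i}$. If $(z,\tilde z_i)_x > k$ then Lemma \ref{product-rays}(i), together with the isometric property of geodesic rays, yields $\sfd(\xi_{x,z}(k),\xi_{x,\tilde z_i}(k)) \le 6\delta$; combined with $|\sfd(x,y) - k| < 1$ and the previous projection estimate, the triangle inequality gives $\sfd(y, \xi_{x,\tilde z_i}(k)) \le 150\delta + 1 \le r_0$. Hence $\Cov_\sfd(S_k \cap \textup{QC-Hull}(C), r_0) \le N_k$ for $k \ge 1$, while for $S_0 \subseteq \overline{B}(x,1)\cap\textup{QC-Hull}(C)$ the bounded-geometry-with-respect-to-$C$ hypothesis supplies a uniform constant $C_0$.

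Summing the shells and invoking the definition of $\overline{\MD}(C)$: for every $\varepsilon > 0$ one has $N_k \le e^{k(\overline{\MD}(C)+\varepsilon)}$ for all $k$ large, so $\Cov_\sfd(\overline{B}(x,T);C;r_0) \le C_0 + \sum_{k=1}^{\lfloor T \rfloor} N_k$ grows at most like $e^{T(\overline{\MD}(C)+\varepsilon)}$; taking $\frac{1}{T}\log$, then $\lims_{T}$, and then letting $\varepsilon \to 0$ closes the proof. The main obstacle is the second paragraph, which requires coordinating three separate estimates (the projection bound from Lemma \ref{lemma:projection_quasigeodesic}, the visual-ball-to-ray comparison from Lemma \ref{product-rays}(i), and the within-shell approximation $|\sfd(x,y) - k| < 1$) so that the accumulated error fits inside a single fixed radius $r_0$ independent of both $k$ and $T$; once this estimate is in place the shell decomposition, the $r$-independence, and the exponential-sum conclusion are all formal.
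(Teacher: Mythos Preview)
Your proof is correct and follows a slightly more direct route than the paper's. Both arguments invoke Lemma~\ref{lemma:h_erg_no_r} to fix a single large radius and then decompose $\overline{B}(x,T)\cap\textup{QC-Hull}(C)$ into unit shells. The paper, however, first establishes the auxiliary reduction $\Cov_\sfd(A(x,k,k+1);C;294\delta)\le \Cov_\sfd(S(x,T);C;294\delta)$ for every relevant $k$ (by tracing geodesic lines from the outer sphere back into each shell) and only afterwards compares the single sphere covering to the visual covering $\Cov_x(C,e^{-T})$; you bypass this and bound each shell directly by $\Cov_x(C,e^{-k})$, then sum the resulting geometric series with an $\varepsilon$-argument. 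Your route avoids the somewhat technical shell-to-sphere transfer lemma; the paper's route avoids the geometric-series summation and lands on a single visual scale. Both rely on the same projection estimates (Lemmas~\ref{product-rays} and~\ref{lemma:projection_quasigeodesic}). One cosmetic point: your covering centers $\xi_{x,\tilde z_i}(k)$ need not lie in $\textup{QC-Hull}(C)$, while the paper's definition of $\Cov_\sfd$ asks the $r$-dense set to be a subset; this is harmless (double $r_0$ and move centers), but worth a remark. Likewise, for $S_0$ you can simply invoke properness of $\X$ rather than the bounded-geometry hypothesis.
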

	\begin{proof}
		Proposition \ref{prop:MD<h_Geod} gives that $\overline{\MD}(C) \le h_\Geod(\X;C)$, so we need to show the other inequality. Let $\delta$ be the hyperbolicity constant of $\X$, let $x\in \X$ and let $M:= \sfd(x,\text{QC-Hull}(C)) + 22\delta$. We claim that
		\begin{equation}
			\label{eq:h_Geod_balls=h_Geod_spheres}
			h_\Geod(\X;C) = \lims_{T \to +\infty}\frac{1}{T}\log \Cov_{\sfd}(\overline{B}(x,T);C;294\delta) = \lims_{T \to +\infty}\frac{1}{T}\log \Cov_{\sfd}(S(x,T);C;294\delta),
		\end{equation}
		where $S(x,T)$ denotes the set of points at distance exactly $T$ from $x$.
		
		Let us suppose for the moment that \eqref{eq:h_Geod_balls=h_Geod_spheres} holds and we show how to conclude the proof. Let $T > M$ be fixed and set $\rho := e^{-T}$. Let $\{z_i\} \subseteq C$ be a minimal $\rho$-dense subset of $C$, i.e. for every $z\in C$ there exists $i$ such that $(z,z_i)_x \ge T$. Lemma \ref{lemma:approximation-ray-line} gives the existence of geodesic lines $\gamma_i$ with endpoints in $C$, $\gamma_i^+ = z_i$ and $\sfd(x,\gamma_i(0))\le M$. We consider a point $y_i$ along $\gamma_i\restr{[0,+\infty)}$ such that $\sfd(x,y_i) = T$, so $y_i \in S(x,T) \cap \text{QC-Hull}(C)$. We observe that Lemma \ref{lemma:projection_quasigeodesic} implies that $\sfd(\xi_{x,z_i}(T), y_i) \le 144\delta$ for every geodesic ray $\xi_{x,z_i}$. 
		
		We claim that $\{y_i\}$ is $294\delta$-dense in $S(x,T)\cap \text{QC-Hull}(C)$.
		Let $y\in S(x,T)\cap \text{QC-Hull}(C)$, in particular it belongs to a geodesic line $\gamma$ with endpoints in $C$. We parametrize $\gamma$ in such a way that $\gamma(0)$ is a closest point of $\gamma$ to $x$ and $y\in \gamma \restr{[0,+\infty)}$. Lemma \ref{lemma:projection_quasigeodesic} implies that $\sfd(y, \xi_{x,\gamma^+}(T)) \le 144\delta$. The point $\gamma^+$ belongs to $C$, then there exists $i$ such that $(\gamma^+,z_i)_x \ge T$. Lemma \ref{product-rays} says that $\sfd(\xi_{x,z_i}(T), \xi_{x,\gamma^+}(T)) \le 6\delta$. Therefore $\sfd(y,y_i) \le 294\delta$ and the claim is proved. This implies that
		\begin{equation}
			\begin{aligned}
				\Cov_{\sfd}(S(x,T);C;294\delta) \le \#\{y_i\} = \#\{z_i\} = \Cov_x(C,\rho),
			\end{aligned}
		\end{equation}
		leading to 
		\begin{equation}
			\lims_{T \to +\infty}\frac{1}{T}\log \Cov_{\sfd}(S(x,T);C;294\delta) \le \lims_{\rho \to 0} \frac{\log \Cov_x(C,\rho)}{\log \frac{1}{\rho}} = \overline{\MD}(C).
		\end{equation}
		
		It remains to prove \eqref{eq:h_Geod_balls=h_Geod_spheres}. The first equality is Lemma \ref{lemma:h_erg_no_r}. Moreover it is obvious that $$\lims_{T \to +\infty}\frac{1}{T}\log \Cov_{\sfd}(\overline{B}(x,T);C;294\delta) \ge \lims_{T \to +\infty}\frac{1}{T}\log \Cov_{\sfd}(S(x,T);C;294\delta).$$
		So we need to prove the opposite inequality. We write $\overline{B}(x,T) \subseteq \bigcup_{k=0}^{\lfloor T \rfloor} A(x,k,k+1)$, where $A(x,k,k+1) = \overline{B}(x,k+1) \setminus B(x,k)$ and we deduce that
		$$\Cov_{\sfd}(\overline{B}(x,T);C;294\delta) \le \sum_{k=0}^{\lfloor T \rfloor} \Cov_{\sfd}(A(x,k,k+1);C;294\delta).$$
		It is therefore enough to prove that $\Cov_{\sfd}(A(x,k,k+1);C;294\delta) \le \Cov_{\sfd}(S(x,T);C;294\delta)$ for every $k=M,\ldots,\lfloor T - 291\delta - 1 \rfloor$.
		
		We fix such a $k$. Let $\{y_i\}$ be a $294\delta$-dense subset of $S(x,T)\cap \text{QC-Hull}(C)$. For every $i$ we take a geodesic line $\gamma_i$ with endpoints in $C$ containing $y_i$. We parametrize $\gamma_i$ in such a way that $\gamma_i(0)$ is a closest point of $\gamma_i$ to $x$ and $y_i$ belongs to $\gamma_i\restr{[0,+\infty)}$. We consider a geodesic ray $\xi_{x, \gamma_i^+}$ and we apply Lemma \ref{lemma:approximation-ray-line} in order to find another geodesic line $\gamma_i'$ with endpoints in $C$, $\gamma_i'^+ = \gamma_i^+$ and parametrized in such a way that $\gamma_i'(0)$ is a closest point of $\gamma_i'$ to $x$ and $\sfd(x,\gamma_i'(0))\le M$. We take a point $x_i$ that belongs to $\gamma_i'\restr{[0,+\infty)}$ and that satisfies $\sfd(x,x_i) \in [k,k+1]$. This is possible since $k \ge M$ and $\sfd(x, \gamma_i'(0)) \le M$. 
		
		We claim that the set $\{x_i\} \in \text{QC-Hull}(C) \cap A(x,k,k+1)$ is $294\delta$-dense. Let $y \in \text{QC-Hull}(C) \cap A(x,k,k+1)$, let $\gamma$ be a geodesic line with endpoints in $C$ containing $y$ and parametrized in such a way that $\gamma(0)$ is a closest point of $\gamma$ to $x$ and $x\in \gamma\restr{[0,+\infty)}$. Let $y_T$ be a point of $\gamma\restr{[0,+\infty)}$ belonging to $S(x,T)$. Then there exists $i$ such that $\sfd(y_T,y_i) \le 294\delta$. We consider geodesic rays $\xi_{x,\gamma^+}$ and $\xi_{x,\gamma_i^+}$. Lemma \ref{lemma:projection_quasigeodesic} implies that $\sfd(\xi_{x,\gamma^+}(T), \xi_{x,\gamma_i^+}(T)) \le 582\delta$. This means, by Lemma \ref{product-rays}, that $(\gamma_i^+, \gamma^+)_x \ge T - 291\delta$. Therefore $(\gamma_i'^+, \gamma^+)_x \ge T - 291\delta$, since $\gamma_i'^+ = \gamma_i^+$. Again by Lemma \ref{product-rays} we get that $\sfd(\xi_{x,\gamma^+}(t), \xi_{x,\gamma_i'^+}(t)) \le 6\delta$ for every $t \in [0,T-291\delta]$. Another double application of Lemma \ref{lemma:projection_quasigeodesic} gives that $\sfd(x_i, y) \le 294\delta$, where we used the fact that both $\sfd(x,x_i)$ and $\sfd(x,y)$ are at most $k+1 \le T - 291\delta$.	This concludes the proof.
	\end{proof}
	
	We end this section with another property implied by the bounded geometry assumption.
	\begin{prop}
		\label{prop:bounded_geometry_implies_doubling_boundary}
		Let $\X$ be a proper, geodesic, Gromov-hyperbolic metric space, let $x\in \X$ and let $C\subseteq \partial \X$. Suppose $\X$ has bounded geometry with respect to $C$. Then $C$ is doubling in the following sense: there exists $C_D > 0$ such that
		\begin{equation}
			\label{eq:covering_doubling_boundary}
			\Cov_x\left(B_x(z,\rho), \frac{\rho}{2}\right) \le C_D
		\end{equation}
		for every $z\in C$, every $\rho > 0$. In particular, $\overline{\MD}(C) < +\infty$.
	\end{prop}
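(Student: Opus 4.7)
The plan is to transfer the covering of the boundary visual ball $B_x(z,\rho)$ into a packing problem inside a uniform region of $\textup{QC-Hull}(C) \subset \X$, where the bounded geometry hypothesis \eqref{eq:defin_bounded_geometry_wrt_C} can then be applied directly. Write $\delta$ for the hyperbolicity constant of $\X$ and set $M := \sfd(x, \textup{QC-Hull}(C)) + 22\delta$.

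First I treat the case $\rho \le 1$ (for $\rho > 1$ the inequality will follow trivially from compactness of $\partial \X$). Set $T := \log(1/\rho) \ge 0$. By \eqref{eq:Cov_Pack} it suffices to bound $\Pack_x(B_x(z,\rho), \rho/4)$, so I take a maximal $\rho/4$-separated subset $\{z_i\}_{i=1}^N \subseteq B_x(z,\rho)$, characterized by $(z_i, z)_x > T$ and $(z_i, z_j)_x \le T + \log 4$ for $i \ne j$. For each $z_i$, Lemma \ref{lemma:approximation-ray-line} supplies a geodesic line $\gamma_i$ with both endpoints in $C$, $\gamma_i^+ = z_i$, satisfying $\sfd(\xi_{x,z_i}(t), \gamma_i(t)) \le M$ for all $t \ge 0$. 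I fix a universal constant $T_0 := \log 4 + M + 2$ (depending only on $\delta$, $x$, $C$) and set $y_i := \gamma_i(T+T_0) \in \textup{QC-Hull}(C)$.

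The crux is a pair of estimates on the $y_i$. For an upper bound, from $(z_i, z)_x > T$ and Lemma \ref{product-rays}(i) I get $\sfd(\xi_{x,z_i}(T-\delta), \xi_{x,z}(T-\delta)) \le 4\delta$; prolonging the two rays up to time $T + T_0$ and combining with the $M$-tracking of $\gamma_i$ by $\xi_{x,z_i}$ yields $\sfd(y_i, \xi_{x,z}(T+T_0)) \le R$ where $R := M + 2T_0 + 6\delta$, so all the $y_i$ lie in $\overline{B}(y_1, 2R)$. For a lower bound, the contrapositive of Lemma \ref{product-rays}(ii) applied at time $T + T_0$ to $(z_i, z_j)_x \le T + \log 4$ gives $\sfd(\xi_{x,z_i}(T+T_0), \xi_{x,z_j}(T+T_0)) \ge 2(T_0 - \log 4) = 2M + 4$; combining again with $M$-tracking yields $\sfd(y_i, y_j) \ge 4$ for $i \ne j$. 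Hence $\{y_i\}$ is a $2$-separated subset of $\overline{B}(y_1, 2R) \cap \textup{QC-Hull}(C)$, and since $y_1 \in \textup{QC-Hull}(C)$, \eqref{eq:Cov_Pack} together with \eqref{eq:defin_bounded_geometry_wrt_C} give $N \le \Cov_\sfd(\overline{B}(y_1, 2R); C; 2) \le C_{2R, 2}$, a constant depending only on $\delta$, $x$, $C$.

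For $\rho > 1$, monotonicity plus compactness of $\partial \X$ give $\Cov_x(B_x(z,\rho), \rho/2) \le \Cov_x(\partial \X, 1/2) < +\infty$. Taking $C_D$ to be the maximum of $C_{2R, 2}$ and this compactness bound establishes \eqref{eq:covering_doubling_boundary}. The finiteness of $\overline{\MD}(C)$ then follows by iterating the doubling inequality to obtain $\Cov_x(C, 2^{-n}) \le C_D^n \, \Cov_x(C, 1)$, whence $\overline{\MD}(C) \le \log(C_D)/\log 2 < +\infty$. The delicate point is the choice of $T_0$: it must be large enough that the geodesic-line separation between the $y_i$ survives the combined $2M$ tracking error, yet it must remain a universal constant so that the enclosing radius $R$ stays uniform in $\rho$ and $z$. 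Everything else is bookkeeping of the Gromov-hyperbolic estimates in Lemmas \ref{product-rays} and \ref{lemma:approximation-ray-line}.
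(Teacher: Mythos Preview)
Your approach is the packing dual of the paper's covering argument: whereas the paper takes a $1$-net $\{x_i\}$ inside $\overline{B}(\xi_{x,z}(T),R)\cap\textup{QC-Hull}(C)$, pushes the associated geodesic lines out to endpoints $\gamma_i^+\in C$, and verifies that these form a $\rho/2$-cover, you pull a $\rho/4$-separated family $\{z_i\}$ on the boundary down to points $y_i=\gamma_i(T+T_0)\in\textup{QC-Hull}(C)$ and show these are separated inside a ball of uniform radius. Both strategies rely on exactly the same pair of lemmas (Lemmas~\ref{product-rays} and~\ref{lemma:approximation-ray-line}) and yield comparable constants; neither is materially simpler than the other.

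There is, however, one step that fails as written. Your appeal to Lemma~\ref{lemma:approximation-ray-line} (``supplies a geodesic line $\gamma_i$ with both endpoints in $C$, $\gamma_i^+=z_i$'') requires $z_i\in C$, but you only chose $\{z_i\}\subseteq B_x(z,\rho)\subseteq\partial\X$. In fact the literal statement---covering all of $B_x(z,\rho)$ rather than $B_x(z,\rho)\cap C$---cannot follow from bounded geometry relative to $C$ alone: take $C$ to be the two endpoints of a single bi-infinite geodesic inside a tree whose boundary is otherwise arbitrarily large. What your argument (and the paper's) actually establishes, and all that the iteration for $\overline{\MD}(C)<\infty$ requires, is the bound for $B_x(z,\rho)\cap C$ with centers in $C$. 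The fix is immediate: choose the $z_i$ in $B_x(z,\rho)\cap C$; then Lemma~\ref{lemma:approximation-ray-line} applies, each $y_i\in\textup{QC-Hull}(C)$, and the rest of your estimates go through verbatim. (The paper's proof has a parallel lacuna: it tacitly uses that $\xi_{x,w}(T+S)$ lies in $\textup{QC-Hull}(C)$, which likewise needs $w\in C$ together with an enlargement of $R$ by $M$.)
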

	\begin{proof}
		Let $\delta$ be the hyperbolicity constant of $\X$. Let $z\in C$ and $\rho > 0$. Set $T:= \log\frac{1}{\rho}$.
		Let $S:= 72\delta + 1 + \log 2$ and let $R:= S + 6\delta$. Let $\{x_i\}$ be a $1$-dense subset of $\overline{B}(\xi_{x,z}(T), R) \cap \text{QC-Hull}(C)$ of cardinality at most $C_D := C_{R,1}$,  where the right hand side comes directly from \eqref{eq:defin_bounded_geometry_wrt_C}. For every $i$ let $\gamma_i$ be a geodesic line with endpoints in $C$ and parametrized in such a way that $\gamma_i(0)$ is a closest point of $\gamma_i$ to $x$ and $x_i \in \gamma_i \restr{[0,+\infty)}$. We claim that the set $\{\gamma_i^+\}$ is $\frac{\rho}{2}$-dense in $B_x(z,\rho)$, i.e. for every $w \in B_x(z,\rho)$ there exists $i$ such that $(w,\gamma_i^+)_x > \log \frac{2}{\rho} = T + \log 2$. Let $w \in B_x(z,\rho)$. By Lemma \ref{product-rays} we have that $\sfd(\xi_{x,z}(T), \xi_{x,w}(T)) \le 6\delta$, so the point $\xi_{x,w}(T+S)$ belongs to $\overline{B}(\xi_{x,z}(T),R)$. Therefore there exists $i$ such that $\sfd(\xi_{x,w}(T+S), x_i) \le 1$. Lemma \ref{lemma:projection_quasigeodesic} implies that $\sfd(x_i, \xi_{x,\gamma_i^+}(t)) \le 144\delta$ where $t$ is such that $\sfd(x, \xi_{x,\gamma_i^+}(t)) = \sfd(x, x_i)$. Since $\sfd(\xi_{x,w}(T+S), x_i) \le 1$ we obtain that $\sfd(x_i, \xi_{x,\gamma_i^+}(T + S)) \le 144\delta + 1$. This means that $\sfd(\xi_{x,\gamma_i^+}(T + S), \xi_{x,w}(T + S)) \le 144\delta + 2$. Lemma \ref{product-rays} says that $(\gamma_i^+,w)_x > T + S - 72\delta - 1 = T + \log 2 = \log \frac{2}{\rho}$, i.e. $w\in B_x(\gamma_i^+, \frac{\rho}{2})$.
		
		For the second part we argue as follows. The diameter of $\partial \X$ is at most $1$, so it is for $C$. In other words, $C \subseteq B(z, 1)$ for every $z \in C$. For every $k > 0$, by iterating \eqref{eq:covering_doubling_boundary}, we obtain that $\Cov_x(C, \frac{1}{2^k}) \le C_D^k$. Similarly, for every $\rho > 0$ we have that $\Cov_x(C, \rho) \le C_D^{\log_2 \frac{1}{\rho}}$.
		This implies that
		$$\overline{\MD}(C) \le \lims_{\rho\to 0} \frac{\log_2 \frac{1}{\rho} \cdot \log C_D}{\log \frac{1}{\rho}} = \frac{\log C_D}{\log 2} < +\infty.$$
	\end{proof}
	
	\subsection{Groups of isometries and Bishop-Jones Theorem}
	\label{subsec:BJ}
	If $\X$ is a proper metric space we denote its group of isometries by $\Isom(\X)$ and we endow it with the uniform convergence on compact subsets of $\X$. A subgroup $\Gamma$ of $\Isom(\X)$ is called {\em discrete} if the following equivalent conditions hold:
	\begin{itemize}
		\item[(a)] $\Gamma$ is discrete as a subspace of $\Isom(\X)$;
		\item[(b)] for all $x\in \X$ and $R\geq 0$ the set $\Sigma_R(x) = \lbrace g \in \Gamma  \,:\,  g  x\in \overline{B}(x,R)\rbrace$ is finite.
	\end{itemize}   
	
	The critical exponent of a discrete group of isometries $\Gamma$ acting on a proper metric space $\X$ can be defined using the Poincaré series, or alternatively (\cite{Cav21ter}, \cite{Coo93}), as
	\begin{equation}
		h_\text{crit}(\X;\Gamma) = \lims_{T \to +\infty}\frac{1}{T}\log \# (\Gamma x \cap B(x,T)),
	\end{equation}
	where $x$ is a fixed point of $\X$. This quantity does not depend on the choice of $x$.

	We specialize the situation to the case of a proper, Gromov hyperbolic metric space $\X$. Every isometry of $\X$ acts naturally on $\partial \X$ and the resulting map on $\X\cup \partial \X$ is a homeomorphism.
	The {\em limit set} $\Lambda(\Gamma)$ of a discrete group of isometries $\Gamma$ is the set of accumulation points of the orbit $\Gamma x$ on $\partial \X$, where $x$ is any point of $\X$; it is the smallest $\Gamma$-invariant closed set of the Gromov boundary (cp. \cite[Theorem 5.1]{Coo93}) and it does not depend on $x$. The group $\Gamma$ is said to be \emph{non-elementary} if $\# \Lambda(\Gamma) > 2$. If $\X$ is Gromov-hyperbolic and $\Gamma$ is non-elementary then the limit superior in \eqref{eq:defin_critical_exponent} is a true limit, see \cite[Theorem B]{Cav24}.
	
	There are several interesting subsets of the limit set like the radial limit set, the uniformly radial limit set and the ergodic limit set. The last one has been defined in \cite{Cav24}. They are related to important sets of the quotient geodesic flow of $\Gamma \backslash \X$.
	
	In order to recall their definition, we fix a basepoint $x\in \X$, a positive real number $\tau > 0$ and an increasing sequence of real numbers $\Theta = \{\vartheta_i\}_{i\in \N}$ such that $\lim_{i\to +\infty} \vartheta_i = +\infty$. 	We define
	$\Lambda_{\tau, \Theta}(\Gamma)$ as the set of points $z\in \partial \X$ for which there exists a geodesic ray $\xi_{x,z}$ satisfying the following property: for every $i\in \N$ there exists $t_i\in [\vartheta_i,\vartheta_{i+1}]$ such that $\sfd(\xi_{x,z}(t_i), \Gamma x) \le \tau$.
	
	By choosing different classes of sequences $\Theta$, we define the different sets we recalled above. The \emph{radial limit set} is
	$$\Lambda_\text{rad}(\Gamma) := \bigcup_{\tau > 0} \bigcup_{\Theta} \Lambda_{\tau, \Theta},$$
	where the second union is taken among all possible sequences $\Theta$ as above. If we restrict the class of sequences we consider, we define the other sets. For $\tau > 0$ we define $\Lambda_\tau(\Gamma) := \Lambda_{\tau, \{i\tau\}}(\Gamma)$. The \emph{uniform radial limit set} of $\Gamma$ is
	\begin{equation}
		\label{eq:defin-uniform-radial-set}
		\Lambda_{\text{u-rad}}(\Gamma) := \bigcup_{\tau > 0} \Lambda_\tau(\Gamma).
	\end{equation}
	We define $\Theta_\text{erg}$ as the set of increasing sequences of positive real numbers $\Theta = \{\vartheta_i\}_{i\in\N}$ such that $\exists \lim_{i\to +\infty} \frac{\vartheta_i}{i} \in (0,+\infty)$. The \emph{ergodic limit set} of $\Gamma$ is the set 
	\begin{equation}
		\label{eq:defin-ergodic-limit-set}
		\Lambda_{\text{erg}}(\Gamma) := \bigcup_{\tau > 0} \bigcup_{\Theta \in \Theta_\text{erg}}  \Lambda_{\tau, \Theta}(\Gamma).
	\end{equation}

	The ergodic limit set plays an important role in relation with the quotient geodesic flow of $\Gamma \backslash \X$, as \cite[Theorem C]{Cav24} shows. In Section \ref{sec:proof_of_h_top_<h_Gamma} we will expand this relation. 
	
	In the sequel we will need other subsets. Let $c\in (0,+\infty)$, $\varepsilon > 0$ and $n\in \N$. We define $$\Theta_{c,\varepsilon,n} := \left\{\{\vartheta_i\}_{i\in \N} \text{ increasing sequence of real numbers}\,:\, \left \vert \frac{\vartheta_i}{i} - c \right \vert < \varepsilon\text{ for all } i\ge n \right\}$$
	and
	$$\Lambda_{\tau,c, \varepsilon, n}(\Gamma) :=  \bigcup_{\Theta \in \Theta_{c,\varepsilon,n}} \Lambda_{\tau, \Theta}(\Gamma).$$
	Notice that a sequence $\{\vartheta_i\} \in \Theta_{c,\varepsilon,n}$ satisfies $c- \varepsilon \le \limi_{i\to +\infty} \frac{\vartheta_i}{i} \le  \lims_{i\to +\infty} \frac{\vartheta_i}{i} \le c+\varepsilon$.

	When $\Gamma$ is clear from the context, we simply omit it from the notation and we write $\Lambda, \Lambda_\text{rad}, \Lambda_\text{u-rad}, \Lambda_\text{erg}, \Lambda_{\tau,c,\varepsilon,n}$. The sets $\Lambda, \Lambda_\text{rad}, \Lambda_\text{u-rad}$ and $\Lambda_\text{erg}$ are $\Gamma$-invariant (\cite[Lemma 4.2]{Cav24}), while the sets $\Lambda_{\tau,c,\varepsilon,n}$ are not in general.

	In the proof of Theorem \ref{theo:intro-h_top>=h_Gamma} we will use following version of Bishop-Jones' Theorem that has been proved in  the remark after the proof of \cite[Theorem B]{Cav24}.
	\begin{theo}[\cite{BJ97, Pau97, DSU17, Cav24}]
		\label{theo:Bishop-Jones}
		Let $\X$ be a proper, Gromov-hyperbolic metric space and let $\Gamma < \Isom(\X)$ be discrete and non-elementary. Then
		\begin{equation}
			\begin{aligned}
				h_\textup{crit}(\X;\Gamma) =  \sup_{\tau \geq 0} \underline{\textup{MD}}(\Lambda_{\tau}).
			\end{aligned}
		\end{equation}
	\end{theo}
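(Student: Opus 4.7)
My plan is to split the equality into the two inequalities $\sup_{\tau\ge 0}\underline{\MD}(\Lambda_\tau)\le h_\textup{crit}(\X;\Gamma)$ and $h_\textup{crit}(\X;\Gamma)\le \sup_{\tau\ge 0}\underline{\MD}(\Lambda_\tau)$. The first will be proved by a direct covering argument via the shadow lemma. The second will be reduced to the classical Bishop--Jones theorem (for Hausdorff dimension of the uniformly radial limit set) via the routine inequality $\HD\le \underline{\MD}$.

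For the upper inequality, fix $\tau>0$, $x\in\X$, and an integer $i\ge 2$. For each $z\in \Lambda_\tau$ the very definition of $\Lambda_{\tau,\{i\tau\}}$ gives a geodesic ray $\xi_{x,z}$, a time $t_i\in[i\tau,(i+1)\tau]$, and an element $g_{i,z}\in\Gamma$ with $\sfd(\xi_{x,z}(t_i), g_{i,z}x)\le \tau$. Hence $z\in\textup{Shad}_x(g_{i,z}x,\tau)$ with $(i-1)\tau\le \sfd(x,g_{i,z}x)\le (i+2)\tau$, and Lemma \ref{shadow-ball} gives $\textup{Shad}_x(g_{i,z}x,\tau)\subseteq B_x(z, e^{-\sfd(x,g_{i,z}x)+\tau})\subseteq B_x(z,e^{-(i-2)\tau})$. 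Therefore
\[
\Cov_x\bigl(\Lambda_\tau,\,e^{-(i-2)\tau}\bigr)\;\le\;\#\bigl(\Gamma x\cap\overline{B}(x,(i+2)\tau)\bigr).
\]
Setting $\rho_i:=e^{-(i-2)\tau}$ we have $\log(1/\rho_i)=(i-2)\tau$, and since $\rho_i/\rho_{i+1}=e^{\tau}$ is a fixed constant, the monotonicity of $\rho\mapsto\Cov_x(\Lambda_\tau,\rho)$ lets me pass from the sequence $\rho_i$ to the continuous limit $\rho\to 0$. Taking $\liminf$ and using that $h_\textup{crit}$ is a true limit for non-elementary discrete actions on Gromov-hyperbolic spaces (Theorem B of \cite{Cav24}), this yields $\underline{\MD}(\Lambda_\tau)\le h_\textup{crit}(\X;\Gamma)$ for every $\tau>0$.

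For the reverse inequality I invoke the classical Bishop--Jones theorem for proper, Gromov-hyperbolic metric spaces (cp.\ \cite{BJ97,Pau97,DSU17}), which states $\HD(\Lambda_\textup{u-rad}(\Gamma))=h_\textup{crit}(\X;\Gamma)$. From \eqref{eq:defin-uniform-radial-set} we have $\Lambda_\textup{u-rad}(\Gamma)=\bigcup_{\tau>0}\Lambda_\tau(\Gamma)$, so countable stability of Hausdorff dimension gives $\HD(\Lambda_\textup{u-rad}(\Gamma))=\sup_\tau \HD(\Lambda_\tau(\Gamma))$. Combining with the elementary inequality $\HD(A)\le \underline{\MD}(A)$, valid for every bounded subset $A$ of a doubling metric space (and thus for every $A\subseteq\partial \X$ once we fix a visual metric), I conclude
\[
h_\textup{crit}(\X;\Gamma)\;=\;\sup_{\tau}\HD(\Lambda_\tau(\Gamma))\;\le\;\sup_{\tau}\underline{\MD}(\Lambda_\tau(\Gamma)).
\]

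The only genuinely delicate step, should one wish a self-contained proof that bypasses the classical Bishop--Jones statement, is the lower inequality: one would need, for each $s<h_\textup{crit}$, to construct a Schottky-type free subgroup of $\Gamma$ whose critical exponent exceeds $s$ and whose limit set is both Ahlfors $s$-regular and contained in $\Lambda_\tau$ for some $\tau>0$, along the lines of \cite{CavS20bis}. Once the shadow lemma is available, the upper inequality is essentially a bookkeeping exercise, and the countable-stability argument together with the monotonicity $\HD\le \underline{\MD}$ makes the lower inequality automatic from the Hausdorff-dimensional version already present in the literature.
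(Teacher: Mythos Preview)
The paper does not give its own proof of this statement; it simply cites it as established in the remark following Theorem~B of \cite{Cav24} (together with \cite{BJ97,Pau97,DSU17}). So there is no ``paper's proof'' to compare against, and your proposal is in fact more than what the paper provides.

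Your argument is the standard one and is essentially correct. Two minor technical points deserve cleaning up. First, from $\sfd(\xi_{x,z}(t_i),g_{i,z}x)\le\tau$ for \emph{one} ray $\xi_{x,z}$ you cannot conclude $z\in\textup{Shad}_x(g_{i,z}x,\tau)$, since the shadow requires \emph{every} ray from $x$ to $z$ to meet the ball; you must invoke Lemma~\ref{parallel-geodesics} to enlarge the radius to $\tau+8\delta$ (compare the paper's own use of this device in the proof of Proposition~\ref{prop:MD < h_Gamma_special_subsets}). Second, Lemma~\ref{shadow-ball} as stated applies to shadows cast by points \emph{on} the ray $\xi_{x,z}$, so to obtain a covering of $\Lambda_\tau$ by finitely many visual balls you should fix one boundary point $z_g$ per shadow $\textup{Shad}_x(gx,\tau+8\delta)$ and observe that the whole shadow lies in $B_x(z_g,e^{-(i-2)\tau+O(1)})$; your formulation with the $z$-dependent centre $B_x(z,\cdot)$ does not literally give a finite cover. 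Finally, the appeal to ``$h_\textup{crit}$ is a true limit'' is harmless but unnecessary: since $\liminf$ along the subsequence $\rho_i$ dominates $\underline{\MD}(\Lambda_\tau)$ and is in turn bounded by the $\limsup$ defining $h_\textup{crit}$, the inequality follows without knowing the limit exists.

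For the reverse inequality, your reduction to the Hausdorff-dimension version of Bishop--Jones via countable stability and $\HD\le\underline{\MD}$ is clean; note that the latter inequality holds in any (quasi-)metric space and does not require doubling, so that parenthetical hypothesis can be dropped.
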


	One of the key steps in the proof of Theorem \ref{theo:intro-equality-Gromov-hyperbolic} will be to show that every ergodic measure of the quotient geodesic flow of $\Gamma \backslash \X$ is concentrated on the set of geodesics with endpoints in $\Lambda_{\tau,c, \varepsilon, n}$ for some choices of the parameters. 
	For the proof of one inequality in Theorem \ref{theo:intro-equality-Gromov-hyperbolic} we will need
	the next estimate, whose proof follows the same lines of \cite[Theorem B]{Cav24}.
	
	\begin{prop}
		\label{prop:MD < h_Gamma_special_subsets}
		Let $\X$ be a proper, Gromov-hyperbolic metric space, let $x\in \X$ and let $\Gamma < \Isom(\X)$ be discrete. For every $\tau,c,\varepsilon > 0$ and $n\in \N$ it holds that
		$$\overline{\MD}(\Lambda_{\tau,c, \varepsilon, n}) \le h_\textup{crit}(\X;\Gamma) \cdot \frac{c+\varepsilon}{c-\varepsilon}.$$
	\end{prop}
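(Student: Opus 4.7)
The plan is to estimate the upper Minkowski dimension of $\Lambda_{\tau,c,\varepsilon,n}$ directly by building efficient coverings with generalized visual balls, in the spirit of the classical Bishop-Jones argument. The key quantitative input is: if $z\in \Lambda_{\tau,c,\varepsilon,n}$ and $S\gg 1$, then along $\xi_{x,z}$ we can find a time $t^*$ and an orbit point $gx$ at distance $\le \tau$ from $\xi_{x,z}(t^*)$ with $t^* \in \bigl[S\tfrac{c-\varepsilon}{c+\varepsilon}-O(1),\, S\bigr]$. The lower bound on $t^*$ will convert a shadow of $gx$ into a small generalized visual ball at $z$, and the upper bound on $t^*$ will ensure $gx$ lies in a ball around $x$ of radius $\le S+\tau$, so that the total number of shadows needed is controlled by the critical exponent.

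To make this precise, I would fix $z\in \Lambda_{\tau,c,\varepsilon,n}$, choose a certifying sequence $\{\vartheta_i\}\in\Theta_{c,\varepsilon,n}$ together with times $t_i\in[\vartheta_i,\vartheta_{i+1}]$ and elements $g_i\in\Gamma$ with $\sfd(\xi_{x,z}(t_i),g_ix)\le\tau$, and for $S$ with $S/(c+\varepsilon)\ge n+2$ define $i^*$ to be the largest index with $t_{i^*}\le S$. The defining inequalities $(c-\varepsilon)i<\vartheta_i$ and $\vartheta_{i+1}<(c+\varepsilon)(i+1)$ for $i\ge n$, together with $t_{i^*+1}>S$, give
\[
t_{i^*}\ \ge\ S\cdot\frac{c-\varepsilon}{c+\varepsilon}-2(c-\varepsilon),\qquad \sfd(x,g_{i^*}x)\le S+\tau.
\]
Since $g_{i^*}x$ lies $\tau$-close to $\xi_{x,z}(t_{i^*})$, one has $\textup{Shad}_x(g_{i^*}x,\tau)\subseteq \textup{Shad}_x(\xi_{x,z}(t_{i^*}),2\tau)$, and Lemma \ref{shadow-ball} then gives
\[
\textup{Shad}_x(g_{i^*}x,\tau)\ \subseteq\ B_x(z,\rho_S),\qquad \rho_S:=e^{-S(c-\varepsilon)/(c+\varepsilon)+C},
\]
for a constant $C=C(\tau,\delta,c,\varepsilon)$ independent of $z$ and of $\{\vartheta_i\}$.

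Letting $z$ vary, the family $\{\textup{Shad}_x(gx,\tau):g\in\Sigma_{S+\tau}(x)\}$ therefore covers $\Lambda_{\tau,c,\varepsilon,n}$ as soon as $S$ is large enough to force $i^*\ge n$ for every admissible sequence (which is immediate from the lower bound $i^*>S/(c+\varepsilon)-2$). Each shadow in the family is contained in a visual ball of radius $\rho_S$, so
\[
\Cov_x(\Lambda_{\tau,c,\varepsilon,n},\rho_S)\ \le\ \#\Sigma_{S+\tau}(x).
\]
For every $\eta>0$ and $S$ large one has $\#\Sigma_{S+\tau}(x)\le e^{(h_\textup{crit}(\X;\Gamma)+\eta)(S+\tau)}$ by the very definition of the critical exponent. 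Taking logarithms, dividing by $\log(1/\rho_S)=S(c-\varepsilon)/(c+\varepsilon)-C$, letting $S\to\infty$ and then $\eta\to 0$ produces
\[
\overline{\MD}(\Lambda_{\tau,c,\varepsilon,n})\ \le\ h_\textup{crit}(\X;\Gamma)\cdot\frac{c+\varepsilon}{c-\varepsilon}.
\]

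There is no serious obstacle: the argument is purely asymptotic, and the only point requiring attention is to make sure all constants (the shadow radius, the distortion $C$, the thresholds past $n$) are uniform in $z$ and in the sequence $\{\vartheta_i\}$ chosen for it. This uniformity is automatic because every such constant depends only on $\tau,\delta,c,\varepsilon,n$.
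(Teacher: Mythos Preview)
Your argument is correct and follows essentially the same shadow-covering strategy as the paper's proof: both find, for each $z\in\Lambda_{\tau,c,\varepsilon,n}$, an orbit point $gx$ within $\tau$ of the ray $\xi_{x,z}$ at a time controlled by the inequalities $(c-\varepsilon)i<\vartheta_i<(c+\varepsilon)i$, and then convert the resulting shadows into generalized visual balls via Lemma~\ref{shadow-ball}. The only cosmetic differences are the parametrization (you fix the outer radius $S$ and pick the largest admissible index $i^*$, while the paper fixes the scale $\rho$ and picks the index $j_\rho$ directly) and a couple of constants that need a small adjustment: the shadow radius should be enlarged to roughly $\tau+8\delta$ so that \emph{every} ray $\xi_{x,z}$ enters the ball (as required by the definition of $\text{Shad}_x$), and the claim ``each shadow in the family is contained in a visual ball of radius $\rho_S$'' should be restricted to those $g$ with $\sfd(x,gx)$ bounded below by $S\tfrac{c-\varepsilon}{c+\varepsilon}-O(1)$, which are the only ones you actually use.
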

	\begin{proof}
		Let $\delta$ be the hyperbolicity constant of $\X$.
		For simplicity, we set $h := h_\text{crit}(\X;\Gamma)$.
		We fix an error term $\eta > 0$.	Let $T_\eta > 0$ be such that 
		$$\frac{1}{T}\log \#(\Gamma x \cap B(x,T)) \leq h + \eta$$
		for every $T\geq T_\eta$.
		Let $\rho \leq e^{-\max\lbrace T_\eta, n(c-\varepsilon) \rbrace}$. We consider $j_\rho\in \mathbb{N}$ with the following property: $(j_\rho-1)(c - \varepsilon) < \log \frac{1}{\rho}\leq j_\rho(c - \varepsilon)$. The condition on $\rho$ gives $\log \frac{1}{\rho} \geq n(c - \varepsilon)$, implying $j_\rho\geq n$.\\
		We consider the set of elements $g\in \Gamma$ such that 
		\begin{equation}
			\label{shadow}
			j_\rho(c - \varepsilon) - \tau \leq \sfd(x,gx) \leq (j_\rho+1)(c + \varepsilon) + \tau.
		\end{equation}
		For every such $g$ we consider the shadow $\text{Shad}_{x}(gx,2\tau + 8\delta)$ and we claim that this set of shadows covers $\Lambda_{\tau,c, \varepsilon, n}$. 
		
		Let $z \in \Lambda_{\tau,c, \varepsilon, n}$. By definition, $z\in \Lambda_{\tau,\{\vartheta_i\}}$ with the property that for every $i\ge n$ it holds that $(c-\varepsilon)i \le \vartheta_i \le (c+\varepsilon)i$. In particular this holds for $j_\rho$, i.e. $(c-\varepsilon)j_\rho \le \vartheta_{j_\rho} \le (c+\varepsilon)j_\rho$.
		Hence there exists a geodesic ray $\xi_{x,z}$ and a time $t$ satisfying
		$$j_\rho(c - \varepsilon)\leq \vartheta_{j_\rho} \leq t\leq \vartheta_{j_\rho+1}\leq (j_\rho+1)(c + \varepsilon), \qquad d(\xi_{x,z}(t),\Gamma x) \leq \tau.$$
		So there exists $g\in \Gamma$ satisfying \eqref{shadow} such that $z\in \text{Shad}_{x}(gx,2\tau + 8\delta)$, by Lemma \ref{parallel-geodesics}.
		Moreover, all these shadows are casted by points at distance at least $j_\rho(c - \varepsilon) - \tau$ from $x$, so at distance at least $\log\frac{1}{e^\tau\rho}$ from $x$. We need to estimate the number of such $g$'s.
		By the assumption on $\rho$ we get that this number is less than or equal to $e^{(h+\eta)[(j_\rho+1)(c + \varepsilon) + \tau]}.$
		Hence, by Lemma \ref{shadow-ball}, we conclude that $\Lambda_{\tau,c, \varepsilon, n}$ is covered by at most $e^{(h+\eta)[(j_\rho+1)(c + \varepsilon) + \tau]}$ generalized visual balls of radius $e^{3\tau+8\delta}\rho$. Thus,
		\begin{equation*}
			\begin{aligned}
				\overline{\text{MD}}(\Lambda_{\tau,c, \varepsilon, n})&=\lims_{\rho \to 0}\frac{\log\text{Cov}_x(\Lambda_{\tau,c, \varepsilon, n}, e^{3\tau+8\delta}\rho)}{\log \frac{1}{e^{3\tau+8\delta}\rho}}\\
				&\leq \lims_{\rho\to 0}\frac{(h+\eta)[(j_\rho+1)(c + \varepsilon) + \tau]}{-3\tau -8\delta + (j_\rho-1)(c - \varepsilon)} \\
				&\leq (h+\eta)\cdot \frac{c+\varepsilon}{c-\varepsilon}.
			\end{aligned}
		\end{equation*}
		The thesis follows by the arbitrariness of $\eta > 0$.
	\end{proof}

\subsection{Ergodic entropy of a group of isometries}
\label{subsec:ergodic_entropy} 	
	Let $\X$ be a geodesic, proper, Gromov-hyperbolic metric space. Let $\Gamma < \Isom(\X)$ be discrete. We define the \emph{ergodic entropy} of $\Gamma$ as
	$$h_\textup{erg}(\X;\Gamma) := h_\Geod(\X; \Lambda_{\text{erg}}(\Gamma)).$$
	This is the notion that appears in Theorem \ref{theo:intro-equality-Gromov-hyperbolic}. We observe the following property.
	\begin{cor}
		\label{cor:h_crit<h_erg}
		Let $\X$ be a proper, geodesic, Gromov-hyperbolic metric space and let $\Gamma < \Isom(\X)$ be discrete and non-elementary. Then $h_\textup{crit}(\X;\Gamma) \le h_\textup{erg}(\X;\Gamma)$. 
	\end{cor}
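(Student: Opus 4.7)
The plan is to chain together three facts already available in the excerpt: Bishop-Jones' Theorem, monotonicity of Minkowski dimension, and Proposition \ref{prop:MD<h_Geod}.

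First I would observe that $\Lambda_\tau(\Gamma) \subseteq \Lambda_\textup{erg}(\Gamma)$ for every $\tau > 0$. Indeed, by definition $\Lambda_\tau = \Lambda_{\tau,\{i\tau\}}$, and the sequence $\vartheta_i := i\tau$ is increasing with $\lim_{i\to+\infty}\vartheta_i/i = \tau \in (0,+\infty)$, so $\{i\tau\}_{i\in\N} \in \Theta_\textup{erg}$. Hence $\Lambda_\tau \subseteq \bigcup_{\Theta \in \Theta_\textup{erg}}\Lambda_{\tau,\Theta} \subseteq \Lambda_\textup{erg}$.

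Next I would apply monotonicity of the Minkowski dimension under inclusion, together with the trivial bound $\underline{\MD} \le \overline{\MD}$, to get
\begin{equation}
\underline{\MD}(\Lambda_\tau) \le \overline{\MD}(\Lambda_\tau) \le \overline{\MD}(\Lambda_\textup{erg})
\end{equation}
for every $\tau > 0$. Then Proposition \ref{prop:MD<h_Geod}, applied with $C = \Lambda_\textup{erg}$, gives $\overline{\MD}(\Lambda_\textup{erg}) \le h_\Geod(\X;\Lambda_\textup{erg}) = h_\textup{erg}(\X;\Gamma)$. Taking the supremum over $\tau \ge 0$ in the previous display and invoking Bishop-Jones' Theorem \ref{theo:Bishop-Jones}, we conclude
\begin{equation}
h_\textup{crit}(\X;\Gamma) = \sup_{\tau \ge 0} \underline{\MD}(\Lambda_\tau) \le \overline{\MD}(\Lambda_\textup{erg}) \le h_\textup{erg}(\X;\Gamma).
\end{equation}

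There is essentially no obstacle here; the corollary is a formal consequence of the identifications already established. The only point to double-check is that Proposition \ref{prop:MD<h_Geod} is applicable to $C = \Lambda_\textup{erg}$ without any extra hypothesis (it is stated for arbitrary $C \subseteq \partial\X$, so this is fine, with the convention that the inequality is vacuous when $\#\Lambda_\textup{erg} < 2$; since $\Gamma$ is non-elementary the radial limit set is dense in $\Lambda(\Gamma)$ and this case does not arise).
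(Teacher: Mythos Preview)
Your proof is correct and follows essentially the same route as the paper's: Bishop--Jones, the inclusion $\Lambda_\tau \subseteq \Lambda_\textup{erg}$, and Proposition~\ref{prop:MD<h_Geod}. The only cosmetic difference is that you pass through $\overline{\MD}(\Lambda_\textup{erg})$ (monotonicity of Minkowski dimension, then the Proposition applied to $\Lambda_\textup{erg}$), whereas the paper passes through $h_\Geod(\X;\Lambda_\tau)$ (the Proposition applied to $\Lambda_\tau$, then monotonicity of $h_\Geod(\X;\cdot)$); both orderings are valid and yield the same chain.
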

	\begin{proof}
		By Theorem \ref{theo:Bishop-Jones} we have that $h_\textup{crit}(\X;\Gamma) = \sup_{\tau \ge 0} \underline{\MD}(\Lambda_\tau)$. Proposition \ref{prop:MD<h_Geod} implies that $\underline{\MD}(\Lambda_\tau) \le h_\Geod(\X;\Lambda_\tau)$ for every $\tau \ge 0$ big enough. The thesis follows since for every such $\tau$ we have that $\Lambda_\tau \subseteq \Lambda_\text{erg}$, so $\underline{\MD}(\Lambda_\tau) \le h_\Geod(\X;\Lambda_\text{erg})$.
	\end{proof}
	
	In particular, under the hypotheses of Theorem \ref{theo:intro-equality-Gromov-hyperbolic} we have that $h_\text{crit}(\X;\Gamma) < +\infty$.
	
	\begin{obs}
		\label{rmk:h_Gamma_vs_h_erg}
		In this remark we explain one difference between the conditions $h_\textup{crit}(\X;\Gamma) < +\infty$ and $h_\textup{erg}(\X;\Gamma) < +\infty$. In the definition of $h_\textup{erg}(\X;\Gamma)$ we take into account the covering at every scale $r > 0$ and then we consider the limit for $r$ going to zero, while in the definition of $h_\textup{crit}(\X;\Gamma)$ we just count the spread of the orbit, without looking at a particular scale. When we want to define quantities related to the topological entropy of the geodesic flow we really need to consider every possible scale $r>0$. Indeed in the classical definition of topological entropy that we will see in Section \ref{sec:geodesic_flow}, every scale $r>0$ is considered. The problem of the scales will be central in the proof of Theorem \ref{theo:intro-equality-Gromov-hyperbolic}, and in Section \ref{sec:proof_of_h_top_<h_Gamma} we will see how the condition $h_\textup{erg}(\X;\Gamma) < +\infty$, together with the line-convexity, gives a solution to this problem. However, the dependence of $h_\textup{erg}(\X;\Gamma)$ on every scale $r>0$ is quite weak as the next corollary explains.
	\end{obs}

	\begin{cor}
		\label{cor:bounded_geometry_then_finite_erg_entropy}
		Let $\X$ be a proper, geodesic, Gromov-hyperbolic metric space, let $x\in \X$ and let $\Gamma < \Isom(\X)$ be discrete and non-elementary. Suppose $\X$ has bounded geometry with respect to $\Lambda_\textup{erg}$. Then
		\begin{itemize}
			\item[(i)] the function $r\mapsto \lims_{T \to +\infty}\frac{1}{T} \log \Cov_\sfd(\overline{B}(x,T);\Lambda_\textup{erg};r)$ is constant;
			\item[(ii)] $h_\textup{erg}(\X;\Gamma) = \overline{\MD}(\Lambda) < +\infty$.
		\end{itemize}	
	\end{cor}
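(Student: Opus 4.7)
The plan is as follows. Part (i) is immediate: I would simply invoke Lemma \ref{lemma:h_erg_no_r} with $C = \Lambda_\textup{erg}(\Gamma)$, whose sole hypothesis, bounded geometry with respect to $C$, is exactly the assumption of the corollary. No further work is needed.

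For part (ii), I would chain three equalities,
\begin{equation*}
h_\textup{erg}(\X;\Gamma) \;=\; h_\Geod(\X;\Lambda_\textup{erg}) \;=\; \overline{\MD}(\Lambda_\textup{erg}) \;=\; \overline{\MD}(\Lambda),
\end{equation*}
and combine this with the finiteness of the middle term. The first equality is the definition of the ergodic entropy given in Section \ref{subsec:ergodic_entropy}. The second follows by applying Proposition \ref{prop:h_Geod=MD} to $C = \Lambda_\textup{erg}$: its hypotheses ask for bounded geometry with respect to $C$ (our assumption) and $\# C \ge 2$, and I would verify the latter by observing that, since $\Gamma$ is non-elementary, it contains at least one loxodromic isometry whose two fixed points both belong to $\Lambda_\textup{u-rad} \subseteq \Lambda_\textup{erg}$ (see below). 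The finiteness $\overline{\MD}(\Lambda_\textup{erg}) < +\infty$ is then granted by Proposition \ref{prop:bounded_geometry_implies_doubling_boundary}.

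The only non-routine step is the third equality $\overline{\MD}(\Lambda_\textup{erg}) = \overline{\MD}(\Lambda)$. The inequality $\le$ is trivial since $\Lambda_\textup{erg} \subseteq \Lambda$. For $\ge$, I would use two ingredients: (a) the upper Minkowski dimension is invariant under taking closures, which follows directly from its definition via covering numbers (any finite set of generalized visual balls that is $\rho$-dense for a set $C$ is, after a harmless enlargement of the radius, $\rho$-dense for $\overline{C}$); and (b) the set $\Lambda_\textup{erg}$ is dense in $\Lambda$. For (b), I would first show that every attracting fixed point of a loxodromic element $g \in \Gamma$ lies in $\Lambda_\textup{u-rad} \subseteq \Lambda_\textup{erg}$: if $\ell$ is the translation length of $g$ and $z^+$ its attracting fixed point, then the orbit $\{g^n x\}$ fellow-travels a geodesic ray $\xi_{x,z^+}$ at uniformly bounded distance (by Lemma \ref{parallel-geodesics} applied to shifts of the axis), so for $\tau$ larger than both $\ell$ and this fellow-traveling constant one has $z^+ \in \Lambda_\tau$. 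I would then combine this with the classical minimality statement that, for a non-elementary discrete group acting on a proper Gromov-hyperbolic space, the set of loxodromic fixed points is dense in $\Lambda$ (see e.g.\ \cite[Theorem 5.1]{Coo93}), to conclude $\Lambda \subseteq \overline{\Lambda_\textup{erg}}$ and hence $\overline{\MD}(\Lambda) \le \overline{\MD}(\overline{\Lambda_\textup{erg}}) = \overline{\MD}(\Lambda_\textup{erg})$.

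The main obstacle is step (b): although both the uniform radiality of loxodromic fixed points and the density of these fixed points in the full limit set are classical in the Gromov-hyperbolic framework, one must extract them in the precise generality at hand (discreteness in the topological sense of Section \ref{subsec:BJ}, no local Riemannian structure), so the argument will rely entirely on quoting \cite{Coo93} plus a short self-contained verification of the uniform-radiality claim via the fellow-traveling lemma.
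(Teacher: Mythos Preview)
Your proposal is correct and follows essentially the same approach as the paper: part (i) via Lemma~\ref{lemma:h_erg_no_r}, then the chain $h_\textup{erg}(\X;\Gamma) = \overline{\MD}(\Lambda_\textup{erg}) = \overline{\MD}(\Lambda)$ via Proposition~\ref{prop:h_Geod=MD} and density, with finiteness from Proposition~\ref{prop:bounded_geometry_implies_doubling_boundary}. The only difference is in how you justify the density of $\Lambda_\textup{erg}$ in $\Lambda$: you go through loxodromic fixed points, whereas the paper simply invokes that $\Lambda_\textup{erg}$ is $\Gamma$-invariant (stated just before Theorem~\ref{theo:Bishop-Jones}, from \cite[Lemma 4.2]{Cav24}) and that $\Lambda$ is the smallest closed $\Gamma$-invariant subset of $\partial\X$; since $\Lambda_\textup{erg}$ is nonempty (being non-elementary), its closure already contains $\Lambda$. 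This one-line argument also immediately gives $\#\Lambda_\textup{erg} \ge 2$, so your separate verification via loxodromics, while correct, is unnecessary.
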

	\begin{proof}
		The first statement follows immediately from Lemma \ref{lemma:h_erg_no_r}. Moreover, Proposition \ref{prop:h_Geod=MD} implies that $h_\text{erg}(\X;\Gamma) = \overline{\MD}(\Lambda_\text{erg}) = \overline{\MD}(\Lambda)$, where the last equality comes from the fact that $\Lambda_\text{erg}$ is dense in $\Lambda$, being $\Gamma$-invariant. The finiteness of $h_\text{erg}(\X;\Gamma)$ is then consequence of Proposition \ref{prop:bounded_geometry_implies_doubling_boundary}.
	\end{proof}

	\begin{obs}
		\label{rmk:difference_h_erg_h_crit}
		We can now see a second difference between $h_\textup{erg}(\X;\Gamma)$ and $h_\textup{crit}(\X;\Gamma)$. If $\X$ is a proper, geodesic, Gromov-hyperbolic metric space with bounded geometry with respect to $\Lambda_\textup{erg}$ then $h_\textup{erg}(\X;\Gamma)$ coincides with the Minkowski dimension of the limit set $\Lambda$, or equivalently of the uniformly radial limit set $\Lambda_\textup{u-rad}$, by density. On the other hand the Bishop-Jones' Theorem in the version of \cite{DSU17} says that $h_\textup{crit}(\X;\Gamma)$ coincides with the Hausdorff dimension of $\Lambda_\textup{u-rad}$. Therefore the difference between the two quantities, at least under these assumptions, relies on the fractal properties of the set $\Lambda_\textup{u-rad}$.
		
		Additionally we remark that there exist manifold examples where the two quantities above differ. For instance in \cite{PS19} there is an example of a simply connected manifold $\tilde{M}$ with pinched negative sectional curvature admitting a finite volume, discrete, torsion-free group of isometries $\Gamma$, but such that $h_\textup{crit}(\tilde{M};\Gamma)$ is strictly smaller than the volume entropy of $\tilde{M}$. In this case $\Lambda(\Gamma) = \partial \tilde{M}$, so \cite[Theorem B]{Cav21} implies that the volume entropy of $\tilde{M}$ coincides with $\overline{\MD}(\Lambda)$.
	\end{obs}

	For quasiconvex cocompact groups, i.e. groups $\Gamma$ such that the quotient space $\Gamma \backslash \text{QC-Hull}(\Lambda(\Gamma))$ is compact, we have equality between the ergodic entropy and the critical exponent.

	\begin{cor}
		\label{cor:h_crit=h_erg}
		Let $\X$ be a proper, geodesic, Gromov-hyperbolic metric space and let $\Gamma < \Isom(\X)$ be discrete, non-elementary and quasiconvex cocompact. Then it holds that $h_\textup{erg}(\X;\Gamma) = h_\textup{crit}(\X;\Gamma) < +\infty$.
	\end{cor}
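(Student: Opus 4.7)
The starting point is Corollary \ref{cor:h_crit<h_erg}, which already gives $h_\textup{crit}(\X;\Gamma) \le h_\textup{erg}(\X;\Gamma)$, so the task reduces to proving the reverse inequality and finiteness. My plan is to exploit quasiconvex cocompactness to (i) identify the limit set with a $\Lambda_\tau$, (ii) verify bounded geometry with respect to $\Lambda_\textup{erg}$, and then (iii) chain together Corollary \ref{cor:bounded_geometry_then_finite_erg_entropy} with Proposition \ref{prop:MD < h_Gamma_special_subsets}.

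First I would fix a basepoint $x\in \X$ and use cocompactness of $\Gamma$ on $\textup{QC-Hull}(\Lambda(\Gamma))$ to obtain a radius $R_0 > 0$ such that every point of $\textup{QC-Hull}(\Lambda)$ lies within distance $R_0$ of $\Gamma x$. Given $z\in \Lambda$ and any geodesic ray $\xi_{x,z}$, Lemma \ref{lemma:approximation-ray-line} shows that $\xi_{x,z}$ stays within distance $22\delta + \sfd(x,\textup{QC-Hull}(\Lambda))$ of a geodesic line with endpoints in $\Lambda$, so every point of the ray is within a uniform distance $\tau_0$ of $\Gamma x$. Setting $\tau := \tau_0$ and choosing $t_i := i\tau$, one obtains $z \in \Lambda_\tau(\Gamma)$ for the fixed $\tau$. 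Hence $\Lambda = \Lambda_\tau \subseteq \Lambda_\textup{u-rad} \subseteq \Lambda_\textup{erg} \subseteq \Lambda$, so all four sets coincide.

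Second, I would check bounded geometry with respect to $\Lambda_\textup{erg} = \Lambda$: for any $y'\in \textup{QC-Hull}(\Lambda)$ pick $g\in \Gamma$ with $\sfd(gy, y') \le R_0$ (where $y\in \textup{QC-Hull}(\Lambda)$ is a fixed auxiliary point), and use the $\Gamma$-invariance of $\textup{QC-Hull}(\Lambda)$ together with isometry invariance of the covering numbers to bound
\begin{equation*}
\Cov_\sfd(\overline{B}(y', R); \Lambda; r) \le \Cov_\sfd(\overline{B}(y, R+R_0) \cap \textup{QC-Hull}(\Lambda), r),
\end{equation*}
which is finite by properness. Corollary \ref{cor:bounded_geometry_then_finite_erg_entropy} then yields $h_\textup{erg}(\X;\Gamma) = \overline{\MD}(\Lambda) < +\infty$.

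Finally, to obtain $\overline{\MD}(\Lambda) \le h_\textup{crit}(\X;\Gamma)$, I would apply Proposition \ref{prop:MD < h_Gamma_special_subsets} to the constant sequence $\vartheta_i = i\tau$: it lies in $\Theta_{\tau,\varepsilon,1}$ for every $\varepsilon > 0$, so $\Lambda = \Lambda_\tau \subseteq \Lambda_{\tau, \tau, \varepsilon, 1}$ and
\begin{equation*}
\overline{\MD}(\Lambda) \le \overline{\MD}(\Lambda_{\tau, \tau, \varepsilon, 1}) \le h_\textup{crit}(\X;\Gamma) \cdot \frac{\tau + \varepsilon}{\tau - \varepsilon}.
\end{equation*}
Letting $\varepsilon \to 0$ gives $h_\textup{erg}(\X;\Gamma) = \overline{\MD}(\Lambda) \le h_\textup{crit}(\X;\Gamma)$, completing the proof. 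The only genuinely delicate step is the first one: extracting the uniform $\tau$ from cocompactness by combining the approximation of rays in Lemma \ref{lemma:approximation-ray-line} with the uniform coarse density of $\Gamma x$ in $\textup{QC-Hull}(\Lambda)$; the remaining steps are direct invocations of the preceding results.
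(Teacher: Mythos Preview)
Your proof is correct and takes a genuinely different route from the paper's. The paper argues as follows: quasiconvex cocompactness gives bounded geometry with respect to $\Lambda$ (citing \cite{Cav21ter}) and $\Lambda=\Lambda_{\text{u-rad}}$ (citing \cite{DSU17}); then Corollary~\ref{cor:bounded_geometry_then_finite_erg_entropy} yields $h_{\mathrm{erg}}=\overline{\mathrm{MD}}(\Lambda)$, while Bishop--Jones gives $h_{\mathrm{crit}}=\mathrm{HD}(\Lambda_{\text{u-rad}})=\mathrm{HD}(\Lambda)$; finally, the two dimensions coincide because $\Lambda$ is Ahlfors regular (citing \cite{Coo93,Cav21ter}). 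You instead prove $\Lambda=\Lambda_\tau$ and bounded geometry directly from cocompactness, and then bypass both Ahlfors regularity and Bishop--Jones by invoking the paper's own Proposition~\ref{prop:MD < h_Gamma_special_subsets} with $c=\tau$ and $\varepsilon\to 0$. Your argument is more self-contained---it stays entirely within the machinery developed in the paper and avoids the nontrivial Ahlfors regularity of the limit set---while the paper's route has the virtue of situating the result within the classical dictionary between critical exponents and fractal dimensions. One cosmetic point: in your second step you introduce an auxiliary $y\in\textup{QC-Hull}(\Lambda)$ and reuse the constant $R_0$ that was defined relative to $\Gamma x$; it would be cleaner either to work with $\Gamma x$ throughout (absorbing $\sfd(x,\textup{QC-Hull}(\Lambda))$ into $R_0$) or to introduce a separate constant, but this does not affect the validity of the argument.
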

	\begin{proof}
		By quasiconvex cocompactness we have that $h_\text{crit}(\X;\Gamma) < +\infty$ by \cite[Proposition 5.7]{Cav21ter} and that $\X$ has bounded geometry with respect to $\Lambda$ by \cite[Lemma 5.4]{Cav21ter}.
		By \cite[Theorem 12.2.7]{DSU17} we have that $\Lambda = \Lambda_\textup{u-rad}$. 
		The conclusion follows by the discussion in Remark \ref{rmk:difference_h_erg_h_crit}, since $\Lambda$ is Ahlfors regular (cp.\cite{Coo93}, \cite{Cav21ter}) and so its Hausdorff and Minkowski dimensions coincide.
	\end{proof}
	
	\section{Geodesic flow}
	\label{sec:geodesic_flow}
	
	In this section we introduce the notion of geodesic flow on the quotient of a metric space by a group action. Before doing that we need to state some basic properties of the quotient metric space.
	
	\subsection{Quotient metric space}
	\label{subsec:quotient-metric-space}
	
	Let $\X$ be a proper metric space and $\Gamma < \Isom(\X)$ be discrete. We denote by $\pi\colon \X \to \Gamma \backslash \X$ the standard projection on the quotient space. On $\Gamma \backslash \X$ it is defined the standard pseudometric $\bar{\sfd}([x],[y]) := \inf_{g\in \Gamma}\sfd(x, gy)$. Since the action is discrete then this pseudometric is actually a metric. 
	The map $\pi$ is $1$-Lipschitz.
	
	The \emph{systole of $\Gamma$ at a point $x\in \X$} is defined by $\text{sys}(\Gamma,x) := \inf_{g\in \Gamma \setminus \{\text{id}\}} \sfd(x,gx)$. The map $x \mapsto \text{sys}(\Gamma,x)$ is lower semicontinuous and $\Gamma$-invariant, so it defines a lower semicontinuous function on the metric space $\Gamma \backslash \X$. Moreover, if $\Gamma$ acts freely, i.e. if $gx = x$ for some $x\in \X$ implies that $g=\text{id}$, then $\text{sys}(\Gamma,x) > 0$ for every $x\in \X$. As a consequence we have the following useful fact.
	\begin{lemma}
	\label{lemma:systole_positive}
		Let $\X$ be a proper metric space and let $\Gamma < \Isom(\X)$ be discrete and free. For every compact set $K\subseteq \Gamma \backslash \X$, there exists a positive constant $\sigma > 0$ such that the systole of $\Gamma$ at every point of $K$ is bigger than or equal to $\sigma$.
	\end{lemma}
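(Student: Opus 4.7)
The plan is a direct application of the two properties of the systole function already established in the paragraph preceding the lemma: (i) the map $x \mapsto \textup{sys}(\Gamma,x)$ is lower semicontinuous on $\X$ and $\Gamma$-invariant, so it descends to a lower semicontinuous function $\overline{\textup{sys}}\colon \Gamma \backslash \X \to [0,+\infty)$; and (ii) freeness of the action forces this function to be strictly positive at every point of $\X$, hence at every point of $\Gamma \backslash \X$.

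Given these two inputs, I would argue as follows. Since $K \subseteq \Gamma \backslash \X$ is compact and $\overline{\textup{sys}}$ is lower semicontinuous, the infimum of $\overline{\textup{sys}}$ on $K$ is attained at some point $[x_0] \in K$. By (ii), $\overline{\textup{sys}}([x_0]) = \textup{sys}(\Gamma,x_0) > 0$. Setting $\sigma := \overline{\textup{sys}}([x_0])$ yields the desired uniform bound $\textup{sys}(\Gamma,x) \geq \sigma$ for every $x$ projecting into $K$.

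For completeness, one could recall \emph{why} (ii) holds, since this is the only place where freeness enters: discreteness gives that $\Sigma_1(x) = \{g \in \Gamma : \sfd(x,gx) \leq 1\}$ is finite, and freeness gives that $\sfd(x,gx) > 0$ for every nontrivial $g \in \Sigma_1(x)$; if $\Sigma_1(x) = \{\textup{id}\}$ then $\textup{sys}(\Gamma,x) \geq 1$, and otherwise the infimum defining $\textup{sys}(\Gamma,x)$ is a minimum over a nonempty finite set of positive numbers, hence positive.

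There is no real obstacle here: the only delicate point is the lower semicontinuity on the quotient, which follows at once from the $\Gamma$-invariance of $\textup{sys}(\Gamma,\cdot)$ and the fact that the projection $\pi$ is a continuous, open, surjective map, so that lower semicontinuity on $\X$ transfers to lower semicontinuity on $\Gamma \backslash \X$. Everything else is the standard fact that a lower semicontinuous function attains its infimum on a compact set.
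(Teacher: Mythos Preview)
Your argument is correct and is precisely the approach the paper indicates: the paragraph preceding the lemma establishes that the systole descends to a lower semicontinuous, strictly positive function on $\Gamma\backslash\X$, and the paper then states the lemma as an immediate consequence without further proof. Your write-up simply makes explicit the standard step that a lower semicontinuous function attains its infimum on a compact set.
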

The following lemma is standard.
\begin{lemma}
	\label{lemma-inj-radius}
	Let $\X$ be a proper metric space and let $\Gamma < \Isom(\X)$ be discrete and free. Then
	\begin{itemize}
		\item[(i)] for all $x,y\in \X$ such that $\bar{\sfd}([x], [y]) < \frac{\textup{sys}(\Gamma,x)}{2}$ there exists a unique $g\in \Gamma$ such that $\sfd(x,gy)<\frac{\textup{sys}(\Gamma,x)}{2}$. In particular $\sfd(x,gy)=\bar{\sfd}([x],[y])$;
		\item[(ii)] if $\X$ is a length space then $\pi$ is a locally isometric covering.
	\end{itemize}
\end{lemma}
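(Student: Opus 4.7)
For existence, first I would use properness of $\X$ and discreteness of $\Gamma$ to show that the infimum $\bar{\sfd}([x],[y]) = \inf_{g \in \Gamma}\sfd(x,gy)$ is attained. Indeed, if $gy \in \overline{B}(x,R)$ then $\sfd(x,gx) \le R + \sfd(x,y)$, so only finitely many $g$ satisfy $\sfd(x,gy) \le R$ by property (b) in the definition of discreteness; hence for $R > \bar{\sfd}([x],[y])$ the infimum is taken over a finite set and is a minimum. Under the assumption $\bar{\sfd}([x],[y]) < \frac{\textup{sys}(\Gamma,x)}{2}$, the attaining element $g$ satisfies $\sfd(x,gy) < \frac{\textup{sys}(\Gamma,x)}{2}$, giving both existence and the final equality in (i). For uniqueness, suppose $g_1, g_2 \in \Gamma$ both satisfy $\sfd(x,g_i y) < \frac{\textup{sys}(\Gamma,x)}{2}$. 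Applying the isometries $g_i^{-1}$ and the triangle inequality yields $\sfd(g_1^{-1}x, g_2^{-1}x) < \textup{sys}(\Gamma,x)$, and then applying $g_1$ gives $\sfd(x, g_1 g_2^{-1}x) < \textup{sys}(\Gamma,x)$. Since $\Gamma$ is free and discrete, the infimum defining $\textup{sys}(\Gamma,x)$ is taken only over $h \ne \textup{id}$, so the strict inequality forces $g_1 g_2^{-1} = \textup{id}$, i.e., $g_1 = g_2$.

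\textbf{Plan for (ii).} Fix $[x] \in \Gamma \backslash \X$, lift to $x \in \X$, set $\sigma := \textup{sys}(\Gamma,x) > 0$ (positive by freeness), and pick $0 < r < \sigma/6$. I would establish three facts. First, $\pi^{-1}(B([x],r)) = \bigsqcup_{g \in \Gamma} g B(x,r)$: set-theoretic equality follows directly from the definition of the quotient pseudometric, and the disjointness is a consequence of the systole bound, since two distinct translates $gB(x,r)$, $g'B(x,r)$ sit at distance at least $\sfd(gx,g'x) - 2r \ge \sigma - 2r > 0$. Second, $\pi\vert_{B(x,r)} \colon B(x,r) \to B([x],r)$ is bijective: surjectivity is built into the definition of the quotient, and injectivity is an immediate application of part (i), since $r < \sigma/2$. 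Third, $\pi\vert_{B(x,r)}$ is distance-preserving: for $y,y' \in B(x,r)$ and any $g \ne \textup{id}$, the triangle inequality gives $\sfd(y,gy') \ge \sfd(x,gx) - \sfd(x,y) - \sfd(x,y') > \sigma - 2r > 2r > \sfd(y,y')$, so the infimum defining $\bar{\sfd}([y],[y'])$ is attained at $g = \textup{id}$ and equals $\sfd(y,y')$. Together these three facts say that $\pi\vert_{B(x,r)}$ is an isometry onto $B([x],r)$ and that $\pi^{-1}(B([x],r))$ is a disjoint union of open sets, each mapped isometrically onto $B([x],r)$ by the restriction of $\pi$ translated by an element of $\Gamma$. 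This is exactly what it means for $\pi$ to be a locally isometric covering around $[x]$.

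\textbf{Hypotheses and obstacles.} The length-space assumption enters only implicitly, to guarantee that the quotient metric on $\Gamma\backslash \X$ is itself a length metric, so that ``locally isometric'' in the conclusion is consistent with the intrinsic length-structure interpretation rather than the purely metric one delivered by the argument above. I do not anticipate any substantial obstacle: the whole proof reduces to combining the strict positivity of $\textup{sys}(\Gamma,x)$ with the triangle inequality and the isometric character of the $\Gamma$-action.
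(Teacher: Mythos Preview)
Your proposal is correct. Note that the paper does not actually give a proof of this lemma: it simply labels the statement as ``standard'' and moves on. Your argument supplies precisely the standard details one would expect---attaining the infimum via discreteness and properness, uniqueness via the triangle inequality and the definition of the systole, and the covering property via a small-ball argument---so there is nothing to compare. One minor remark: for part (ii) the bound $r<\sigma/4$ already suffices for your third fact, so $r<\sigma/6$ is slightly more restrictive than necessary, but this is harmless.
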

	
%
	\subsection{The space of local geodesics}
	\label{subsec:space_local_geodesics}
	
	Let $\X$ be a proper, geodesic metric space and let $\LocGeod{\X}$ be its space of globally defined local geodesics. We equip it with the topology of uniform convergence on compact subsets of $\R$. This is a metrizable topology. We use metrics that are slightly different to the ones introduced in \cite{Cav21}. Let $\mathcal{F}$ be the class of continuous functions $f\colon \mathbb{R} \to (0,1]$ satisfying
	\begin{itemize}
		\item[(a)] $f(0)=1$;
		\item[(b)] $f(s) = f(-s)$ for every $s \in \mathbb{R}$;
		\item[(c)]  $\lims_{s \to +\infty} 2\vert s \vert f(s) = 0$.
	\end{itemize}
	The class $\mathcal{F}$ is not empty, as it contains for instance the functions of the form $f_a(s) := e^{-a\vert s \vert}$, for every $a > 0$. For every $f\in \mathcal{F}$, we define the quantity 
	\begin{equation}
		\label{f-distance}
		\sfD_f(\gamma, \gamma') := \sup_{s\in \R}\sfd(\gamma(s),\gamma'(s))f(s),
	\end{equation}
	where $\gamma,\gamma' \in \LocGeod{\X}$.
	\begin{lemma}
		\label{lemma-metric-locgeod}
		Let $\X$ be a proper metric space. Then
		\begin{itemize}
			\item[(i)] for every $\gamma \in \LocGeod{\X}$ and for every $t<s$ it holds $\sfd(\gamma(t),\gamma(s))\leq \ell(\gamma\vert_{[t,s]})= \vert s -t \vert$, where $\ell(\cdot)$ denotes the length of a curve;
			\item[(ii)] for every $\gamma, \gamma' \in \LocGeod{\X}$ and for every $f\in \mathcal{F}$ it holds that
			$$\sfD_f(\gamma, \gamma') \leq \sfd(\gamma(0),\gamma'(0)) + \sup_{s\in \R}2\vert s \vert f(s).$$
			Moreover, $\sfD_f(\cdot,\cdot)$ defines a distance on $\LocGeod{\X}$ which induces its topology. Finally,
			$\sfd(\gamma(0),\gamma'(0)) \leq \sfD_f(\gamma, \gamma').$
		\end{itemize}
	\end{lemma}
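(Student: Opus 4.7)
The plan is to establish (i) by a compactness argument on the interval $[t,s]$, then derive all the estimates in (ii) from (i) together with the defining properties of $\mathcal{F}$.

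For (i), the key observation is that a local geodesic is globally $1$-Lipschitz. Given $t < s$, the interval $[t,s]$ is compact, so by the local geodesic property and a standard covering argument I would extract a finite partition $t = t_0 < t_1 < \cdots < t_N = s$ such that each restriction $\gamma\vert_{[t_i,t_{i+1}]}$ is a true geodesic (hence an isometric embedding). Then
\[
\sfd(\gamma(t),\gamma(s)) \le \sum_{i=0}^{N-1}\sfd(\gamma(t_i),\gamma(t_{i+1})) = \sum_{i=0}^{N-1}(t_{i+1}-t_i) = s-t,
\]
and the same sum computes $\ell(\gamma\vert_{[t,s]}) = s-t$.

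For (ii), the upper bound is the first consequence of (i): by the triangle inequality and (i),
\[
\sfd(\gamma(s),\gamma'(s)) \le \sfd(\gamma(0),\gamma'(0)) + \sfd(\gamma(s),\gamma(0)) + \sfd(\gamma'(0),\gamma'(s)) \le \sfd(\gamma(0),\gamma'(0)) + 2|s|,
\]
and since $0 < f(s) \le 1 = f(0)$ we multiply through by $f(s)$ and pass to the supremum. Condition (c) in the definition of $\mathcal{F}$ ensures $\sup_{s\in\R}2|s|f(s) < +\infty$, so $\sfD_f$ takes finite values. The lower bound $\sfd(\gamma(0),\gamma'(0)) \le \sfD_f(\gamma,\gamma')$ is immediate by evaluating at $s=0$ and using $f(0)=1$. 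The metric axioms are routine: symmetry follows from symmetry of $\sfd$; the triangle inequality is obtained pointwise in $s$ and then by taking the supremum; positive definiteness uses $f(s) > 0$ together with the lower bound just established.

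The only real content lies in showing that $\sfD_f$ induces the topology of uniform convergence on compact subsets of $\R$. For one direction, if $\sfD_f(\gamma_n,\gamma) \to 0$ then on any compact $[-T,T]$ we have $\min_{|s|\le T}f(s) > 0$, so $\sup_{|s|\le T}\sfd(\gamma_n(s),\gamma(s)) \le \sfD_f(\gamma_n,\gamma)/\min_{|s|\le T}f(s) \to 0$. For the converse, given $\varepsilon > 0$, condition (c) lets me pick $T$ with $\sup_{|s|\ge T} 2|s|f(s) < \varepsilon/2$; splitting the supremum in \eqref{f-distance}, the part over $|s|\le T$ tends to $0$ by uniform convergence on $[-T,T]$, while for $|s|\ge T$ the upper bound established above gives
\[
\sfd(\gamma_n(s),\gamma'(s))f(s) \le \sfd(\gamma_n(0),\gamma(0)) + 2|s|f(s),
\]
whose right-hand side is eventually less than $\varepsilon$. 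No single step is truly hard; the mildest subtlety is using condition (c) of $\mathcal{F}$ to control the tails in the topological equivalence, which is the only place where the decay hypothesis on $f$ at infinity is genuinely needed.
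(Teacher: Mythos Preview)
Your proof is correct and follows essentially the same route as the paper's, which is quite terse: the paper dismisses (i) as ``well known'' and handles (ii) with one-line references to properties (a) and (c) of $\mathcal{F}$ for the two inequalities and the topological equivalence. You have simply spelled out the details (the partition argument for (i), the tail--versus--compact split for the topology), which is exactly what the paper's sketch intends. One minor typo: in your final displayed inequality you wrote $\gamma'(s)$ where you mean $\gamma(s)$.
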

	\begin{proof}
		The proof of (i) is well known. The first inequality in (ii) follows directly from (i). The fact that the right hand side is finite follows by property (c) above. The last inequality in (ii) follows by property (a), while $\sfD_f$ is a distance since $\sfd$ is. If $\gamma_j$ converges uniformly to $\gamma_\infty$ on compact subsets of $\R$ then $\sfD_f(\gamma_j,\gamma_\infty) \to 0$, again because of property (c) above. Vice versa, if $\sfD_f(\gamma_j,\gamma_\infty) \to 0$ then $\gamma_j$ converges uniformly on compact subsets of $\R$ to $\gamma_\infty$ because $f$ is positive and continuous.
	\end{proof}

	The set of geodesic lines, $\Geod(\X)$, is a closed subset of $\LocGeod{\X}$. For Busemann convex metric space, so for instance in the $\CAT(-1)$ case, $\Geod(\X) = \LocGeod{\X}$.
	\begin{ex}
		\label{ex:LocGeod-not-loc-compact}
		The space $\LocGeod{\X}$ is not necessarily locally compact. Consider for instance the space $\X$ obtained by gluing infinitely many circles $\X_i$ of radius $\frac{1}{i}$ by one common point, endowed with the length metric. $\X$ is compact, but there are sequences of local geodesics that converge to the map constantly equal to the gluing point. 
		
		On the other hand the space $\Geod(\X)$ is locally compact and even proper, with the metric induced by $f\in \mathcal{F}$. This is because of Lemma \ref{lemma-metric-locgeod} and Ascoli-Arzelà's Theorem.
	\end{ex}
	In view of Example \ref{ex:LocGeod-not-loc-compact}, we mainly focus on the space $\Geod(\X)$ instead than on $\LocGeod{\X}$, even if many of the things we are going to say apply also to $\LocGeod{\X}$.
	
	Every isometry $g$ of $\X$ acts on $\Geod(\X)$ by $(g\gamma)(\cdot) = g\gamma(\cdot)$,  $\gamma\in \Geod(\X)$. The action is continuous, so $g$ defines a homeomorphism of $\Geod(\X)$. Thus every group of isometries of $\X$ acts by homeomorphisms on $\Geod(\X)$. The same action can be naturally defined on $\LocGeod{\X}$.
	
	In the next result we show that also $\Gamma \backslash \Geod(\X)$ is metrizable and proper, for every discrete group $\Gamma < \Isom(\X)$. 
	\begin{prop}
		\label{prop:quotient_of_loc_geod}
		Let $\X$ be a proper metric space and let $\Gamma < \Isom(\X)$ be discrete. For every $f\in \mathcal{F}$, the action of $\Gamma$ on $(\Geod(\X), \sfD_f)$ is by isometries and $\Gamma$, as a subgroup of $\Isom(\Geod(\X),\sfD_f)$, is discrete. In particular, the space $\Gamma \backslash \Geod(\X)$ is metrizable and proper. 
		If, moreover, $\Gamma$ acts freely on $\X$, then it acts freely also on $\Geod(\X)$.  
	\end{prop}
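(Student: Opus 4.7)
The plan is to verify the four assertions one after another, relying on Lemma~\ref{lemma-metric-locgeod}(ii) together with the material on quotients by discrete isometry group actions recalled in Section~\ref{subsec:quotient-metric-space}. First, for the action by isometries, I would observe that each $g \in \Gamma$ is an isometry of $\X$, so $\sfd(g\gamma(s), g\gamma'(s)) = \sfd(\gamma(s), \gamma'(s))$ for every $s \in \R$; multiplying by $f(s)$ and taking the supremum directly gives $\sfD_f(g\gamma, g\gamma') = \sfD_f(\gamma, \gamma')$.

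For discreteness I would apply the characterization (b) of Section~\ref{subsec:BJ} to the action on $(\Geod(\X), \sfD_f)$: fix $\gamma \in \Geod(\X)$ and $R \geq 0$, and consider any $g \in \Gamma$ with $\sfD_f(g\gamma, \gamma) \leq R$. The last inequality in Lemma~\ref{lemma-metric-locgeod}(ii) yields $\sfd(g\gamma(0), \gamma(0)) \leq \sfD_f(g\gamma, \gamma) \leq R$, so $g$ lies in $\Sigma_R(\gamma(0))$, which is finite by the discreteness of $\Gamma$ in $\Isom(\X)$. Now $\Gamma$ acts discretely by isometries on the proper metric space $(\Geod(\X), \sfD_f)$ (properness being guaranteed by Example~\ref{ex:LocGeod-not-loc-compact}), so the quotient pseudometric $\bar{\sfD}_f$ on $\Gamma \backslash \Geod(\X)$ is a metric by the discussion of Section~\ref{subsec:quotient-metric-space}, and only properness of the quotient remains. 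For this I would prove the equality $\overline{B}([\gamma], R) = \pi(\overline{B}(\gamma, R))$: the inclusion $\supseteq$ is immediate since $\pi$ is $1$-Lipschitz, while for $\subseteq$, given $[\eta] \in \overline{B}([\gamma], R)$ one picks representatives $g_n \eta$ with $\sfD_f(\gamma, g_n \eta) \leq R + 1/n$ and uses properness of $(\Geod(\X), \sfD_f)$ to extract a subsequential limit $\eta_\infty \in \overline{B}(\gamma, R)$ with $[\eta_\infty] = [\eta]$; hence $\overline{B}([\gamma], R)$ is the continuous image of the compact set $\overline{B}(\gamma, R)$, therefore compact. Finally, if $\Gamma$ acts freely on $\X$ and $g\gamma = \gamma$ for some $g \in \Gamma$ and $\gamma \in \Geod(\X)$, evaluating at $s = 0$ gives $g\gamma(0) = \gamma(0)$, which forces $g = \mathrm{id}$.

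The only place where I expect to need some care is the properness of the quotient, specifically the passage from the infimum appearing in the definition of $\bar{\sfD}_f$ to an actual representative at distance $\leq R$, where I rely on the properness of $(\Geod(\X), \sfD_f)$ from Example~\ref{ex:LocGeod-not-loc-compact} to pass to the limit. Everything else reduces to a direct application of Lemma~\ref{lemma-metric-locgeod}(ii) or to the abstract facts about quotients by discrete isometry group actions.
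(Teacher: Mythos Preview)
Your proof is correct and follows essentially the same route as the paper's: both verify the isometric action from the definition of $\sfD_f$, deduce discreteness from the inequality $\sfd(g\gamma(0),\gamma(0))\le \sfD_f(g\gamma,\gamma)$ of Lemma~\ref{lemma-metric-locgeod}(ii), and obtain freeness by evaluating at $s=0$. The only cosmetic difference is in the properness step: you prove the identity $\overline{B}([\gamma],R)=\pi(\overline{B}(\gamma,R))$ by approximating the infimum in $\bar{\sfD}_f$ and passing to a limit via properness of $(\Geod(\X),\sfD_f)$, whereas the paper argues sequential compactness directly (taking a sequence $[\gamma_k]$ in the closed ball, lifting, and applying Ascoli--Arzel\`a); these are equivalent, and your version is arguably more careful in not assuming the infimum is attained.
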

	\begin{proof}
		The fact that the action of $\Gamma$ on $(\Geod(\X),\sfD_f)$ is by isometries follows directly from \eqref{f-distance}. Lemma \ref{lemma-metric-locgeod} implies that for every $\gamma \in \Geod(\X)$ and every $R\geq 0$ it holds that
		$$\#\lbrace g \in \Gamma \,:\, \sfD_f(g\gamma, \gamma) \leq R\rbrace \leq \#\lbrace g \in \Gamma \,:\, \sfd(g\gamma(0), \gamma(0)) \leq R\rbrace < +\infty.$$ 
		This shows that $\Gamma$ is discrete as subgroup of isometries of $(\Geod(\X),\sfD_f)$. The discussion in Section \ref{subsec:quotient-metric-space} implies that the space $\Gamma \backslash \Geod(\X)$ is metrizable via the quotient metric $\bar{\sfD}_f$ induced by $\sfD_f$. Let $[\gamma] \in \Gamma \backslash \Geod(\X)$ and let $[\gamma_k] \in \Gamma \backslash \Geod(\X)$ be a sequence such that $\bar{\sfD}_f([\gamma_k], [\gamma]) \le R$ for every $k$. By definition of quotient metric, for every $k$ there exists $g_k\in \Gamma$ such that $\sfD_f(g_k\gamma_k, \gamma) \le R$. Lemma \ref{lemma-metric-locgeod} and Ascoli-Arzelà's Theorem imply that there exists $\gamma_\infty \in \Geod(\X)$ and a subsequence $\{k_j\}$ such that $g_{k_j}\gamma_{k_j}$ converges to $\gamma_\infty$. By definition, this means that $[\gamma_{k_j}]$ converges to $[\gamma_\infty]$, so the $\bar{\sfD}_f$ balls in $\Gamma \backslash \Geod(\X)$ are compact.
		
		Finally, suppose that $\Gamma$ acts freely on $\X$ and let $g\in \Gamma$ be such that $g\gamma = \gamma$ for some $\gamma \in \Geod(\X)$. By definition, $g\gamma(0) = \gamma(0)$ and so $g = \text{id}$.
	\end{proof}

	Under some additional assumptions, the space $\Gamma \backslash \Geod(\X)$ can be identified with the space of all local geodesics of the quotient space $\Gamma \backslash \X$. We first need a result about the space of local geodesics.

	\begin{prop}
		\label{prop-quotient-geodesics}
		Let $\X$ be a proper, geodesic metric space and let $\Gamma < \Isom(\X)$ be discrete and free. 		
		Then the map 
		$$\Pi\colon \LocGeod{\X}\mapsto  \LocGeod{\Gamma \backslash \X}, \quad \Pi(\gamma)(\cdot) = (\pi \circ \gamma)(\cdot),$$ 
		where $\pi\colon \X \to \Gamma \backslash \X$ is the projection map,
		is continuous, surjective and $\Gamma$-invariant. So it induces a map  $$\bar\Pi\colon \Gamma\backslash\LocGeod{\X}\mapsto  \LocGeod{\Gamma \backslash \X}$$ 
		which is a homeomorphism.
	\end{prop}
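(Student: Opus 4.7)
The plan is to split the proof into three parts: elementary properties of $\Pi$, bijectivity of the induced $\bar\Pi$, and then the homeomorphism conclusion, where continuity of $\bar\Pi^{-1}$ is the main obstacle.

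For well-definedness, continuity, and $\Gamma$-invariance of $\Pi$, I would invoke Lemma \ref{lemma-inj-radius}(ii), which says that $\pi$ is a locally isometric covering (recall $\X$ is proper geodesic, hence a length space). Composing a local geodesic $\gamma$ of $\X$ with an isometric chart of $\pi$ around each $\gamma(t)$ shows $\Pi(\gamma) \in \LocGeod{\Gamma\backslash\X}$. Continuity of $\Pi$ follows because $\pi$ is $1$-Lipschitz, so uniform convergence on compacts is preserved; and $\Gamma$-invariance $\Pi(g\gamma) = \Pi(\gamma)$ is immediate from $\pi \circ g = \pi$.

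To get bijectivity of $\bar\Pi$ I would use path lifting for the covering $\pi$. Given $\bar\gamma \in \LocGeod{\Gamma\backslash\X}$, choose any $x_0 \in \pi^{-1}(\bar\gamma(0))$: the covering property produces a unique continuous lift $\gamma \colon \R \to \X$ with $\gamma(0)=x_0$ and $\pi\circ\gamma = \bar\gamma$; the local isometry property then forces $\gamma \in \LocGeod{\X}$, giving surjectivity. For injectivity of $\bar\Pi$, if $\Pi(\gamma) = \Pi(\gamma')$, choose $g \in \Gamma$ with $\gamma'(0) = g\gamma(0)$; then $\gamma'$ and $g\gamma$ are two lifts of the same local geodesic agreeing at $t=0$, so uniqueness of path lifting forces $\gamma' = g\gamma$, i.e.\ $[\gamma']=[\gamma]$.

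The main obstacle is continuity of $\bar\Pi^{-1}$. I would argue sequentially: suppose $\bar\gamma_n \to \bar\gamma$ uniformly on compacts in $\LocGeod{\Gamma\backslash\X}$ and $\gamma_n \in \LocGeod{\X}$ are lifts. For $n$ large, Lemma \ref{lemma-inj-radius}(i) supplies a unique $g_n \in \Gamma$ with $\sfd(g_n\gamma_n(0),\gamma(0)) = \bar\sfd(\bar\gamma_n(0),\bar\gamma(0))$. Fix $T>0$ and a compact neighborhood $K \subseteq \Gamma\backslash\X$ of $\bar\gamma([-T,T])$; Lemma \ref{lemma:systole_positive} yields $\sigma_T>0$ with $\textup{sys}(\Gamma,\cdot)\ge 2\sigma_T$ on $\pi^{-1}(K)$. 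For $n$ large enough that $\bar\gamma_n([-T,T]) \subseteq K$ and $\bar\sfd(\bar\gamma_n(t),\bar\gamma(t))<\sigma_T/2$ on $[-T,T]$, the crucial claim is $\sfd(g_n\gamma_n(t),\gamma(t)) < \sigma_T$ for every such $t$. This is a connectedness argument: the set of such $t$ is open and contains $0$; at a putative first boundary point $t^*$, Lemma \ref{lemma-inj-radius}(i) would give a unique $g' \in \Gamma$ with $\sfd(g'\gamma_n(t^*),\gamma(t^*))<\sigma_T/2$, and the triangle inequality together with the systole bound $\textup{sys}(\Gamma,g_n\gamma_n(t^*))\ge 2\sigma_T$ and freeness would force $g'=g_n$, contradicting $\sfd(g_n\gamma_n(t^*),\gamma(t^*))=\sigma_T$. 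Once the bound holds, Lemma \ref{lemma-inj-radius}(i) upgrades it to $\sfd(g_n\gamma_n(t),\gamma(t))=\bar\sfd(\bar\gamma_n(t),\bar\gamma(t))\to 0$ uniformly on $[-T,T]$; arbitrariness of $T$ concludes.
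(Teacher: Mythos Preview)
Your proof is correct and follows essentially the same approach as the paper: both use Lemma~\ref{lemma-inj-radius} for well-definedness and surjectivity via covering lifts, and both prove continuity of $\bar\Pi^{-1}$ by a connectedness argument showing that the group element realizing the quotient distance at $t=0$ works on all of $[-T,T]$. The one notable streamlining is your injectivity step: you invoke uniqueness of path lifting directly, whereas the paper writes out an explicit open--closed argument for the set $\{t : g_t = g_0\}$, which amounts to reproving that uniqueness in place.
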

	\begin{proof}
		First of all, $\Pi$ is well defined. Indeed the projection of a local geodesic of $\X$ on $\Gamma \backslash \X$ is a local geodesic, by Lemma \ref{lemma-inj-radius}.
		Moreover, $\Pi$ is $\Gamma$-invariant: if $\gamma = g \gamma'$ for some $g\in \Gamma$ and $\gamma,\gamma' \in \LocGeod{\X}$, then $(\pi \circ \gamma)(\cdot) = (\pi\circ\gamma')(\cdot)$.
		Furthermore, every map $\gamma\colon \mathbb{R} \to \Gamma \backslash \X$ can be lifted to a map $\tilde{\gamma}\colon \mathbb{R}\to\ X$ since $\pi\colon \X\to \Gamma\backslash \X$ is a covering map by Lemma \ref{lemma-inj-radius}. If $\gamma$ is a local geodesic of $\Gamma \backslash \X$ then $\tilde{\gamma}$ is a local geodesic of $\X$, again by Lemma \ref{lemma-inj-radius}. This shows that $\Pi$ is surjective. Since $\pi$ is $1$-Lipschitz when we equip both $\LocGeod{\X}$ and $\LocGeod{\Gamma \backslash \X}$ with the metric induced by the same $f\in \mathcal{F}$, then $\Pi$ is continuous.
		
		It remains to show that the map $\bar{\Pi}$ is a homeomorphism. Let us show it is injective: suppose $\Pi(\gamma) = \Pi(\gamma')$, i.e. $(\pi\circ\gamma)(t)=(\pi\circ\gamma')(t)$ for every $t\in \mathbb{R}$. For every $t\in \mathbb{R}$ there is a unique $g_t\in \Gamma$ such that $g_t\gamma'(t) = \gamma(t)$, by Lemma \ref{lemma-inj-radius}.
		We consider the set $I=\lbrace t \in \mathbb{R} \,:\, g_t = g_0\rbrace$. It is closed: indeed if $t_k\to t_\infty$ with $t_k\in I$ then we have
		\begin{equation*}
			\begin{aligned}
				\sfd(g_{t_\infty}\gamma'(t_\infty), g_0\gamma'(t_\infty)) &\leq \sfd(g_{t_\infty}\gamma'(t_\infty), g_{t_k}\gamma'(t_k)) + \sfd(g_{t_k}\gamma'(t_k), g_0\gamma'(t_\infty))\\
				&\leq \sfd(\gamma(t_\infty),\gamma(t_k)) + \sfd(\gamma'(t_k),\gamma'(t_\infty))
			\end{aligned}
		\end{equation*}
		and the last quantity is as small as we want. So $g_{t_\infty} = g_0$, i.e. $t_\infty \in I$. Moreover $I$ is open: let $t\in I$ and suppose there is a sequence $t_k \to t$ such that $g_{t_k}\neq g_0$. Then
		$$\sfd(\gamma(t), g_{t_k}\gamma'(t)) \leq \sfd(\gamma(t), \gamma(t_k)) + \sfd(g_{t_k}\gamma'(t_k), g_{t_k}\gamma'(t)) \leq 2\vert t - t_k \vert.$$
		When $2\vert t - t_k \vert < \frac{\text{sys}(\Gamma,\gamma'(t))}{2}$ we get $\sfd(g_0^{-1}g_{t_k}\gamma'(t), \gamma'(t)) < \frac{\text{sys}(\Gamma,\gamma'(t))}{2}$, so $g_0^{-1}g_{t_k} = \text{id}$ by Lemma \ref{lemma-inj-radius}. This means that $g_{t_k} = g_0$ for every $t_k$ sufficiently close to $t$. Since $I$ is non empty we conclude that $I=\mathbb{R}$, i.e. $\gamma(t) = g_0 \gamma'(t)$ for every $t\in \mathbb{R}$. So $\gamma = g_0 \gamma'$, i.e. $\bar{\Pi}$ is injective.
		
		The continuity of $\bar{\Pi}^{-1}$ can be checked on sequences since both spaces are metrizable by Lemma \ref{lemma-metric-locgeod} and Proposition  \ref{prop:quotient_of_loc_geod}: observe that the proof of Proposition  \ref{prop:quotient_of_loc_geod} says that also $\Gamma \backslash \LocGeod{\X}$ is metrizable.
		Let $p\colon \LocGeod{\X} \to \Gamma\backslash\LocGeod{\X}$ be the standard projection map induced by the action of $\Gamma$ on $\LocGeod{\X}$. Let $\gamma_k, \gamma_\infty \in \LocGeod{\Gamma \backslash \X}$ be such that $\gamma_k \to \gamma_\infty$ uniformly on compact subsets of $\mathbb{R}$. Let $T\geq 0$ and let $\varepsilon > 0$ be a real number smaller than half the systole of the compact set $\gamma_\infty([-T,T])$. Let $\tilde{\gamma}_k, \tilde{\gamma}_\infty$ be covering local geodesics of $\gamma_k, \gamma_\infty$ respectively. We have $\sfd(\gamma_k(t),\gamma_\infty(t)) < \varepsilon$ for every $t\in [-T,T]$ for $k$ big enough. So, for every $t\in [-T,T]$ there exists a unique $g_k(t)\in \Gamma$ such that $\sfd(g_k(t)\tilde\gamma_k(t),\tilde\gamma_\infty(t)) < \varepsilon$ by Lemma \ref{lemma-inj-radius}.
		Arguing as before we conclude that for every $k$ big enough it holds that $g_k(t) = g_k(0) =:g_k \in \Gamma$ for every $t\in [-T,T]$. This implies that $p(\tilde{\gamma}_k)$ converges uniformly on $[-T,T]$ to $p(\tilde{\gamma}_\infty)$. Since this is true for every  $T\geq 0$ we get the continuity of $\bar\Pi^{-1}$.
	\end{proof}

	\begin{cor}
		\label{cor:quotient_geodesics=local_geodesics_of_quotient}
		Let $\X$ be a proper, geodesic metric space such that every local geodesic of $\X$ is a global geodesic. Let $\Gamma < \Isom(\X)$ be discrete and free. Then $\bar{\Pi}$ is a homeomorphism between $\Gamma \backslash \Geod(\X)$ and $\LocGeod{\Gamma \backslash \X}$. 
		
		The assumptions are satisfied for instance by Busemann convex metric spaces and in particular by $\CAT(-1)$ spaces.
	\end{cor}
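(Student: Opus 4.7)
The plan is to derive this corollary directly from Proposition \ref{prop-quotient-geodesics}, which already provides the hard analytic work (continuity, injectivity, and continuity of the inverse of the induced quotient map). First I would observe that the hypothesis ``every local geodesic of $\X$ is a global geodesic'' says precisely that $\LocGeod{\X} = \Geod(\X)$ as subsets of the space of maps $\R \to \X$. Combined with the fact that $\Geod(\X) \subseteq \LocGeod{\X}$ is always closed and $\Gamma$-invariant, this gives $\Gamma \backslash \LocGeod{\X} = \Gamma \backslash \Geod(\X)$ as topological spaces.

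Next I would note that Proposition \ref{prop-quotient-geodesics} provides a homeomorphism $\bar{\Pi} \colon \Gamma \backslash \LocGeod{\X} \to \LocGeod{\Gamma \backslash \X}$. Substituting the identification of the previous paragraph, this map is exactly the homeomorphism $\Gamma \backslash \Geod(\X) \to \LocGeod{\Gamma \backslash \X}$ claimed by the corollary. No further analysis is required for this step.

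The only remaining point is the final assertion about Busemann convex spaces. The plan here is to recall that in a Busemann convex metric space the distance function $t \mapsto \sfd(\gamma(t),\gamma'(t))$ between any two geodesics is convex; applied to two overlapping local geodesics this convexity forces their concatenation to remain distance realizing, so every local geodesic is a global geodesic (this is the classical fact that Busemann convex implies uniqueness and global extendability of geodesics). Since $\CAT(-1)$ spaces are $\CAT(0)$ and hence Busemann convex, the hypothesis is satisfied and the corollary applies.

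The only conceivable obstacle is making sure the identification $\LocGeod{\X} = \Geod(\X)$ is compatible with the metrics $\sfD_f$ used on each side and with the $\Gamma$-action; but this is automatic from the definition \eqref{f-distance}, which is formally identical on $\LocGeod{\X}$ and on its subset $\Geod(\X)$, and from the fact that the $\Gamma$-action on $\LocGeod{\X}$ restricts to the $\Gamma$-action on $\Geod(\X)$ already used in Proposition \ref{prop:quotient_of_loc_geod}. Hence the corollary reduces entirely to an application of the proposition, with no new estimate needed.
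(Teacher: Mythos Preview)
Your proposal is correct and matches the paper's approach: the corollary is stated without proof in the paper precisely because it follows immediately from Proposition~\ref{prop-quotient-geodesics} via the identification $\LocGeod{\X}=\Geod(\X)$, exactly as you outline. Your remark on Busemann convex spaces is also the standard justification the paper has in mind.
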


	\subsection{The geodesic flow}
	\label{subsec:geodesic-flow}
	
	Let $\X$ be a proper metric space and let $\LocGeod{\X}$ be its space of local geodesic lines.  There is a natural action of $\mathbb{R}$ on $\text{Loc-Geod}(\X)$ defined by reparametrization:
	\begin{equation}
		\label{eq:defin_reparametrization_flow}
		\Phi_t\gamma (\cdot) = \gamma(\cdot + t)
	\end{equation}
	for every $t\in \mathbb{R}$.
	It is a continuous action, i.e. the map $\R \to \text{Homeo}(\LocGeod{\X})$, $t\mapsto \Phi_t$, is a continuous homomorphism of groups. In particular $\Phi_t$ is a homeomorphism of $\LocGeod{\X}$ for every $t\in\mathbb{R}$ and $\Phi_t \circ \Phi_s = \Phi_{t+s}$ for every $t,s\in \mathbb{R}$. The dynamical system $(\LocGeod{\X}, \Phi_1)$ is called the \emph{local geodesic flow of $\X$}. By definition, $\Geod(\X)$ is preserved by the action of $\Phi_t$. The dynamical system $(\Geod(\X), \Phi_1)$ is called the \emph{geodesic flow of $\X$}.
	
	The action of $\Isom(\X)$ on $\Geod(\X)$ commutes with the action $\Phi_t$, in the sense that $g(\Phi_t \gamma) = \Phi_t(g\gamma)$ for every $t\in \R$ and every $\gamma \in \Geod(\X)$. Therefore, $\Phi_t$ induces a continuous action by $\R$ on $\Gamma \backslash \Geod(\X)$ by homeomorphisms, that we denote by $\bar{\Phi}_t$.
	The \emph{quotient geodesic flow of $\Gamma \backslash \X$} is the dynamical system $(\Gamma \backslash \Geod(\X), \bar{\Phi}_1)$.
	This is the flow of which we compute the topological entropy in Theorem \ref{theo:intro-equality-Gromov-hyperbolic}.
	
	For $\CAT(-1)$ spaces, the quotient geodesic flow of $\Gamma \backslash \X$ is conjugated to the local geodesic flow of $\Gamma \backslash \X$.
	
	\begin{cor}
		\label{cor:conjugation_of_flows}
		Let $\X$ be a proper, geodesic metric space such that every local geodesic of $\X$ is a global geodesic. Let $\Gamma < \Isom(\X)$ be discrete and free. Then the map $\bar{\Pi}$ of Proposition \ref{prop-quotient-geodesics} is a conjugation between the dynamical systems $(\Gamma \backslash \Geod(\X), \bar{\Phi}_1)$ and $(\LocGeod{\Gamma \backslash \X}, \Phi_1)$, i.e. $\bar{\Pi}\circ \bar{\Phi}_1 = \Phi_1 \circ \bar{\Pi}$.
	\end{cor}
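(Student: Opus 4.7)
The plan is to reduce the claimed conjugation to an equality that can be checked by unwinding definitions at the level of $\Geod(\X)$ and then pushing it down through the quotient. By hypothesis every local geodesic is global, so Proposition \ref{prop-quotient-geodesics} provides the homeomorphism $\bar{\Pi}\colon \Gamma\backslash \Geod(\X) \to \LocGeod{\Gamma\backslash \X}$; what remains is only the intertwining with the time-$1$ maps.

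First I would verify the flow compatibility upstairs. The map $\Pi\colon \Geod(\X)\to \LocGeod{\Gamma\backslash \X}$ is defined by $\Pi(\gamma) = \pi\circ \gamma$, where $\pi\colon \X \to \Gamma\backslash \X$ is the covering projection. Using the definition \eqref{eq:defin_reparametrization_flow} of the flow, for every $\gamma\in \Geod(\X)$ and every $t\in\R$ one has
\begin{equation}
\Pi(\Phi_1\gamma)(t) = \pi\bigl((\Phi_1\gamma)(t)\bigr) = \pi(\gamma(t+1)) = (\pi\circ\gamma)(t+1) = \Phi_1(\Pi(\gamma))(t),
\end{equation}
so $\Pi\circ \Phi_1 = \Phi_1\circ \Pi$ as maps from $\Geod(\X)$ to $\LocGeod{\Gamma\backslash \X}$.

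Next I would push this identity to the quotient. Let $p\colon \Geod(\X)\to \Gamma\backslash \Geod(\X)$ denote the quotient projection, so by definition $\bar{\Pi}\circ p = \Pi$ and $\bar{\Phi}_1 \circ p = p\circ \Phi_1$ (the latter holds because the $\Gamma$-action on $\Geod(\X)$ commutes with $\Phi_t$, as noted just before \eqref{eq:defin_reparametrization_flow} is extended to the quotient). Combining these with the upstairs identity gives, for every $\gamma\in \Geod(\X)$,
\begin{equation}
\bar{\Pi}\bigl(\bar{\Phi}_1(p(\gamma))\bigr) = \bar{\Pi}\bigl(p(\Phi_1\gamma)\bigr) = \Pi(\Phi_1\gamma) = \Phi_1(\Pi(\gamma)) = \Phi_1\bigl(\bar{\Pi}(p(\gamma))\bigr).
\end{equation}
Since $p$ is surjective, this proves $\bar{\Pi}\circ \bar{\Phi}_1 = \Phi_1 \circ \bar{\Pi}$.

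There is no real obstacle here: the homeomorphism part has already been done in Proposition \ref{prop-quotient-geodesics}, and the compatibility with the flow is tautological from the definition of $\Phi_t$ by time-shift together with the fact that $\pi$ commutes trivially with reparametrizations since it acts only in the target variable. The only mild subtlety is making sure the $\R$-action descends to $\Gamma\backslash \Geod(\X)$, but this follows from the observation already recorded in Section \ref{subsec:geodesic-flow} that elements of $\Isom(\X)$ commute with $\Phi_t$.
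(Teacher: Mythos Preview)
Your proof is correct and takes essentially the same approach as the paper: both unwind the definitions to verify the intertwining identity directly. The paper compresses your two-step argument (upstairs identity, then push down via surjectivity of $p$) into a single chain of equalities on a generic element $[\gamma]$, but the content is identical.
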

	\begin{proof}
		Let $p\colon \Geod(\X) \to \Gamma \backslash \Geod(\X)$ be the standard projection map. For every $[\gamma]\in \Gamma \backslash \Geod(\X)$ we have that
		$$(\bar{\Pi}\circ \bar{\Phi}_1)([\gamma(\cdot)]) = \bar{\Pi}([\gamma(1+\cdot)]) = (\pi \circ \gamma)(1+\cdot) = \Phi_1((\pi \circ \gamma)(\cdot)) = (\Phi_1 \circ \bar{\Pi})([\gamma(\cdot)]).$$
	\end{proof}
	
	\section{The entropies of a dynamical system}
	\label{sec:topological_entropy}

	A dynamical system is a couple $(\Y,T)$ where $\Y$ is a topological space and $T\colon \Y \to \Y$ is a continuous map. We will always denote by $\mathscr{B}$ the $\sigma$-algebra of Borel subsets of $\Y$. We denote by $\mathcal{M}_1(\Y,T)$ the set of probability measure on $(\Y,\mathscr{B})$ that are $T$-invariant, i.e. such that $T_\#\mu = \mu$. We recall that a measure $\mu\in \mathcal{M}_1(\Y,T)$ is ergodic if the $\mu$-measure of every subset $A$ of $\Y$ such that $T^{-1}A \subseteq A$ is either $0$ or $1$. We denote the set of ergodic measures by $\mathcal{E}_1(\Y,T)$. The measure-theoretic entropy of the dynamical system $(\Y,T)$ is by definition:
	\begin{equation}
		\label{eq:defin_top_entropy_via_measures}
		h_\text{meas}(\Y,T) := \sup_{\mu\in \mathcal{M}_1(\Y,T)}h_\mu(\Y,T) = \sup_{\mu\in \mathcal{E}_1(\Y,T)}h_\mu(\Y,T),
	\end{equation}
	where $h_\mu(\Y,T)$ denotes the Kolmogorov-Sinai entropy of the measure $\mu$, whose definition we now recall. 
	
	Let $\mu\in \mathcal{M}_1(\Y,T)$.
	A \emph{finite Borel partition} of $\Y$ is a set $\mathcal{Q} = \{Q_1,\ldots,Q_k\}$, where $Q_i \subseteq \Y$ is Borel for every $i = 1,\ldots, k$, $Q_i\cap Q_j = \emptyset$ if $i\neq j$ and $\Y = \bigcup_{i=1}^k Q_i$. The \emph{$\mu$-entropy of $\mathcal{Q}$} is the quantity
	$$H_\mu(\mathcal{Q}) := -\sum_{i=1}^k \mu(Q_i)\log\mu(Q_i).$$
	Here we use the convention that $0\cdot \log 0 = 0$.
	Given $n\in \N$ we define
	$$\bigvee_{j=0}^n T^{-j}\mathcal{Q} := \left\{ Q_{i_0} \cap T^{-1} Q_{i_1} \cap \ldots \cap T^{-n}Q_{i_n} \,:\, i_l \in \{1,\ldots,k\} \right\}.$$
	Notice that $\bigvee_{j=0}^n T^{-j}\mathcal{Q}$ is again a finite Borel partition of $\Y$. So we set
	$$h_\mu(\Y,T,\mathcal{Q}) := \lim_{n\to +\infty} H_\mu\left(\bigvee_{j=0}^n T^{-j}\mathcal{Q}\right).$$
	The limit on the right hand side exists. Finally, the \emph{Kolmogorov-Sinai entropy of $\mu$} is 
	$$h_\mu(\Y,T) := \sup_{Q}h_\mu(\Y,T,\mathcal{Q}),$$
	where the supremum is taken among all possible finite Borel partitions of $\Y$.
	A direct consequence of the definition of the Kolmogorov-Sinai entropy is the following.
	\begin{lemma}
		\label{lemma:measure_entropy_on_measure_1}
		Let $(\Y,T)$ be a dynamical system and let $\mu \in \mathcal{E}_1(\Y,T)$. Suppose that there exists a Borel subset $\Y' \subseteq \Y$ such that $T(\Y') = \Y'$ and $\mu(\Y') = 1$. Then $h_\mu(\Y,T) = h_{\mu \llcorner \Y'}(\Y',T)$, where $\mu \llcorner \Y'$ is the restriction of $\mu$ to $\Y'$.
	\end{lemma}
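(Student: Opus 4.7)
The plan is to exhibit a canonical correspondence between finite Borel partitions of $\Y$ and finite Borel partitions of $\Y'$, modulo sets of $\mu$-measure zero, and observe that this correspondence preserves the entropy computations. Concretely, I would set $\nu := \mu \llcorner \Y'$ and first check that the restricted system $(\Y', T\restr{\Y'})$ is well-defined ($T(\Y') = \Y'$ by hypothesis) and that $\nu$ is a $T$-invariant probability on $\Y'$. For any Borel $B \subseteq \Y'$ we have $(T\restr{\Y'})^{-1}(B) = T^{-1}(B)\cap \Y'$, and the $T$-invariance of $\mu$ together with $\mu(\Y')=1$ gives $\mu(T^{-1}(\Y\setminus \Y'))=0$. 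Hence
\begin{equation}
\nu((T\restr{\Y'})^{-1}(B)) = \mu(T^{-1}(B)\cap \Y') = \mu(T^{-1}(B)) = \mu(B) = \nu(B),
\end{equation}
so $h_\nu(\Y', T)$ is well-defined.

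For the inequality $h_\nu(\Y', T) \le h_\mu(\Y, T)$, given a finite Borel partition $\mathcal{Q}' = \{Q'_1,\dots,Q'_k\}$ of $\Y'$, I extend it to a finite Borel partition of $\Y$ by setting $\mathcal{Q} := \{Q'_1,\dots,Q'_{k-1},\, Q'_k \cup (\Y \setminus \Y')\}$. Since $\mu(T^{-j}(\Y \setminus \Y')) = 0$ for every $j \ge 0$, each atom of $\bigvee_{j=0}^n T^{-j}\mathcal{Q}$ differs from the corresponding atom of $\bigvee_{j=0}^n T^{-j}\mathcal{Q}'$ (viewed inside $\Y$) by a $\mu$-null set, so using the convention $0\log 0 = 0$ we get $H_\mu\bigl(\bigvee_{j=0}^n T^{-j}\mathcal{Q}\bigr) = H_\nu\bigl(\bigvee_{j=0}^n T^{-j}\mathcal{Q}'\bigr)$. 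Taking $n \to \infty$ yields $h_\mu(\Y, T, \mathcal{Q}) = h_\nu(\Y', T, \mathcal{Q}')$, and the supremum over $\mathcal{Q}'$ gives the claimed inequality.

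For the reverse inequality, given any finite Borel partition $\mathcal{Q} = \{Q_1,\dots,Q_k\}$ of $\Y$, its trace $\mathcal{Q}' := \{Q_1 \cap \Y',\dots, Q_k \cap \Y'\}$ is a finite Borel partition of $\Y'$ (some atoms possibly empty), and $\mu(Q_i) = \nu(Q_i \cap \Y')$. The same null-set bookkeeping applied to every iterated refinement shows $H_\mu\bigl(\bigvee_{j=0}^n T^{-j}\mathcal{Q}\bigr) = H_\nu\bigl(\bigvee_{j=0}^n T^{-j}\mathcal{Q}'\bigr)$, so $h_\mu(\Y, T, \mathcal{Q}) = h_\nu(\Y', T, \mathcal{Q}') \le h_\nu(\Y', T)$; taking the supremum over $\mathcal{Q}$ completes the proof.

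There is essentially no obstacle here: the statement is purely measure-theoretic, and the only point that needs a little attention is the null-set bookkeeping for the iterated refinements, which is handled cleanly by $T$-invariance of $\mu$ together with the convention $0\log 0 = 0$.
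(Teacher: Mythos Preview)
Your proof is correct and follows essentially the same approach as the paper: extend a partition of $\Y'$ to one of $\Y$ by adjoining the null set $\Y\setminus\Y'$ (the paper adds it as a separate atom, you merge it with the last atom), and conversely restrict a partition of $\Y$ by tracing onto $\Y'$, then match up the entropies using $T$-invariance and $\mu(\Y\setminus\Y')=0$.
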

	\begin{proof}
		Given a finite Borel partition $\mathcal{Q} = \{Q_1,\ldots,Q_k\}$ of $\Y'$ we define $\mathcal{Q}^e := \{Q_1,\ldots,Q_k, \Y\setminus \Y'\}$, which is a finite Borel partition of $\Y$. As collection of sets, $\bigvee_{j=0}^n T^{-j}\mathcal{Q} \subseteq \bigvee_{j=0}^n T^{-j}\mathcal{Q}^e$. From the definitions we have that $h_{\mu \llcorner \Y'}(\Y',T,\mathcal{Q}) \le h_{\mu}(\Y,T,\mathcal{Q}^e)$. Since this is true for every finite Borel partition of $\Y'$ we deduce that $h_{\mu \llcorner \Y'}(\Y',T) \le h_\mu(\Y,T)$.
		
		On the other hand let $\mathcal{Q} = \{Q_1,\ldots,Q_k\}$ be a finite Borel partition of $\Y$ and define $\mathcal{Q}^r := \{Q_1\cap \Y', \ldots, Q_k \cap \Y'\}$, a finite Borel partition of $\Y'$.
		Since $T(\Y\setminus \Y') = \Y\setminus \Y'$ and $\mu(\Y\setminus \Y') = 0$, it follows that $h_\mu(\Y,T,\mathcal{Q}) = h_{\mu\llcorner \Y'}(\Y',T,\mathcal{Q}^r)$. By the arbitrariness of $\mathcal{Q}$ we conclude that $h_\mu(\Y,T) \le  h_{\mu\llcorner \Y'}(\Y',T)$.
	\end{proof}

	Another notion of entropy, introduced by R.Bowen in \cite{Bow73}, is defined for dynamical systems $(\Y,T)$ such that $\Y$ is metrizable. The \emph{topological entropy} of $(\Y,T)$ is defined by
	\begin{equation}
		\label{eq:defin-Bowen-entropy}
		h_\textup{top}(\Y,T) =\inf_{\text{\texthtd}}\sup_{K}\lim_{r\to 0}\lims_{n \to +\infty}\frac{1}{n} \log \textup{Cov}_{\text{\texthtd}^n}(K,r),
	\end{equation}
	where the infimum is taken among all metrics $\text{\texthtd}$ inducing the topology of $\Y$, the supremum is among all compact subsets of $\Y$, $\text{\texthtd}^n$ is the \emph{dynamical metric}
	\begin{equation}
		\label{eq:defin_dynamical_metric}
		\text{\texthtd}^n(y,y') := \max_{i=0,\ldots,n-1} \text{\texthtd}(T^iy,T^iy')
	\end{equation}
	and $\textup{Cov}_{\text{\texthtd}^n}(K,r)$ denotes the minimal number of balls of radius $r$, with respect to the metric $\text{\texthtd}^n$, needed to cover $K$.
	
	The results around the variational principle of \cite{HK95} can be summerized as follows.
	\begin{prop}
		\label{prop:variational_principle}
		Let $(\Y,T)$ be a metrizable dynamical system.
		\begin{itemize}
			\item[(i)] It holds that $h_\textup{meas}(\Y,T) \le h_\textup{top}(\Y,T)$.
			\item[(ii)] If $\Y$ is locally compact then $h_\textup{meas}(\Y,T) = h_\textup{top}(\Y,T)$. 
			\item[(iii)] If $\Y$ is compact, then the right hand side of \eqref{eq:defin-Bowen-entropy} does not depend on the metric \textup{\texthtd}\, inducing the topology of $\Y$. In other words
			$$h_\textup{top}(\Y,T) = \lim_{r\to 0}\lims_{n \to +\infty}\frac{1}{n} \log \textup{Cov}_{\textup{\texthtd}^n}(\Y,r),$$
			for every choice of \textup{\texthtd}.
		\end{itemize}
	\end{prop}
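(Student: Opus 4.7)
The plan is to derive the three statements from the framework of \cite{HK95} combined with the classical arguments of Bowen, without re-deriving the full variational principle from scratch. Throughout I would fix an auxiliary metric $\text{\texthtd}$ inducing the topology on $\Y$; the main work is to prove (i) and (ii) with $\text{\texthtd}$ fixed, after which the infimum/supremum in \eqref{eq:defin-Bowen-entropy} is trivial.

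For (i), I would argue measure by measure: take $\mu\in\mathcal{E}_1(\Y,T)$ and invoke the Brin--Katok local entropy formula, which gives $h_\mu(\Y,T)=\lim_{r\to 0}\lims_{n\to+\infty}-\tfrac1n\log\mu(B_{\text{\texthtd}^n}(y,r))$ for $\mu$-a.e.\ $y$. Here $B_{\text{\texthtd}^n}(y,r)$ is the dynamical ball defined by \eqref{eq:defin_dynamical_metric}. Fix such a generic $y$ and, since $\mu$ is inner regular, a compact $K\ni y$ with $\mu(K)>\tfrac12$; then the classical pigeonhole argument (covering $K$ by $\text{\texthtd}^n$-balls of radius $r/2$ and selecting the heaviest) yields $-\tfrac1n\log\mu(B_{\text{\texthtd}^n}(y,r))\le \tfrac1n\log\big(2\cdot\textup{Cov}_{\text{\texthtd}^n}(K,r/2)\big)$ for many $n$, so $h_\mu(\Y,T)\le\lim_{r\to 0}\lims_n \tfrac1n\log\textup{Cov}_{\text{\texthtd}^n}(K,r)$. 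Taking the supremum over $K$, then the infimum over $\text{\texthtd}$, and finally the supremum over $\mu\in\mathcal{E}_1$ gives (i) via \eqref{eq:defin_top_entropy_via_measures}.

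For (ii), which is the reverse inequality in the locally compact case, I would apply the variational principle of Handel--Kitchens directly. With $\text{\texthtd}$ fixed, local compactness lets one realize any compact $K\subseteq\Y$ inside a relatively compact open set $U$; Handel--Kitchens then construct, for each $n$ and $r$, maximal $(n,r)$-separated subsets $E_n\subseteq K$ whose empirical measures $\nu_n:=\frac{1}{|E_n|}\sum_{y\in E_n}\delta_y$ have time averages $\mu_n:=\frac1n\sum_{j=0}^{n-1}T^j_\#\nu_n$ that, up to subsequence, converge weakly to some $\mu\in\mathcal{M}_1(\Y,T)$ supported in $\overline{U}$. Passing to ergodic components and using upper semicontinuity of $h_\mu$ against suitable partitions subordinate to $U$, one obtains $h_\mu(\Y,T)\ge\lim_{r\to 0}\lims_n\tfrac1n\log\textup{Cov}_{\text{\texthtd}^n}(K,r)$. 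Since this holds for every $K$ and every $\text{\texthtd}$, we conclude $h_\text{meas}\ge h_\text{top}$.

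For (iii), when $\Y$ is compact any two metrics $\text{\texthtd}_1,\text{\texthtd}_2$ inducing the same topology are \emph{uniformly} equivalent, since the identity is a continuous bijection between compact metric spaces and hence uniformly continuous in both directions. Consequently, for every $r>0$ there exists $r'>0$, independent of $n$, such that every $\text{\texthtd}_1^n$-ball of radius $r'$ is contained in a $\text{\texthtd}_2^n$-ball of radius $r$, and vice versa; so the covering numbers $\textup{Cov}_{\text{\texthtd}_1^n}(\Y,r)$ and $\textup{Cov}_{\text{\texthtd}_2^n}(\Y,r')$ are equal up to constants independent of $n$, which disappear after dividing by $n$. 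This makes the right hand side of \eqref{eq:defin-Bowen-entropy} intrinsic, so the infimum over metrics is attained by every metric. The main obstacle is step (ii): without local compactness, the weak limit of the empirical measures could escape to infinity, which is precisely why Handel--Kitchens' construction requires an exhaustion by relatively compact sets, and why the statement in (ii) cannot be weakened beyond local compactness.
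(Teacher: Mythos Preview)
The paper does not actually prove this proposition: the sentence preceding it reads ``The results around the variational principle of \cite{HK95} can be summarized as follows,'' and no proof is given. So there is nothing to compare against beyond the citation to \cite{HK95}; your elaboration is in the same spirit and your sketches for (ii) and (iii) are standard and correct.

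Your argument for (i), however, has a genuine gap. After invoking Brin--Katok for a generic $y$ and choosing a compact $K\ni y$, you write that ``selecting the heaviest'' ball in a $\text{\texthtd}^n$-cover of $K$ yields a lower bound on $\mu(B_{\text{\texthtd}^n}(y,r))$. It does not: the heaviest ball need not be the one containing your fixed $y$, and $y$ may sit in an arbitrarily light ball of the cover. The correct bridge is a Borel--Cantelli step: if $N_n=\textup{Cov}_{\text{\texthtd}^n}(K,r/2)$, then any ball $B_{\text{\texthtd}^n}(z_i,r/2)$ meeting the set $A_n:=\{y\in K:\mu(B_{\text{\texthtd}^n}(y,r))<e^{-n\alpha}\}$ is contained in some $B_{\text{\texthtd}^n}(y,r)$ with $y\in A_n$, hence has measure $<e^{-n\alpha}$; summing gives $\mu(A_n)\le N_n e^{-n\alpha}$, and Borel--Cantelli then yields the desired inequality for $\mu$-a.e.\ $y$. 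A second issue is that the Brin--Katok formula is classically stated for compact (or at least Borel probability on Polish) systems, so in the bare metrizable setting you should either justify its validity or, more simply, bypass it entirely and run Goodwyn's original partition argument, which needs no compactness.
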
 
	
	We observe that in the right hand side of \eqref{eq:defin-Bowen-entropy}, the compact subsets $K$ are not required to be $T$-invariant, where by $T$-invariant we mean $T(K) = K$. For a generic dynamical system, the restriction in \eqref{eq:defin-Bowen-entropy} to $T$-invariant compact subsets gives a strictly smaller quantity, see the remark below \cite[Lemma 1.6]{HK95}. 
	We call it the \emph{invariant topological entropy}, which is by definition
	\begin{equation}
		h_\textup{inv-top}(\Y,T) := \inf_{\text{\texthtd}}\sup_{K\,\,\, T\text{-inv.}}\lim_{r\to 0}\lims_{n \to +\infty}\frac{1}{n} \log \textup{Cov}_{\text{\texthtd}^n}(K,r). 
	\end{equation}
	
	When $K$ is a compact $T$-invariant subset then the dynamical system $(K,T)$ is supported on a compact metrizable space, so the quantity 
	$$\lim_{r\to 0}\lims_{n \to +\infty}\frac{1}{n} \log \textup{Cov}_{\text{\texthtd}^n}(K,r)$$ 
	does not depend on the choice of the metric \texthtd\, and it coincides with the topological entropy of the dynamical system $(K,T)$ by Proposition \ref{prop:variational_principle}.(iii). Therefore
	\begin{equation}
		\label{eq:h_inv-top}
		h_\textup{inv-top}(\Y,T) = \sup_{K\,\,\, T\text{-inv.}} h_\text{top}(K,T).
	\end{equation}

	Let $(\Y,T)$ be a dynamical system and let $\Gamma$ be a group of homeomorphisms of $\Y$ commuting with $T$, i.e. $Tg(y) = gT(y)$ for every $g\in \Gamma$ and every $y\in \Y$. In this situation, the map $T$ induces a continuous function $\bar{T} \colon \Gamma \backslash \Y \to \Gamma \backslash \Y$, $\bar{T}([y]) = [Ty]$ and so we have the dynamical system $(\Gamma \backslash \Y, \bar{T})$.
	\begin{lemma}
		\label{lemma:h_top_finite_quotient}
		Let $(\Y,T)$ be a dynamical system. Suppose $\Y$ is metrizable with a metric $\sfd$. Let $\Gamma < \Isom(\Y,\sfd)$ commuting with $T$. If $\Gamma$ is finite then
		$$h_\textup{inv-top}(\Y,T) = h_\textup{inv-top}(\Gamma\backslash\Y,\bar{T}).$$
	\end{lemma}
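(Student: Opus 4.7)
The plan is to reduce both inequalities of the lemma to the following single claim: \emph{if $K' \subseteq \Y$ is compact, $T$-invariant, and $\Gamma$-invariant (so that $K' = \pi^{-1}(\pi(K'))$, where $\pi \colon \Y \to \Gamma \backslash \Y$ is the projection), then $h_\textup{top}(K', T) = h_\textup{top}(\pi(K'), \bar{T})$}. Granting the claim, the inequality $h_\textup{inv-top}(\Y, T) \le h_\textup{inv-top}(\Gamma \backslash \Y, \bar{T})$ follows by starting from any compact $T$-invariant $K \subseteq \Y$ and passing to its $\Gamma$-saturation $K' := \bigcup_{g \in \Gamma} gK$. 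This is compact by finiteness of $\Gamma$, $T$-invariant because the action of $\Gamma$ commutes with $T$, and $\Gamma$-invariant by construction, so $h_\textup{top}(K, T) \le h_\textup{top}(K', T) = h_\textup{top}(\pi(K'), \bar{T}) \le h_\textup{inv-top}(\Gamma \backslash \Y, \bar{T})$, and one takes the supremum over $K$.

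The reverse inequality is dual: given a compact $\bar{T}$-invariant $\bar{K} \subseteq \Gamma \backslash \Y$, the set $K := \pi^{-1}(\bar{K})$ is $T$-invariant by equivariance of $\pi$ and $\Gamma$-invariant by construction; compactness follows from a sequential argument using finiteness of $\Gamma$ and the fact that $\Gamma$ acts by isometries (for a sequence $y_n \in K$, project to a convergent subsequence in $\bar{K}$ and extract a further subsequence along which a fixed $g \in \Gamma$ realizes the infimum in the quotient metric). The claim then yields $h_\textup{top}(\bar{K}, \bar{T}) = h_\textup{top}(K, T) \le h_\textup{inv-top}(\Y, T)$, and the supremum over $\bar{K}$ concludes.

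It remains to prove the claim. Here $\pi|_{K'} \colon K' \to \pi(K')$ is a factor map between compact metric dynamical systems whose fibers have cardinality at most $|\Gamma|$. The inequality $h_\textup{top}(\pi(K'), \bar{T}) \le h_\textup{top}(K', T)$ is standard and follows because $\pi$ is $1$-Lipschitz from $\sfd$ to $\bar{\sfd}$ and hence from $\sfd^n$ to $\bar{\sfd}^n$: an $(n, r)$-separated subset of $(\pi(K'), \bar{\sfd}^n)$ lifts through any section of $\pi$ to an $(n, r)$-separated subset of $(K', \sfd^n)$ of the same cardinality. For the converse, I would invoke Bowen's fiber-entropy bound for factor maps between compact metric dynamical systems,
\[
h_\textup{top}(K', T) \;\le\; h_\textup{top}(\pi(K'), \bar{T}) + \sup_{[y] \in \pi(K')} h_\textup{top}\bigl(T, \pi^{-1}([y])\bigr),
\]
where the last term denotes the Bowen entropy of the fiber as a (non-invariant) subset. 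Since every fiber has at most $|\Gamma|$ points, each of its $(n, r)$-separated subsets has cardinality bounded by $|\Gamma|$, so the supremum vanishes and the claim is proved.

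The main obstacle that Bowen's formula sidesteps is the following: a direct covering argument would attempt to produce an $(n, r')$-cover of $K'$ by lifting an $(n, r)$-cover of $\pi(K')$, but an approximation $\bar{\sfd}^n([y], [s]) \le r$ only provides, for each time index $i$ separately, an element $g_i \in \Gamma$ with $\sfd(T^iy, T^i g_i s) \le r$, while near-fixed points of elements of $\Gamma$ on $K'$ prevent any uniform injectivity-radius estimate from forcing a single choice $g_i = g_0$ across all $i$. Bowen's theorem bypasses this synchronization issue by replacing orbit approximations with fiber entropies that are trivially zero when fibers are uniformly finite.
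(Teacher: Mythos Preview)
Your proof is correct and takes a genuinely different route from the paper. The paper argues directly at the level of covering numbers, asserting that for every $K\subseteq\Y$, every $n$ and every $r$,
\[
\Cov_{\bar{\sfd}^n}(p(K),r)\ \le\ \Cov_{\sfd^n}(K,r)\ \le\ \#\Gamma\cdot\Cov_{\bar{\sfd}^n}(p(K),r),
\]
and reads off the entropy equality from the right-hand inequality. You instead first saturate to a $\Gamma$-invariant compact $T$-invariant set, and then invoke Bowen's fiber-entropy bound for the finite-to-one factor map $\pi|_{K'}$.

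Your closing paragraph identifies precisely the weak spot of the direct approach: the right-hand inequality above would require producing, from $\bar\sfd^n([y],[s])\le r$, a \emph{single} $g\in\Gamma$ with $\sfd^n(y,gs)\le r$, and in general no such $g$ exists. In fact the inequality is false already for $\Y=\{0,1\}^{\Z}$ with the shift and $\Gamma=\Z/2\Z$ acting by the global symbol flip: one always has $\bar\sfd([x],[y])\le\tfrac12$, hence $\Cov_{\bar\sfd^n}(\Gamma\backslash\Y,\tfrac12)=1$ for every $n$, while $\Cov_{\sfd^n}(\Y,\tfrac12)=2^n$. So the paper's covering estimate is not valid as stated, whereas your argument via Bowen's theorem goes through because it only needs the fibers to have uniformly bounded cardinality. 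The cost of your route is the black-box appeal to Bowen's bound; the gain is that it is actually correct.
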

	\begin{proof}
		Let $p\colon \Y \to \Gamma \backslash \Y$ be the standard projection. If $K$ is a compact $T$-invariant subset of $\Y$, then $p(K)$ is a compact $\bar{T}$-invariant subset. Vice versa, if $K$ is a compact $\bar{T}$-invariant subset of $\Gamma \backslash \Y$ then $p^{-1}(K)$ is a compact, $T$-invariant subset of $\Y$.
		By Proposition \ref{prop:variational_principle}, we can compute the topological entropy of every compact $T$-invariant subset of $\Y$ using $\sfd$ and the entropy of every compact $\bar{T}$-invariant subset of $\Gamma \backslash \Y$ using the quotient metric $\bar{\sfd}$. 
		Since
		$$\Cov_{\bar{\sfd}^n}(p(K),r) \le \Cov_{\sfd^n}(K,r) \le \#\Gamma \cdot \Cov_{\bar{\sfd}^n}(p(K),r)$$ 
		for every $n\in \N$, every $r>0$ and every $K\subseteq \Y$, we get the thesis.
	\end{proof}

	\subsection{Topological entropy of flows}
	\label{subsec:topological_entropy_flows}
	Let $\Y$ be a metrizable topological space and denote by $\text{Homeo}(\Y)$ the group of self-homeomorphisms of $\Y$ endowed with the compact-open topology. A flow on $\Y$ is a continuous homomorphism $\Phi \colon \R \to \text{Homeo}(\Y)$, $t\mapsto \Phi_t \colon \Y \to \Y$, so $\Phi_0 = \text{id}$ and $\Phi_t \circ \Phi_s = \Phi_{t+s}$. Classically, the topological entropy of a flow is defined via the following formula
	\begin{equation}
		\label{eq:defin_h_top_flow}
		h_\text{top}^\text{flow}(\Y,\Phi) := \inf_{\textup{\texthtd}}\sup_{K} \lim_{r\to 0} \lims_{T \to +\infty} \frac{1}{T} \log \textup{Cov}_{\textup{\texthtd}^T}(K,r),
	\end{equation}
	where the infimum is among all metrics inducing the topology of $\Y$, the supremum is among all compact subsets of $\Y$ and 
	\begin{equation}
		\label{eq:defin_d^T}
		\textup{\texthtd}^T(y,y') = \sup_{t\in [0,T]} \textup{\texthtd}(\Phi_t(y), \Phi_t(y')).
	\end{equation}
	The difference between \eqref{eq:defin_h_top_flow} and \eqref{eq:defin-Bowen-entropy} relies in the different definition of distance in \eqref{eq:defin_d^T} with respect to \eqref{eq:defin_dynamical_metric}. The next lemma focuses on the relation between these two definitions.
	\begin{lemma}
		\label{lemma:h_top_flow_vs_h_top}
		Let $\Y$ be a metrizable topological space and let $\Phi$ be a flow on it. Consider the dynamical system $(\Y,\Phi_1)$. Then $h_\textup{top}(\Y, \Phi_1) \le h_\textup{top}^\textup{flow}(\Y,\Phi)$. Moreover, if $\Y$ is compact and the maps $\{\Phi_t\}_{t\in [0,1]}$ are uniformly equicontinuous then $h_\textup{top}(\Y, \Phi_1) = h_\textup{top}^\textup{flow}(\Y,\Phi)$.
	\end{lemma}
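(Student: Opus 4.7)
The plan is to compare the discrete dynamical metric $\textup{\texthtd}^n$ of \eqref{eq:defin_dynamical_metric} with its continuous-time analogue $\textup{\texthtd}^T$ of \eqref{eq:defin_d^T}. For the first inequality, the observation $\max_{0\le i\le n-1}\textup{\texthtd}(\Phi_i y,\Phi_i y')\le\sup_{t\in[0,n]}\textup{\texthtd}(\Phi_t y,\Phi_t y')$ is immediate, so the discrete dynamical metric is pointwise dominated by its flow counterpart and $\Cov_{\textup{\texthtd}^n_{\mathrm{disc}}}(K,r)\le\Cov_{\textup{\texthtd}^n_{\mathrm{flow}}}(K,r)$ for every compact $K\subseteq\Y$ and every $r>0$. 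Restricting the $\lims_{T\to+\infty}$ in \eqref{eq:defin_h_top_flow} to integer values $T=n$ then bounds the discrete expression by the flow one for every specific metric $\textup{\texthtd}$ and every $K$; passing to the infimum over metrics and supremum over compact sets yields $h_\textup{top}(\Y,\Phi_1)\le h_\textup{top}^\textup{flow}(\Y,\Phi)$ directly from the definitions.

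For the reverse inequality under the compactness and equicontinuity hypotheses, I would fix any metric $\textup{\texthtd}$ inducing the topology of $\Y$. The core step is to exploit the uniform equicontinuity of $\{\Phi_s\}_{s\in[0,1]}$: given $r>0$, choose $r'>0$ so that $\textup{\texthtd}(a,b)<r'$ forces $\textup{\texthtd}(\Phi_s a,\Phi_s b)<r$ for every $s\in[0,1]$, and then decompose any $t\in[0,n]$ as $t=i+s$ with $i\in\{0,\ldots,n-1\}$ and $s\in[0,1]$. If $\textup{\texthtd}^n_{\mathrm{disc}}(y,y')<r'$ then for every such $t$,
\begin{equation}
\textup{\texthtd}(\Phi_t y,\Phi_t y')=\textup{\texthtd}(\Phi_s\Phi_i y,\Phi_s\Phi_i y')<r,
\end{equation}
so $\Cov_{\textup{\texthtd}^n_{\mathrm{flow}}}(\Y,r)\le\Cov_{\textup{\texthtd}^n_{\mathrm{disc}}}(\Y,r')$.

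To convert the integer index $n$ back to real $T$ in \eqref{eq:defin_h_top_flow}, I would set $n:=\lceil T\rceil$, use the monotonicity $\textup{\texthtd}^T_{\mathrm{flow}}\le\textup{\texthtd}^n_{\mathrm{flow}}$, and note that $\lceil T\rceil/T\to 1$, giving
\begin{equation}
\lims_{T\to+\infty}\frac{1}{T}\log\Cov_{\textup{\texthtd}^T_{\mathrm{flow}}}(\Y,r)\le\lims_{n\to+\infty}\frac{1}{n}\log\Cov_{\textup{\texthtd}^n_{\mathrm{flow}}}(\Y,r).
\end{equation}
Combining this with the previous bound and then letting $r\to 0$ (so $r'\to 0$) gives $\lim_{r\to 0}\lims_{T\to+\infty}\frac{1}{T}\log\Cov_{\textup{\texthtd}^T_{\mathrm{flow}}}(\Y,r)\le h_\textup{top}(\Y,\Phi_1)$, where the final identification of the right-hand side with $h_\textup{top}(\Y,\Phi_1)$ uses Proposition \ref{prop:variational_principle}(iii) applied to the fixed metric $\textup{\texthtd}$ on the compact space $\Y$. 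Since the infimum over metrics defining $h_\textup{top}^\textup{flow}(\Y,\Phi)$ is at most the value attained at our chosen $\textup{\texthtd}$ and at $K=\Y$, the converse inequality follows.

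The argument is largely formal once the two dynamical metrics are placed side by side, so I do not expect a serious obstacle; the delicate point is the order of quantifiers, namely that Proposition \ref{prop:variational_principle}(iii) is exactly what legitimates working with a single fixed metric on $\Y$ throughout, which is why both the compactness and the uniform equicontinuity hypotheses are essential to the argument.
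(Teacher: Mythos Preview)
Your proof is correct and follows essentially the same approach as the paper's: both arguments compare the discrete dynamical metric with the flow metric via the trivial inequality for one direction, and for the reverse use uniform equicontinuity to decompose $t=i+s$ with $s\in[0,1]$ so that an $r'$-cover for the discrete metric becomes an $r$-cover for the flow metric, invoking Proposition~\ref{prop:variational_principle}(iii) to fix a single metric on the compact space. The only cosmetic difference is that the paper uses $\lfloor T\rfloor$ while you use $\lceil T\rceil$ together with monotonicity of $\textup{\texthtd}^T$ in $T$; this is immaterial.
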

	The uniform equicontinuity means that for every $\varepsilon > 0$ there exists $\delta > 0$ such that if $\text{\texthtd}(y,y') < \delta$ then $\text{\texthtd}(\Phi_t(y), \Phi_t(y')) < \varepsilon$ for every $y,y'\in \Y$ and every $t\in [0,1]$. Here $\text{\texthtd}$ is any metric inducing the topology of $\Y$. By compactness of $\Y$ and of $[0,1]$ the notion of uniform continuity does not depend on the specific metric $\text{\texthtd}$.
	\begin{proof}
		The inequality follows from \eqref{eq:defin-Bowen-entropy} and \eqref{eq:defin_h_top_flow} since $\text{\texthtd}^T(y,y') \ge \text{\texthtd}^{\lfloor T \rfloor}(y,y')$ for every $y,y'\in \Y$. On the other hand, if $\Y$ is compact we have that
		$$h_\text{top}(\Y,\Phi_1) = \lim_{r\to 0} \lims_{n \to +\infty} \frac{1}{n} \log \text{Cov}_{\text{\texthtd}^n}(\Y,r)$$
		where $\text{\texthtd}$ is any metric inducing the topology of $\Y$, by Proposition \ref{prop:variational_principle}, while
		$$h_\text{top}^\text{flow}(\Y,\Phi) \le \lim_{r\to 0} \lims_{T \to +\infty} \frac{1}{T} \log \text{Cov}_{\text{\texthtd}^T}(\Y,r).$$
		Let $\varepsilon > 0$ and let $\delta > 0$ be as in the definition of uniform equicontinuity of the maps $\{\Phi_t\}_{t\in [0,1]}$.
		Let $\{y_i\}$ be a subset of $\Y$ realizing $\text{Cov}_{\text{\texthtd}^{\lfloor T \rfloor}}(\Y,\delta)$. We claim that this set is $\varepsilon$-dense with respect to $\text{\texthtd}^T$. Fix $y\in \Y$ and take $i$ such that $\text{\texthtd}^{\lfloor T \rfloor}(y,y_i) < \delta$. For every $t\in [0,T]$ we consider $n_t := \lfloor t \rfloor$ and we observe that $$\text{\texthtd}(\Phi_t(y),\Phi_t(y_i)) =  \text{\texthtd}(\Phi_{t-n_t}(\Phi_{n_t}(y)),\Phi_{t-n_t}(\Phi_{n_t}(y_i))).$$
		Since $\text{\texthtd}(\Phi_{n_t}(y), \Phi_{n_t}(y_i)) < \delta$ and since $t-n_t \in [0,1]$ we deduce that $\text{\texthtd}(\Phi_t(y),\Phi_t(y_i)) < \varepsilon$. Since this is true for every $t\in [0,T]$ we have that $\text{\texthtd}^T(y,y_i) < \varepsilon$. Then $\Cov_{\text{\texthtd}^T}(\Y,\varepsilon) \le \Cov_{\text{\texthtd}^{\lfloor T \rfloor}}(\Y,\delta)$. The thesis follows.
	\end{proof}
	
	\subsection{Topological entropy of the geodesic flow}
	\label{subsec:topological_entropy_geodesic_flow}
	
	We specialize the discussion to the case of the geodesic flow. Let $\X$ be a proper metric space. Remember that the geodesic flow of $\X$ is the dynamical system $(\Geod(\X), \Phi_1)$, where $\Phi_1$ is defined in \eqref{eq:defin_reparametrization_flow}. Let $\Gamma < \Isom(\X)$ be discrete. The quotient geodesic flow of $\Gamma \backslash \X$ is the dynamical system $(\Gamma \backslash \Geod(\X), \bar{\Phi}_1)$. This is the flow whose topological entropy appears in Theorem \ref{theo:intro-equality-Gromov-hyperbolic}. By Proposition \ref{prop:quotient_of_loc_geod}, $\Gamma \backslash \Geod(\X)$ is a locally compact metrizable space, so $h_\text{meas}(\Gamma \backslash \Geod(\X), \bar{\Phi}_1) = h_\text{top}(\Gamma \backslash \Geod(\X), \bar{\Phi}_1)$, by Proposition \ref{prop:variational_principle}.
	
	
	The famous Birkhoff's ergodic theorem, in the case of the quotient geodesic flow, reads as follows.
	\begin{prop}
		\label{prop-Birkhoff}
		Let $\X$ be a proper metric space and let $\Gamma < \textup{Isom}(\X)$ be discrete. Let $\mu \in \mathcal{E}_1(\Gamma \backslash \Geod(\X), \bar{\Phi}_1)$. Then, for every $f \in L^1(\mu)$ it holds that
		\begin{equation}
			\label{Birkhoff}
			\lim_{N\to +\infty}\frac{1}{N}\sum_{j=0}^{N-1} (f \circ \bar{\Phi}_j)([\gamma]) = \int f\,d\mu
		\end{equation}
		for $\mu$-a.e. $[\gamma] \in \Gamma \backslash \X$. In other words, the limit in \eqref{Birkhoff} exists for $\mu$-a.e. $[\gamma] \in \Gamma \backslash \X$ and equals the right hand side.
	\end{prop}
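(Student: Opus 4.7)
The statement is essentially the classical Birkhoff pointwise ergodic theorem applied to the concrete dynamical system $(\Gamma\backslash\Geod(\X), \bar{\Phi}_1)$, so the plan is not to reprove the theorem but to verify that its hypotheses are met and to invoke it directly.

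First I would note the measurable-dynamical setup. By Proposition \ref{prop:quotient_of_loc_geod}, the quotient $\Gamma\backslash\Geod(\X)$ is a metrizable (indeed proper) topological space, so equipping it with its Borel $\sigma$-algebra $\mathscr{B}$ we have a standard measurable space, and $\bar{\Phi}_1$ is a Borel-measurable map since it is continuous. By the definition of $\mathcal{E}_1(\Gamma\backslash\Geod(\X),\bar{\Phi}_1)$ recalled in Section \ref{sec:topological_entropy}, $\mu$ is a Borel probability measure that is $\bar{\Phi}_1$-invariant, i.e.\ $(\bar{\Phi}_1)_\#\mu=\mu$, and is ergodic: every $A\in\mathscr{B}$ with $\bar{\Phi}_1^{-1}A\subseteq A$ satisfies $\mu(A)\in\{0,1\}$.

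Next I would invoke the classical Birkhoff pointwise ergodic theorem, which states that whenever $(Y,\mathscr{B},\mu,T)$ is a probability measure-preserving system and $f\in L^1(\mu)$, the Birkhoff averages $\frac{1}{N}\sum_{j=0}^{N-1} f\circ T^j$ converge $\mu$-a.e.\ to a $T$-invariant function $\bar f\in L^1(\mu)$ with $\int \bar f\,d\mu=\int f\,d\mu$. Applying this to $Y=\Gamma\backslash\Geod(\X)$, $T=\bar{\Phi}_1$, and the given $f$ yields the existence of the limit
\begin{equation*}
\bar{f}([\gamma]) \;:=\; \lim_{N\to+\infty}\frac{1}{N}\sum_{j=0}^{N-1}(f\circ\bar{\Phi}_j)([\gamma])
\end{equation*}
for $\mu$-a.e.\ $[\gamma]$, together with the $\bar{\Phi}_1$-invariance of $\bar f$.

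Finally, I would use ergodicity to identify $\bar f$. The $\bar{\Phi}_1$-invariance of $\bar f$ means that for every $c\in\R$ the sublevel set $\{\bar f\le c\}$ is $\bar{\Phi}_1$-invariant up to a $\mu$-null set; by ergodicity each such set has $\mu$-measure $0$ or $1$, hence $\bar f$ is constant $\mu$-a.e.\ Integrating and using $\int \bar f\,d\mu=\int f\,d\mu$ identifies this constant as $\int f\,d\mu$, completing the proof. There is no real obstacle here since the only subtlety, namely ensuring $\mu$ is defined on a genuine measurable space and that $\bar{\Phi}_1$ is measurable and measure-preserving, is immediate from the definitions and from Proposition \ref{prop:quotient_of_loc_geod}.
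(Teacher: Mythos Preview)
Your proposal is correct and matches the paper's approach: the paper does not give a proof either, simply presenting the statement as an instance of the classical Birkhoff ergodic theorem. Your explicit verification of the hypotheses (measurability via Proposition~\ref{prop:quotient_of_loc_geod}, invariance and ergodicity from the definition of $\mathcal{E}_1$) is exactly what is needed to justify invoking the classical result.
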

	
	Since conjugated dynamical systems have same topological entropy, then 
	$$h_\text{top}(\Gamma \backslash \Geod(\X), \bar{\Phi}_1) = h_\text{top}(\LocGeod{\Gamma\backslash \X}, \Phi_1)$$
	under the assumptions of Corollary \ref{cor:conjugation_of_flows}. In particular this applies to proper $\CAT(-1)$ spaces $\X$ and torsion-free, discrete groups $\Gamma < \Isom(\X)$.
	
	\section{$f$-entropy on Gromov-hyperbolic metric spaces}
	\label{sec:Lip-Bowen-entropy}
	
	In order to prove Theorem \ref{theo:intro-equality-Gromov-hyperbolic} we need a way to connect the topological entropy of the dynamical system $(\Gamma \backslash \Geod(\X), \bar{\Phi}_1)$ to the topological entropy of the dynamical system $(\Geod(\X), \Phi_1)$. However, if one uses the definition of topological entropy of $(\Geod(\X), \Phi_1)$ given in Section \ref{sec:topological_entropy}, then $h_\text{top}(\Geod(\X), \Phi_1) = 0$, see \cite[Lemma 4.1]{Cav21}. In \cite{Cav21} we proposed a modification of the definition of the topological entropy of $(\Geod(\X), \Phi_1)$ introducing the Lipschitz-topological entropy. In this paper we will use a slightly different notion that simplifies the exposition.
	
	Let $\X$ be a proper, geodesic, Gromov-hyperbolic metric space and let $x\in \X$. For every $A \subseteq \X$ and every $C\subseteq \partial \X$ we set
	\begin{equation}
		\Geod(A;C) := \left\{ \gamma \in \Geod(\X)\,:\, \gamma(0) \in A,\, \gamma^\pm \in C\right\}.
	\end{equation} 
	This notation is coherent with the one introduced in \eqref{eq:MD<e_T_Geod}.
	Let $f\in \mathcal{F}$ as introduced in Section \ref{subsec:space_local_geodesics}. We define the \emph{upper $f$-entropy of $C$} as
	\begin{equation}
		\label{eq:defin-upper-Lip-Bowen-entropy}
		\overline{h}_{f}(C) := \sup_{R\ge 0} \lim_{r\to 0} \lims_{T \to +\infty} \frac{1}{T} \log \text{Cov}_{\sfD_f^T}(\Geod(\overline{B}(x,R);C),r)
	\end{equation}
	and the \emph{lower $f$-entropy of $C$} as
	\begin{equation}
		\label{eq:defin-lower-Lip-Bowen-entropy}
		\underline{h}_{f}(C) :=  \sup_{R \ge 0} \lim_{r\to 0} \limi_{T \to +\infty} \frac{1}{T} \log \text{Cov}_{\sfD_f^T}(\Geod(\overline{B}(x,R);C),r),
	\end{equation}
	where the dynamical distance $\sfD_f^T$ is
	$$\sfD_f^T(\gamma, \gamma') = \sup_{t\in [0,T]} \sfD_f(\Phi_t \gamma, \Phi_t \gamma').$$
	This last definition is the same as the one in \eqref{eq:defin_d^T}. The upper and lower $f$-entropies do not depend on the choice of the basepoint $x\in \X$.
	
	\begin{prop}
		\label{prop:h_Lip_top_>=_MD}
		Let $\X$ be a proper, geodesic, Gromov-hyperbolic metric space, let $C\subseteq \partial \X$ and let $f\in \mathcal{F}$. Then
		$$\overline{h}_{f}(C) \ge  \overline{\MD}(C)\quad \text{and}\quad \underline{h}_{f}(C) \ge  \underline{\MD}(C).$$
	\end{prop}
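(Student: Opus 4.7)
If $\#C < 2$ then $\Geod(\overline{B}(x,R);C) = \emptyset$ and $\overline{\MD}(C) = \underline{\MD}(C) = 0$, so the inequalities are trivial. Assume therefore $\#C \ge 2$ and set $M := \sfd(x,\textup{QC-Hull}(C)) < +\infty$. The plan is to pass from the dynamical covering with respect to $\sfD_f^T$ to the spatial covering of the ``endpoint'' set $e_T(\Geod(\overline{B}(x,M);C))$, and then invoke the proof of Proposition \ref{prop:MD<h_Geod} to recover the Minkowski dimension.

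The key observation is the following inequality: for every $\gamma,\gamma' \in \Geod(\X)$ and every $T \ge 0$,
\begin{equation}
    \sfd(\gamma(T),\gamma'(T)) \le \sfD_f^T(\gamma,\gamma').
\end{equation}
Indeed, $\sfD_f^T(\gamma,\gamma') \ge \sfD_f(\Phi_T\gamma, \Phi_T\gamma') \ge \sfd((\Phi_T\gamma)(0), (\Phi_T\gamma')(0)) = \sfd(\gamma(T),\gamma'(T))$, where the second inequality uses $f(0)=1$ via Lemma \ref{lemma-metric-locgeod}(ii). As a consequence, if $\{\gamma_i\}$ realizes $\Cov_{\sfD_f^T}(\Geod(\overline{B}(x,M);C),r)$, then $\{\gamma_i(T)\}$ is $r$-dense in $e_T(\Geod(\overline{B}(x,M);C))$ with respect to $\sfd$. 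Hence
\begin{equation}
    \Cov_\sfd(e_T(\Geod(\overline{B}(x,M);C)),r) \le \Cov_{\sfD_f^T}(\Geod(\overline{B}(x,M);C),r)
\end{equation}
for every $r>0$ and every $T \ge 0$.

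Now recall that inside the proof of Proposition \ref{prop:MD<h_Geod} a stronger statement than its final assertion is established: writing $\rho = e^{-T}$ and $c := M + \tfrac12 + 72\delta$, one obtains
\begin{equation}
    \frac{\log \Cov_x(C, e^{c}\rho)}{\log(1/\rho)} \le \frac{\log \Cov_\sfd(e_T(\Geod(\overline{B}(x,M);C)),1)}{T}
\end{equation}
for every $T > M$. Taking $\lims_{T \to +\infty}$ on the right and $\lims_{\rho \to 0}$ on the left (these are linked through $\rho = e^{-T}$) gives the inequality \eqref{eq:MD<e_T_Geod}, while the same passage with $\limi$ on both sides yields the analogous bound for $\underline{\MD}(C)$. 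Combining these with the endpoint-to-dynamical comparison above, and choosing $R = M$ in the supremum defining $\overline{h}_f(C)$ and $\underline{h}_f(C)$, one concludes
\begin{equation}
    \overline{\MD}(C) \le \lim_{r\to 0}\lims_{T \to +\infty}\frac{1}{T}\log \Cov_{\sfD_f^T}(\Geod(\overline{B}(x,M);C),r) \le \overline{h}_f(C),
\end{equation}
and likewise $\underline{\MD}(C) \le \underline{h}_f(C)$. There is no serious obstacle here: the only substantive point is the observation that the $\sfD_f^T$-distance dominates $\sfd$ at time $T$, after which the proof essentially re-uses the construction already carried out for Proposition \ref{prop:MD<h_Geod}.
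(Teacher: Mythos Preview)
Your proof is correct and follows essentially the same approach as the paper: both use the key inequality $\sfd(\gamma(T),\gamma'(T)) \le \sfD_f^T(\gamma,\gamma')$ (from $f(0)=1$) to reduce the dynamical covering number to the spatial one for $e_T(\Geod(\overline{B}(x,M);C))$, and then invoke the intermediate inequality \eqref{eq:MD<e_T_Geod} already established inside the proof of Proposition~\ref{prop:MD<h_Geod}. The only cosmetic difference is that the paper takes $M = \sfd(x,\textup{QC-Hull}(C)) + 22\delta$ rather than $\sfd(x,\textup{QC-Hull}(C))$, but this does not affect the argument.
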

	\begin{proof}
		We give the proof for the upper entropy and the upper Minkowski dimension only, the other case being analogous. Let $\delta$ be the hyperbolicity constant of $\X$.
		Let $x\in \X$ and let $M:= \sfd(x,\text{QC-Hull}(C)) + 22\delta$. Notice that 
		$$\sfd(e_T(\gamma),e_T(\gamma'))=\sfd(\gamma(T), \gamma'(T)) \le \sfD_f(\Phi_T\gamma, \Phi_T \gamma') \le \sfD_f^T(\gamma,\gamma')$$
		for every $\gamma,\gamma'\in \Geod(\X)$ by Lemma \ref{lemma-metric-locgeod}. This implies that
		\begin{equation}
			\begin{aligned}
				\lims_{T \to +\infty}\frac{1}{T}\log \Cov_{\sfD_f^T}(\Geod(\overline{B}(x,R);C), r) \ge \lims_{T \to +\infty}\frac{1}{T}\log \Cov_{\sfd}(e_T(\Geod(\overline{B}(x,M);C)), r)
			\end{aligned}
		\end{equation}
		for every $r>0$ and every $R \ge M$. The thesis follows by \eqref{eq:MD<e_T_Geod}.
	\end{proof}
	
	The opposite inequalities are false in general. They are true under additional assumptions, as we will see in Section \ref{sec:proof_of_h_top_<h_Gamma}.
	
	\section{The proof of Theorem \ref{theo:intro-h_top>=h_Gamma}}
	\label{sec:proof_of_main_theorems}
	
	In this section we prove Theorem \ref{theo:intro-h_top>=h_Gamma}. Let us recall the setup. Let $\X$ be a proper, geodesic, Gromov-hyperbolic metric space, let $\Gamma < \Isom(\X)$ be discrete, non-elementary and virtually torsion-free and let $x\in \X$ be a basepoint.
	The goal is to show that $h_\text{top}(\Gamma \backslash \Geod(\X), \bar{\Phi}_1) \ge h_\text{crit}(\X;\Gamma)$. We first reduce the proof to the torsion-free case.
	\begin{lemma}
	\label{lemma:reduction_torsion_free}
		Under the assumptions of Theorem \ref{theo:intro-h_top>=h_Gamma}, let $\Gamma_0 \triangleleft \Gamma$ be a finite index, torsion-free subgroup. Then $h_\textup{crit}(\X;\Gamma_0) = h_\textup{crit}(\X;\Gamma)$ and $h_\textup{inv-top}(\Gamma_0 \backslash \Geod(\X), \bar{\Phi}_1) = h_\textup{inv-top}(\Gamma \backslash \Geod(\X), \bar{\Phi}_1)$.
	\end{lemma}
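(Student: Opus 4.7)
The statement splits into two essentially independent claims, and my plan is to treat them separately using tools that are already in place from the preliminary sections. Neither step looks like a serious obstacle; the whole point of the lemma is to reduce the proof of Theorem~\ref{theo:intro-h_top>=h_Gamma} to the torsion-free case, and both equalities are standard finite-index reductions.

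For the equality of critical exponents, the plan is to write $\Gamma = \bigsqcup_{i=1}^N g_i \Gamma_0$ as a finite union of left cosets and set $D := \max_i \sfd(x, g_i^{-1}x)$. The inclusion $\Gamma_0 \subseteq \Gamma$ gives $h_\textup{crit}(\X;\Gamma_0) \le h_\textup{crit}(\X;\Gamma)$ for free. For the converse, since $g_i^{-1}$ is an isometry,
\[
\#\big(g_i\Gamma_0 x \cap B(x,T)\big) = \#\big(\Gamma_0 x \cap B(g_i^{-1}x, T)\big) \le \#\big(\Gamma_0 x \cap B(x, T+D)\big),
\]
so summing over $i$ gives $\#(\Gamma x \cap B(x,T)) \le N\cdot \#(\Gamma_0 x \cap B(x,T+D))$. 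Taking $\frac{1}{T}\log$ and a $\lims$ produces $h_\textup{crit}(\X;\Gamma) \le h_\textup{crit}(\X;\Gamma_0)$.

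For the equality of invariant topological entropies the key observation is that $\Gamma_0$ is \emph{normal} in $\Gamma$, so the finite quotient group $F := \Gamma/\Gamma_0$ acts on $\Gamma_0\backslash\Geod(\X)$ by $[g]\cdot[\gamma] := [g\gamma]$. Fix any $f\in\mathcal{F}$. By Proposition~\ref{prop:quotient_of_loc_geod}, $\Gamma$ acts on $(\Geod(\X), \sfD_f)$ by isometries, and as recalled in Section~\ref{subsec:geodesic-flow} this action commutes with $\Phi_1$. Passing to the quotient, $F$ therefore acts isometrically on $(\Gamma_0\backslash\Geod(\X), \bar{\sfD}_f)$ and commutes with $\bar{\Phi}_1$. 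The normality of $\Gamma_0$ also gives the identification $F\backslash(\Gamma_0\backslash\Geod(\X)) = \Gamma\backslash\Geod(\X)$ as dynamical systems with their respective induced flows, because two $\Gamma_0$-classes $[\gamma],[\gamma']$ lie in the same $F$-orbit if and only if $\gamma'\in\Gamma\gamma$.

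With this setup, the statement is a direct application of Lemma~\ref{lemma:h_top_finite_quotient} to the dynamical system $(\Gamma_0\backslash\Geod(\X),\bar{\Phi}_1)$ equipped with the finite isometry group $F$: the lemma yields
\[
h_\textup{inv-top}(\Gamma_0\backslash\Geod(\X),\bar{\Phi}_1) = h_\textup{inv-top}\big(F\backslash(\Gamma_0\backslash\Geod(\X)),\, \overline{\bar{\Phi}_1}\big) = h_\textup{inv-top}(\Gamma\backslash\Geod(\X),\bar{\Phi}_1),
\]
which is exactly the second equality of the lemma. The only conceptual ingredient that needs care is the verification that $F$ truly acts by isometries of a proper metric on $\Gamma_0\backslash\Geod(\X)$ (rather than merely by homeomorphisms), but this is precisely guaranteed by Proposition~\ref{prop:quotient_of_loc_geod}, so there is no real obstacle.
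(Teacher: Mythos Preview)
Your proposal is correct and follows essentially the same approach as the paper. The only difference is that the paper dismisses the critical-exponent equality as ``known'' while you spell out the standard coset argument; for the invariant topological entropy, both you and the paper set up the action of the finite group $\Gamma/\Gamma_0$ on $(\Gamma_0\backslash\Geod(\X),\bar{\sfD}_f)$, verify it is isometric and commutes with $\bar{\Phi}_1$ via Proposition~\ref{prop:quotient_of_loc_geod}, identify the double quotient with $\Gamma\backslash\Geod(\X)$, and conclude by Lemma~\ref{lemma:h_top_finite_quotient}.
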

	\begin{proof}
		The statement about the critical exponent is known. As described in Section \ref{sec:topological_entropy}, the dynamical system $(\Gamma \backslash \Geod(\X), \bar{\Phi}_1)$ coincides with the quotient of the dynamical system $(\Gamma_0 \backslash \Geod(\X), \bar{\Phi}_1)$ modulo the finite group $\Gamma / \Gamma_0$. In particular the action of $\Gamma/\Gamma_0$ commutes with $\bar{\Phi}_1$. Moreover, this dynamical system is metrizable via the metric $\bar{\sfD}_f$ introduced in Section \ref{subsec:space_local_geodesics}, with respect to which $\Gamma / \Gamma_0$ acts by isometries, see Proposition \ref{prop:quotient_of_loc_geod}. The thesis follows by Lemma \ref{lemma:h_top_finite_quotient}.
	\end{proof}
	
	The strategy to prove Theorem \ref{theo:intro-h_top>=h_Gamma} relies on the following chain of inequalities:
	\vspace{2mm}
	\begin{equation}
		\begin{aligned}
			h_\text{top}(\Gamma \backslash \Geod(\X), \bar{\Phi}_1) \ge h_\text{inv-top}(\Gamma \backslash \Geod(\X), \bar{\Phi}_1) \ge \sup_{\tau \ge 0} \underline{h}_f(\Lambda_\tau) \ge \sup_{\tau \ge 0} \underline{\MD}(\Lambda_\tau) \ge h_\text{crit}(\X;\Gamma),
		\end{aligned}
	\end{equation}
	where the sets $\Lambda_\tau \subseteq \partial \X$ have been defined in Section \ref{subsec:BJ}. The first inequality is true by definition, as we recalled in Section \ref{sec:topological_entropy}.
	The third inequality is Proposition \ref{prop:h_Lip_top_>=_MD}. The last inequality is Theorem \ref{theo:Bishop-Jones}. We have reduced the proof of Theorem \ref{theo:intro-h_top>=h_Gamma} to the proof of
	\begin{equation}
		\label{eq:for_thm_h_top_>_h_Gamma}
		h_\text{inv-top}(\Gamma \backslash \Geod(\X), \bar{\Phi}_1) \ge \sup_{\tau \ge 0} \underline{h}_f(\Lambda_\tau),
	\end{equation}
	and we can further assume that $\Gamma$ is torsion-free by Lemma \ref{lemma:reduction_torsion_free}.
	
	We need to identify the compact, $\bar{\Phi}_1$-invariant subsets of $\Gamma \backslash \Geod(\X)$. For every $\tau \ge 0$ we define
	$$K_\tau = \lbrace [\gamma] \in \Gamma \backslash \Geod(\X) \,:\, \sfd(\gamma(n), \Gamma x) \le \tau \text{ for every } n\in \mathbb{Z} \rbrace.$$
	We observe that the set is well posed, since the condition on the representative of $[\gamma]$ is $\Gamma$-invariant.
	Under the assumptions of Corollary \ref{cor:conjugation_of_flows}, the sets $K_\tau$ are comparable via $\bar{\Pi}$ to the set of local geodesic lines of $\Gamma \backslash \X$ that are contained in a compact region of $\Gamma \backslash \X$.
	\begin{lemma}
		\label{lemma:invariant_compact_subsets}
		$K_\tau$ is a compact, $\bar{\Phi}_1$-invariant subset for every $\tau \geq 0$. Moreover every compact, $\bar{\Phi}_1$-invariant subset of $\Gamma \backslash \Geod(\X)$ is contained in $K_\tau$ for some $\tau \geq 0$.
	\end{lemma}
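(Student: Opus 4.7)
The plan is to split the proof into three moves: $\bar{\Phi}_1$-invariance of $K_\tau$, compactness of $K_\tau$, and the converse characterization.

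For the invariance, observe that the condition $\sfd(\gamma(n),\Gamma x)\le\tau$ is required only at integer times $n\in\Z$, and $\bar{\Phi}_1$ shifts time by the integer $1$. So if $[\gamma]\in K_\tau$, then $(\Phi_1\gamma)(n)=\gamma(n+1)$ and $n+1$ still ranges over $\Z$ as $n$ does; this gives $\bar{\Phi}_1(K_\tau)\subseteq K_\tau$, and the analogous argument for $\Phi_{-1}$ gives equality.

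For the compactness I would use that $(\Gamma\backslash\Geod(\X),\bar{\sfD}_f)$ is a proper metric space by Proposition \ref{prop:quotient_of_loc_geod}, so it is enough to show that $K_\tau$ is closed and bounded. Closedness is immediate once one notes that the map $F\colon \Gamma\backslash\Geod(\X)\to\R$ defined by $F([\gamma]):=\sfd(\gamma(0),\Gamma x)$ is well defined (because $\Gamma x$ is $\Gamma$-invariant) and continuous (because $y\mapsto\sfd(y,\Gamma x)$ is $1$-Lipschitz and $[\gamma_k]\to[\gamma]$ means $g_k\gamma_k\to\gamma$ for suitable $g_k\in\Gamma$, hence $g_k\gamma_k(0)\to\gamma(0)$); then $K_\tau=\bigcap_{n\in\Z}(F\circ\bar{\Phi}_n)^{-1}([0,\tau])$ is an intersection of closed sets. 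For boundedness, given $[\gamma]\in K_\tau$, choose the representative $\gamma$ such that $\sfd(\gamma(0),x)\le\tau$ (possible by the definition of $K_\tau$ at $n=0$). By Lemma \ref{lemma-metric-locgeod}, if $\gamma_0\in\Geod(\X)$ is any fixed geodesic line with $\gamma_0(0)=x$, then
\begin{equation}
\sfD_f(\gamma,\gamma_0)\le \sfd(\gamma(0),x)+\sup_{s\in\R}2\lvert s\rvert f(s)\le \tau+C_f,
\end{equation}
so $\bar{\sfD}_f([\gamma],[\gamma_0])\le \tau+C_f$ where $C_f:=\sup_{s\in\R}2\lvert s\rvert f(s)<\infty$ because $f\in\mathcal{F}$. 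Thus $K_\tau$ sits inside a closed $\bar{\sfD}_f$-ball, hence is relatively compact, hence compact.

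For the converse, let $K\subseteq\Gamma\backslash\Geod(\X)$ be any compact $\bar{\Phi}_1$-invariant subset. The continuous function $F$ above attains a finite maximum $\tau$ on $K$. Since $K$ is $\bar{\Phi}_1$-invariant and $\bar{\Phi}_1$ is a homeomorphism, $\bar{\Phi}_n(K)=K$ for every $n\in\Z$, hence for every $[\gamma]\in K$ one has $\bar{\Phi}_n[\gamma]\in K$ and
\begin{equation}
\sfd(\gamma(n),\Gamma x)=F(\bar{\Phi}_n[\gamma])\le \tau\qquad \text{for all } n\in\Z.
\end{equation}
That is, $K\subseteq K_\tau$. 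The only mild care is to keep the representative bookkeeping consistent throughout, but there is no real obstacle: the ingredients are just that $\Gamma x$ is $\Gamma$-invariant so $F$ descends, and that $\bar{\sfD}_f$ makes the quotient proper so bounded closed sets are compact.
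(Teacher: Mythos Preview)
Your argument is correct and essentially covers the same ground as the paper, though with a somewhat different organization. The paper proves compactness directly by a sequential argument: given $[\gamma_k]\in K_\tau$, pick $g_k\in\Gamma$ with $\sfd(g_k\gamma_k(0),x)\le\tau$ and apply Ascoli--Arzel\`a to extract a convergent subsequence, then check the limit lies in $K_\tau$. You instead invoke the properness of $(\Gamma\backslash\Geod(\X),\bar{\sfD}_f)$ from Proposition~\ref{prop:quotient_of_loc_geod} and reduce to showing $K_\tau$ is closed and bounded, which you do cleanly via the function $F([\gamma])=\sfd(\gamma(0),\Gamma x)$. For the converse, the paper argues by contradiction (if $K\not\subseteq K_\tau$ for all $\tau$, build a sequence in $K$ escaping to infinity after a $\bar{\Phi}_1$-shift, contradicting compactness), whereas you take the more direct route of letting $\tau=\max_K F$ and using $\bar{\Phi}_n(K)=K$. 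Your approach is arguably tidier, since the single continuous function $F$ organizes both the closedness of $K_\tau$ and the converse inclusion.

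One very minor point: you assume the existence of a geodesic line $\gamma_0$ with $\gamma_0(0)=x$, which is not guaranteed for an arbitrary basepoint. This is harmless: if $K_\tau\neq\emptyset$ just take $\gamma_0$ to be any representative of any element of $K_\tau$ (so $\sfd(\gamma_0(0),x)\le\tau$), and the same estimate gives $\bar{\sfD}_f([\gamma],[\gamma_0])\le 2\tau+C_f$.
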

	\begin{proof}
		For every $\tau \geq 0$, the set $K_\tau$ satisfies $\bar{\Phi}_1(K_\tau)\subseteq K_\tau$ and $\bar{\Phi}_1^{-1}(K_\tau)\subseteq K_\tau$, so it is $\bar{\Phi}_1$-invariant. Let $\{[\gamma_k]\}_k\subseteq K_\tau$ be a sequence. By definition, for every $k$ there exists $g_k\in \Gamma$ such that $\sfd(g_k\gamma_k(0), x) \le \tau$. By Ascoli-Arzelà's Theorem, the sequence $g_k\gamma_k$ subconverges to a geodesic $\gamma_\infty \in \Geod(\X)$. Therefore $[\gamma_k]$ subconverges to $[\gamma_\infty]$. Moreover, for every $n\in \Z$ we have that $\sfd(g_k\gamma_k(n),\Gamma x) \le \tau$, implying that $\sfd(\gamma_\infty(n), \Gamma x) \le \tau$, i.e. $[\gamma_\infty] \in K_\tau$. This completes the proof of the first statement.
		
		Let $K$ be a compact, $\bar{\Phi}_1$-invariant subset of $\Gamma \backslash \Geod(\X)$.
		Suppose that $K$ is not contained in $K_\tau$ for every $\tau \ge 0$. Then, for every $k \geq 0$ there exists $[\gamma_k] \in K$ such that $\sfd(\gamma_k(n_k), \Gamma x) > k$ for some $n_k\in \mathbb{Z}$.  For every $k$ we reparametrize $\gamma_k$ so that $\sfd(\gamma_k(0), \Gamma x) > k$. Since $K$ is $\bar{\Phi}_1$-invariant then the equivalence classes of the reparametrized geodesics belong again to $K$. Since $K$ is compact we can find a subsequence, denoted again by $[\gamma_k]$, that converges to $[\gamma_\infty] \in K$. This means that there exist $g_k \in \Gamma$ such that $g_k\gamma_k$ converges to $\gamma_\infty$, in particular $g_k\gamma_k(0)$ converges to $\gamma_\infty(0)$. But $\sfd(g_k\gamma_k(0), \Gamma x) > k$ for every $k$, which is a contradiction.
	\end{proof}

	\begin{proof}[Proof of Theorem \ref{theo:intro-h_top>=h_Gamma}]
		As discussed above, the proof is reduced to show \eqref{eq:for_thm_h_top_>_h_Gamma} under the additional assumption that $\Gamma$ is torsion-free. By Lemma \ref{lemma:invariant_compact_subsets} we have that
		\begin{equation}
			\label{eq:h_invtop=sup_h_top}
			h_\text{inv-top}(\Gamma \backslash \Geod(\X), \bar{\Phi}_1) = \sup_{\tau \geq 0} h_{\text{top}}(K_\tau, \bar{\Phi}_1).
		\end{equation}
		The proof of \eqref{eq:for_thm_h_top_>_h_Gamma} will be a consequence of the following fact. For every $\tau \ge 0$, for every $f\in \mathcal{F}$, and for every $R\ge 0$ it holds that
		\begin{equation}
			\begin{aligned}
			\label{eq:inequality_Lambda_tau_up_down}
			\lim_{r\to 0}\limi_{T \to +\infty} \frac{1}{T}\log \Cov_{\sfD_f^T}(\Geod(\overline{B}(x,R);\Lambda_\tau),r) &\le h_\text{top}^{\text{flow}}(K_{2\tau + 8\delta + R}, \bar{\Phi})\\
			&=  h_\text{top}(K_{2\tau + 8\delta + R}, \bar{\Phi}_1),
			\end{aligned}
		\end{equation}
		where the last equality follows by Lemma \ref{lemma:h_top_flow_vs_h_top} and the fact that the maps $\{\bar{\Phi}_t\}_{t\in [0,1]}$ are uniformly continuous with respect to the metric $\bar{\sfD}_{e^{-\vert s \vert}}$. Indeed, for $t\in [0,1]$ we have that 
		\begin{equation}
			\begin{aligned}
				\sfD_{e^{-\vert s \vert}}(\Phi_t(\gamma), \Phi_t(\gamma')) = \sup_{s \in \R} \sfd(\gamma(s+t), \gamma'(s+t))e^{-\vert s \vert} = \sfD_{e^{-\vert s \vert}}(\gamma, \gamma')\sup_{s \in \R} \frac{e^{-\vert s \vert}}{e^{-\vert s + t \vert}} = e^2 \sfD_{e^{-\vert s \vert}}(\gamma, \gamma').
			\end{aligned}
		\end{equation}
		This means that the maps $\{\Phi_t\}_{t\in [0,1]}$ are uniformly Lipschitz with respect to $\sfD_{e^{-\vert s \vert}}$, so the quotient maps $\{\bar{\Phi}_t\}_{t\in [0,1]}$ are uniformly continuous with respect to the metric $\bar{\sfD}_{e^{-\vert s \vert}}$.
		
		We observe that if \eqref{eq:inequality_Lambda_tau_up_down} holds, then we can take the supremum over $R \ge 0$ to get that
		$$\underline{h}_f(\Lambda_\tau) \le h_\text{inv-top}(\Gamma \backslash \Geod(\X), \bar{\Phi}_1)$$
		for every $\tau \ge 0$. Taking the supremum over $\tau \ge 0$ we finally obtain \eqref{eq:for_thm_h_top_>_h_Gamma}. 
		
		It remains to prove the inequality in \eqref{eq:inequality_Lambda_tau_up_down}. We fix $\tau, R \ge 0$ and $f\in \mathcal{F}$. By Proposition \ref{prop:quotient_of_loc_geod} the metric $\sfD_f$ defines a quotient metric $\bar{\sfD}_f$ on $\Gamma \backslash \Geod(\X)$ and that can be used for the computation of the topological entropy of the compact dynamical system $(K_{2\tau + 8\delta + R}, \bar{\Phi}_1)$, see Proposition \ref{prop:variational_principle}. We fix $T\ge 0$. Let $\{[\gamma_i]\}$ be a subset of $K_{2\tau + 8\delta + R}$ realizing $\Cov_{\bar{\sfD}_f^T}(K_{2\tau + 8\delta + R}, r)$. We define
		$$A_T:=\left\{\gamma \in \Geod(\X) \, : \, [\gamma] = [\gamma_i] \text{ for some } i \text{ and } \sfd(\gamma(0),x) \le R+r\right\}.$$
		
		The group $\Gamma$ acts by isometries on $(\Geod(\X), \sfD_f)$ by Proposition \ref{prop:quotient_of_loc_geod}. Let $\sigma > 0$ be the systole of $\Gamma$ on the compact set $K_{2\tau + 8\delta + R}$: it is positive by Lemma \ref{lemma:systole_positive}. For every two $\gamma,\gamma' \in \Geod(\X)$ such that $[\gamma]=[\gamma']=[\gamma_i]$ we have that either $\gamma=\gamma'$ or $\sfD_f(\gamma,\gamma') > \frac{\sigma}{2}$, by Lemma \ref{lemma-inj-radius}. This implies that either $\gamma = \gamma'$ or $\sfd(\gamma(0), \gamma'(0)) > \frac{\sigma}{2}$ by Lemma \ref{lemma-metric-locgeod}.
		Therefore
		$$\# A_T \le \Cov_{\bar{\sfD}_f^T}(K_{2\tau + 8\delta + R}, r) \cdot \Pack_\sfd\left(\overline{B}(x,R+r), \frac{\sigma}{2}\right).$$
		Observe that the second factor does not depend on $T$. So, inequality \eqref{eq:inequality_Lambda_tau_up_down} is true if we show that $A_T$ is $r$-dense in $\Geod(\overline{B}(x,R); \Lambda_\tau)$ with respect to $\sfD_f^T$ for every $r< \frac{\sigma}{2}$. Indeed, in that case we have that
		$$\lim_{r\to 0}\limi_{T \to +\infty} \frac{1}{T}\log \Cov_{\sfD_f^T}(\Geod(\overline{B}(x,R);\Lambda_\tau),r) \le \lim_{r\to 0}\limi_{T \to +\infty} \frac{1}{T}\log  \Cov_{\bar{\sfD}_f^T}(K_{2\tau + 8\delta + R}, r).$$
		The right hand side is exactly $h_\text{top}^\text{flow}(K_{2\tau + 8\delta + R}, \bar{\Phi})$.
		
		It remains to prove that $A_T$ is $r$-dense with respect to $\sfD_f^T$ in $\Geod(\overline{B}(x,R); \Lambda_\tau)$ for $r<\frac{\sigma}{2}$. Let $\gamma \in \Geod(\overline{B}(x,R); \Lambda_\tau)$. By definition, there exist geodesic rays $\xi_{x,\gamma^\pm}$ such that $\sfd(\xi_{x,\gamma^\pm}(n), \Gamma x) \le 2\tau$ for every $n\in \N$. By Lemma \ref{parallel-geodesics} we deduce that $\sfd(\gamma(n), \Gamma x) \le 2\tau + 8\delta + R$ for every $n\in \Z$, because $\sfd(\gamma(0),x) \le R$. By definition, $[\gamma] \in K_{2\tau + 8\delta + R}$. Hence we can find $i$ such that $\bar{\sfD}_f^T([\gamma],[\gamma_i]) \le r$. 
		By definition of quotient metric, for every $t\in [0,T]$ there exists $g_t \in \Gamma$ such that $$\sfD_f(\Phi_t(\gamma), g_t \Phi_t(\gamma_i)) = \sfD_f(\Phi_t(\gamma),  \Phi_t (g_t\gamma_i)) \le r.$$
		The isometry $g_t \in \Gamma$ is unique by Lemma \ref{lemma-inj-radius}, 	since $r < \frac{\sigma}{2}$. Notice that $\sfd(\gamma(0), g_0\gamma_i(0)) \le r$ because of Lemma \ref{lemma-metric-locgeod}, so $\sfd(g_0\gamma_i(0), x) \le R+r$. In other words, $g_0\gamma_i \in A_T$. We consider the set $I=\{t \in [0,T] \,: \, g_t = g_0\}$ and we show that it is open and closed. Suppose $t_k \to t_\infty$ with $t_k \in I$. Then $\sfD_f(\Phi_{t_k}(\gamma), \Phi_{t_k}(g_{0}\gamma_i)) \le r$ for every $k$. Therefore, $\sfD_f(\Phi_{t_\infty}(\gamma), \Phi_{t_\infty}(g_{0}\gamma_i)) \le r$, showing that $I$ is closed. On the other hand let $t\in I$ and suppose that there exists a sequence $t_k \to t$ with $g_{t_k} \neq g_0$. We have that $\sfd(g_{t_k}\gamma_i(0), \gamma_i(0)) \le 2t_k + 2r$, so by discreteness we can find a constant subsequence, that we denote again by $g_{t_k}$, such that $g_{t_k} = g' \neq g_0$. But this implies that $\sfD_f(\Phi_t (\gamma), \Phi_t (g' \gamma_i)) \le r$, so $g' = g_0$ by uniqueness. This contradiction shows that $I$ is open, thus $I=[0,T]$. Therefore $g_0\gamma_i \in A_T$ is such that $\sup_{t\in [0,T]}\sfD_f(\Phi_t (\gamma), \Phi_t (g_0\gamma_i)) \le r$. This concludes the proof.
	\end{proof}

	\section{The proof of Theorem \ref{theo:intro-equality-Gromov-hyperbolic}}
	\label{sec:proof_of_h_top_<h_Gamma}
	In this section we prove Theorem \ref{theo:intro-equality-Gromov-hyperbolic}. Let $\X$ be a proper, line-convex, Gromov-hyperbolic metric space and let $\Gamma < \Isom(\X)$ be discrete and non-elementary. We suppose that $h_\text{erg}(\X; \Gamma) < +\infty$. Our goal is to show that $h_\text{top}(\Gamma \backslash \Geod(\X), \bar{\Phi}_1) \le h_\text{crit}(\X;\Gamma)$, which is sufficient in view of Theorem \ref{theo:intro-h_top>=h_Gamma}.
	
	Recall that the space $\Gamma \backslash \Geod(\X)$ is locally compact, by Proposition \ref{prop:quotient_of_loc_geod}. So, by Proposition \ref{prop:variational_principle}, the topological entropy of $(\Gamma \backslash \Geod(\X), \bar{\Phi}_1)$ satisfies
	$$h_\text{top}(\Gamma \backslash \Geod(\X), \bar{\Phi}_1) = \sup_{\mu\in \mathcal{E}_1(\Gamma \backslash \Geod(\X), \bar{\Phi}_1)} h_\mu(\Gamma \backslash \Geod(\X), \bar{\Phi}_1).$$
	The strategy is the following. Recall the subsets $\Lambda_{\tau,c,\varepsilon,n}$ defined in Section \ref{subsec:BJ}. We fix a measure $\mu\in \mathcal{E}_1(\Gamma \backslash \Geod(\X), \bar{\Phi}_1)$. Then we find $c > 0$ with the following property: for every $\varepsilon > 0$ there exist $\tau_\varepsilon > 0$ and $n_\varepsilon\in \N$ such that $\mu(\Gamma \backslash \Geod(\X;\Lambda_{\tau,c, \varepsilon, n_\varepsilon} \cap \Lambda_\text{erg})) = 1$. Next, we prove following chain of inequalities:
	\begin{equation}
		\label{eq:strategy_for_h_top_<h_Gamma}
		\begin{aligned}
			h_\mu(\Gamma \backslash \Geod(\X), \bar{\Phi}_1) &= h_\mu(\Gamma \backslash \Geod(\X;\Lambda_{\tau_\varepsilon,c,\varepsilon, n_\varepsilon} \cap \Lambda_{\text{erg}}), \bar{\Phi}_1) \\
			&\le h_\text{top}(\Gamma \backslash \Geod(\X;\Lambda_{\tau_\varepsilon,c,\varepsilon,n_\varepsilon} \cap \Lambda_{\text{erg}}), \bar{\Phi}_1)\\
			& \le \overline{h}_f(\Lambda_{\tau_\varepsilon,c,\varepsilon, n_\varepsilon} \cap \Lambda_{\text{erg}}) \\
			&\le \overline{\MD}(\Lambda_{\tau_\varepsilon,c,\varepsilon,n_\varepsilon} \cap \Lambda_{\text{erg}}) + h_\text{erg}(\X; \Gamma)\cdot\frac{2\varepsilon}{c-\varepsilon} \\
			&\le h_\text{crit}(\X;\Gamma)\cdot \frac{c+\varepsilon}{c-\varepsilon} + h_\text{erg}(\X;\Gamma)\cdot\frac{2\varepsilon}{c-\varepsilon}.
		\end{aligned}
	\end{equation}
	By taking $\varepsilon \to 0$ we conclude that $	h_\mu(\Gamma \backslash \Geod(\X), \bar{\Phi}_1) \le h_\text{crit}(\X;\Gamma)$ for every ergodic measure $\mu \in \mathcal{E}_1(\Gamma \backslash \Geod(\X), \bar{\Phi}_1)$, yelding the proof.
	
	Let us start with the first step, which is a more accurate version of \cite[Theorem C]{Cav24} and it is based on Birkhoff's ergodic theorem.

	\begin{prop}
		\label{prop:Birkhoff_mean}
		Let $\X$ be a proper, geodesic, Gromov-hyperbolic metric space, let $x\in \X$ and let $\Gamma < \Isom(\X)$ be discrete. For every $\mu \in \mathcal{E}_1(\Gamma \backslash \Geod(\X), \bar{\Phi}_1)$ there exists $c\in [1,+\infty)$ satisfying the following. For every $\varepsilon > 0$ there exist $\tau_\varepsilon > 0$ and $n_\varepsilon \in \N$ such that $\mu(\Gamma \backslash \Geod(\X;\Lambda_{\tau_\varepsilon,c, \varepsilon, n_\varepsilon} \cap \Lambda_\textup{erg})) = 1$.
	\end{prop}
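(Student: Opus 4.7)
The strategy combines Birkhoff's ergodic theorem (Proposition \ref{prop-Birkhoff}) applied to the indicator of a large compact subset of $\Gamma\backslash \Geod(\X)$, the fellow-travelling Lemma \ref{parallel-geodesics}, and the ergodicity of $\mu$ used as a $0$--$1$ law to make the threshold index uniform in $[\gamma]$. To choose $c$, note that properness together with Lemma \ref{lemma:invariant_compact_subsets} implies that the sets $K_\tau := \{[\gamma] : \sfd(\gamma(0), \Gamma x) \leq \tau\}$ are compact and $\bar{\Phi}_1$-invariant, with $\bigcup_{\tau > 0} K_\tau = \Gamma\backslash\Geod(\X)$; since $\mu$ is a probability measure, I would fix $\tau_0 > 0$ with $\alpha := \mu(K_{\tau_0}) > 0$ and set $c := 1/\alpha \in [1, +\infty)$.

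I would then apply Birkhoff's theorem to $\mathbf{1}_{K_{\tau_0}}$ under both $\bar{\Phi}_1$ and $\bar{\Phi}_{-1}$: for $\mu$-a.e. $[\gamma]$, if $0 \leq n_1 < n_2 < \cdots$ and $0 < n_1^- < n_2^- < \cdots$ enumerate the forward, respectively backward, integer return times to $K_{\tau_0}$, then $n_j/j \to c$ and $n_j^-/j \to c$. Fix a representative $\gamma$ and apply Lemma \ref{parallel-geodesics} to $\xi_{x, \gamma^+}$ and $\gamma|_{[0,\infty)}$, which share the endpoint $\gamma^+$; this produces $t_1, t_2 \geq 0$ with $t_1 + t_2 = \sfd(x, \gamma(0))$ and $\sfd(\xi_{x, \gamma^+}(s + t_1), \gamma(s + t_2)) \leq 8\delta$ for every admissible $s$. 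Setting $s_j := n_j + t_1 - t_2$ and $\tau_\varepsilon := \tau_0 + 8\delta$, the inequality $\sfd(\gamma(n_j), \Gamma x) \leq \tau_0$ gives $\sfd(\xi_{x, \gamma^+}(s_j), \Gamma x) \leq \tau_\varepsilon$ for every sufficiently large $j$, while $s_j/j \to c$. Hence the sequence $\Theta^+ := \{s_j\}_j$ lies in $\Theta_\textup{erg}$ and also in $\Theta_{c, \varepsilon, N([\gamma])}$ for a threshold $N([\gamma]) \in \N$ depending on the Birkhoff convergence rate of this particular orbit, so $\gamma^+ \in \Lambda_{\tau_\varepsilon, c, \varepsilon, N([\gamma])} \cap \Lambda_\textup{erg}$. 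A symmetric argument using $\xi_{x, \gamma^-}$ treats $\gamma^-$.

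The final step, which is the main point of the proof, is to upgrade the orbit-dependent $N([\gamma])$ to a uniform $n_\varepsilon$. For each $n \in \N$ set
$$A_{\varepsilon, n} := \Gamma\backslash\Geod(\X;\Lambda_{\tau_\varepsilon, c, \varepsilon, n} \cap \Lambda_\textup{erg});$$
these sets are increasing in $n$ and the previous step shows $\mu(\bigcup_n A_{\varepsilon, n}) = 1$. The key observation is that each $A_{\varepsilon, n}$ is $\bar{\Phi}_1$-invariant despite $\Lambda_{\tau_\varepsilon, c, \varepsilon, n}$ not being $\Gamma$-invariant: since $\Phi_1$ commutes with the $\Gamma$-action and preserves endpoints at infinity, $[\gamma]$ and $\bar{\Phi}_1[\gamma]$ share the same family of endpoint-pairs $\{(g\gamma^+, g\gamma^-) : g \in \Gamma\}$ of their representatives, so the condition ``some representative has both endpoints in $\Lambda_{\tau_\varepsilon, c, \varepsilon, n} \cap \Lambda_\textup{erg}$'' depends only on the orbit. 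Ergodicity of $\mu$ then forces $\mu(A_{\varepsilon, n}) \in \{0, 1\}$ for every $n$, and combined with the fact that the increasing union has full measure this produces some $n_\varepsilon \in \N$ with $\mu(A_{\varepsilon, n_\varepsilon}) = 1$, yielding the required statement.
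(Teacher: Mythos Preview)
Your proposal is correct and follows the same strategy as the paper: apply Birkhoff to the indicator of a positive-measure set, read off the return times as the sequence $\Theta$, transfer to the ray $\xi_{x,\gamma^\pm}$ via Lemma~\ref{parallel-geodesics}, and use ergodicity as a $0$--$1$ law on the $\bar\Phi_1$-invariant sets $A_{\varepsilon,n}$ to make the threshold uniform. One small slip: your $K_\tau$ (condition only at time $0$) is compact but \emph{not} $\bar\Phi_1$-invariant --- this is harmless since you never actually use invariance, only that Birkhoff applies to any $L^1$ indicator; the paper handles the dependence on $\sfd(x,\gamma(0))$ by taking a countable union over $\tau$ as well as over $n$, whereas your reparametrization lets you fix $\tau_\varepsilon=\tau_0+8\delta$ outright.
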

	\begin{proof}
		Let $\{x_i\}_{i\in\mathbb{N}}\subseteq \X$ be a countable set such that $\X = \bigcup_{i\in\mathbb{N}} B(x_i,1)$. For every $i$ let
		$$V_i := \{ \gamma \in \Geod(\X)\,:\, \gamma(0) \in B(x_i,1) \}$$
		and
		$$U_i := \Gamma \backslash V_i \subseteq \Gamma \backslash \Geod(\X).$$
		Since $\{V_i\}_{i\in \mathbb{N}}$ is a covering of $\Geod(\X)$, also $\{U_i\}_{i\in\mathbb{N}}$ is a covering of $\Gamma \backslash \Geod(\X)$. Then, there exists $i_0\in \mathbb{N}$ such that $\mu(U_{i_0}) = \sigma > 0$.
		To every $[\gamma] \in \Gamma \backslash \Geod(\X)$ we associate the set of integers $\Theta([\gamma]) = \lbrace \vartheta_i([\gamma])\rbrace$ defined recursively by
		$$\vartheta_0([\gamma])=0, \qquad \vartheta_{i+1}([\gamma]) = \min \lbrace n\in\mathbb{N}, n > \vartheta_i([\gamma]) \text{ s.t. } \bar{\Phi}_n([\gamma]) \in U_{i_0}\rbrace.$$
		We apply Proposition \ref{prop-Birkhoff} to the indicator function of the set $U_{i_0}$, namely $\chi_{U_{i_0}}$, obtaining that for $\mu$-a.e.$[\gamma] \in \Gamma \backslash \Geod(\X)$ it holds that
		$$\exists \lim_{N\to +\infty}\frac{1}{N}\sum_{j=0}^{N-1} (\chi_{U_{i_0}} \circ \bar{\Phi}_j)([\gamma]) = \mu(U_{i_0}) = \sigma \in (0,1].$$
		Observe that $(\chi_{U_{i_0}} \circ \bar{\Phi}_j)([\gamma]) = 1$ if and only if $j\in \Theta([\gamma])$ and it is $0$ otherwise. So 
		$$\lim_{N\to +\infty}\frac{1}{N}\sum_{j=0}^{N-1} (\chi_{U_{i_0}} \circ \bar{\Phi}_j)([\gamma]) = \lim_{N\to +\infty}\frac{\#\Theta([\gamma]) \cap [0,N-1]}{N},$$
		and the right hand side is by definition the density of the set $\Theta([\gamma])$. Given the standard increasing enumeration $\lbrace \vartheta_0([\gamma]), \vartheta_1([\gamma]),\ldots \rbrace$ of $\Theta([\gamma])$, it holds that
		$$\lim_{N\to +\infty}\frac{\#\Theta([\gamma]) \cap [0,N-1]}{N} = \lim_{N\to +\infty}\frac{N}{\vartheta_N([\gamma])}.$$
		So, for $\mu$-a.e.$[\gamma] \in \Gamma \backslash \Geod(\X)$ we have that
		\begin{equation}
			\label{eq-rec-times}
			\exists \lim_{N\to +\infty}\frac{\vartheta_N([\gamma])}{N} = \frac{1}{\sigma} \in [1,+\infty).
		\end{equation}
		In the same way, applying the same argument to the flow at time $-1$, we get that for $\mu$-a.e.$[\gamma] \in \Gamma \backslash \Geod(\X)$ we have
		\begin{equation}
			\label{eq-rec-times-negative}
			\exists \lim_{N\to +\infty}\frac{\vartheta_N([-\gamma])}{N} = \frac{1}{\sigma} \in [1,+\infty).
		\end{equation}
		Here $-\gamma$ denotes the curve $-\gamma(t) := \gamma(-t)$. We deduce that \eqref{eq-rec-times} and \eqref{eq-rec-times-negative} hold together for $\mu$-a.e.$[\gamma] \in \Gamma \backslash \Geod(\X)$. 
		
		We claim that the proposition holds with $c := \frac{1}{\sigma} \in [1,+\infty)$. We fix $\varepsilon > 0$. Let $[\gamma] \in \Gamma \backslash\Geod(\X)$ be such that \eqref{eq-rec-times} and \eqref{eq-rec-times-negative} holds.
		We notice that an integer $n$ satisfies $n \in \Theta([\gamma])$ if and only if there exists a representative $g\gamma$ of $[\gamma]$, with $g\in \Gamma$, such that $\Phi_n(g\gamma) \in V_{i_0}$, i.e. $g\gamma(n) \in B(x_{i_0},1)$. In other words $n \in \Theta([\gamma])$ if and only if
		\begin{equation}
			\label{eq:returning_ergodic}
			\sfd(\gamma(n), \Gamma x_{i_0}) < 1.
		\end{equation}
		We fix a geodesic ray $\xi_{x, \gamma^+}$. By Lemma \ref{parallel-geodesics} we have that $d(\xi_{x,\gamma^+}(t), \gamma(t)) \leq 8\delta + \sfd(x,\gamma(0))$ for every $t\geq 0$. This, together with \eqref{eq:returning_ergodic} says that $d(\xi_{x,\gamma^+}(\vartheta_N([\gamma])), \Gamma x_{i_0}) < 8\delta + \sfd(x,\gamma(0)) + 1$. By definition this means that $\gamma^+ \in \Lambda_{\tau, \Theta([\gamma])}$, where $\tau = 8\delta + \sfd(x,\gamma(0)) + 1$.
		By the properties of the sequence $\Theta([\gamma])$ proved in \eqref{eq-rec-times}, we have that
		$$\gamma^+ \in \Lambda_\text{erg} \cap \bigcup_{\tau > 0} \bigcup_{n\in \N} \Lambda_{\tau,c, \varepsilon, n} = \Lambda_\text{erg} \cap \bigcup_{k \in \N} \bigcup_{n\in \N} \Lambda_{k,c, \varepsilon, n}$$
		for $\mu$-a.e.$[\gamma]\in \Gamma \backslash \Geod(\X)$. Repeating the argument for $\gamma^-$, we get that 
		$$\gamma^{\pm} \in \Lambda_\text{erg} \cap \bigcup_{k \in \N} \bigcup_{n\in \N} \Lambda_{k,c, \varepsilon, n}$$
		for $\mu$-a.e.$[\gamma]\in \Gamma \backslash \Geod(\X)$.
		
		We have just showed that
		$\mu(\Gamma \backslash \Geod(\X; \Lambda_\text{erg} \cap \bigcup_{k\in \N} \bigcup_{n\in \N} \Lambda_{k,c,\varepsilon,n}) = 1$.
		But 
		$$\Gamma \backslash \Geod\left(\X; \Lambda_\text{erg} \cap \bigcup_{k\in \N}\bigcup_{n\in \N} \Lambda_{k,c,\varepsilon,n}\right) = \bigcup_{k\in \N}\bigcup_{n\in \N} \Gamma \backslash \Geod(\X; \Lambda_{k,c,\varepsilon,n} \cap \Lambda_\text{erg}).$$
		Each set $\Gamma \backslash \Geod(\X; \Lambda_{k,c,\varepsilon,n} \cap \Lambda_\text{erg})$ is $\bar{\Phi}_1$-invariant, since its definition depends only on the values of the geodesics at infinity. Therefore, since $\mu$ is ergodic, $\mu(\Gamma \backslash \Geod(\X; \Lambda_{k,c,\varepsilon,n} \cap \Lambda_\text{erg})) \in \{0,1\}$ for every $k,n \in \N$. We conclude that there exists $k_\varepsilon,n_\varepsilon \in \N$ such that $\mu(\Gamma \backslash \Geod(\X; \Lambda_{k_\varepsilon,c, \varepsilon, n_\varepsilon} \cap \Lambda_\text{erg})) = 1$.
	\end{proof}

	We proceed to the second step, which provides an almost opposite inequality to Proposition \ref{prop:h_Lip_top_>=_MD} which holds for the sets $\Lambda_{\tau,c, \varepsilon, n} \cap \Lambda_\text{erg}$ under the additional assumptions of Theorem \ref{theo:intro-equality-Gromov-hyperbolic}.
	\begin{prop}
		\label{prop:h_Liptop_<_MD_special_subsets}
		Let $\X$ be a proper, geodesic, line-convex, Gromov-hyperbolic metric space, let $x\in \X$ and let $\Gamma < \Isom(\X)$ be discrete. For every $\tau,c,\varepsilon > 0$ and $n\in \N$ it holds that
		$$\overline{h}_f(\Lambda_{\tau,c, \varepsilon, n} \cap \Lambda_\textup{erg}) \le \overline{\MD}(\Lambda_{\tau,c, \varepsilon, n} \cap \Lambda_\textup{erg}) + h_\textup{erg}(\X;\Gamma)\cdot\frac{2\varepsilon}{c-\varepsilon}.$$
	\end{prop}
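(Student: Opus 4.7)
The plan is to use line-convexity to reduce $\sfD_f^T$-closeness to $\sfd$-closeness at two extreme times, and then to exploit the return-time structure of $\Lambda_{\tau,c,\varepsilon,n}$ in order to localize the forward endpoint of $\gamma$ near a single orbit point determined by $\gamma^+$ at generalized visual scale $e^{-(T+T'')}$, leaving only a ``drift ball'' of radius $\sim 2\varepsilon T/(c-\varepsilon)$ to be covered through $h_\textup{erg}(\X;\Gamma)$. First I would fix $r>0$ and $R\ge 0$ and choose $T''=T''(r)$ so large that $\sup_{|s|\ge T''}2|s|f(|s|)\le r/2$, which is possible by condition $(c)$ in the definition of $\mathcal{F}$. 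Because $u\mapsto \sfd(\gamma(u),\gamma'(u))$ is convex (by line-convexity) and $2$-Lipschitz, the two bounds $\sfd(\gamma(-T''),\gamma'(-T''))\le r/2$ and $\sfd(\gamma(T+T''),\gamma'(T+T''))\le r/2$ propagate by convexity to the whole of $[-T'',T+T'']$ and decay past it via $f$, forcing $\sfD_f^T(\gamma,\gamma')\le r$. Consequently
$$\Cov_{\sfD_f^T}(\Geod(\overline{B}(x,R);C),r)\le N_-\cdot N_+(T),$$
where $C=\Lambda_{\tau,c,\varepsilon,n}\cap\Lambda_\textup{erg}$, $N_-$ is the $r/2$-cover number of $\overline{B}(x,T''+R)$ (independent of $T$), and $N_+(T)$ counts $r/2$-separated values of $\gamma(T+T'')$ as $\gamma$ ranges.

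Next I would estimate $N_+(T)$ by combining a boundary cover of $C$ at generalized visual scale $e^{-(T+T'')}$, of cardinality at most $e^{(\overline{\MD}(C)+\eta)(T+T'')}$, with a drift-ball cover supplied by $h_\textup{erg}(\X;\Gamma)$. For each representative $z_j$ in a minimal such cover I fix a witness sequence $\{\vartheta_i(z_j)\}\in\Theta_{c,\varepsilon,n}$, return times $t_i(z_j)\in[\vartheta_i(z_j),\vartheta_{i+1}(z_j)]$, and orbit points $g_i(z_j)x\in\Gamma x$ with $\sfd(\xi_{x,z_j}(t_i(z_j)),g_i(z_j)x)\le \tau$, and set $i^*=i^*(z_j)$ to be the largest $i$ with $t_i(z_j)\le T+T''$. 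For any $\gamma\in\Geod(\overline{B}(x,R);C)$ with $(\gamma^+,z_j)_x\ge T+T''$, Lemma \ref{product-rays} gives $\sfd(\xi_{x,\gamma^+}(t_{i^*}),\xi_{x,z_j}(t_{i^*}))\le 6\delta$ and Lemma \ref{parallel-geodesics} yields $\sfd(\gamma(t_{i^*}),\xi_{x,\gamma^+}(t_{i^*}))\le 8\delta+R$, so chaining
$$\sfd(\gamma(t_{i^*}(z_j)),\,g_{i^*}(z_j)x)\le \tau+14\delta+R.$$
The constraints $(c-\varepsilon)i^*\le t_{i^*}(z_j)\le T+T''<t_{i^*+1}(z_j)\le (c+\varepsilon)(i^*+2)$ then give $i^*\le (T+T'')/(c-\varepsilon)$ and $(T+T'')-t_{i^*}(z_j)\le 2\varepsilon(T+T'')/(c-\varepsilon)+O(1)=:L$. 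Hence by $1$-Lipschitzness, $\gamma(T+T'')\in\overline{B}(g_{i^*}(z_j)x,L+O(1))\cap\textup{QC-Hull}(\Lambda_\textup{erg})$, and by $\Gamma$-invariance of $\Lambda_\textup{erg}$ the $r/2$-cover number of this set equals $\Cov_\sfd(\overline{B}(x,L+O(1));\Lambda_\textup{erg};r/2)\le e^{(h_\textup{erg}(\X;\Gamma)+\eta)L}$ for $L$ large, by definition of $h_\textup{erg}(\X;\Gamma)$.

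Combining, $N_+(T)\le e^{(\overline{\MD}(C)+\eta)(T+T'')+(h_\textup{erg}(\X;\Gamma)+\eta)\cdot 2\varepsilon(T+T'')/(c-\varepsilon)+o(T)}$. Dividing by $T$, sending $T\to\infty$, then $r\to 0$ (which drives $\eta\to 0$ through the defining limits of $\overline{\MD}$ and $h_\textup{erg}$), and finally taking the supremum over $R$ yields the claimed inequality. The main obstacle I expect is the coordination step: $i^*(z_j)$ and $g_{i^*}(z_j)$ depend only on the visual-ball representative $z_j$, and one must verify carefully, using just the hyperbolic shadowing lemmas, that every $\gamma$ with $\gamma^+\in B_x(z_j,e^{-(T+T'')})$ — regardless of its own, possibly different, witness sequence in $\Theta_{c,\varepsilon,n}$ — passes close to the \emph{same} orbit point $g_{i^*}(z_j)x$. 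That uniform passage is precisely what allows the counting to consume only the drift $2\varepsilon T/(c-\varepsilon)$ instead of the full $T$ that the naive bound $\overline{h}_f(C)\le h_\textup{erg}(\X;\Gamma)$ would cost.
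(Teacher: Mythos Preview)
Your argument is correct and reaches the same conclusion, but it is organised differently from the paper. The paper factors the estimate into two pieces: it first isolates a stand-alone lemma (Lemma~\ref{lemma:key_lemma_Lip_Bow_entropy}) saying that for each fixed $\gamma\in\Geod(\overline{B}(x,R);C)$ the refinement cost $\Cov_{\sfD_f^T}(\overline{B}_{\sfD_f^T}(\gamma,r'),r)$ grows at rate at most $h_\textup{erg}(\X;\Gamma)\cdot\frac{2\varepsilon}{c-\varepsilon}$, using the witness sequence of $\gamma^+$ itself to translate $\gamma(T)$ into a drift ball around $x$; then, separately, it shows that a coarse cover at a fixed large scale $r'$ already costs only $\overline{\MD}(C)$, via a boundary cover. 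Your route does both at once: you take a visual cover of $C$ at scale $e^{-(T+T'')}$, and for every $\gamma$ whose endpoint lies in the same visual ball you use the \emph{representative's} witness sequence to funnel $\gamma(T+T'')$ into a single translated drift ball. This is exactly the coordination step you flag; it does go through, because product-rays gives $\sfd(\xi_{x,\gamma^+}(t),\xi_{x,z_j}(t))\le 4\delta$ for all $t\le (\gamma^+,z_j)_x-\delta$, hence in particular at $t_{i^*}(z_j)$, and the remaining chaining via Lemma~\ref{parallel-geodesics} is as you wrote. The paper's decomposition has the advantage of producing a reusable ``almost constancy in $r$'' lemma that makes the role of line-convexity and of the special sets $\Lambda_{\tau,c,\varepsilon,n}$ very transparent (cf.\ Remark~\ref{rmk:idea_behind_h_f<MD}); your approach is more direct and avoids the intermediate large scale $r'$, at the price of having to argue uniformity over the visual ball. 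One cosmetic point: your final limit bookkeeping is slightly garbled --- $\eta$ is an arbitrary error parameter and should be sent to $0$ after $T\to\infty$, independently of $r$; the $r\to 0$ and $\sup_R$ are then harmless since your bound is already uniform in those.
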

	
	This is the only point where we use the line-convexity. Actually, it will be used in the next lemma which is inspired to \cite[Lemma 4.4]{Cav21}.
	
	\begin{lemma}
		\label{lemma:key_lemma_Lip_Bow_entropy}
		Let $\X$ be a proper, geodesic, line-convex, Gromov-hyperbolic metric space, let $x\in \X$ and let $\Gamma < \Isom(\X)$ be discrete. For every $\tau,c,\varepsilon > 0$, $n\in \N$, $f\in \mathcal{F}$, $R\ge 0$ and $0<r\le r'$ it holds that
		$$\lims_{T \to +\infty} \sup_{\gamma \in \Geod(\overline{B}(x,R); \Lambda_{\tau,c,\varepsilon,n} \cap \Lambda_\textup{erg})}\frac{1}{T}\log \Cov_{\sfD_f^T}(\overline{B}_{\sfD_f^T}(\gamma,r'),r) \le h_\textup{erg}(\X;\Gamma)\cdot \frac{2\varepsilon}{c-\varepsilon}.$$
	\end{lemma}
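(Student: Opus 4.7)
Fix $\tau, c, \varepsilon, n, f, R, r, r'$ as in the statement, let $\delta$ be the hyperbolicity constant of $\X$, and set $M := \tau + R + 8\delta$.  The strategy combines three ingredients: the returning-time structure of $\gamma$ forced by $\gamma^\pm \in \Lambda_{\tau,c,\varepsilon,n}$, the line-convexity of $\X$, and the definition of $h_\textup{erg}(\X;\Gamma)$.

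\emph{Step 1 (Returning times).}  For $\gamma \in \Geod(\overline{B}(x,R); \Lambda_{\tau,c,\varepsilon,n} \cap \Lambda_\textup{erg})$, the definition of $\Lambda_{\tau,c,\varepsilon,n}$ yields an increasing sequence $\{\vartheta_i\}$ with $(c-\varepsilon)i < \vartheta_i < (c+\varepsilon)i$ for $i \ge n$ and returning times $t_i \in [\vartheta_i, \vartheta_{i+1}]$ such that $\sfd(\xi_{x,\gamma^+}(t_i), \Gamma x) \le \tau$.  Lemma \ref{parallel-geodesics} provides a shift $|\beta| \le R$ with $\sfd(\gamma(t_i + \beta), \Gamma x) \le M$ for every $i$.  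For $T$ large, let $i^* \ge n$ be the largest index with $t_{i^*} + \beta \le T$ and set $s^+ := t_{i^*} + \beta$; the asymptotic constraint on $\vartheta_i$ implies $T - s^+ \le \tfrac{2\varepsilon T}{c-\varepsilon} + C_1$ for a constant $C_1$ depending only on $c,\varepsilon,n,R$ (in particular uniform in $\gamma$).  Pick $h^+ \in \Gamma$ with $\sfd(\gamma(s^+), h^+ x) \le M$.

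\emph{Step 2 (Line-convexity reduction).}  For two geodesic lines the function $u \mapsto \sfd(\gamma'(u), \gamma''(u))$ is convex on $\R$ and automatically $2$-Lipschitz.  Setting $C_f := \sup_{s \in \R} 2|s|f(s) < +\infty$ by property (c) of $\mathcal{F}$, one verifies that whenever $\sfd(\gamma'(0), \gamma''(0)) \le \rho$ and $\sfd(\gamma'(T), \gamma''(T)) \le \rho$, convexity gives $\sfd(\gamma'(u), \gamma''(u)) \le \rho$ on $[0,T]$ and the $2$-Lipschitz extension combined with the decay of $f$ yields $\sfD_f^T(\gamma', \gamma'') \le \rho + C_f$.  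Choosing $\rho = r - C_f$ (assumed positive; for $r \le C_f$ one refines by covering at a denser finite set of times inside $[0,T]$ at a smaller $\sfd$-scale, which contributes only a sub-exponential correction), one obtains
\[
\Cov_{\sfD_f^T}\bigl(\overline{B}_{\sfD_f^T}(\gamma, r'), r\bigr) \le \Cov_\sfd\bigl(\overline{B}(\gamma(0), r'), \rho\bigr) \cdot \Cov_\sfd\bigl(\overline{B}(\gamma(T), r'), \rho\bigr).
\]

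\emph{Step 3 (Bounds via $h_\textup{erg}$).}  The first factor is uniform in $\gamma$ and $T$: $\overline{B}(\gamma(0), r') \subseteq \overline{B}(x, R+r')$ is a fixed compact set whose $\rho$-cover is a constant by properness.  For the second, translate by $(h^+)^{-1}$: using the $\Gamma$-invariance of $\Lambda_\textup{erg}$ (and the hypothesis $\gamma^\pm \in \Lambda_\textup{erg}$), the point $(h^+)^{-1}\gamma(T)$ lies in $\textup{QC-Hull}(\Lambda_\textup{erg}) \cap \overline{B}(x, T - s^+ + M)$, so $\overline{B}((h^+)^{-1}\gamma(T), r')$ is contained in the $r'$-neighborhood of $\textup{QC-Hull}(\Lambda_\textup{erg}) \cap \overline{B}(x, T - s^+ + M + r')$.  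A net-expansion argument then gives
\[
\Cov_\sfd\bigl(\overline{B}(\gamma(T), r'), \rho\bigr) \le K \cdot \Cov_\sfd\bigl(\overline{B}(x, T - s^+ + M + r'); \Lambda_\textup{erg}; \rho'\bigr)
\]
for constants $K, \rho' > 0$ depending only on $r, r', M$.  The definition of $h_\textup{erg}(\X;\Gamma)$ together with Lemma \ref{lemma:h_erg_no_r} bounds the right-hand side by $\exp((h_\textup{erg}(\X;\Gamma) + o(1))(T - s^+ + O(1)))$ as $T \to +\infty$, and combining with Step 1 gives the claimed $\limsup_T$ bound $h_\textup{erg}(\X;\Gamma) \cdot \frac{2\varepsilon}{c-\varepsilon}$.

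\emph{Main obstacle.}  The delicate step is the net-expansion estimate in Step 3.  Without any form of bounded geometry the cover of an arbitrary ball in $\X$ may be arbitrarily large, and the argument must crucially exploit that the \emph{center} of the ball lies in $\textup{QC-Hull}(\Lambda_\textup{erg})$: only then does the passage to the $r'$-neighborhood of $\textup{QC-Hull}(\Lambda_\textup{erg})$ inside a growing ball reduce the local cover to the globally controlled quantity that defines $h_\textup{erg}(\X;\Gamma)$.  A secondary subtlety is the regime $r \le C_f$, where one needs a denser time-discretization in Step 2 and a careful bookkeeping to ensure that the extra covering factors remain sub-exponential in $T$.
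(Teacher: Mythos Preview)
Your overall structure---returning times, line-convexity reduction to two endpoint packings, then a covering bound via $h_\textup{erg}$---matches the paper's proof, but there are two genuine gaps.

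First, in Step~2, your proposed fix for $r \le C_f$ (``denser finite set of times inside $[0,T]$'') does not work: the only obstruction to $\sfD_f^T \le r$ comes from the tails $s+t < 0$ and $s+t > T$, and adding test points \emph{inside} $[0,T]$ gives no additional control there. The paper resolves this by pushing the two test points \emph{outside} the interval, to $-S$ and $T+S$, where $S$ depends only on $r$ and $f$ and is chosen so that $2|s|f(s) \le r/4$ for $|s| \ge S$. Line-convexity on $[-S, T+S]$ together with the decay of $f$ on $\{|s| \ge S\}$ then gives $\sfD_f^T \le r/2$ directly, at the cost of replacing your packing balls $\overline{B}(\gamma(0), r')$, $\overline{B}(\gamma(T), r')$ by the larger $\overline{B}(\gamma(0), S+r')$, $\overline{B}(\gamma(T), S+r')$. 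Since $S$ does not depend on $T$, this is harmless for the $\limsup$.

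Second, and more seriously, your Step~3 tries to cover the \emph{full} ball $\overline{B}(\gamma(T), r') \subseteq \X$, and the net-expansion you sketch---covering $\textup{QC-Hull}(\Lambda_\textup{erg})$ at scale $\rho'$ and then inflating to its $r'$-neighborhood---needs a bounded-geometry hypothesis to refine from scale $\rho'+r'$ back down to $\rho$; indeed you invoke Lemma~\ref{lemma:h_erg_no_r}, whose hypothesis is precisely bounded geometry with respect to $\Lambda_\textup{erg}$. No such hypothesis is available here. The correct observation, which the paper uses, is that the geodesics $\gamma'$ being separated in the $\sfD_f^T$-ball all have $\gamma'^\pm \in C \subseteq \Lambda_\textup{erg}$, so the points $\gamma'(T+S)$ already lie in $\textup{QC-Hull}(\Lambda_\textup{erg})$. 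Hence one only needs to pack $\overline{B}(\gamma(T), S+r') \cap \textup{QC-Hull}(\Lambda_\textup{erg})$, and after translating by an element of $\Gamma$ (using that $\Lambda_\textup{erg}$ is $\Gamma$-invariant) this sits inside $\overline{B}\bigl(x, \tfrac{2\varepsilon}{c-\varepsilon}T + O(1)\bigr) \cap \textup{QC-Hull}(\Lambda_\textup{erg})$, whose packing is controlled by the very definition of $h_\textup{erg}(\X;\Gamma)$, with no auxiliary lemma needed. Your ``main obstacle'' paragraph correctly senses the difficulty but mislocates the resolution: what matters is the location of the \emph{packed points}, not of the center.
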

	
	Before discussing the proof, we show how to get Proposition \ref{prop:h_Liptop_<_MD_special_subsets} from Lemma \ref{lemma:key_lemma_Lip_Bow_entropy}.
	\begin{proof}[Proof of Proposition \ref{prop:h_Liptop_<_MD_special_subsets}]
		Let $\delta$ be the iperbolicity constant of $\X$.
		In order to simplify the notation we set $C:= \Lambda_{\tau,c, \varepsilon, n} \cap \Lambda_\text{erg}$.
		By definition \eqref{eq:defin-upper-Lip-Bowen-entropy} we have that
		\begin{equation}
			\label{eq:Prop.h_Lip<h_Gamma}
			\overline{h}_f(C) = \sup_{R\ge 0} \lim_{r\to 0} \lims_{T \to +\infty} \frac{1}{T} \log \Cov_{\sfD_f^T}(\Geod(\overline{B}(x,R); C), r),
		\end{equation} 
		where $f\in \mathcal{F}$. We claim that the decreasing function
		$$r\mapsto \lims_{T \to +\infty} \frac{1}{T}\log \Cov_{\sfD_f^T}(\Geod(\overline{B}(x,R);C),r)$$
		is almost constant. For that, we fix $0<r\le r'$. Then $\frac{1}{T}\log \Cov_{\sfD_f^T}(\Geod(\overline{B}(x,R);C),r)$ is bounded from above by
		\begin{equation}
			\begin{aligned}
				&\frac{1}{T}\log \left(\Cov_{\sfD_f^T}(\Geod(\overline{B}(x,R);C),r') \cdot \sup_{\gamma \in \Geod(\overline{B}(x,R); C)} \Cov_{\sfD_f^T}(\overline{B}_{\sfD_f^T}(\gamma,r'),r)  \right)\\
				= &\frac{1}{T}\log \Cov_{\sfD_f^T}(\Geod(\overline{B}(x,R);C),r')+ \frac{1}{T}\log
				\sup_{\gamma \in \Geod(\overline{B}(x,R); C)} \Cov_{\sfD_f^T}(\overline{B}_{\sfD_f^T}(\gamma,r'),r).
			\end{aligned}
		\end{equation}
		Taking the limit superior for $T$ going to $+\infty$ we get that $\lims_{T \to +\infty}\frac{1}{T}\log \Cov_{\sfD_f^T}(\Geod(\overline{B}(x,R);C),r)$ is smaller than or equal to
		$$\lims_{T \to +\infty}\frac{1}{T}\log \Cov_{\sfD_f^T}(\Geod(\overline{B}(x,R);C),r')+ h_\text{erg}(\X;\Gamma)\cdot\frac{2\varepsilon}{c-\varepsilon},$$
		where we used Lemma \ref{lemma:key_lemma_Lip_Bow_entropy}. 
		
		We can apply this property to $r' := R + \sfd(x,\text{QC-Hull}(C)) + 44\delta + \sup_{s\in \R}2\vert s \vert f(s)$ and an arbitrary $r<r'$. By taking the limit for $r$ going to $0$ we get that 
		\begin{equation}
			\begin{aligned}
				\overline{h}_f(C) \le \sup_{R \ge 0} \lims_{T \to +\infty} \frac{1}{T} \log \Cov_{\sfD_f^T}(\Geod(\overline{B}(x,R); C), r') + h_\text{erg}(\X;\Gamma)\cdot\frac{2\varepsilon}{c-\varepsilon},
			\end{aligned}
		\end{equation}		
				
		We now claim that 
		\begin{equation}
			\lims_{T \to +\infty} \frac{1}{T} \log \Cov_{\sfD_f^T}(\Geod(\overline{B}(x,R); C), r') \le \overline{\MD}(C)
		\end{equation}
		for every $R\ge \sfd(x,\text{QC-Hull}(C)) + 22\delta =:M$.
		Let $T \ge 0$ and define $\rho := e^{-T}$. Let $\{z_i\}$ be a $\rho$-dense subset of $C$, i.e. for every $z\in C$ there exists $i$ such that $(z,z_i)_x \ge T$. Lemma \ref{lemma:approximation-ray-line} gives geodesic lines $\gamma_i \in \Geod(\overline{B}(x,M);C)$ such that $\gamma_i^+=z_i$. We claim that the set $\{\gamma_i\}$ is $r'$-dense in $\Geod(\overline{B}(x,R);C)$ with respect to $\sfD_f^T$. Let $\gamma \in \Geod(\overline{B}(x,R);C)$ and let $i$ be such that $(\gamma^+,z_i)_x \ge T$. By construction, $\sfd(\gamma(0), \gamma_i(0)) \le R+M$. Moreover, using Lemma \ref{product-rays} and twice Lemma \ref{lemma:approximation-ray-line} we get that $\sfd(\gamma(s), \gamma_i(s)) \le R + M + 22\delta$ for every $s\in [0,T]$. We estimate
		$$\sfD_f^T(\gamma,\gamma_i) = \sup_{t\in [0,T]} \sup_{s\in \R} \sfd(\gamma(s+t), \gamma_i(s+t))f(s)$$
		dividing the supremum in three parts. 
		
		If $s+t \le 0$ then $$\sfd(\gamma(s+t),\gamma_i(s+t)) \le 2\vert s+t \vert + R + M \le 2\vert s \vert + R + M,$$
		implying $\sup_{s+t \le 0} \sfd(\gamma(s+t), \gamma_i(s+t))f(s) \le \sup_{s\in \R}2\vert s \vert f(s) + R + M$. 
		
		If $0\le s+t \le T$ then
		$$\sfd(\gamma(s+t),\gamma_i(s+t)) \le R + M + 22\delta$$
		implying $\sup_{0\le s+t \le T} \sfd(\gamma(s+t), \gamma_i(s+t))f(s) \le R + M + 22\delta$.
		
		If $s+t \ge T$ then $$\sfd(\gamma(s+t),\gamma_i(s+t)) \le 2\vert s+t - T \vert + R + M + 22\delta \le 2\vert s \vert + R + M + 22\delta,$$
		implying $\sup_{s+t \ge T} \sfd(\gamma(s+t), \gamma_i(s+t))f(s) \le \sup_{s\in \R}2\vert s \vert f(s) + R + M + 22\delta$.
		
		Therefore we conclude that $\sfD_f^T(\gamma,\gamma_i) \le r'$. Hence,
		\begin{equation}
			\begin{aligned}
				\lims_{T \to +\infty} \frac{1}{T} \log \Cov_{\sfD_f^T}(\Geod(\overline{B}(x,R); C), r') \le \lims_{\rho \to 0} \frac{\log \Cov_x(C,\rho) }{\log\frac{1}{\rho}} = \overline{\MD}(C).
			\end{aligned}
		\end{equation}
	\end{proof}

	\begin{obs}
	\label{rmk:idea_behind_h_f<MD}
		The main idea in the proof of Proposition \ref{prop:h_Liptop_<_MD_special_subsets} is the following. While it is always true that $\overline{\MD}(C) \le \overline{h}_f(C)$ for every $C\subseteq \partial \X$ by Proposition \ref{prop:h_Lip_top_>=_MD}, the opposite inequality can be false in general. However, if the function 
		\begin{equation}
			\label{eq:h_f_rmk}
			r \mapsto \lims_{T \to +\infty}\frac{1}{T} \log \Cov_{\sfD_f^T}(\Geod(\overline{B}(x,R);C), r)
		\end{equation} 
		is constant then one can use a large enough value of $r$ to obtain the opposite inequality $\overline{h}_f(C) \le \overline{\MD}(C)$. This should be compared with what we did in Section \ref{subsec:geodesic_entropy_subsets}.
		
		In general there is no reason for which the function \eqref{eq:h_f_rmk} is constant. However, for the special subset $C = \Lambda_{\tau,c,\varepsilon,n} \cap \Lambda_\textup{erg}$ we get that the associated function \eqref{eq:h_f_rmk} is almost constant, with an error which is controlled by $h_\textup{erg}(\X;\Gamma)$. This is enough to get the estimate of Proposition \ref{prop:h_Liptop_<_MD_special_subsets}. The almost constancy of the function is provided by Lemma \ref{lemma:key_lemma_Lip_Bow_entropy}, which therefore plays a crucial role.
	\end{obs}
	
	In order to clarify the idea behind the proof of Lemma \ref{lemma:key_lemma_Lip_Bow_entropy} we first provide an easier version for spaces with bounded geometry.
	
	\begin{lemma}
		\label{lemma:key_lemma_Lip_Bow_entropy_doubling}
		Let $\X$ be a proper, geodesic, line-convex metric space with bounded geometry. Let $f\in \mathcal{F}$ and $0<r\le r'$. Then
		$$\lim_{T \to +\infty} \sup_{\gamma \in \Geod(\X)}\frac{1}{T}\log \Cov_{\sfD_f^T}(\overline{B}_{\sfD_f^T}(\gamma,r'),r) = 0.$$
	\end{lemma}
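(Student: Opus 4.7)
The plan is to show that $\Cov_{\sfD_f^T}(\overline{B}_{\sfD_f^T}(\gamma,r'),r)$ is bounded by a constant depending only on $r,r',f$ and on the bounded geometry constants of $\X$, but not on $T$ or on the center $\gamma\in\Geod(\X)$. Once this is established, dividing the logarithm of that constant by $T$ produces a vanishing $T^{-1}$ factor, and the limit is $0$ uniformly in $\gamma$. Using property (c) in the definition of $\mathcal{F}$, I first fix $L>0$ so large that $\sup_{|s|\ge L}2|s|f(s)\le r/4$; intuitively, only the behaviour of geodesics on the compact window $[-L,T+L]$ affects $\sfD_f^T$ up to error $r/2$, since the weight $f$ damps out the tails.

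The key step, in which line-convexity is used in an essential way, is a $T$-independent rigidity statement: if $\gamma_1,\gamma_2\in\Geod(\X)$ satisfy $\sfd(\gamma_1(-L),\gamma_2(-L))\le r/4$ and $\sfd(\gamma_1(T+L),\gamma_2(T+L))\le r/4$, then $\sfD_f^T(\gamma_1,\gamma_2)\le r$. To prove it I rewrite $\sfD_f^T(\gamma_1,\gamma_2)=\sup_{t\in[0,T]}\sup_{s\in\R}h_t(s)f(s)$ with $h_t(s):=\sfd(\gamma_1(s+t),\gamma_2(s+t))$; by line-convexity each $h_t$ is convex in $s$, and by construction $h_t(-L-t)\le r/4$ and $h_t(T+L-t)\le r/4$. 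Since $[-L,L]\subseteq[-L-t,T+L-t]$ for every $t\in[0,T]$, convexity forces $h_t(s)\le r/4$ on $[-L,L]$, so $h_t(s)f(s)\le r/4$ there. For $|s|>L$ the $2$-Lipschitz bound $h_t(s)\le r/4+2|s|$ combined with the choice of $L$ yields $h_t(s)f(s)\le r/4+2|s|f(s)\le r/2$, and the claim follows.

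The last step is a counting argument based on bounded geometry. Evaluating the definition of $\sfD_f^T$ at well-chosen $(t,s)$, any $\gamma'\in\overline{B}_{\sfD_f^T}(\gamma,r')$ must satisfy $\sfd(\gamma'(-L),\gamma(-L))\le r'/f(L)$ and $\sfd(\gamma'(T+L),\gamma(T+L))\le r'/f(L)$. By \eqref{eq:defin_bounded_geometry}, the balls $\overline{B}(\gamma(-L),r'/f(L))$ and $\overline{B}(\gamma(T+L),r'/f(L))$ admit $(r/8)$-dense subsets of size at most $N:=C_{r'/f(L),r/8}$, a bound independent of both $T$ and $\gamma$. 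Picking one representative in $\overline{B}_{\sfD_f^T}(\gamma,r')$ for each non-empty pair of these $(r/8)$-balls at $-L$ and at $T+L$, the triangle inequality combined with the rigidity step above produces a set of at most $N^2$ geodesics that is $r$-dense in $\overline{B}_{\sfD_f^T}(\gamma,r')$. I expect the main obstacle to be the bookkeeping around the weight $F_T(u)=\sup_{t\in[0,T]}f(u-t)$ near the corners $u=-L$ and $u=T+L$; apart from that, the proof just combines line-convexity (which makes the interior of the window $[-L,T+L]$ entirely determined by the two endpoint values) with the uniform local complexity provided by bounded geometry.
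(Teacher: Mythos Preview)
Your proposal is correct and follows essentially the same route as the paper: pick a window $[-L,T+L]$ using the decay of $f$, use line-convexity to show that two geodesics agreeing (to within $r/4$) at the window endpoints are $\sfD_f^T$-close, and invoke bounded geometry to bound the number of endpoint configurations by a constant independent of $T$ and $\gamma$. The only cosmetic differences are that the paper localises $\gamma'(-S)$ and $\gamma'(T+S)$ in balls of radius $S+r'$ around $\gamma(0)$ and $\gamma(T)$ (via $\sfd(\gamma(t),\gamma'(t))\le r'$ for $t\in[0,T]$ plus the triangle inequality), whereas you evaluate $\sfD_f^T$ at $(t,s)=(0,-L)$ and $(T,L)$ to land in balls of radius $r'/f(L)$ around $\gamma(\pm L)$ and $\gamma(T+L)$; and the paper phrases the counting via packing rather than covering.
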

	\begin{proof}
		Let $\gamma \in \Geod(\X)$. Let $S >0$ such that $2\vert s \vert f(s) \le \frac{r}{4}$ for every $s\ge S$. Let $T\ge S$. Our goal is to bound from above the quantity $\Pack_{\sfD_f^T}(\overline{B}_{\sfD_f^T}(\gamma,r'),\frac{r}{2})$. 
		Let $\gamma' \in \overline{B}_{\sfD_f^T}(\gamma,r')$. By Lemma \ref{lemma-metric-locgeod} we get that $\sfd(\gamma(t), \gamma'(t)) = \sfd((\Phi_t (\gamma))(0), (\Phi_t (\gamma'))(0)) \le r'$ for every $t\in [0,T]$. The triangular inequality implies that $$\sfd(\gamma'(T + S), \gamma(T)) \le S + r', \qquad \sfd(\gamma'(-S), \gamma(0)) \le S + r'.$$
		In other words, for every $\gamma' \in \overline{B}_{\sfD_f^T}(\gamma,r')$ we have that $\gamma'(T+S) \in \overline{B}(\gamma(T), S + r')$ and $\gamma'(-S) \in \overline{B}(\gamma(0), S+r')$. Moreover, suppose that two geodesics $\gamma',\gamma'' \in \overline{B}_{\sfD_f^T}(\gamma,r')$ satisfy $\sfd(\gamma'(T + S), \gamma''(T + S)) \le \frac{r}{4}$ and $\sfd(\gamma'(-S), \gamma''(-S)) \le \frac{r}{4}$. By line-convexity of the space we deduce that $\sfd(\gamma'(s), \gamma''(s)) \le \frac{r}{4}$ for every $s\in [-S, T+ S]$. In order to estimate
		\begin{equation}
			\begin{aligned}
				\sfD_f^T(\gamma',\gamma'') &= \sup_{t\in [0,T]} \sup_{s\in \R} \sfd(\gamma'(s+t), \gamma''(s+t)) f(s),
			\end{aligned}
		\end{equation}
		we divide the second supremum in three parts. 
		
		If $s+t \le -S$ then $\sfd(\gamma'(s+t), \gamma''(s+t)) \le \frac{r}{4} + 2\vert s+t + S \vert \le \frac{r}{4} + 2\vert s \vert$, so
		$$\sup_{s+t \le -S} \sfd(\gamma'(s+t), \gamma''(s+t))f(s) \le \sup_{s+t \le -S} \left(\frac{r}{4} + 2\vert s \vert\right)f(s) \le \frac{r}{2}.$$ 
		
		If $-S \le s+T \le T+S$ then $\sfd(\gamma'(s+t), \gamma''(s+t)) \le \frac{r}{4}$ and 
		$$\sup_{-S \le s+t \le T + S} \sfd(\gamma'(s+t), \gamma''(s+t))f(s) \le \frac{r}{4}.$$ 
		
		If $s+t \ge T + S$ then $\sfd(\gamma'(s+t), \gamma''(s+t)) \le \frac{r}{4} + 2\vert s+t - T - S \vert \le \frac{r}{4} + 2\vert s \vert$, because $T + S - t \ge S \ge 0$, so
		$$\sup_{s+t \ge T + S} \sfd(\gamma'(s+t), \gamma''(s+t))f(s) \le \sup_{s+t \ge T + S} \left(\frac{r}{4} + 2\vert s \vert\right)f(s) \le \frac{r}{2},$$
		where the last inequality follows since $s\ge S$. Therefore,
		$$\sfD_f^T(\gamma',\gamma'') \le \frac{r}{2}.$$
		This means that if two elements $\gamma',\gamma''$ of $\overline{B}_{\sfD_f^T}(\gamma,r')$ are at $\sfD_f^T$-distance bigger than $\frac{r}{2}$, then at least one of the two couple of points $(\gamma'(-S),\gamma''(-S))$, $(\gamma'(T+S), \gamma''(T+S))$ are $\frac{r}{4}$-separated in $\X$. 
		This provides the bound
		$$\Pack_{\sfD_f^T}\left(\overline{B}_{\sfD_f^T}(\gamma,r'),\frac{r}{2}\right) \le \Pack_\sfd\left(\overline{B}(\gamma(0), S + r'), \frac{r}{4}\right)\cdot \Pack_\sfd\left(\overline{B}(\gamma(T), S + r'), \frac{r}{4}\right).$$
		By \eqref{eq:Cov_Pack} and the fact that $\X$ has bounded geometry we get that the quantities
		$$\Pack_\sfd\left(\overline{B}(\gamma(0), S + r'), \frac{r}{4}\right)\quad \text{ and } \quad\Pack_\sfd\left(\overline{B}(\gamma(T), S + r'), \frac{r}{4}\right)$$
		can be bound from above uniformly in $T$ and in $\gamma$. Therefore
		\begin{equation}
			\begin{aligned}
				\lims_{T \to +\infty}\sup_{\gamma \in \Geod(\X)}\frac{1}{T}\log \Cov_{\sfD_f^T}(\overline{B}_{\sfD_f^T}(\gamma,r'),r) &\le \lims_{T \to +\infty}\sup_{\gamma \in \Geod(\X)}\frac{1}{T}\log \Pack_{\sfD_f^T}\left(\overline{B}_{\sfD_f^T}(\gamma,r'),\frac{r}{2}\right)=0.
			\end{aligned}
		\end{equation}
	\end{proof}
	
	\begin{obs}
		The line-convexity assumption in Lemma \ref{lemma:key_lemma_Lip_Bow_entropy_doubling} ensures the following property. In order to have that $\gamma, \gamma' \in \Geod(\X)$ are close on the interval $[0,T]$ it is enough to test their closeness at two times: at time $0$ and at time $T$. This property fails for non line-convex spaces, like the example of Theorem \ref{theo:intro_counterexample_h_top>h_Gamma} where its lackness gives origin to additional topological entropy.
	\end{obs}

	\begin{obs}
		Following the discussion of Remark \ref{rmk:idea_behind_h_f<MD} we observe that as a consequence of Lemma \ref{lemma:key_lemma_Lip_Bow_entropy_doubling} we have the following fact. Let $\X$ be a proper, geodesic, line-convex, Gromov-hyperbolic metric space with bounded geometry. Then for every $C\subseteq \partial \X$ we have that $\overline{h}_f(C) = \overline{\MD}(C)$. This should be compared with the results of \cite[\S 6]{Cav21}.
	\end{obs}
	
	Observe that in the proof of Lemma \ref{lemma:key_lemma_Lip_Bow_entropy_doubling}, the fact that the metric space has bounded geometry has been used only once, namely to bound the cardinality of a separated set inside balls of fixed radius but different centers. The idea for Lemma \ref{lemma:key_lemma_Lip_Bow_entropy} is to obtain such a bound using the fact that every geodesic in $\Lambda_{\tau,c, \varepsilon, n} \cap \Lambda_\text{erg}$ is at controlled distance to a point of $\Gamma x$ and to reduce the packing problem to balls with fixed center. The price to pay is that the balls have radii going to $+\infty$, and that is where the condition on the ergodic entropy of the action of $\Gamma$ on $\X$ plays a role. 
	
	\begin{proof}[Proof of Lemma \ref{lemma:key_lemma_Lip_Bow_entropy}]
		In order to simplify the notations we set $C:= \Lambda_{\tau,c, \varepsilon, n} \cap \Lambda_\text{erg}$.
		Let $\gamma \in \Geod(\overline{B}(x,R); C)$. Let $S>0$ be such that $2\vert s \vert f(s) \le \frac{r}{4}$ for every $s\ge S$. Let $T \ge \max\{(c+\varepsilon)n, S\}$. The same computations of the proof of Lemma \ref{lemma:key_lemma_Lip_Bow_entropy_doubling} show that $\Pack_{\sfD_f^T}\left(\overline{B}_{\sfD_f^T}(\gamma,r'),\frac{r}{2}\right)$ is bounded from above by
		\begin{equation}
			\begin{aligned}
				 \Pack_{\sfd}\left(\overline{B}(\gamma(0), S + r') \cap \text{QC-Hull}(C), \frac{r}{4}\right) \cdot \Pack_{\sfd}\left(\overline{B}(\gamma(T), S + r') \cap \text{QC-Hull}(C), \frac{r}{4}\right).
			\end{aligned}
		\end{equation}
		The quantity $\Pack_{\sfd}\left(\overline{B}(\gamma(0), S + r') \cap \text{QC-Hull}(C), \frac{r}{4}\right)$ is bounded uniformly from above with respect to $\gamma$ because $\X$ is proper and $\gamma(0)$ belongs to the fixed compact set $\overline{B}(x,R)$.
		
		It remains to bound $\Pack_{\sfd}\left(\overline{B}(\gamma(T), S + r') \cap \text{QC-Hull}(C), \frac{r}{4}\right)$ from above. Let $z = \gamma^+ \in \partial \X$ and let $\xi_{x,z}$ be a geodesic ray appearing in the condition $z\in\Lambda_{\tau,c, \varepsilon, n}$. More precisely, the following conditions are true:
		\begin{itemize}
			\item[(i)] $(c-\varepsilon)i \le \vartheta_i \le (c+\varepsilon)i$ for every $i\ge n$.
			\item[(ii)] For every $i$ there exists $t_i \in [\vartheta_i, \vartheta_{i+1}]$ such that $\sfd(\xi_{x,z}(t_i), \Gamma x) \le \tau$.
		\end{itemize}
	By assumptions, we restricted to the values $T\ge (c+\varepsilon)n \ge \vartheta_n$. Then there exists $i_T \ge n$ such that $T\in [\vartheta_{i_T},\vartheta_{i_T + 1}]$ and we have that 
	$$\sfd(\xi_{x,z}(T), \Gamma x) \le \vert T - t_{i_T} \vert + \tau \le \vartheta_{i_T + 1} - \vartheta_{i_T} + \tau \le 2\varepsilon i_T + c + \varepsilon + \tau \le \frac{2\varepsilon}{c-\varepsilon} T + c + \varepsilon + \tau.$$ 
	By Lemma \ref{parallel-geodesics} we know that $\sfd(\gamma(T), \xi_{x,z}(T)) \le R +8\delta$, so $\sfd(\gamma(T), \Gamma x) \le \frac{2\varepsilon}{c-\varepsilon} T + c + \varepsilon + \tau + R + 8\delta$.
	We can now estimate 
	\begin{equation}
		\begin{aligned}
			\Pack_{\sfd}&\left(\overline{B}(\gamma(T), S + r') \cap \text{QC-Hull}(C), \frac{r}{4}\right)\\&\le \Pack_{\sfd}\left(\overline{B}\left(x, S + r' + \frac{2\varepsilon}{c-\varepsilon} T + c + \varepsilon + \tau + R + 8\delta\right) \cap \text{QC-Hull}(\Lambda_\text{erg}), \frac{r}{4}\right).
		\end{aligned}
	\end{equation}
	Observe that in this inequality we replaced the set $C$ with the larger set $\Lambda_\text{erg}$, which is $\Gamma$-invariant. In particular the set $\text{QC-Hull}(\Lambda_\text{erg})$ is $\Gamma$-invariant too and we can translate our counting problem to a counting problem on the ball centered at $x$.

	By definition of $h_\text{erg}(\X;\Gamma)$ we get that
	\begin{equation}
		\begin{aligned}
			&\lims_{T \to +\infty}\sup_{\gamma \in \Geod(\overline{B}(x,R); \Lambda_{\tau,c,\varepsilon,n} \cap \Lambda_\text{erg})} \frac{1}{T} \log \Pack_{\sfD_f^T}\left(\overline{B}_{\sfD_f^T}(\gamma,r'),\frac{r}{2}\right) \\
			\le &\lims_{T \to +\infty} \frac{1}{T} \log \Pack_{\sfd}\left(\overline{B}\left(x, S + r' + \frac{2\varepsilon}{c-\varepsilon} T + c + \varepsilon + \tau + R + 8\delta\right) \cap \text{QC-Hull}( \Lambda_\text{erg}), \frac{r}{4}\right)\\
			\le & h_\text{erg}(\X;\Gamma)\cdot\frac{2\varepsilon}{c-\varepsilon}.
		\end{aligned}
	\end{equation}
	\end{proof}

	We have all the ingredients to give the
	\begin{proof}[Proof of Theorem \ref{theo:intro-equality-Gromov-hyperbolic}]
	In view of Theorem \ref{theo:intro-h_top>=h_Gamma} and the discussion at the beginning of the section, it is enough to show that \eqref{eq:strategy_for_h_top_<h_Gamma} holds. So, we fix $\mu \in \mathcal{E}_1(\Gamma \backslash
	 \Geod(\X), \bar{\Phi}_1)$. Proposition \ref{prop:Birkhoff_mean} gives $c \in [1,+\infty)$ such that, for every $\varepsilon > 0$ there exist $\tau_\varepsilon >0$ and $n_\varepsilon\in \N$ such that $\mu(\Gamma \backslash \Geod(\X; \Lambda_{\tau_\varepsilon,c, \varepsilon, n_\varepsilon} \cap \Lambda_\text{erg})) = 1$. The relations
	 $$h_\mu(\Gamma \backslash \Geod(\X), \bar{\Phi}_1) = h_\mu(\Gamma \backslash \Geod(\X;\Lambda_{\tau_\varepsilon,c,\varepsilon, n_\varepsilon} \cap \Lambda_\text{erg}), \bar{\Phi}_1) \le h_\text{top}(\Gamma \backslash \Geod(\X;\Lambda_{\tau_\varepsilon,c,\varepsilon,n_\varepsilon} \cap \Lambda_\text{erg}), \bar{\Phi}_1)$$
	 are true because of Lemma \ref{lemma:measure_entropy_on_measure_1} and Proposition \ref{prop:variational_principle}. We now prove that
	 $$h_\text{top}(\Gamma \backslash \Geod(\X;\Lambda_{\tau_\varepsilon,c,\varepsilon,n_\varepsilon} \cap \Lambda_\text{erg}), \bar{\Phi}_1) \le \overline{h}_f(\Lambda_{\tau_\varepsilon,c,\varepsilon, n_\varepsilon} \cap \Lambda_\text{erg}),$$
	 where $f\in \mathcal{F}$. Since $\Gamma$ acts discretely and by isometries on $(\Geod(\X), \sfD_f)$, by Proposition \ref{prop:quotient_of_loc_geod}, then $\sfD_f$ induces the quotient metric $\bar{\sfD}_f$ on $\Gamma \backslash\Geod(\X)$ that restricts to a metric on $\Gamma \backslash\Geod(\X; \Lambda_{\tau_\varepsilon,c, \varepsilon, n_\varepsilon} \cap \Lambda_\text{erg})$. Let $K \subseteq \Gamma \backslash \Geod(\X; \Lambda_{\tau_\varepsilon,c, \varepsilon, n_\varepsilon} \cap \Lambda_\text{erg})$ be compact. We claim that there exists $R\ge 0$ such that $K \subseteq \Gamma \backslash \Geod(\overline{B}(x,R); \Lambda_{\tau_\varepsilon,c, \varepsilon, n_\varepsilon} \cap \Lambda_\text{erg})$. Indeed, if not, for every $j \in \N$ we could find $[\gamma_j] \in K$ such that $\sfd(\Gamma \gamma_j(0), x) > j$. Up to a subsequence we have that $[\gamma_j]$ converges to $[\gamma_\infty]$. But this means that there are $g_j \in \Gamma$ such that $g_j\gamma_j$ converges to $\gamma_\infty$. In particular $\sfd(\Gamma \gamma_j(0), \gamma_\infty(0))$ tends to zero, which is a contradiction. Therefore we can bound $h_\text{top}(\Gamma \backslash \Geod(\X;\Lambda_{\tau_\varepsilon,c,\varepsilon,n_\varepsilon} \cap \Lambda_\text{erg}), \bar{\Phi}_1)$ from above by
	 \begin{equation}
	 	\begin{aligned}
	 		&\sup_{R \ge 0} \lim_{r\to 0} \lims_{n \to +\infty} \frac{1}{n}\log \Cov_{\bar{\sfD}_f^n}(\Gamma \backslash \Geod(\overline{B}(x,R); \Lambda_{\tau_\varepsilon,c, \varepsilon, n_\varepsilon} \cap \Lambda_\text{erg}), r) \\
	 		\le &\sup_{R \ge 0} \lim_{r\to 0} \lims_{n \to +\infty} \frac{1}{n}\log \Cov_{\sfD_f^n}(\Geod(\overline{B}(x,R); \Lambda_{\tau_\varepsilon,c, \varepsilon, n_\varepsilon} \cap \Lambda_\text{erg}), r),
	 	\end{aligned}
	 \end{equation}
 	because the projection map $p\colon (\Geod(\X), \sfD_f) \to (\Gamma \backslash \Geod(\X), \bar{\sfD}_f)$ is $1$-Lipschitz. Indeed, for every $n\in \N$ we have that
 	\begin{equation}
 		\begin{aligned}
 			\bar{\sfD}_f^n([\gamma],[\gamma']) = \sup_{i=0,\ldots,n} \bar{\sfD}_f(\bar{\Phi}_i([\gamma]),\bar{\Phi}_i([\gamma'])) &= \sup_{i=0,\ldots,n} \bar{\sfD}_f([\Phi_i(\gamma)],[\Phi_i(\gamma')])\\&\le \sup_{i=0,\ldots,n} {\sfD}_f(\Phi_i(\gamma),\Phi_i(\gamma')) = \sfD_f^n(\gamma, \gamma')
 		\end{aligned}
 	\end{equation}
 	for every $\gamma,\gamma'\in \Geod(\X)$ and every $n\in \N$.
 	
 	By taking the limit superior over all real numbers instead of the integers, we obtain that
 	$$h_\text{top}(\Gamma \backslash \Geod(\X;\Lambda_{\tau_\varepsilon,c,\varepsilon,n_\varepsilon} \cap \Lambda_\text{erg}), \bar{\Phi}_1) \le \overline{h}_{f}(\Lambda_{\tau_\varepsilon,c,\varepsilon,n_\varepsilon} \cap \Lambda_\text{erg}).$$
 	Finally, Propositions \ref{prop:h_Liptop_<_MD_special_subsets} and \ref{prop:MD < h_Gamma_special_subsets} give that
 	$$h_\mu(\Gamma \backslash \Geod(\X), \bar{\Phi}_1) \le h_\text{crit}(\X;\Gamma)\cdot\frac{c+\varepsilon}{c-\varepsilon} + h_\text{erg}(\X;\Gamma)\cdot \frac{2\varepsilon}{c-\varepsilon}.$$
 	By taking the limit for $\varepsilon$ going to zero we deduce that
 	$$h_\mu(\Gamma \backslash \Geod(\X), \bar{\Phi}_1) \le h_\text{crit}(\X;\Gamma).$$
	\end{proof}
	
	\section{The proofs of Theorems \ref{theo:intro_counterexample_h_top>h_Gamma}, \ref{theo:intro-counter-tree}, \ref{theo:intro-weird-example}}
	
	In this section we will present the proofs of the Theorems \ref{theo:intro_counterexample_h_top>h_Gamma}, \ref{theo:intro-counter-tree} and \ref{theo:intro-weird-example}. They will be mainly based on the computation of the topological entropy of the following model example. Let $F_\ell = \langle a_1,\ldots, a_\ell \rangle$ be the free group on $\ell$ generators. Let $T_{2\ell}$ be the regular tree of valency $2\ell$, which is also the Cayley graph of $F_\ell$. The action of $F_\ell$ on the geodesically complete, $\CAT(-1)$ space $T_{2\ell}$ is discrete and cocompact.
	\begin{lemma}
		\label{lemma:crit_F_ell}
		We have that $h_{\textup{crit}}(T_{2\ell};F_\ell) = \log(2\ell-1)$.
	\end{lemma}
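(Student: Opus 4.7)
The proof is a direct computation from the definition of the critical exponent, so the plan is short. The key observation is that since $T_{2\ell}$ is the Cayley graph of $F_\ell$ with respect to the standard generating set $\{a_1^{\pm 1},\ldots,a_\ell^{\pm 1}\}$, the orbit of the identity vertex $e$ under $F_\ell$ is exactly the vertex set $V(T_{2\ell})$, and the tree distance $\sfd(e,ge)$ equals the word length $|g|$. Hence $\#(F_\ell e \cap B(e,T))$ is precisely the number of reduced words in $F_\ell$ of length strictly less than $T$.

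First I would recall the standard enumeration: there is exactly one reduced word of length $0$ (the identity), and for every $n\ge 1$ the number of reduced words of length exactly $n$ equals $2\ell(2\ell-1)^{n-1}$, because the first letter can be chosen in $2\ell$ ways and each subsequent letter in $2\ell-1$ ways (to avoid cancellation). Summing the geometric series,
\begin{equation}
\#(F_\ell e \cap B(e,T)) \;=\; 1 + \sum_{n=1}^{\lceil T\rceil - 1} 2\ell(2\ell-1)^{n-1} \;=\; 1 + \frac{2\ell}{2\ell-2}\bigl((2\ell-1)^{\lceil T\rceil-1}-1\bigr)
\end{equation}
for $\ell \ge 2$ (the case $\ell=1$, where $F_1 = \Z$ acts on $T_2 = \R$, is trivial since then $\#(F_1 e \cap B(e,T))$ grows linearly and $\log(2\ell-1) = 0$).

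Then I would take the logarithm, divide by $T$, and pass to the limit, obtaining
\begin{equation}
h_{\textup{crit}}(T_{2\ell};F_\ell) \;=\; \lims_{T\to+\infty}\frac{1}{T}\log\#(F_\ell e \cap B(e,T)) \;=\; \log(2\ell-1),
\end{equation}
since all the lower-order terms and the multiplicative constant $\frac{2\ell}{2\ell-2}$ contribute $0$ after dividing by $T$. Since the value of $h_{\textup{crit}}(\X;\Gamma)$ does not depend on the basepoint (as recalled in Section \ref{subsec:BJ}), this computation with $x=e$ suffices. There is no real obstacle here; this is a bookkeeping lemma whose only role is to fix the numerical value $\log(2\ell-1)$ that will be used in subsequent constructions (Theorems \ref{theo:intro_counterexample_h_top>h_Gamma}, \ref{theo:intro-counter-tree}, \ref{theo:intro-weird-example}).
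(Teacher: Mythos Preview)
Your proof is correct and, in fact, more direct than the paper's. You apply the definition of the critical exponent verbatim: count orbit points of $e$ inside $B(e,T)$, which amounts to counting reduced words of length $<T$, and read off the exponential growth rate $\log(2\ell-1)$. The paper instead takes a detour: it invokes \cite[Proposition~5.7]{Cav21ter} to identify $h_{\textup{crit}}(T_{2\ell};F_\ell)$ with the covering entropy of $T_{2\ell}$, then with the volume entropy with respect to $\mathcal{H}^1$, and finally computes $\mathcal{H}^1(B(e,T))$ explicitly. Both computations are of the same elementary nature (a geometric series in $2\ell-1$), but your route avoids the external references and stays entirely within the framework already set up in Section~\ref{subsec:BJ}. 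The paper's route, on the other hand, illustrates that the various entropy notions coincide here, which is thematically relevant to the surrounding discussion even if not logically required for this lemma.
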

	\begin{proof}
		Arguing as in \cite[Proposition 5.7]{Cav21ter} one gets that $h_{\textup{crit}}(T_{2\ell};F_\ell)$ coincides with the so called covering entropy of $T_{2\ell}$, and so with every notion of entropy of $T_{2\ell}$ studied in \cite{Cav21}. Choosing the volume entropy with respect to the $1$-dimensional Hausdorff measure $\mathcal{H}^1$, we get that
		$$h_{\textup{crit}}(T_{2\ell};F_\ell) = \lims_{T \to +\infty} \frac{1}{T}\log \mathcal{H}^1(B(e,T)).$$
		Since $\mathcal{H}^1(B(e,T)) = 2\ell\cdot (2\ell-1)^{\lfloor T- 1 \rfloor}(1 + (2\ell-1)(T - \lfloor T \rfloor))$ we get that
		$$h_{\textup{crit}}(T_{2\ell};F_\ell) = \log (2\ell-1).$$
	\end{proof}

	We compute explicitily the topological entropy of the dynamical system $(F_\ell \backslash \Geod(T_{2\ell}), \bar{\Phi}_1)$, which equals $\log(2\ell-1)$ by Theorem \ref{theo:intro-equality-Gromov-hyperbolic} and Lemma \ref{lemma:crit_F_ell}. Indeed this computation serves as a model for similar ones that will be performed later.
	
	\begin{prop}
		\label{prop:computation_h_top_F_ell}
		We have that $h_\textup{top}(F_\ell \backslash \Geod(T_{2\ell}), \bar{\Phi}_1) = \log(2\ell-1)$.
	\end{prop}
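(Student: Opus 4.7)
The hypotheses of Theorem \ref{theo:intro-equality-Gromov-hyperbolic} are all satisfied when $\ell \geq 2$: $T_{2\ell}$ is $\CAT(-1)$ (hence proper, geodesic, line-convex, and Gromov-hyperbolic), $F_\ell$ is torsion-free, discrete, non-elementary, and acts cocompactly, so by Corollary \ref{cor:h_crit=h_erg} the ergodic entropy is finite. Applying Theorem \ref{theo:intro-equality-Gromov-hyperbolic} together with Lemma \ref{lemma:crit_F_ell} immediately gives $h_\textup{top}(F_\ell \backslash \Geod(T_{2\ell}), \bar{\Phi}_1) = \log(2\ell-1)$; the degenerate case $\ell = 1$ reduces to the geodesic flow on a circle of length $1$, whose topological entropy is zero, matching $\log(2\cdot 1 - 1) = 0$.

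Since the statement is used as a model for later computations, I would also give a direct combinatorial proof that does not invoke Theorem \ref{theo:intro-equality-Gromov-hyperbolic}. By Corollary \ref{cor:conjugation_of_flows}, the dynamical system $(F_\ell \backslash \Geod(T_{2\ell}), \bar{\Phi}_1)$ is conjugate to $(\LocGeod{B_\ell}, \Phi_1)$, where $B_\ell := F_\ell \backslash T_{2\ell}$ is the bouquet of $\ell$ loops of length $1$. Compactness of this quotient follows from the cocompactness of the $F_\ell$-action on $T_{2\ell}$ combined with Proposition \ref{prop:quotient_of_loc_geod}. A local geodesic of $B_\ell$ is encoded by a biinfinite reduced word $w = (\ldots, w_{-1}, w_0, w_1, \ldots)$ in the alphabet $\{a_1^{\pm 1}, \ldots, a_\ell^{\pm 1}\}$, recording the successive oriented edges traversed, together with a phase $s \in [0,1)$ telling where in the edge $w_0$ the basepoint lies.

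By Proposition \ref{prop:variational_principle}(iii) one may compute the entropy with respect to $\bar{\sfD}_f$ for $f(t) = e^{-|t|}$. For the upper bound, fix $r \in (0, 1/8)$ and choose $K \in \N$ so large that $(2K+4)e^{-K} < r/2$: if two local geodesics have codes agreeing on $[-K-1, n+K+1] \cap \Z$ and phases within $r/2$, a tail estimate exploiting the exponential decay of $f$ against the linear growth of $\sfd(\gamma(t), \gamma'(t))$ outside this window gives $\bar{\sfD}_f^n \le r$, so that $\Cov_{\bar{\sfD}_f^n}(F_\ell \backslash \Geod(T_{2\ell}), r) \leq C_r (2\ell-1)^n$ and hence $h_\textup{top} \leq \log(2\ell-1)$. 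For the matching lower bound, the $(2\ell-1)^n$ length-$n$ reduced words beginning with a fixed initial symbol lift to geodesic lines through a common vertex whose pairwise $\sfd$-distance reaches at least $1$ at some time $t \in [0,n]$; a systole argument analogous to the one in the proof of Theorem \ref{theo:intro-h_top>=h_Gamma}, using Lemma \ref{lemma:systole_positive} and the freeness of $F_\ell$, transfers this into $\bar{\sfD}_f^n$-separation in the quotient, giving $h_\textup{top} \geq \log(2\ell-1)$. The main technical step is the tail estimate in the upper bound; once it is in hand, the combinatorial count on reduced words drives everything.
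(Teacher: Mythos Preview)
Your proposal is correct and follows essentially the same route as the paper. The paper also notes (just before the proposition) that the result is immediate from Theorem~\ref{theo:intro-equality-Gromov-hyperbolic} and Lemma~\ref{lemma:crit_F_ell}, and then gives a direct combinatorial argument precisely because it serves as a template for the later proofs of Theorems~\ref{theo:intro_counterexample_h_top>h_Gamma}, \ref{theo:intro-counter-tree}, \ref{theo:intro-weird-example}. Your direct argument matches the paper's: encode elements of the quotient by reduced bi-infinite words in $\{a_1^{\pm 1},\ldots,a_\ell^{\pm 1}\}$ plus a phase in $[0,1)$, use $f(s)=e^{-|s|}$, get the upper bound by a tail estimate (agreement on a window $[-k_r,n+k_r]$ forces $\bar{\sfD}_f^n$-closeness) and the lower bound by exhibiting $\sim(2\ell-1)^n$ classes that are $\bar{\sfD}_f^n$-separated. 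Two cosmetic differences: the paper works directly in $F_\ell\backslash\Geod(T_{2\ell})$ rather than passing through the conjugation to $\LocGeod{B_\ell}$, and for the lower bound it gives the explicit estimate $\inf_{g\in F_\ell}\sfd(g\gamma_{w_v}(i+\tfrac14),\gamma_{w_{v'}}(i+\tfrac14))=\tfrac12$, yielding $\bar{\sfD}_f^n\ge \tfrac12 e^{-1/4}>\tfrac13$, instead of invoking a systole argument.
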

	
	\begin{proof}
		We need to parametrize the space $F_\ell \backslash \Geod(T_{2\ell})$. Let $\Sigma$ be the set of symbols $\Sigma := \{a_1^{\pm 1}, \ldots, a_\ell^{\pm 1}\}$. Let $\mathcal{A}_\Z(\Sigma)$ be the set of sequences $w = (w_i)_{i\in\Z}$, with $w_i \in \Sigma$. To every $w \in \mathcal{A}_\Z(\Sigma)$ we can associate a curve of $T_{2\ell}$ in the following way. Remember that $T_{2\ell}$ is the Cayley graph of the group $F_\ell = \langle a_1,\ldots,a_\ell\rangle$.	
		Then each symbol $a_i^{\pm 1} \in \Sigma$ is associated to the geodesic segment that joins $e$ with the vertex $a_i^{\pm 1}$. The curve associated to $w = (w_i)_{i\in \Z}$ is the curve associated to the infinite word $w$ as element of $F_\ell$.
		We call this curve $\gamma_w$. Notice that $\gamma_w$ is a geodesic if and only if $w$ belongs to the set $\mathcal{A}_\Z^\text{red}(\Sigma) := \{(w_i)_{i\in \Z} \in \mathcal{A}_\Z(\Sigma)\,:\, w_{i+1} \neq w_i^{-1} \text{ for every } i \in \Z\}$. With 'red' we mean reduced, since these sequences are exactly the reduced words in $F_\ell$.
		
		Moreover, every geodesic $\gamma$ of $T_{2\ell}$ such that $\gamma(0) = e$ is of the form $\gamma = \gamma_w$ for some $w\in \mathcal{A}_\Z^\text{red}(\Sigma)$. Since $F_\ell$ acts transitively on the vertices of $T_{2\ell}$ we deduce the following. Every $\gamma \in \Geod(T_{2\ell})$ is of the form $\gamma = g\Phi_t (\gamma_w)$ for some $t\in [0,1]$, $w \in \mathcal{A}_\Z^\text{red}(\Sigma)$ and $g\in F_\ell$. This implies that every element $[\gamma] \in F_\ell \backslash \Geod(T_{2\ell})$ is of the form $\bar{\Phi}_t([\gamma_w])$ for some $t\in [0,1]$ and $w \in \mathcal{A}_\Z^\text{red}(\Sigma)$.
		
		Recall that the dynamical system $(F_\ell \backslash \Geod(T_{2\ell}), \bar{\Phi}_1)$ is compact, so we can use the metric $\bar{\sfD}_f$, with $f(s) = e^{-\vert s \vert}\in \mathcal{F}$, to compute its topological entropy, by Proposition \ref{prop:variational_principle}. 
		For every $k < m\in \Z$ we consider the set
		$\mathcal{A}_{k,m}^\text{red}(\Sigma) = \{ (v_i)_{i=k,\ldots,m}\,:\, v_i \in \Sigma \text{ and } v_{i+1} \neq v_i^{-1} \text{ for every } i=k,\ldots,m-1\}$. We fix $0<r<1$ and $n \in \N$ and we choose $k_r \in \N$ such that $\sup_{s\ge k_r-1} se^{-s} < \frac{r}{8}$. For every element of $v = (v_i) \in \mathcal{A}_{-k_r,n+k_r}^\text{red}(\Sigma)$ we choose one element $w_v = (w_i) \in \mathcal{A}_\Z^\text{red}(\Sigma)$ such that $v_i = w_i$ for every $i=-k_r,\ldots,n+k_r$. We claim that the set  $$\mathcal{B}_{r,n} := \bigcup_{j=1}^{\frac{2}{r}} \bar{\Phi}_{\frac{rj}{2}}(\{[\gamma_{w_v}] \,:\, v \in  \mathcal{A}^\text{red}_{-k_r,n+k_r}(\Sigma)\})$$
		is $r$-dense in $F_\ell \backslash \Geod(T_{2\ell})$ with respect to the metric $\bar{\sfD}_{f}^n$. For that, we first compute
		\begin{equation}
			\label{eq:distance_gamma_Phi_gamma}
			\bar{\sfD}_{f}^n([\gamma], \bar{\Phi}_t([\gamma])) = \max_{i=0,\ldots,n} \inf_{g\in F_\ell} \sup_{s\in \R} \sfd(g\gamma(i+s), \gamma(i+t+s))e^{-\vert s \vert} = \max_{i=0,\ldots,n} \sup_{s\in \R} te^{-\vert s \vert} = t
		\end{equation}
		for every $\gamma \in \Geod(T_{2\ell})$ and every $t\le\frac12$.
		Here we used that the systole of $F_\ell$ on $T_{2\ell}$ is $1$. Let now $[\gamma] \in F_\ell \backslash \Geod(T_{2\ell})$ be arbitrary. We know that there exists $t\in [0,1]$ and $w\in \mathcal{A}_\Z^\text{red}(\Sigma)$ such that $[\gamma] = \bar{\Phi}_t ([\gamma_w])$. We find $j \in \{1,\ldots \frac{2}{r}\}$ such that $\vert t - \frac{rj}{2}\vert < \frac{r}{2}$ and we define $[\gamma'] := \bar{\Phi}_{\frac{rj}{2}} ([\gamma_w])$. By \eqref{eq:distance_gamma_Phi_gamma} we get that $\bar{\sfD}_{f}^n([\gamma], [\gamma']) < \frac{r}{2}$. Let now $v = (w_{-k_r}, \ldots, w_{n+k_r}) \in \mathcal{A}_{-k_r, n+k_r}^\text{red}(\Sigma)$ and $w_v \in \mathcal{A}^\text{red}_\Z(\Sigma)$ be its associated element. By definition, $\bar{\Phi}_{\frac{rj}{2}} ([\gamma_{w_v}]) \in \mathcal{B}_{r,n}$. If we prove that $\bar{\sfD}_{f}^n([\gamma'], \bar{\Phi}_{\frac{rj}{2}}( [\gamma_{w_v}])) < \frac{r}{2}$ we conclude the proof of the claim, since then $\bar{\sfD}_{f}^n([\gamma], \bar{\Phi}_{\frac{rj}{2}}( [\gamma_{w_v}])) < r$. Observe that, by definition, $\gamma'(t) = \Phi_{\frac{rj}{2}}(\gamma_{w_v})(t)$ for every $t \in [-k_r + 1, n+k_r]$. On the other hand for every $t$ we have $\sfd(F_\ell\gamma'(t), \Phi_{\frac{rj}{2}}(\gamma_{w_v})(t)) \le 1$ because the diameter of $F_\ell \backslash T_{2\ell}$ is $1$. 
		Therefore a raw estimate gives that
		\begin{equation}
			\begin{aligned}
				\bar{\sfD}_{f}^n\left([\gamma'], \bar{\Phi}_{\frac{rj}{2}} ([\gamma_{w_v}])\right) &\le \max_{i=0,\ldots,n} \sup_{s\in \R} \sfd\left(\gamma'(s+i), \Phi_{\frac{rj}{2}}(\gamma_{w_v})(s+i)\right)e^{-\vert s \vert}\\
				&\le \max_{i=0,\ldots,n} \sup_{s\in (-\infty, -k_r + 1 - i]} 2\vert s \vert e^{-\vert s \vert} + \sup_{s\in [n + k_r - i, + \infty)} 2\vert s \vert e^{-\vert s \vert}\\
				&\le \sup_{s\in [k_r-1, +\infty)} 4\vert s \vert e^{- s} < \frac{r}{2},
			\end{aligned}
		\end{equation}
		because of the choice of $k_r$. Hence we proved that the set $\mathcal{B}_{r,n}$ is $r$-dense in $F_\ell \backslash \Geod(T_{2\ell})$. So,
		\begin{equation}
			\begin{aligned}
				\textup{Cov}_{\bar{\sfD}_{f}^n}(F_\ell \backslash \Geod(T_{2\ell}),r) \le \#\mathcal{B}_{r,n} = \frac{2}{r} \# \mathcal{A}_{-k_r,n+k_r}^\text{red}(\Sigma) = \frac{2}{r} \cdot (2\ell) \cdot (2\ell - 1)^{n+2k_r - 1}.
			\end{aligned}
		\end{equation}
		Therefore 
		$$h_\text{top}(F_\ell \backslash \Geod(T_{2\ell}), \bar{\Phi}_1) \le \lim_{r\to 0} \lims_{n \to +\infty} \frac{1}{n} \log\left( \frac{2}{r} \cdot (2\ell) \cdot (2\ell - 1)^{n+2k_r-1} \right) = \log (2\ell - 1).$$
		
		It remains to bound $h_\text{top}(F_\ell \backslash \Geod(T_{2\ell}), \bar{\Phi}_1)$ from below. For that, we claim that the set 
		$$\mathcal{B}_{n}' := \{[\gamma_{w_v}]\,:\, v \in \mathcal{A}_{-n+1,0}^\text{red}(\Sigma)\}$$
		is $\frac{1}{3}$-separated with respect to the metric $\bar{\sfD}_{f}^n$. Indeed, for every two distinct elements $[\gamma_{w_v}], [\gamma_{w_{v'}}] \in \mathcal{B}_n'$ there exists $i \in \{-n+1, \ldots, 0\}$ such that $v_i \neq v_i'$. Then
		$$\bar{\sfD}_{f}^n([\gamma_{w_v}], [\gamma_{w_{v'}}]) \ge \inf_{g\in F_\ell} {\sfD}_{f}(g\Phi_i(\gamma_{w_v}), \Phi_i (\gamma_{w_{v'}})) \ge \frac{1}{2}e^{-\frac{1}{4}} > \frac{1}{3}.$$
		Here we used that $\inf_{g\in F_\ell} \sfd(g\gamma_{w_v}(i+\frac14), \gamma_{w_{v'}}(i+\frac14)) = \frac12$.
		Therefore, if $r<\frac{1}{3}$ we necessarily have that
		\begin{equation}
			\begin{aligned}
				\textup{Cov}_{\bar{\sfD}_{f}^n}(F_\ell \backslash \Geod(T_{2\ell}),r) \ge \#\mathcal{B}_{n}' = \# \mathcal{A}_{-n + 1, 0}^\text{red}(\Sigma) = (2\ell) \cdot (2\ell - 1)^{n - 2},
			\end{aligned}
		\end{equation}
		and so
		$$h_\text{top}(F_\ell \backslash \Geod(T_{2\ell}), \bar{\Phi}_1) \ge \lim_{r\to 0} \lims_{n \to +\infty} \frac{1}{n} \log\left( (2\ell) \cdot (2\ell - 1)^{n - 2} \right) = \log (2\ell-1).$$
	\end{proof}

	Building on these computations we will provide the proofs of the remaining three theorems.
	\begin{proof}[Proof of Theorem \ref{theo:intro_counterexample_h_top>h_Gamma}]
		Let $\ell \in \N$ be fixed. We consider the metric space $\X$ built in this way. We start with $T_{2\ell}$ and for every couple of adjacent vertices we glue another edge of length $1$. Then $\X$ is proper, geodesic, geodesically complete and metrically doubling up to scale $1$. It is also Gromov-hyperbolic being at bounded Hausdorff distance from $T_{2\ell}$, so quasi-isometric to it. However, it is not line-convex. As group of isometries we take $F_\ell$ that acts preserving its action on $T_{2\ell}$ and sending added edges to the corresponding added edges. This action on $\X$ is by isometries, discrete, cocompact, non-elementary and of course $F_\ell$ is torsion-free. Arguing as in Lemma \ref{lemma:crit_F_ell} we get that $h_\text{crit}(\X;F_\ell) = \log(2\ell - 1)$. But, on the other hand, the topological entropy of $(F_\ell \backslash \Geod(\X), \bar{\Phi}_1)$ equals $\log(4\ell - 2)$. This follows by the same computations of Proposition \ref{prop:computation_h_top_F_ell}, once we parametrize the space $F_\ell \backslash \Geod(\X)$. In this case, for every $a_i^{\pm 1} \in F_\ell$ we denote by $\tilde{a}_i^{\pm 1}$ the geodesic segment which was added to $T_{2\ell}$ in the definition of $\X$ and which joins the same endpoints of the geodesic segment associated to $a_i^{\pm 1}$. Then, with the notation of Proposition \ref{prop:computation_h_top_F_ell} one has to use the set of symbols $\Sigma := \{a_1^{\pm 1}, \ldots, a_\ell^{\pm 1}, \tilde{a}_1^{\pm 1}, \ldots, \tilde{a}_\ell^{\pm 1}\}$. Moreover every geodesic $\gamma \in \Geod(\X)$ with $\gamma(0) = e$ is of the form $\gamma = \gamma_w$ for some $w \in \mathcal{A}_\Z^\text{red}(\Sigma)$, where now
		$\mathcal{A}_\Z^\text{red}(\Sigma) := \{(w_i)_{i\in \Z} \in \mathcal{A}_\Z(\Sigma)\,:\, w_{i+1} \neq w_i^{-1},\tilde{w}_i^{-1}\}$. The rest of the computations are identical and lead to the equality $h_\text{top}(F_\ell \backslash \Geod(\X), \bar{\Phi}_1) = \log(4\ell - 2)$.
	\end{proof}

	We continue with the
	\begin{proof}[Proof of Theorem \ref{theo:intro-counter-tree}]
		We think again to $T_4$ as the Cayley graph of $F_2 = \langle a_1,a_2 \rangle$. Let $\tilde{a}_1, \tilde{a}_2$ be the axes of the isometries $a_1$ and $a_2$. We consider the standard topological immersion of $T_4$ in $\R^2$ with edges labelled by $a_1$ parallel to the $x$-axis, and edges labelled by $a_2$ parallel to the $y$-axis.
		We denote by $R_\theta$ the Euclidean rotation of angle $\theta$. Then $R_{\frac{\pi}{2}}$ defines an isometric automorphism of $T_4$. We set $\Gamma := \langle a_1,a_2, R_{\frac{\pi}{2}} \rangle$. A direct computation shows that $R_{\frac{\pi}{2}} a_1 R_{\frac{\pi}{2}}^{-1} = a_2$ and $R_{\frac{\pi}{2}} a_2 R_{\frac{\pi}{2}}^{-1} = a_1$, so $F_2 \triangleleft \Gamma$ and $[\Gamma : F_2] \le \#\langle R_{\frac{\pi}{2}}\rangle = 4$. In particular $\Gamma < \Isom(T_4)$ is discrete, cocompact and virtually torsion-free.
		Theorem \ref{theo:intro-equality-Gromov-hyperbolic}, Lemma \ref{lemma:reduction_torsion_free} and Lemma \ref{lemma:crit_F_ell} imply that 
		$$h_\text{top}(\Gamma \backslash \Geod(T_4), \bar{\Phi}_1) = h_\text{crit}(T_4;\Gamma) = h_\text{crit}(T_4; F_2) = \log 3.$$
		
		On the other hand we need to consider the action of $\Gamma / F_2$ induced on $\LocGeod{F_2 \backslash T_4}$ by the action of $\Gamma / F_2$ on $F_2\backslash T_4 = \mathbb{S}^1 \ast \mathbb{S}^1$, the wedge of two circles of length $1$. Every local geodesic $\gamma$ of $\mathbb{S}^1\ast\mathbb{S}^1$ with $\gamma(0)=[e]$ is parametrized by a word in $\mathcal{A}_\Z^\text{red}(\Sigma)$, where $\Sigma = \{a_1^{\pm 1}, a_2^{\pm 1}\}$ as defined in the proof of Proposition \ref{prop:computation_h_top_F_ell}. The action of $\Gamma \backslash F_2 = \langle R_{\frac{\pi}{2}} \rangle$ on $\mathbb{S}^1\ast \mathbb{S}^1$ identifies all local geodesic segments associated to the symbols $a_1^{\pm 1}, a_2^{\pm 1}$. Therefore every element $[\gamma]$ of $(\Gamma / F_2) \backslash \LocGeod{\mathbb{S}^1\ast \mathbb{S}^1}$ is of the form $\bar{\Phi}_t([\gamma_w])$ where $w \in \mathcal{A}_\Z(\Sigma') = $ and $\Sigma' = \{a\}$. The same computations of Proposition \ref{prop:computation_h_top_F_ell} show that
		$$h_\text{top}((\Gamma / F_2) \backslash \LocGeod{\mathbb{S}^1 \ast \mathbb{S}^1}, \bar{\Phi}_1) = 0.$$
	\end{proof}
	
	We conclude with the
	\begin{proof}[Proof of Theorem \ref{theo:intro-weird-example}]
		We start with $T_4$ and we glue one $\mathbb{S}^1$ of length $1$ by one point at every vertex of $T_4$. We denote by $\X$ the resulting graph. It is proper, geodesic and Gromov-hyperbolic since it is at finite Hausdorff distance from $T_4$, so quasi-isometric to it. It is also line-convex because every geodesic line of $\X$ lies entirely inside $T_4$. 
		The action of $F_2$ on $T_4$ extends to a isometric, discrete, non-elementary and cocompact action of $F_2$ on $\X$. By Theorem \ref{theo:intro-equality-Gromov-hyperbolic} and Lemma \ref{lemma:crit_F_ell} we have that
		$$h_\text{top}(F_2 \backslash \Geod(\X), \bar{\Phi}_1) = h_\text{crit}(\X;F_2) = h_\text{crit}(T_4;F_2) = \log 3.$$
		The second equality follows from the fact that the $F_2$ orbit of a vertex is contained in $T_4$.
		
		On the other hand, the space $F_2 \backslash \X$ is isometric to the wedge of three $\mathbb{S}^1$ of length $1$ by one common point. It is a locally $\CAT(-1)$, locally geodesically complete space. The space $\X$ is not the universal cover of $F_2 \backslash \X$ because it is not simply connected. Indeed the universal cover of $F_2 \backslash \X$ is $T_6$ and the fundamental group of $F_2\backslash \X$ is $F_3$. Therefore, again by Theorem \ref{theo:intro-equality-Gromov-hyperbolic}, Lemma \ref{lemma:crit_F_ell} and Corollary \ref{cor:quotient_geodesics=local_geodesics_of_quotient} we have that
		$$h_\text{top}(\LocGeod{F_2\backslash \X}, \Phi_1) = h_\text{top}(F_3 \backslash \Geod(T_6), \bar{\Phi}_1) = h_\text{crit}(T_6; F_3) = \log 5.$$
	\end{proof}

	\bibliographystyle{alpha}
	\bibliography{Otal-Peigne-for-CAT-spaces}

\begin{thebibliography}{BCGS17}

\bibitem[BCGS17]{BCGS}
G.~Besson, G.~Courtois, S.~Gallot, and A.~Sambusetti.
\newblock Curvature-free margulis lemma for gromov-hyperbolic spaces.
\newblock {\em arXiv preprint arXiv:1712.08386}, 2017.

\bibitem[BH13]{BH09}
M.~Bridson and A.~Haefliger.
\newblock {\em Metric spaces of non-positive curvature}, volume 319.
\newblock Springer Science \& Business Media, 2013.

\bibitem[BJ97]{BJ97}
C.~Bishop and P.~Jones.
\newblock Hausdorff dimension and {K}leinian groups.
\newblock {\em Acta Mathematica}, 179(1):1--39, 1997.

\bibitem[Bow73]{Bow73}
R.~Bowen.
\newblock Topological entropy for noncompact sets.
\newblock {\em Transactions of the American Mathematical Society},
  184:125--136, 1973.

\bibitem[Cav22]{Cav21}
N.~Cavallucci.
\newblock Entropies of non-positively curved metric spaces.
\newblock {\em Geometriae Dedicata}, 216(5):54, 2022.

\bibitem[Cav23]{Cav21ter}
N.~Cavallucci.
\newblock Continuity of critical exponent of quasiconvex-cocompact groups under
  {G}romov-{H}ausdorff convergence.
\newblock {\em Ergodic Theory and Dynamical Systems}, 43(4):1189--1221, 2023.

\bibitem[Cav24]{Cav24}
N.~Cavallucci.
\newblock {B}ishop–{J}ones’ {T}heorem and the ergodic limit set.
\newblock {\em Ergodic Theory and Dynamical Systems}, page 1–15, 2024.

\bibitem[CDP90]{CDP90}
M.~Coornaert, T.~Delzant, and A.~Papadopoulos.
\newblock {\em G{\'e}om{\'e}trie et th{\'e}orie des groupes: les groupes
  hyperboliques de Gromov}.
\newblock Lecture notes in mathematics. Springer-Verlag, 1990.

\bibitem[Coo93]{Coo93}
M.~Coornaert.
\newblock Mesures de {P}atterson-{S}ullivan sur le bord d'un espace
  hyperbolique au sens de {G}romov.
\newblock {\em Pacific J. Math.}, 159(2):241--270, 1993.

\bibitem[CS21]{CavS20}
N.~Cavallucci and A.~Sambusetti.
\newblock Packing and doubling in metric spaces with curvature bounded above.
\newblock {\em Mathematische Zeitschrift}, pages 1--46, 2021.

\bibitem[CS24]{CavS20bis}
N.~Cavallucci and A.~Sambusetti.
\newblock Discrete groups of packed, non-positively curved, {G}romov hyperbolic
  metric spaces.
\newblock {\em Geom. Dedicata}, 218(2):Paper No. 36, 52, 2024.

\bibitem[DSU17]{DSU17}
T.~Das, D.~Simmons, and M.~Urba{\'n}ski.
\newblock {\em Geometry and dynamics in Gromov hyperbolic metric spaces},
  volume 218.
\newblock American Mathematical Soc., 2017.

\bibitem[DT23]{DilsavorThompson2023}
C.~Dilsavor and D.J. Thompson.
\newblock Gibbs measures for geodesic flow on {CAT}$(-1)$ spaces.
\newblock {\em arXiv preprint arXiv:2309.03297}, 2023.

\bibitem[HKR95]{HK95}
M.~Handel, B.~Kitchens, and D.~Rudolph.
\newblock Metrics and entropy for non-compact spaces.
\newblock {\em Israel Journal of Mathematics}, 91(1):253--271, Oct 1995.

\bibitem[Led13]{Led13}
F.~Ledrappier.
\newblock Entropie et principe variationnel pour le flot g{\'e}od{\'e}sique en
  courbure n{\'e}gative pinc{\'e}e.
\newblock {\em G{\'e}om{\'e}trie ergodique}, 43:117--144, 2013.

\bibitem[Man79]{Man79}
A.~Manning.
\newblock Topological entropy for geodesic flows.
\newblock {\em Annals of Mathematics}, 110(3):567--573, 1979.

\bibitem[OP04]{OP04}
J.~Otal and M.~Peign{\'e}.
\newblock Principe variationnel et groupes {K}leiniens.
\newblock {\em Duke Mathematical Journal}, 125(1):15--44, 2004.

\bibitem[Pau96]{Pau96}
F.~Paulin.
\newblock Un groupe hyperbolique est déterminé par son bord.
\newblock {\em Journal of the London Mathematical Society}, 54(1):50--74, 1996.

\bibitem[Pau97]{Pau97}
F.~Paulin.
\newblock On the critical exponent of a discrete group of hyperbolic
  isometries.
\newblock {\em Differential Geometry and its Applications}, 7(3):231--236,
  1997.

\bibitem[PPS15]{PaulinPollicottShapira2015}
F.~Paulin, M.~Pollicott, and B.~Schapira.
\newblock Equilibrium states in negative curvature.
\newblock {\em Ast\'{e}risque}, (373):viii+281, 2015.

\bibitem[PS19]{PS19}
M.~Peign{\'e} and A.~Sambusetti.
\newblock Entropy rigidity of negatively curved manifolds of finite volume.
\newblock {\em Mathematische Zeitschrift}, 293(1-2):609--627, 2019.

\bibitem[Ric21]{Ric19}
R.~Ricks.
\newblock The unique measure of maximal entropy for a compact rank one locally
  {CAT}(0) space.
\newblock {\em Discrete and Continuous Dynamical Systems}, 41(2):507--523,
  2021.

\bibitem[Rob03]{Rob03}
T.~Roblin.
\newblock Ergodicit\'e{} et \'equidistribution en courbure n\'egative.
\newblock {\em M\'em. Soc. Math. Fr. (N.S.)}, (95):vi+96, 2003.

\end{thebibliography}
	
\end{document}